\definecolor{red}{rgb}{0.7,0.15,0.15}
\definecolor{green}{rgb}{0,0.5,0}
\definecolor{blue}{rgb}{0,0,0.7}
\makeatletter \@addtoreset{equation}{section}
\newtheorem{theorem}{Theorem}[section]
\newtheorem{assumption}[theorem]{Assumption}
\newtheorem{corollary}[theorem]{Corollary}
\newtheorem{lemma}[theorem]{Lemma}
\newtheorem{proposition}[theorem]{Proposition}
\newtheorem{definition}[theorem]{Definition}
\newtheorem{remark}[theorem]{Remark}
\def \E{\mathbb{E}}
\def \F{\mathbb{F}}
\def \G{\mathbb{G}}
\def \H{\mathbb{H}}
\def \M{\mathbb{M}}
\def \N{\mathbb{N}}
\def \P{\mathbb{P}}
\def \Q{\mathbb{Q}}
\def \R{\mathbb{R}}
\def \S{\mathbb{S}}
\def \Z{\mathbb{Z}}
\def \Pr{\mathrm{P}}
\def\Ac{{\cal A}}
\def\Bc{{\cal B}}
\def\Cc{{\cal C}}
\def\Fc{{\cal F}}
\def\Gc{{\cal G}}
\def\Hc{{\cal H}}
\def\Lc{{\cal L}}
\def\Mc{{\cal M}}
\def\Pc{{\cal P}}
\def\Rc{{\cal R}}
\def\Sc{{\cal S}}
\def\Uc{{\cal U}}
\def\Wc{{\cal W}}
\def\Fb{{\bar F}}
\def\Gb{{\overline \G}}
\def\Gcb{\overline \Gc}
\def\x{\times}
\def\Om{\Omega}
\def\Omt{\widetilde{\Omega}}
\def\om{\omega}
\def\Omb{\overline{\Om}}
\def\omb{\bar \om}
\def\Fb{\overline{\F}}
\def\Gcb{\overline{\Gc}}
\def\Fcb{\overline{\Fc}}
\def\Bt{\widetilde{B}}
\def\Gct{\widetilde{\Gc}}
\def\Fct{\widetilde{\Fc}}
\def\Ft{\widetilde{\F}}
\def\Gt{\widetilde{\G}}
\def\Wt{\widetilde{W}}
\def\Xt{\widetilde{X}}
\def\0{\mathbf{0}}
\def \Ert{\widetilde{\mathrm{E}}}
\def \bb{\mathbf{b}}
\def \mb{\mathbf{m}}
\def \alephb{\overline{\aleph}}
\def \alephh{\widehat{\aleph}}
\def \mbb{\overline{\mathbf{m}}}
\def \nb{\mathbf{n}}
\def \qb{\mathbf{q}}
\def \mub{\overline{\mu}}
\def \Lambdap{\Lambda^{\prime}}
\def \zetab{\overline{\zeta}}
\def \muh{\widehat{\mu}}
\def \mup{\mu^{\prime}}
\def \Pih{\widehat{\Pi}}
\def \zetah{\widehat{\zeta}}
\def \Qrt{\widetilde{\mathrm{Q}}}
\def \Art{\widetilde{\mathrm{A}}}
\def \Rrt{\widetilde{\mathrm{R}}}
\def \alphat{\widetilde{\alpha}}
\def \varphit{\widetilde{\varphi}}
\def \Qr{\mathrm{Q}}
\def \Pt{\widetilde{\P}}
\def \Prt{\widetilde{\Pr}}
\def \Kt{\widetilde{K}}
\def \Mct{\widetilde{\Mc}}
\def \St{\widetilde{S}}
\def \Zt{\widetilde{Z}}
\def \sigmat{\widetilde{\sigma}}
\def \gammat{\widetilde{\gamma}}
\def \Deltat{\widetilde{\Delta}}
\def \betat{\widetilde{\beta}}
\def \nub{\bar{\nu}}
\def \Xh{\widehat{X}}
\def \Zh{\widehat{Z}}
\def \pib{\overline{\pi}}
\def\normeL2#1{\left\|{#1}\right\|_{L^2}}
\def\Pcb{\overline \Pc}
\def \mubb {\boldsymbol{\mu}}
\def \Lim{\displaystyle\lim}
\def \Limsup{\displaystyle\limsup}
\def \Xbb{\mathbf{X}}
 \title{Mean Field Games of Controls: on the convergence of Nash equilibria \footnote{The author is grateful to Dylan Possama\"{i} and Xiaolu Tan for helpful comments and suggestions.}}
\author{
    Mao Fabrice {\sc Djete}\footnote{Universit\'e Paris--Dauphine, PSL Research University, CNRS, CEREMADE, 75016 Paris,France, djete@ce-remade.dauphine.fr. This work benefited from support of the r\'egion \^Ile--de--France.}
    }
             \date{\today}
\begin{document}

\maketitle
 
\begin{abstract}

{\color{black}In this paper, we provide convergence and existence results for mean field games of controls. Mean field games of controls are a class of mean field games where the mean field interactions are achieved through the joint (conditional) distribution of the controlled state and the control process. The framework we are considering allows to control the volatility coefficient $\sigma$, and the controls/strategies are supposed to be of $open\;loop$ type.
Using (controlled) Fokker--Planck equations, we introduce a notion of measure--valued solution of mean field games of controls and prove a relation between these solutions on the one hand, and the approximate Nash equilibria on the other hand.}
First of all, in the $N$--player game associated to the mean field games of controls, given a sequence of approximate Nash equilibria, it is shown that, this sequence admits limits as $N$ tends to infinity, and each limit is a measure--valued solution of the {\color{black}corresponding} mean field games of controls. Conversely, any measure--valued solution can be obtained as the limit of a sequence of approximate Nash equilibria of the $N$--player game. In other words, the measure--valued solutions are the {\color{black}accumulation} points of the approximate Nash equilibria. Then, by considering an approximate strong solution of mean field games of controls which is the classical strong solution where the optimality is obtained by admitting a small error $\varepsilon,$ we prove that the measure--valued solutions are the {\color{black}accumulation points} of this type of solutions when $\varepsilon$ goes to zero. Finally, the existence of a measure--valued solution of mean field games of controls is proved in the case without common noise. 


\end{abstract}
\section{Introduction}
Since the pioneering work of \citeauthor*{lasry2006jeux} \cite{lasry2007mean} and \citeauthor*{huang2003individual} \cite{huang2006large}, mean field games (MFG) have been the subject of intensive research in recent years. Due to the diversity of applications, particularly in models of oil production, volatility formation, population dynamics and economic growth (see \citeauthor*{carmona2018probabilisticI} \cite{carmona2018probabilisticI} for an overview), the study of MFG has attracted increasing interest in the field of applied mathematics.

\medskip
The MFG can be seen as symmetric stochastic differential games with {\color{black}infinitely} many players. Indeed, under the appropriate assumptions, a MFG solution can be used to construct approximate Nash equilibria for a game involving a large number of players. Also, it can be shown that each sequence of Nash equilibria converges towards a solution of MFG when the number of players tends to infinity. 

\medskip
So far, this study has been conducted considering that the interactions between the players are achieved only through the empirical distribution of the state processes. We refer to \citeauthor*{lacker2016general} \cite{lacker2016general} for a general analysis of this case (see also \citeauthor*{M-Fisher} \cite{M-Fisher}). 
The goal of this paper is to give a general analysis of the case where {\color{black} the mean field interaction occurs through the empirical distribution of both the states and the controls.}

\medskip
To briefly summarize the finite--player games, specified
in full details in Section \ref{sec:finite-player-game}, let us suppose {\color{black} that the} $N$--players have private state processes $\Xbb:=(\Xbb^1,\dots,\Xbb^N)$ given by the stochastic differential equations (SDEs) system
\begin{align*}
        \mathrm{d}\Xbb^{i}_t
        &= b\big(t,\Xbb^{i}_{t},\varphi^{N,\Xbb}_{t \wedge \cdot},\varphi^{N}_{t} ,\alpha^i_t \big) \mathrm{d}t 
        +
        \sigma \big(t,\Xbb^{i}_{t},\varphi^{N,\Xbb}_{t \wedge \cdot},\varphi^{N}_{t} ,\alpha^i_t \big) \mathrm{d}W^i_t
        +
        \sigma_0 \mathrm{d}B_t,~~t \in [0,T],
        \\
        \varphi^{N}_{t}&:= \frac{1}{N}\sum_{i=1}^N \delta_{(\Xbb^{i}_{t},\alpha^i_t )}~\mbox{and}~\varphi^{N,\Xbb}_{t}:= \frac{1}{N}\sum_{i=1}^N \delta_{\Xbb^{i}_{t}},
    \end{align*}
where $T>0$ is a fixed time horizon, $(B,W^1,\dots,W^N)$ are independent Brownian motions where $B$ is called the common noise, and $\alpha^i$ is the control of player $i$. Here, the specificity is that the dynamics $X^i$ of player $i$ depends on the empirical distribution $\varphi^{N}$ of states and controls of all players.
Given a strategy $(\alpha^1,\dots,\alpha^N),$ the reward to the player $i$ is 
\begin{align*}
    J^N_i(\alpha^1,\dots,\alpha^N)
    :=
    \E \bigg[
        \int_0^T L\big(t,\Xbb^{i}_{t},\varphi^{N,\Xbb}_{t \wedge \cdot},\varphi^{N}_{t} ,\alpha^i_t \big) \mathrm{d}t 
        + 
        g \big( \Xbb^{i}_{T}, \varphi^{N,\Xbb}_{T \wedge \cdot} \big)
        \bigg].
\end{align*}
The aim of each agent is to maximize this reward through a Nash equilibrium criterion. The strategy $(\alpha^1,\dots,\alpha^N)$ is an $\varepsilon_N$--Nash equilibrium if for any admissible control $\beta,$
\begin{align} \label{eq:opti-player_i}
    J^N_i(\alpha^1,\dots,\alpha^N)
    \ge
    J^N_i(\alpha^1,...,\alpha^{i-1},\beta,\alpha^{i+1},\dots,\alpha^N) - \varepsilon_N.
\end{align}

Following the intuition, if $N$ is large, because of the symmetry of the model, the contribution of player $i$ and its control over $\varphi^{N}$ is negligible, and everything happens as if $\varphi^{N}$ was fixed in the optimization \eqref{eq:opti-player_i}. This line of argument leads to the derivation of the problem called in the literature the mean field games of controls or $extended$ mean field games, which has, loosely speaking, the following structure (the precise definition is given in Section \ref{section:strong_formulation}): a $(\sigma\{B_s,s \le t\})_{t \in [0,T]}$--adapted measure--valued process $(\mub^\star_t)_{t \in [0,T]}$ is an $\varepsilon$--strong MFG solution (or approximate strong MFG) if for all $t \in [0,T],$ $\mub^\star_t=\Lc(X^\star_t,\alpha^\star_t|B),$ where the state process $X^\star$ is governed by
\begin{align*}
		\mathrm{d}X^\star_t
		&= 
		b \big(t, X^\star_t,(\mu^\star_s)_{s \le t}, \mub^\star_t, \alpha^\star_t \big) \mathrm{d}t
		+
	    \sigma\big(t, X^\star_t,(\mu^\star_s)_{s \le t}, \mub^\star_t, \alpha^\star_t \big) \mathrm{d} W_t
		+ 
		\sigma_0 \mathrm{d}B_t,,~~t \in [0,T]
		\\
		\mu^\star_t&:=\Lc(X^\star_t|B),
	\end{align*}
	and one has
	\begin{align} \label{def-epsilon-MFG}
		    \begin{cases} \displaystyle \E \bigg[
				\int_0^T L(t, X^{\star}_t,\mu^\star_{t \wedge \cdot}, \mub^{\star}_t,\alpha^\star_t) \mathrm{d}t 
				+ 
				g(X^{\star}_T, \mu^{\star}) 
			\bigg]
			\ge
			\sup_{\alpha}\;\E \bigg[
				\int_0^T L(t, X_t,\mu^\star_{t \wedge \cdot}, \mub^{\star}_t,\alpha_t) \mathrm{d}t 
				+ 
				g(X_T, \mu^{\star}) 
			\bigg] - \varepsilon,
\\[0.7em]
		    \displaystyle  \mbox{where  {\color{black} the optimization is over the solutions}}\;\; 
		    \mathrm{d}X_t
		    = 
		    b \big(t, X_t,(\mu^\star_s)_{s \le t}, \mub^\star_t, \alpha_t \big) \mathrm{d}t
		    +
	        \sigma\big(t, X_t,(\mu^\star_s)_{s \le t}, \mub^\star_t, \alpha_t \big) \mathrm{d} W_t
		    + 
		    \sigma_0 \mathrm{d}B_t.
		    \end{cases}
		\end{align} 

This structure means that, when the process $(\mub^\star_t)_{t \in [0,T]}$ is fixed, a single representative player solves an
optimal control problem. The condition $\mub^\star_t=\Lc(X^\star_t,\alpha^\star_t|B),$ called $consistency\;condition$ or $fixed\;point\;problem$ in the literature, gives to $(X^\star,\alpha^\star)$, the $\varepsilon$--optimal control, a representation property of the entire population. {\color{black}The process} $  (\mub^\star_t)_{t \in [0,T]}$ can be seen as an equilibrium. This is exactly the classical MFG problem except for two aspects: first the solution is a (conditional) distribution of the state and the control $(\Lc(X^\star_t,\alpha^\star_t|B))_{t \in [0,T]}$ and not just a (conditional) distribution of the state $(\Lc(X^\star_t|B))_{t \in [0,T]},$ next, $(X^\star,\alpha^\star)$ is an $\varepsilon$--optimal control and not necessary an optimal control (or $0$--optimal control).

\medskip
For MFG of controls or $extended$ MFG, the literature on this topic focus primarily on the existence and uniqueness results of the limit problem (with $\varepsilon = 0$), usually without common noise i.e. $\sigma_0=0.$ 
\citeauthor*{DiogoVardan-ExtMFG} \cite{DiogoVardan-ExtMFG}, by using PDE methods, study these types of interactions
in the deterministic case i.e. $\sigma=\sigma_0=0.$  Strong assumptions of continuity and convexity make it possible to obtain the existence and the regularity of the solutions. In order to explore a problem of optimal liquidation in finance, \citeauthor*{Lehalle-card} \cite{Lehalle-card} apply similar PDE techniques for this problem in the case without common noise, while allowing $\sigma$ to be non--zero. With the same philosophy, \citeauthor*{Kobeissi} \cite{Kobeissi} provides some results and discusses properties of existence and uniqueness in examples. Let us also mention \citeauthor*{Kobeissi-Y_Achdou} \cite{Kobeissi-Y_Achdou} which gives numerical approximations via finite difference for the PDE system arising in the MFG of controls. {\color{black}See also \citeauthor*{GOMES201449} \cite{GOMES201449} and \citeauthor*{bonnans_pfeifer2019} \cite{bonnans_pfeifer2019} for other investigations of PDE techniques in the situation of MFG of controls.} 

\medskip
Probability techniques have also been used to give some results for the limit problem. Without common noise, using a weak formulation of the MFG of controls, \citeauthor*{carmona2015probabilistic} \cite{carmona2015probabilistic} obtain the existence and uniqueness of the MFG of controls, and from this solution, construct an approximate Nash equilibrium, all this by imposing an uncontrolled and non--degenerative volatility $\sigma$ ($\sigma>0).$ They illustrate their results on the price impact models (which share some similarities with those considered in \cite{Lehalle-card}) and the flocking model. Similarly, \citeauthor*{P-JamesonGraber} \cite{P-JamesonGraber}, for the studies of models of production of an exhaustible resource, solves similar existence and uniqueness problems.

\medskip
Except the recent work of \citeauthor*{M_Lauriere-Tangpi} \cite{M_Lauriere-Tangpi} which, using the FBSDEs, treats the convergence of Nash equilibria in the framework of MFG of controls, to the best of our knowledge, there are no other papers using probabilistic or PDE methods that answer the question of the convergence of $\varepsilon_N$--Nash equilibria to the MFG solution in this context. 
The assumptions of regularity of coefficients usually used in the literature are no longer verified in the presence of the distribution of control (see an example of this phenomena in \cite[Remark 2.4]{MFD-2020}).  Although using probabilistic point of view, the approach developed in this paper is very different from these previously mentioned, and considers very general assumptions. Despite many differences, this article is in the same spirit as \cite{lacker2016general}, which is, in the framework without law of control and with open loop controls, the most significant paper investigating the connection between large population differential games and MFG under very general assumptions. We want to emphasize that the interesting techniques developed in \cite{lacker2016general} do not work in the case of MFG of controls. As previously mentioned, {\color{black} because} of the presence of the law of control, the assumptions of regularity on the coefficients are no longer verified.

\medskip
In order to solve the difficulty generated by the empirical distribution of controls, we introduce the notion of $measure$--$valued$ MFG equilibrium. This notion is precisely defined in Section \ref{section:relaxed_formulation}. The idea of our notion comes from the (stochastic) Fokker--Planck equation verified by the pair $(\mu^\star,\mub^\star).$ This notion of MFG solution is very close to the classical notion. The main difference is that {\color{black}the optimization is taken over all solutions of specific controlled Fokker--Planck equations} and not to a solution of a controlled SDE. This notion {\color{black}has} already been considered in the literature by \citeauthor*{cardaliaguet2015master} \cite[Section 3.7.]{cardaliaguet2015master} and, in some way, by \citeauthor*{Lacker-closedloop} \cite{Lacker-closedloop}. 
Borrowing techniques from \cite{lacker2016general}, under suitable assumptions, we prove that the sequence of empirical measure flows $(\varphi^{N,\Xbb},\varphi^{N})_{N \in \N^*}$ is tight in a suitable space, and with the help of techniques introduced in our companion paper \cite{MFD-2020}, we show that every limit in distribution is a $measure$--$valued$ mean field equilibrium. And conversely, for each $measure$--$valued$ mean field equilibrium, we construct an approximate Nash equilibrium which has this $measure$--$valued$ mean field equilibrium as limit. 
In addition to these convergence results, this article provides an $\varepsilon$--strong existence and another approximation not taken into account until now. Similarly to the approximate Nash equilibrium, when $\varepsilon$ is positive and goes to zero, the sequence  $(\mu^\star,\mub^\star)$ is tight with any limit being a $measure$--$valued$ MFG equilibrium. Also, when there is common noise, any $measure$--$valued$ MFG equilibrium can be approached by a sequence of $\varepsilon$--strong MFG equilibrium $(\mu^\star,\mub^\star)$ verifying \eqref{def-epsilon-MFG}. 

\medskip
Consequently, there is a perfect symmetry between approximate Nash equilibria and $\varepsilon$--strong MFG equilibria. Besides, our notion of $measure$--$valued$ MFG equilibrium is the accumulation points of approximate Nash equilibria and $\varepsilon$--strong MFG equilibria. Therefore, if there exists a $measure$--$valued$ MFG equilibrium or an approximate Nash equilibrium, there is necessarily an $\varepsilon$--strong MFG equilibrium for any $\varepsilon>0.$ Without common noise, with similar arguments to \citeauthor*{LackerMFGcontrolledmartingale} \cite{LackerMFGcontrolledmartingale}, we show that there is a $measure$--$valued$ MFG equilibrium under general condition, as a result there is a $\varepsilon$--strong MFG equilibrium. 

\medskip
It is well known in the MFG theory that the existence of a strong MFG solution is very difficult to obtain and requires strong assumptions. Admitting a small error $\varepsilon>0,$ it is possible to get the existence of an $\varepsilon$--strong MFG equilibrium under general assumptions. It is worth emphasizing that our results allow to handle the case where $\sigma$ is controlled i.e. the control $\alpha$ appears in the function $\sigma.$ There are not many works that look at the situation where the volatility is controlled.
Let us also mention, in this paper, despite general assumptions considered, we are limited by some conditions that we must have for technical reasons, a separability condition on $(b,\sigma,L)$ (see \Cref{assum:main1}) and a non--degeneracy volatility condition of type $\sigma \sigma^{\top}> 0.$ 

\medskip
The rest of the paper is organized as follows. After introducing some notations, we provide respectively in \Cref{sec:finite-player-game} and \Cref{section:MFGcontrols}, the definition of the $N$--player game and the corresponding MFG of controls. The main limit Theorem \ref{thm:limitNashEquilibrium} and, its converse, Theorem \ref{thm:converselimitNash} are then stated in \Cref{subsec:main-results}. Later, in Section \ref{sec:no-commonnoise}, we present some existence and approximation results in the particular case without common noise. 
Most of the technical proofs are completed in Section \ref{sec:proofs} and Section \ref{sec:proofThm_existence}.



\vspace{0.5em}

{\bf \large Notations}.
	$(i)$
	Given a Polish space $(E,\Delta)$, $p \ge 1,$ we denote by $\Pc(E)$ the collection of all Borel probability measures on $E$,
	and by $\Pc_p(E)$ the subset of Borel probability measures $\mu$ 
	such that $\int_E \Delta(e, e_0)^p  \mu(de) < \infty$ for some $e_0 \in E$.
	We equip $\Pc_p(E)$ with the Wasserstein metric $\Wc_p$ defined by
	\[
		\Wc_p(\mu , \mu') 
		~:=~
		\bigg(
			\inf_{\lambda \in \Lambda(\mu, \mu')}  \int_{E \x E} \Delta(e, e')^p ~\lambda( \mathrm{d}e, \mathrm{d}e') 
		\bigg)^{1/p},
	\]
	where $\Lambda(\mu, \mu')$ denote the collection of all probability measures $\lambda$ on $E \x E$ 
	such that $\lambda( \mathrm{d}e, E) = \mu(\mathrm{d}e)$ and $\lambda(E,  \mathrm{d}e') = \mu'( \mathrm{d}e')$. Equipped with $\Wc_p,$ $\Pc_p(E)$ is a Polish space (see \cite[Theorem 6.18]{villani2008optimal}). For any  $\mu \in \Pc(E)$ and $\mu$--integrable function $\varphi: E \to \R,$ we define
	\begin{align*}
	    \langle \varphi, \mu \rangle
	    =
	    \langle \mu, \varphi \rangle
	    :=
	    \int_E \varphi(e) \mu(\mathrm{d}e),
	\end{align*}
	and for another metric space $(E^\prime,\Delta^\prime)$, we denote by $\mu \otimes \mu^\prime \in \Pc(E \x E')$ the product probability of any $(\mu,\mu^\prime) \in \Pc(E) \x \Pc(E^\prime)$.
	Given a probability space $(\Om, \Fc, \P)$ supporting a sub $\sigma$--algebra $\Gc \subset \Fc,$ then for a Polish space $E$ and any random variable $\xi: \Om \longrightarrow E$, both the notations $\Lc^{\P}( \xi | \Gc)(\om)$ and $\P^{\Gc}_{\om} \circ (\xi)^{-1}$ are used to denote the conditional distribution of $\xi$ knowing $\Gc$ under $\P$.
	
\medskip	
	
	
	
\medskip
	\noindent $(ii)$	
	For any $(E,\Delta)$ and $(E',\Delta')$ two Polish spaces, we use $C_b(E,E')$ to denote the set of continuous functions $f$ from $E$ into $E'$ such that $\sup_{e \in E} \Delta'(f(e),e'_0) < \infty$ for some $e'_0 \in E'$.
	Let $\N^*$ denote the set of positive integers. Given non--negative integers $m$ and $n$, we denote by $\S^{m \x n}$ the collection of all $m \x n$--dimensional matrices with real entries, equipped with the standard Euclidean norm, which we denote by $|\cdot|$ regardless of the dimensions. 
	We also denote $\S^n:=\S^{n \times n}$, and by $\mathrm{I}_n$ the identity matrix in $\S^n$. For any matrix $a \in \S^{n}$ which is symmetric positive semi--definite, we write $a^{1/2}$ the unique symmetric positive semi--definite square root of the matrix $a.$
	Let $k$ be a positive integer, we denote by $C^k_b(\R^n;\R)$ the set of bounded maps $f: \R^n \longrightarrow \R$, having bounded continuous derivatives of order up to and including $k$. 
	Let $f: \R^n \longrightarrow \R$ be twice differentiable, we denote by $\nabla f$ and $\nabla^2f$ the gradient and the Hessian of $f$ respectively.


\medskip
    \noindent $(iii)$
	Let $T > 0$, and $(\Sigma,\rho)$ be a Polish space, we denote by $C([0,T], \Sigma)$ the space of all continuous functions on $[0,T]$ taking values in $\Sigma$.
	Then $C([0,T], \Sigma)$ is a Polish space under the uniform convergence topology, and we denote by $\|\cdot\|$ the uniform norm. 
	When $\Sigma=\R^k$ for some $k\in\N$, we simply write $\Cc^k (:= C([0,T], \R^k)),$ also we shall denote by $\Cc^{k}_{\Wc}:=C([0,T], \Pc(\R^k)),$ and for $p \ge 1,$ $\Cc^{k,p}_{\Wc}:=C([0,T], \Pc_p(\R^k)).$ 

\medskip
	With a Polish space $E$, we denote by $\M(E)$ the space of all Borel measures $q( \mathrm{d}t,  \mathrm{d}e)$ on $[0,T] \x E$, 
	whose marginal distribution on $[0,T]$ is the Lebesgue measure $ \mathrm{d}t$, 
	that is to say $q( \mathrm{d}t, \mathrm{d}e)=q(t,  \mathrm{d}e) \mathrm{d}t$ for a family $(q(t,  \mathrm{d}e))_{t \in [0,T]}$ of Borel probability measures on $E$.
	{\color{black}We will denote by $\Lambda$ the canonical element of $\M(E)$ and we introduce}
	\begin{equation}\label{eq:lambda}
	\Lambda_{t \wedge \cdot}(\mathrm{d}s, \mathrm{d}e) :=  \Lambda(\mathrm{d}s, \mathrm{d}e) \big|_{ [0,t] \x E} + \delta_{e_0}(\mathrm{d}e) \mathrm{d}s \big|_{(t,T] \x E},\; \text{for some fixed $e_0 \in E$.}
	\end{equation}
	For $p \ge 1$, we use $\M_p(E)$ to designate the elements of $q \in \M(E)$ such that $q/T \in \Pc_p(E \x [0,T]).$


\section{Mean field games of controls (with common noise): Setup and main results} \label{sec:set-resutls}

In this section, we first introduce the $N$--player game, and the definition of $\epsilon_N$--Nash equilibria. Next, we formulate the notions of approximate strong and measure--valued MFG solutions which will be essential to describe the limit of the Nash equilibria. 

\medskip
The general assumptions used throughout this paper are now formulated. The dimensions $(n,\ell) \in \N^*\times\N,$ the nonempty Polish space $(U,\rho)$ and the horizon time $T>0$ are fixed and $\Pc^n_U$ denotes the space of all Borel probability measures on $\R^n \x U$ i.e. $\Pc^n_U:=\Pc(\R^n \x U).$ {\color{black} Also, we set $p \ge 2,$ $\nu \in \Pc_{p'}(\R^n)$ with $p'>p,$ and the probability space $(\Om,\H:=(\Hc_t)_{t \in [0,T]},\Hc,\P)$\footnote{The probability space $(\Om,\H,\P)$ contains as many random variables as we want in the sense that: each time we need a sequence of independent uniform random variables or Brownian motions, we can find them on $\Om$ without mentioning an enlarging of the space. }. } We are given the following Borel measurable functions
	\[
		\big[b, \sigma,L \big]:[0,T] \x \R^n \x \Cc^n_{\Wc}  \x \Pc^n_U \x U \longrightarrow \R^n \x \S^{n \x n} \x \R\;\mbox{and}\; 
		g: \R^n \x \Cc^n_{\Wc} \longrightarrow \R.
	\]
		

    \begin{assumption} \label{assum:main1} 
        $[b,\sigma,L]$ are Borel measurable in all their variables, and non--anticipative in the sense that, for all $(t, x, u, \pi, m) \in [0,T] \x \R^n \x U \x \Cc^n_{\Wc} \x \Pc^n_U$
	\[
		\big[b, \sigma,L \big] (t,x, \pi, m, u) 
		=
		\big[b, \sigma, L \big] (t,x, \pi_{t \wedge \cdot}, m, u).
	\]
        Moreover, there is positive constant $C$ such that
		
		
		$(i)$ $U$ is a compact nonempty polish set;
		
		$(ii)$ $b$ and $\sigma$ are bounded continuous functions, and $\sigma_0 \in \S^{n \x \ell}$ is a constant;
		
		\medskip
		$(iii)$ for all $(t, x,x',\pi,\pi',m,m',u) \in [0,T] \x \R^n \x \R^n \x \Cc^n_{\Wc} \x \Cc^n_{\Wc} \x \Pc^n_U \x \Pc^n_U \x U,$ one has
		\begin{align*}
			\big| [b,\sigma](t,x,\pi,m,u)
			-
			[b,\sigma](t,x',\pi',m',u)
			\big |
			~\le~
			C \Big( | x-x' | + \sup_{s \in [0,T]}\Wc_p(\pi_s,\pi'_s) + \Wc_p (m,m')\Big);
		\end{align*}

    \medskip
		$(iv)$ $\underline{Non\mbox{--}degeneracy\;condition}$: for some constant $\theta >0$, one has, for all $(t,x,\pi,m,u) \in [0,T] \x \R^n \x \Cc^n_{\Wc} \x \Pc^n_U \x U$,
		\begin{align*}
		    ~\theta \mathrm{I}_{n} \le ~\sigma \sigma^{\top}(t,x,\pi,m, u);
		\end{align*}
		
    \medskip
		$(v)$ the reward functions  $L$ and $g$ are continuous,
		and for all $(t,x,\pi,m,u) \in [0,T] \x \R^n \x \Cc^n_{\Wc} \x \Pc^n_U \x U$, one has
		\[  
			\big|L(t,x,\pi,m,u) \big| + |g(x,\pi)|
			\le  
			C \bigg [ 1+ | x |^{p} + \sup_{s \in [0,T]} \Wc_p(\pi_s,\delta_{0})^{p} + \int_{\R^n} | x'|^{p} m(\mathrm{d}x',U)   \bigg ];
		\]
		
		$(vi)$ $\underline{Separability\;condition}$: There exist continuous functions $(b^\circ, b^\star,a^\circ, a^\star,L^\circ,L^\star)$ satisfying
		\begin{align*}
		    [b,\sigma \sigma^\top ] ( t,x,\pi,m, u )
		    :=
		    [b^\star,a^\star] (t,\pi,m ) 
		    +
		    [b^\circ,a^\circ] ( t,x,\pi, u )\;\;\mbox{and}\;\;L ( t,x,\pi,m, u )
		    :=
		    L^\star(t,x,\pi,m ) 
		   +
		   L^\circ( t,x,\pi, u ),
		\end{align*}
		for all $(t,x,\pi,m, u) \in [0,T] \x \R^n \x \Cc^n_{\Wc} \x \Pc^n_U \x U.$
		
	\end{assumption}
	
	\begin{remark}
	{
	    Most of these assumptions are classical in the study of mean field games and control problems $($see {\rm\citeauthor*{lacker2016general} \cite{lacker2016general}, \citeauthor*{djete2019general} \cite{djete2019general}} and {\rm\citeauthor{MFD-2020} \cite{MFD-2020}} $)$. Only the ``separability condition'' and the ``non--degeneracy condition'' can be seen as non--standard. However, in the context of mean field games of controls, these conditions are used by many authors, for instance {\rm\citeauthor{Lehalle-card} \cite{Lehalle-card}}$($only separability condition$)$, {\rm\citeauthor*{carmona2015probabilistic} \cite{carmona2015probabilistic}} and {\rm\citeauthor*{M_Lauriere-Tangpi} \cite{M_Lauriere-Tangpi}}. These are essentially technical assumptions. 
	   }
	\end{remark}


\subsection{{\color{black}The $N$-player game}} \label{sec:finite-player-game}

{\color{black}
    
    On the probability space $(\Om,\H,\Hc,\P),$ let $(W^i)_{i \in \N^*}$ be a sequence of independent $\R^n$--valued $\H$--Brownian motions, $B$ be a $\R^\ell$--valued $\H$--Brownian motion and $(\xi^i)_{i \in \N^*}$ be a sequence of iid $\R^n$--valued $\Hc_0$--random variables of law $\nu.$ Besides $(W^i)_{i \in \N^*},$ $B$ and $(\xi^i)_{i \in \N^*}$ are independent.
	Let $\F^N = (\Fc^N_s)_{0 \le s \le T}$ be defined by the $\P$--completion of $\Ft^N:=(\Fct^N_t)_{t \in [0,T]}$ where
	$$
		\Fct^N_s 
		:=
		\sigma \big\{\xi^i, W^i_r, B_r, ~r \in [0,s],\;1 \le i \le N \big\}, 0 \le s \le T.
	$$
}	
	Let us denote by $\Ac^N$ the collection of all $U$--valued processes $\alpha = (\alpha_s)_{0 \le s \le T}$ which are $\F^N$-predictable.
	Then given a control rule/strategy $\overline{\alpha}:=(\alpha^1,\dots,\alpha^N) \in \Ac^N$, denote by $\Xbb_{\cdot}[\overline{\alpha}]:=(\Xbb^{1}_{\cdot}[\overline{\alpha}],\dots,\Xbb^{N}_{\cdot}[\overline{\alpha}])$ the unique strong solution of the following system of SDEs ({\color{black} the well--posedness is assured by \Cref{assum:main1}}): for each $i \in \{1,\dots,N\},$ $\E[\|\Xbb^i\|^p]< \infty,$
    \begin{align} \label{eq:N-agents_StrongMF_CommonNoise}
        \mathrm{d}\Xbb^{i}_t[\overline{\alpha}]
        =
        b\big(t,\Xbb^{i}_t[\overline{\alpha}],{\color{black}\varphi^{N,\Xbb,\overline{\alpha}}},\varphi^{N,\overline{\alpha}}_{t} ,\alpha^i_t \big) \mathrm{d}t 
        +
        \sigma \big(t,\Xbb^{i}_t[\overline{\alpha}],{\color{black}\varphi^{N,\Xbb,\overline{\alpha}}},\varphi^{N,\overline{\alpha}}_{t} ,\alpha^i_t \big) \mathrm{d}W^i_t
        +
        \sigma_0 \mathrm{d}B_t\;\mbox{with}\;\Xbb^i_0=\xi^i
    \end{align}
	where 
	\[
	    \varphi^{N,\overline{\alpha}}_{t}(\mathrm{d}x,\mathrm{d}u) := \frac{1}{N}\sum_{i=1}^N \delta_{\big(\Xbb^{i}_t[\overline{\alpha}],\;\alpha^i_t \big)}(\mathrm{d}x,\mathrm{d}u)
	    ~
	    \mbox{and}
	    ~
	    \varphi^{N,\Xbb,\overline{\alpha}}_{t}(\mathrm{d}x) := \frac{1}{N}\sum_{i=1}^N \delta_{\Xbb^{i}_t[\overline{\alpha}]} (\mathrm{d}x)~~
	    \mbox{for all}
	    ~~
	    t \in [0,T].
	\] 
	The reward value of player $i$ associated with control rule/strategy $\overline{\alpha}:=(\alpha^1,\dots,\alpha^N)$ is then defined by
	\[
	    J_i[\overline{\alpha}]
	    :=
	    \E \bigg[
        \int_0^T L\big(t,\Xbb^{i}_t[\overline{\alpha}],\varphi^{N,\Xbb,\overline{\alpha}},\varphi^{N,\overline{\alpha}}_{t} ,\alpha^i_t \big) \mathrm{d}t 
        + 
        g \big( \Xbb^{i}_T[\overline{\alpha}], \varphi^{N,\Xbb,\overline{\alpha}} \big)
        \bigg],
	\]
	and for $\beta \in \Ac^N$, one introduces the strategy $(\overline{\alpha}^{[-i]},\beta) \in \Ac^N$ by
	\[
	    (\overline{\alpha}^{[-i]},\beta)
	    :=
	    \big(\alpha^1,\dots,\alpha^{i-1},\beta,\alpha^{i+1},\dots,\alpha^N \big).
	\]
	\begin{definition}
	    For any $\varepsilon:=(\varepsilon_1,\dots,\varepsilon_N) \in (\R_{+})^N,$ $\overline{\alpha}$ is a $\varepsilon$--{\color{black}$($open loop$)$} Nash equilibrium if 
	\[
	    J_i[\overline{\alpha}] \ge \sup_{\beta \in \Ac^N} J_i\big((\overline{\alpha}^{[-i]},\beta) \big)-\varepsilon_i,\;\mbox{for each}\;i \in \{1,\dots,N\}.
	\]
	\end{definition}

\subsection{Mean field games of controls} \label{section:MFGcontrols}

\subsubsection{$\varepsilon$--Strong mean field game equilibrium}
\label{section:strong_formulation}
{\color{black}
We now formulate the classical MFG problem with common noise including the (conditional) law of control.
	On the probability space $(\Om,\H,\Hc,\P),$ let $(W,B)$ be $\R^n \x \R^\ell$--valued $\H$--Brownian motion and $\xi$ be a $\R^n$--valued $\Hc_0$--random variables of law $\nu.$
	Let $\F = (\Fc_s)_{0 \le s \le T}$ and $\G = (\Gc_s)_{0 \le s \le T}$ be defined by the $\P$--completion of $\Ft:=(\Fct_t)_{t \in [0,T]}$ and $\Gt:=(\Gct_t)_{t \in [0,T]}$ where
	$$
		\Fct_s 
		:=
		\sigma 
		\big\{\xi, W_r, B_r, ~r \in [0,s] \big\}
		~~~~~
		\mbox{and}
		~~~~~~~
		\Gct_s 
		:= 
		\sigma \big\{ B_r, ~r \in [0,s] \big\}.
	$$
}
	Let us denote by $\Ac$ the collection of all $U$--valued $\F$--predictable processes.
	Then, given $\alpha \in \Ac$, let $X^{\alpha}$ be the unique strong solution of the SDE (e.g. \cite[Theorem A.3]{djete2019mckean}): $\E[\|X^{\alpha} \|^p]< \infty,$
	and
	\begin{align} \label{eq:MKV_strong}
		\mathrm{d}X^{\alpha}_t
		= 
		b \big(t, X^{\alpha}_t,{\color{black}\mu^\alpha}, \mub^{\alpha}_t, \alpha_t \big) \mathrm{d}t
		+
		\sigma\big(t, X^{\alpha}_t, {\color{black}\mu^\alpha}, \mub^{\alpha}_t, \alpha_t \big) \mathrm{d} W_t
		+ 
		\sigma_0 \mathrm{d}B_t\;\mbox{with}\;X^\alpha_0=\xi
	\end{align}
	where $\mub^{\alpha}_t:=\Lc\big(X^{\alpha}_t, \alpha_t \big| \Gc_t \big)$ and $\mu^\alpha_t:=\Lc\big(X^{\alpha}_t \big| \Gc_r \big)$ for all $t \in [0,T].$
	
\medskip	
	Given $\alpha \in \Ac,$ and $X^{\alpha}$ solution of \eqref{eq:MKV_strong}, for every $\alpha' \in \Ac,$ let us introduce the unique strong  solution $X^{\alpha,\alpha'}$ of: $\E[\|X^{\alpha,\alpha'} \|^p]< \infty,$ and
	\begin{align} \label{eq:FixedMKV_strong}
		\mathrm{d}X^{\alpha,\alpha'}_t
		= 
		b \big(t, X^{\alpha,\alpha'}_t,\mu^\alpha, \mub^{\alpha}_t, \alpha'_t \big) \mathrm{d}t
		+
		\sigma\big(t, X^{\alpha,\alpha'}_t, \mu^\alpha, \mub^{\alpha}_t, \alpha'_t \big) \mathrm{d} W_t
		+ 
		\sigma_0 \mathrm{d}B_t\;\mbox{with}\;X^{\alpha,\alpha'}_0=\xi
	\end{align}
	and the {\color{black} reward function $\Psi$}
	\begin{align} \label{def:strong_value-function}
	    \Psi(\alpha,\alpha')
	    :=
	    \E\bigg[
				\int_0^T L(t, X^{\alpha,\alpha'}_t,\mu^\alpha, \mub^{\alpha}_t,\alpha'_t) \mathrm{d}t 
				+ 
				g(X^{\alpha,\alpha'}_T, \mu^{\alpha}) 
			\bigg].
	\end{align}
	
	\begin{definition}
	    For any $\varepsilon \in [0,\infty),$ we say that $\alpha$ is an $\varepsilon$--strong MFG equilibrium, if 
	\begin{align} \label{eq:optimality-strong}
	    \Psi(\alpha,\alpha)
	    \ge
	    \sup_{\alpha' \in \Ac} \Psi(\alpha,\alpha') - \varepsilon.
	\end{align}
	\end{definition}
	
	\medskip
	For all $(\alpha,\alpha') \in \Ac \x \Ac,$ let us define
	\begin{align*}
	    \mathrm{P}^{\alpha,\alpha'}
	    :=
	    \P \circ \Big(\mu^{\alpha,\alpha'}, \mu^{\alpha}, {\color{black}\delta_{\mub^{\alpha,\alpha'}_t}(\mathrm{d}m) \mathrm{d}t, \delta_{\mub^\alpha_t}(\mathrm{d}m) \mathrm{d}t}, B \Big)^{-1}
	\end{align*}
	where $\mub^{\alpha,\alpha'}_t:=\Lc\big(X^{\alpha,\alpha'}_t, \alpha'_t \big| \Gc_t \big)$ and  $\mu^{\alpha,\alpha'}_t:=\Lc\big(X^{\alpha,\alpha'}_t \big| \Gc_t \big)$ with $t \in [0,T],$ and
	\begin{align} \label{eq:McV-measure-v}
	    \mathrm{P}^\alpha
	    :=
	    \P \circ \Big(\mu^{\alpha}, \mu^{\alpha}, {\color{black}\delta_{\mub^\alpha_t}(\mathrm{d}m) \mathrm{d}t, \delta_{\mub^\alpha_t}(\mathrm{d}m) \mathrm{d}t}, B \Big)^{-1}.
	\end{align}
	
	$\Pcb_S$ and for each $\varepsilon \in [0,\infty),$ $\Pcb^\star_S[\varepsilon]$ denote the subsets of $\Pc\big(\Cc^n_{\Wc} \x \Cc^n_{\Wc} \x \M(\Pc^n_U) \x \M(\Pc^n_U) \x \Cc^\ell\big)$ defined as follows
	\begin{align*}
	    \Pcb_S
	    :=
	    \big\{
	        \mathrm{P}^{\alpha,\alpha'},\;\;\mbox{with}\;(\alpha,\alpha') \in \Ac \x \Ac
	    \big\}\;\;\;\mbox{and}\;\;\;
	    \Pcb^\star_S[\varepsilon]
	    :=
	    \big\{
	        \mathrm{P}^\alpha,\;\;\mbox{with}\;\alpha\;\mbox{is a}\;\varepsilon\mbox{--strong MFG equilibrium}
	    \big\}.
	\end{align*}
	{\color{black}In other words, $\Pcb_S$ is the subset of all {\color{black} distributions of} controlled McKean--Vlasov processes of type \eqref{eq:MKV_strong}, and $\Pcb^\star_S[\varepsilon]$ is all $\varepsilon$--strong MFG equilibrium. In what follows, the use of these forms of sets will become clearer.}
	

    \subsubsection{Measure--valued MFG equilibrium} \label{section:relaxed_formulation}
    
    {\color{black} Notice that, for each $(\alpha,\alpha') \in \Ac \x \Ac,$ the couple $(\mu^{\alpha,\alpha'},\mub^{\alpha,\alpha'})$ satisfies the Fokker--Planck equation: for each $f \in C_b^2(\R^n),$
    \begin{align*} 
        \mathrm{d}\langle f(\cdot-\sigma_0 B_t),\mu^{\alpha,\alpha'}_t \rangle
        =
	    &\langle \nabla f(\cdot-\sigma_0 B_t)^\top b(t,\cdot,\mu^\alpha,\mub^\alpha_t,\cdot), \mub^{\alpha,\alpha'}_t \rangle \mathrm{d}t + \frac{1}{2} \langle \mathrm{Tr}[\sigma \sigma^\top(t,\cdot,\mu^\alpha,\mub^\alpha_t,\cdot) \nabla^2f(\cdot-\sigma_0B_t)], \mub^{\alpha,\alpha'}_t \rangle \mathrm{d}t\;\;a.s.
    \end{align*}

    }
    {\color{black}Inspired by the Fokker--Planck equation satisfied by the couple $(\mu^{\alpha,\alpha'},\mub^{\alpha,\alpha'})$ (see also the discussion in \cite{MFD-2020}}), we carefully formulate the notion of measure--valued control rules which is essential for the notion of measure--valued MFG equilibrium that will be introduced just after.

\paragraph{Measure--valued control rules}

    Denote by $\M := \M \big(\Pc^n_U \big)$ the collection of all finite (Borel) measures $q(\mathrm{d}t, \mathrm{d}e)$ on $[0,T] \x \Pc^n_U$, 
	whose marginal distribution on $[0,T]$ is the Lebesgue measure $\mathrm{d}s$
	i.e. $q(\mathrm{d}s,\mathrm{d}e)=q(s,\mathrm{d}e)\mathrm{d}s$ for a measurable family $(q(s, \mathrm{d}e))_{s \in [0,T]}$ of Borel probability measures on $\Pc^n_U.$
	Let $\Lambda$ be the canonical element on $\M$.
	We then introduce the canonical filtration $\F^\Lambda = (\Fc^\Lambda_t)_{0 \le t \le T}$ on $\M$ by
	$$
		\Fc^\Lambda_t := \sigma \big\{ \Lambda(C \x [0,s]) ~: \forall s \le t, C \in \Bc(\Pc^n_U) \big\}.
	$$
	For each $q \in \M$, one has the disintegration property: $q(\mathrm{d}t, \mathrm{d}e) = q(t, \mathrm{d}e) \mathrm{d}t$, and there is a version of the disintegration
	such that $(t, q) \mapsto q(t, \mathrm{d}e)$ is $\F^\Lambda$-predictable.
	

    \vspace{4mm}
    \noindent
    The canonical element on $\Omb:=\Cc^n_{\Wc} \x \Cc^n_{\Wc} \x \M \x \M \x \Cc^\ell$ is denoted by $(\mup,\mu,\Lambdap,\Lambda,B).$ Then, the canonical filtration $\Fb = (\Fcb_t)_{t \in [0,T]}$ is defined by: for all $t \in [0,T]$  
    $$ 
        \Fcb_t
        :=
        \sigma 
        \big\{\mup_{t \wedge \cdot}, \mu_{t \wedge \cdot},\Lambdap_{t \wedge \cdot}, \Lambda_{t \wedge \cdot},B_{t \wedge \cdot}
        \big\}, 
    $$
    with $\Lambdap_{t \wedge \cdot}$ and $\Lambda_{t \wedge \cdot}$ denote the restriction of $\Lambdap$ and $\Lambda$ on $[0,t] \x \Pc^n_U$ (see definition \ref{eq:lambda}). Notice that we can choose a version of the disintegration $\Lambdap(\mathrm{d}m,\mathrm{d}t)=\Lambdap_t(\mathrm{d}m)\mathrm{d}t$ (resp $\Lambda(\mathrm{d}m,\mathrm{d}t)=\Lambda_t(\mathrm{d}m)\mathrm{d}t$) such that $(\Lambdap_t)_{t \in [0,T]}$ (resp $(\Lambda_t)_{t \in [0,T]}$) a $\Pc(\Pc^n_U)$--valued $\Fb$--predictable process.
    Let us also introduce the $``{\color{black}fixed}\;common\;noise"$ filtration $(\Gcb_t)_{t \in [0,T]}$ by
    $$ 
        \Gcb_t
        :=
        \sigma 
        \big\{ \mu_{t \wedge \cdot}, \Lambda_{t \wedge \cdot},B_{t \wedge \cdot}
        \big\}. 
    $$
    
    We consider $\Lc$ the following generator: for $(t,x,\pi,m,u) \in [0,T] \x \R^n \x \Cc^n_{\Wc} \x \Pc^n_U \x U $, and $ \varphi \in C^2(\R^n)$
    \begin{align}
    \label{eq:def_generator}
        \Lc_t\varphi(x,\pi,m,u) 
        &:= 
        \Lc^\circ_t\varphi(x,\pi,u)
        +
        \Lc^\star_t\varphi(x,\pi,m) 
    \end{align}
    where
    \begin{align}
    \label{eq:def_generatorC}
        \Lc^\circ_t\varphi(x,\pi,u) 
        &:= 
        \frac{1}{2}  \text{Tr}\big[a^\circ(t,x,\pi_{t \wedge \cdot},u) \nabla^2 \varphi(x) 
        \big] 
        + b^\circ(t,x,\pi_{t \wedge \cdot},u)^\top \nabla \varphi(x),
    \end{align}
    and
    \begin{align}
    \label{eq:def_generatorS}
        \Lc^\star_t\varphi(x,\pi,m) 
        &:= 
        \frac{1}{2}  \text{Tr}\big[a^\star(t,\pi_{t \wedge \cdot},m) \nabla^2 \varphi(x) 
        \big] 
        + b^\star(t,\pi_{t \wedge \cdot},m)^\top \nabla \varphi(x).
    \end{align}
    Also, for every $f \in C^{2}(\R^n),$ let us define $N_t(f):=N_t[\mup,\mu,\Lambdap,\Lambda](f)$ by
    \begin{align} \label{eq:FP-equation}
        N_t[\mup,\mu,\Lambdap,\Lambda](f)
        :=
        \langle f(\cdot-\sigma_0 B_t),\mup_t \rangle
	    -
	    \langle f,\mu_0 \rangle
	    &-\int_0^t \int_{\Pc^n_U} \int_{\R^n}\Lc^\star_r[ f(\cdot-\sigma_0 B_r)](x,\mu,m) \mup_r(\mathrm{d}x)\Lambda_r(\mathrm{d}m)\mathrm{d}r \nonumber
	    \\
	   &-\int_0^t  \int_{\Pc^n_U}  \langle \Lc^\circ_r[ f(\cdot-\sigma_0 B_r)](\cdot,\mu,\cdot), m \rangle \Lambdap_r(\mathrm{d}m)\mathrm{d}r,
    \end{align}
    and for each $\pi \in \Pc(\R^n),$ the Borel set $\Z_{\pi}$ by
    \begin{align*}
        \Z_{\pi}:=\Big\{ m \in \Pc^n_U: m(\mathrm{d}x,U)=\pi(\mathrm{d}x) \Big\}.
    \end{align*}
\begin{definition}[measure--valued control rule] \label{def:RelaxedCcontrol}
    We say that $\mathrm{P} \in \Pc(\Omb)$ is a measure--valued control rule if:
    \begin{enumerate}
        \item $\mathrm{P} \big(\mu'_0=\nu \big)=1$.
        
        \item $(B_t)_{t \in [0,T]}$ is a $(\mathrm{P},\Fb)$ Wiener process starting at zero and for $\mathrm{P}$--almost every $\om \in \Omb$, $N_t(f)=0$ for all $f \in C^{2}_b(\R^n)$ and every $t \in [0,T].$
        
        \item {\color{black}
        $(\Lambda'_t)_{t \in [0,T]}$ is a $\Gb$--predictable process.
        }
        
        \item For $\mathrm{d}\mathrm{P} \otimes \mathrm{d}t$ almost every $(t,\om) \in [0,T] \x \Omb$, $ \Lambda'_t\big(\Z_{\mu'_t} \big)=1.$
    \end{enumerate}

\end{definition}
    
    We shall denote $\Pcb_V$ the set of all measure--valued control rules.
    
\begin{remark}
    { $(i)$ To do an analogy with {\rm \Cref{section:strong_formulation}} $($the strong ``point of view''$)$, in order to give a better intuition of this definition, here, $\mu'$ plays the role of $(\Lc(X^{\alpha,\alpha'}_t | \Gc_t ))_{t \in [0,T]},$ $\Lambdap$ that of {\color{black}$\delta_{\Lc(X^{\alpha,\alpha'}_s,\;\alpha_s' | \Gc_s )}(\mathrm{d}m)\mathrm{d}s,$} $\mu$ and $\Lambda$ represent the fixed measures $\mu^\alpha$ and $\delta_{\mub^\alpha_s}(\mathrm{d}m)\mathrm{d}s,$ and $B$ is the common noise.  
    
    \medskip
    {\color{black} $(ii)$ Notice that the canonical space $\Omb:=\Cc^n_{\Wc} \x \Cc^n_{\Wc} \x \M \x \M \x \Cc^\ell$ is ``doubled$"$ because we need to consider the fixed processes $(\mu,\Lambda)$ which will be the optimum and the controlled processes $(\mup,\Lambdap)$. All these processes share the same spaces. Also, because of the condition 3 of {\rm\Cref{def:RelaxedCcontrol}}, the set $\Pcb_V$ cannot be closed in general. As $\Pcb_V$ is not closed, the proofs become much more delicate $($see for instance {\rm\Cref{prop:converse_general}} $)$ }
    }
\end{remark} 

    Now, using the measure--valued control rules, we introduce the notion of ($\varepsilon$--) measure--valued MFG solution.
    \paragraph{MFG solution} \label{para_MFG-solution}
    For all $(\pi',\pi,q',q) \in (\Cc^n_{\Wc})^2 \x \M(U)^2,$ one defines 
    \begin{align*}
        J\big(\pi',\pi,q',q \big)
        :=
        \int_0^T \bigg[\int_{\Pc^n_U}\langle L^\circ\big(t,\cdot,\pi,\cdot \big), m \rangle q'_t(\mathrm{d}m) +
        \int_{\Pc^n_U}\langle L^\star\big(t,\cdot,\pi,m \big), \pi'_t \rangle q_t(\mathrm{d}m)
        \bigg]
        \mathrm{d}t 
        +
        \langle g(\cdot,\pi),\pi'_T \rangle.
\end{align*}

\begin{definition}  \label{def:RelaxedMFG_control}
        For $\varepsilon \in [0,\infty),$ $\mathrm{P}^\star$ is an $\varepsilon$--measure--valued MFG solution if $\mathrm{P}^\star \in \Pcb_V$, and for every $\mathrm{P} \in \Pcb_V$ such that $\Lc^{\mathrm{P}^{\star}}\big(\mu,\Lambda, B\big)=\Lc^{\mathrm{P}}\big(\mu,\Lambda,B \big)$, one has
        \begin{align} \label{eq:optimality-relaxed}
            \E^{\mathrm{P}^{\star}}\big[J(\mu',\mu, \Lambda',\Lambda) \big] \ge \E^{\mathrm{P}} \big[J(\mu',\mu, \Lambda',\Lambda) \big] - \varepsilon,
        \end{align}
        and for $\mathrm{P}^{\star}$ almost every $\om \in \Omb,$
        \begin{align} \label{eq:consistency}
            {\color{black}\Lambda'
            =
            \Lambda}
            ~~
            \mbox{and}
            ~~
            \mu'=\mu.
        \end{align}
        When $\varepsilon=0,$ we just say that $\mathrm{P}^\star$ is a measure--valued MFG solution.
\end{definition}

\medskip
The space $\Pcb^{\star}_V[\varepsilon]$ is defined by
	\[
		\Pcb^{\star}_V [\varepsilon]
		~:=~
		\big\{ 
			\mbox{All}\;\Pr^\star\;\varepsilon\mbox{--measure--valued MFG solutions}
		\big\},
	\]
again when $\varepsilon=0,$ we shall denote $\Pcb_V^\star[0]$ by $\Pcb_V^\star.$

\begin{remark}
    {\color{black} Condition \eqref{eq:consistency} is the analog of the well--known consistency property in the MFG framework. Without taking into account the law of control, one of the main differences of this notion of MFG solutions is the optimality conditions \eqref{eq:optimality-relaxed} and \eqref{eq:optimality-strong}. Here, sometimes a small error $\varepsilon$ is authorized. With this condition, the MFG solutions turn out to be more flexible $($see the mains results in {\rm \Cref{subsec:main-results}$).$}
    }
\end{remark}

\paragraph*{Comparison \Cref{def:RelaxedMFG_control} and \cite[Definition 3.1]{carmona2014mean}}
{\color{black}
    { Looking at this kind of measure--valued solution is largely inspired by the notion considered in {\rm\cite{MFD-2020}} in the McKean--Vlasov setting. However, our notion of $(\varepsilon$--$)$ measure--valued MFG solution enters completely in the framework of MFG solutions considered in {\rm\citeauthor*{carmona2014mean} \cite{carmona2014mean}}. Indeed, in the situation where our coefficients satisfied the assumptions of the setting of {\rm\cite{carmona2014mean}} $($essentially without the law of the control and with no control in the volatility$)$, any weak MFG solution of {\rm\cite{carmona2014mean}} can be seen as a measure--valued MFG solution, and conversely any measure--valued MFG solution can be seen as a weak MFG solution. Let us be more precise about this equivalence. We begin by recalling the notion of weak MFG solution of {\rm\cite{carmona2014mean}}. \begin{definition}{\rm(\cite[Definition 3.1]{carmona2014mean})} \label{def_carmona}
        The tuple $(\Omt,\Fct,\Ft,\widetilde{\P},\Wt,\Bt,\mubb,\widetilde{\Lambda},\Xt)$ is a weak MFG solution if
    \begin{enumerate}
        \item $(\Omt,\Fct,\Ft,\widetilde{\P})$ is a complete probability space. Also, $(\Wt,\Bt)$ is a $\R^{d+\ell}$--valued $\Ft$--Brownian motion, $\Xt$ is a $\R^n$--valued $\Ft$--adapted continuous process with $\widetilde{\P} \circ (\Xt_0)^{-1}=\nu$, and $\widetilde{\Lambda}$ is a $\Pc(U)$--valued $\Ft$--predictable measurable process. Lastly, $\mubb$ is a $\Pc(\Cc^n \x \M(U) \x \Cc^n)$--valued random variable such that $\mubb(S)$ is $\Fct_t$--measurable whenever $S \in \Mct_t$ and $t \in [0, T]$ where $\Mct:=(\Mct_t)_{t \in [0,T]}$ is the filtration s.t. $\Mct_t$ is the $\sigma$--field generated by the maps $\Cc^n \x \M(U) \x \Cc^n \ni (w, q, x) \to (w_s, q(S), x_s) \in \R^n \x \R \x \R^n$, where $s \le t$ and $S$ is a Borel subset of $[0, t] \x A.$
        
        \item $\Xt_0,$ $\Wt$ and $(\Bt,\mubb)$ are independent
        
        \item $\widetilde{\Lambda}_{t \wedge \cdot}$ is conditionally independent of $\Fct^{\Xt_0,\Wt,\mubb,\Bt}_T$ given $\Fct^{\Xt_0,\Wt,\mubb,\Bt}_t,$ for each $t \in [0,T],$ where $\Fct^{\Xt_0,\Wt,\mubb,\Bt}_t:=\sigma\big\{ \Xt_0,\Wt_{t \wedge \cdot},\Bt_{t \wedge \cdot},\mubb(S): S \in \Mct_t  \big\}$
        
        \item The state equation holds
        \begin{align*}
            \mathrm{d}\Xt_t
            =
            \int_U b(t,\Xt_t,\mubb^x_t,u) \widetilde{\Lambda}_t(\mathrm{d}u)\mathrm{d}t
            +
            \sigma(t,\Xt_t,\mubb^x_t)\mathrm{d}\Wt_t
            +
            \sigma_0 \mathrm{d}\Bt_t
        \end{align*}
        where $\mubb^x_t:=\mubb \circ [(w,q,x) \mapsto x_t]^{-1}$
        
        \item The control $\widetilde{\Lambda}$ is optimal, in the sense that: if $(\Omt',\Fct',\Ft',\widetilde{\P}',\Wt',\Bt',\mubb',\widetilde{\Lambda}',\Xt')$ satisfies $(1$--$4)$ and $\widetilde{\P}'\circ \big(\Xt_0',\Wt',\mubb',\Bt' \big)^{-1}=\widetilde{\P}\circ \big(\Xt_0,\Wt,\mubb,\Bt \big)^{-1}$, then we have
        \begin{align*}
            \E^{\widetilde{\P}'} \bigg[
				\int_0^T \int_U L(t, \Xt_t',\mubb'^x_t,u) \widetilde{\Lambda}_t'(\mathrm{d}u) \mathrm{d}t 
				+ 
				g(\Xt_T', \mubb'^x_T) 
			\bigg]
			\le
			\E^{\widetilde{\P}} \bigg[
				\int_0^T \int_U L(t, \Xt_t,\mubb^x_t,u) \widetilde{\Lambda}_t(\mathrm{d}u) \mathrm{d}t 
				+ 
				g(\Xt_T, \mubb^x_T) 
			\bigg].
        \end{align*}
        
        \item The consistency condition holds: $\mubb= \widetilde{\P} \big( (\Wt,\widetilde{\Lambda},\Xt) \in  \big| \Bt,\mubb \big)$ a.s.
    \end{enumerate}
        
    \end{definition}
    With this previous definition in mind, for any weak solution $(\Omt,\Fct,\Ft,\widetilde{\P},\Wt,\Bt,\mubb,\widetilde{\Lambda},\Xt),$ we have that (see \Cref{appendix:equivalence1} for the details):
    \begin{align*}
        \Pr:=\widetilde{\P} \circ \big( (\mubb^x_t)_{t \in [0,T]}, \Pi, (\mubb^x_t)_{t \in [0,T]}, \Pi, B \big)^{-1}
    \end{align*}
    is a measure--valued solution where $\Pi:=\delta_{\mb_t}(\mathrm{d}m)\mathrm{d}t$ with $\mb_t:=\E^{\widetilde{\P}}[\delta_{\Xt_t}(\mathrm{d}x)\widetilde{\Lambda}_t(\mathrm{d}u)\big| \Bt,\mubb].$ Conversely, let $\Pr^*$ be a measure--valued MFG solution, we can construct an extension $(\widetilde{\Omb},\widetilde{\Fcb}_T,\widetilde{\Fb},\Prt^\star)$ of the probability space $(\Omb,\Fcb_T,\Fb,\Pr^\star)$ supporting processes $(\Wt,\mubb,\widetilde{\Lambda},\Xt)$ s.t. the tuple $(\widetilde{\Omb},\widetilde{\Fcb}_T,\widetilde{\Fb},\Prt^\star,\Wt,B,\mubb,\widetilde{\Lambda},\Xt)$ is a weak MFG solution (see \Cref{appendix:equivalence2} for details). 
    
}    

    \begin{remark}
        { Notice that the previous definitions of the strong MFG equilibrium and $N$--player game cover the case without common noise. Indeed, for the non common noise case, it is enough to take $\ell=0$ i.e. $B$ and $\sigma_0$ disappear $($see {\rm\cite{djete2019mckean}, \cite{djete2019general}} and {\rm\cite{MFD-2020}}$)$. When $\sigma_0=0$ and $\ell \neq 0$, $B$ can be seen as an additional noise.
        }
    \end{remark}

The next proposition ensures that our measure--valued MFG solution definition using Fokker--Planck equation indeed generalizes the classical notion. {\color{black} The proof is postponed in \Cref{proof-prop-strong-as-measure}}

\begin{proposition}\label{prop-strong-as-measure}
    $\Pcb_S^\star[\varepsilon] \subset \Pcb_V^\star[\varepsilon],$ for all $\varepsilon \in [0,\infty).$
\end{proposition}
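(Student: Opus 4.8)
The plan is to take an arbitrary $\varepsilon$--strong MFG equilibrium $\alpha \in \Ac$ and show that the associated measure $\mathrm{P}^\alpha$ defined in \eqref{eq:McV-measure-v} belongs to $\Pcb_V^\star[\varepsilon]$. The work splits into two parts: first, verifying that $\mathrm{P}^\alpha$ is a measure--valued control rule (i.e. $\mathrm{P}^\alpha \in \Pcb_V$), and second, verifying the optimality condition \eqref{eq:optimality-relaxed} and the consistency condition \eqref{eq:consistency} for $\mathrm{P}^\alpha$.

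First I would check the four conditions of \Cref{def:RelaxedCcontrol} for $\mathrm{P}^\alpha$. Condition (1) is immediate since $X^\alpha_0 = \xi$ has law $\nu$. Condition (2): that $B$ is a $(\mathrm{P}^\alpha,\Fb)$--Wiener process follows because $B$ is an $\H$--Brownian motion and $\mu^\alpha,\mub^\alpha$ are adapted to the common--noise filtration; the key point is the identity $N_t(f)=0$ a.s., which I would derive by applying It\^o's formula to $f(X^\alpha_t - \sigma_0 B_t)$, taking conditional expectation with respect to $\Gc_t$, and using the definition of $\mub^\alpha_t = \Lc(X^\alpha_t,\alpha_t|\Gc_t)$ together with the separability decomposition in \Cref{assum:main1}$(vi)$ to split the drift/diffusion into the $\star$-part (which sees only $\mu^\alpha$ and the fixed measure, here also $\mub^\alpha$) and the $\circ$-part (which is integrated against $\mub^\alpha_t$ through the control marginal). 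This is exactly the Fokker--Planck computation displayed just before \Cref{def:RelaxedCcontrol}, with $\mup = \mu = \mu^\alpha$ and $\Lambdap = \Lambda = \delta_{\mub^\alpha_t}(\mathrm{d}m)\mathrm{d}t$. Condition (3), that $(\Lambda'_t)$ is $\Gb$--predictable, holds because here $\Lambda' = \Lambda = \delta_{\mub^\alpha_t}(\mathrm{d}m)\mathrm{d}t$ and $\mub^\alpha$ is $\G$--adapted by construction, hence $\Gb$--predictable after the usual choice of predictable version. Condition (4), $\Lambda'_t(\Z_{\mu'_t})=1$, is immediate since $\Lambda'_t = \delta_{\mub^\alpha_t}$ and the first marginal of $\mub^\alpha_t=\Lc(X^\alpha_t,\alpha_t|\Gc_t)$ is precisely $\mu^\alpha_t = \mu'_t$. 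The consistency condition \eqref{eq:consistency} holds by the very definition of $\mathrm{P}^\alpha$, where both copies of the state flow and both copies of the control measure flow coincide.

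Next I would prove the optimality \eqref{eq:optimality-relaxed}. Fix any competitor $\mathrm{P} \in \Pcb_V$ with $\Lc^{\mathrm{P}}(\mu,\Lambda,B) = \Lc^{\mathrm{P}^\alpha}(\mu,\Lambda,B) = \Lc^{\P}(\mu^\alpha, \delta_{\mub^\alpha_t}(\mathrm{d}m)\mathrm{d}t, B)$. The strategy is a mimicking / superposition argument: given such a $\mathrm{P}$, its state flow $\mu'$ and control flow $\Lambda'$ satisfy the controlled Fokker--Planck equation $N_t(f)=0$ with the fixed inputs $\mu = \mu^\alpha$, $\Lambda = \delta_{\mub^\alpha_t}$. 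By a superposition principle for Fokker--Planck equations (the non--degeneracy \Cref{assum:main1}$(iv)$ is used here, as in \cite{MFD-2020}), after a suitable enlargement of the probability space there exists a control $\alpha' \in \Ac$ and a solution $X^{\alpha,\alpha'}$ of \eqref{eq:FixedMKV_strong} whose conditional marginals realize $\mu' = (\Lc(X^{\alpha,\alpha'}_t|\Gc_t))_t$ and whose conditional state--control law realizes the disintegration of $\Lambda'$, and such that $\E^{\mathrm{P}}[J(\mu',\mu,\Lambda',\Lambda)] = \Psi(\alpha,\alpha')$. Then the $\varepsilon$--strong optimality \eqref{eq:optimality-strong} gives $\Psi(\alpha,\alpha') \le \Psi(\alpha,\alpha) + \varepsilon = \E^{\mathrm{P}^\alpha}[J(\mu',\mu,\Lambda',\Lambda)] + \varepsilon$, where the last equality is again just the definition of $\mathrm{P}^\alpha$ together with \eqref{def:strong_value-function} rewritten via the separability of $L$. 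This yields \eqref{eq:optimality-relaxed} and completes the proof.

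\textbf{Main obstacle.} The delicate step is the superposition argument in the optimality part: passing from an abstract measure--valued competitor $\mathrm{P} \in \Pcb_V$ (a solution of the controlled Fokker--Planck equation on the canonical space $\Omb$) back to an honest controlled SDE \eqref{eq:FixedMKV_strong} on a possibly enlarged space, with matching conditional laws and matching reward. One must be careful that the fixed common--noise data $(\mu^\alpha,\mub^\alpha,B)$ are preserved through the enlargement, that the reconstructed control $\alpha'$ is genuinely $\F$--predictable, and that the conditional--law identification is compatible with the conditioning on $\Gc_t$ rather than on the full filtration; the non--degeneracy condition on $\sigma\sigma^\top$ and the separability condition are precisely what make this reconstruction go through, and I expect the proof to invoke the corresponding lemmas from the companion paper \cite{MFD-2020}. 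Everything else is a routine It\^o/conditional--expectation verification.
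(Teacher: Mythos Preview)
Your verification that $\mathrm{P}^\alpha \in \Pcb_V$ and the consistency condition are fine and match the paper, which simply declares this part ``straightforward''. The issue is in the optimality step.

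You claim that for any competitor $\mathrm{P} \in \Pcb_V$ one can, by superposition, produce a single control $\alpha' \in \Ac$ on the original space with $\E^{\mathrm{P}}[J(\mu',\mu,\Lambda',\Lambda)] = \Psi(\alpha,\alpha')$. This is too strong. Superposition (e.g.\ via \cite{Lacker-Shkolnikov-Zhang_2020}) indeed reconstructs a controlled SDE, but only on an \emph{enlarged} probability space, with a genuinely relaxed control $\widetilde\Lambda_t(\mathrm{d}u)$ and extra randomness; there is no reason the resulting control is $\F$--predictable on $(\Om,\H,\P)$, i.e.\ belongs to $\Ac$. Since the $\varepsilon$--strong optimality \eqref{eq:optimality-strong} is only over $\alpha' \in \Ac$, you cannot invoke it directly for such a reconstructed control. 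You correctly flag this as the main obstacle, but your proposed resolution (``preserve the data through the enlargement'') does not close the gap: enlarging the space takes you outside $\Ac$ by definition.

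The paper handles this by \emph{approximation} rather than exact superposition. It applies \Cref{lemm:convergenceMFGstrong} with the constant sequence $\alpha^k \equiv \alpha$: given the competitor $\mathrm{P}$, one obtains a measurable family $(\mathrm{P}^k_f)_{f \in [0,1]} \subset \Pcb_S$, each genuinely of the form $\mathrm{P}^{\alpha,\alpha'_f}$ with $\alpha'_f \in \Ac$, such that $\int_0^1 \E^{\mathrm{P}^{k_j}_f}[J]\,\mathrm{d}f \to \E^{\mathrm{P}}[J]$ along a subsequence. For each $f$ the strong optimality gives $\E^{\mathrm{P}^{k_j}_f}[J] = \Psi(\alpha,\alpha'_f) \le \Psi(\alpha,\alpha)+\varepsilon$, hence the same holds for the average and, passing to the limit, $\E^{\mathrm{P}}[J] \le \E^{\mathrm{P}^\alpha}[J] + \varepsilon$. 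The extra uniform variable $F$ hidden in the construction of \Cref{lemm:convergenceMFGstrong} is precisely the randomization you were missing; conditioning on $F=f$ is what brings each slice back into $\Ac$.
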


\subsection{Main limit results} \label{subsec:main-results}

The main results of this paper are now given in the following two theorems.

\begin{theorem}[Limit Theorem]
\label{thm:limitNashEquilibrium}
    Let {\rm \Cref{assum:main1}} hold true, $\varepsilon \in [0,\infty),$ $(\varepsilon_i)_{i \in \N^*} \subset (0,\infty).$
    
    \medskip
    \hspace{1em}$(i)$ For each $N \in \N^*$, let $\overline{\alpha}^N$ be a $(\varepsilon_1,\dots,\varepsilon_N)$--Nash equilibrium, then the sequence $(\mathrm{P}^N)_{N \in \N^*}$ with $\mathrm{P}^N:=\mathrm{P}^N[\overline{\alpha}^N] \in \Pc \big( \Omb \big)$ is relatively compact in $\Wc_p(\Omb)$ where
    \[
        \mathrm{P}^N[\overline{\alpha}^N]
        :=
        \P \circ \Big( \varphi^{N,\Xbb,\overline{\alpha}^N},\varphi^{N,\Xbb,\overline{\alpha}^N}, \Lambda^N,\Lambda^N, B  \Big)^{-1}\;\mbox{with}\;\Lambda^N:=\delta_{\varphi^{N,\overline{\alpha}^N}_t}(\mathrm{d}m)\mathrm{d}t,
    \]
    and 
    \begin{align*}
        \mbox{if}\;\;\lim_{N \to \infty}\frac{1}{N}\sum_{i=1}^N \varepsilon_i=\varepsilon,\;\;\mbox{then each limit point}\;\;\mathrm{P}^\infty\;\;\mbox{is an}\;\;\varepsilon\mbox{--measure--valued MFG solution.}
    \end{align*}
    
    \medskip
    \hspace{1em}$(ii)$ Let $(\mathrm{P}^k)_{k \in \N^*} \subset \Pcb_S$ such that {\color{black}$\mathrm{P}^k \in \Pcb^\star_S[\varepsilon_k],$} for each $k \in \N^*.$ Then $(\mathrm{P}^k)_{k \in \N^*}$ is relatively compact in $\Wc_p(\Omb)$, and
    \begin{align*}
        \mbox{if}\;\;\lim_{k \to \infty} \varepsilon_k=\varepsilon,\;\;\;\mbox{then each limit point}\;\;\mathrm{P}^\infty\;\;\mbox{is an}\;\;\varepsilon\mbox{--measure--valued MFG solution.}
    \end{align*}
        
    \medskip
    In particular when $\varepsilon=0,$ $\mathrm{P}^\infty$ is a measure--valued MFG solution.

\end{theorem}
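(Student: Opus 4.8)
\textbf{Proof proposal for Theorem \ref{thm:limitNashEquilibrium}.}

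\medskip

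The plan is to prove both parts by the same two-stage scheme: first establish relative compactness of the family $(\mathrm{P}^N)$ (resp.\ $(\mathrm{P}^k)$) in $\Wc_p(\Omb)$, then show that any weak limit point satisfies all the requirements of Definition \ref{def:RelaxedMFG_control} with the prescribed $\varepsilon$. For the compactness in part $(i)$, I would first obtain a uniform moment bound: since $b,\sigma$ are bounded (Assumption \ref{assum:main1}$(ii)$) and $\xi^i$ has a finite $p'$-th moment with $p'>p$, a standard application of the Burkholder--Davis--Gundy and Gronwall inequalities to \eqref{eq:N-agents_StrongMF_CommonNoise} yields $\sup_{N}\sup_i\E[\|\Xbb^i\|^{p'}]<\infty$, hence a uniform $\Wc_p$-moment bound on $\varphi^{N,\Xbb,\overline{\alpha}^N}$ and on $\Lambda^N$ (using compactness of $U$). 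This upgrades tightness to relative compactness in the Wasserstein topology. Tightness of the $\Cc^n_{\Wc}$-marginals follows from an Aldous-type criterion applied to the empirical flow, exactly as in \cite{lacker2016general} and \cite{MFD-2020}: the drift part is Lipschitz-bounded and the martingale part has the BDG estimate; tightness of the $\M$-marginals $\Lambda^N$ is immediate because $\M(\Pc^n_U)$ is compact when $\Pc^n_U$ is compact (which it is, as $U$ is compact and we restrict to $\Pc_p$, or one argues tightness directly since the $t$-marginal is fixed Lebesgue). Thus $(\mathrm{P}^N)$ is relatively compact in $\Wc_p(\Omb)$; the same bounds give relative compactness in part $(ii)$ since the processes $(\mu^\alpha,\mub^\alpha)$ obey identical estimates.

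\medskip

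For the identification of limit points, fix a subsequence with $\mathrm{P}^N\to\mathrm{P}^\infty$. I would verify conditions 1--4 of Definition \ref{def:RelaxedCcontrol} for $\mathrm{P}^\infty$ and then the MFG optimality. Condition 1 ($\mu'_0=\nu$) follows from the law of large numbers for the iid initial data $\xi^i$. For condition 2, the key is that the empirical measure flow solves, up to a martingale vanishing in the limit, the Fokker--Planck relation $N_t(f)=0$: applying It\^o's formula to $f(\Xbb^i_t-\sigma_0 B_t)$, averaging over $i$, and using the separability condition \ref{assum:main1}$(vi)$ to split the generator into the $\star$-part (which sees only $(\mu,\Lambda)$, i.e.\ $\varphi^{N,\Xbb}$ and $\varphi^{N}$ diagonally) and the $\circ$-part (which is integrated against the control marginal), one gets $N_t^N(f)=M_t^N(f)$ where $M^N$ is a martingale whose bracket is $O(1/N)$ by independence of the $W^i$; passing to the limit (the maps appearing are continuous and bounded, so the functionals are continuous on $\Wc_p(\Omb)$) gives $N_t(f)=0$ $\mathrm{P}^\infty$-a.s.\ for all $f\in C^2_b$ and all $t$, and that $B$ remains a Wiener process is standard. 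Condition 4, $\Lambda'_t(\Z_{\mu'_t})=1$, holds because for the prelimit object the control marginal is by construction supported on measures whose state-marginal is $\varphi^{N,\Xbb}$, a closed condition preserved under weak limits. Condition 3 ($\Lambda'$ is $\Gb$-predictable) is more delicate since $\Pcb_V$ is not closed; here, in the present construction $\Lambda'=\Lambda$ and $\mu'=\mu$ already at the prelimit level (both pairs equal the same empirical objects), so this is automatic --- and this also gives the consistency \eqref{eq:consistency} for free.

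\medskip

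The substantive step is the optimality inequality \eqref{eq:optimality-relaxed} with the right $\varepsilon$. Given any competitor $\mathrm{P}\in\Pcb_V$ with $\Lc^{\mathrm{P}}(\mu,\Lambda,B)=\Lc^{\mathrm{P}^\infty}(\mu,\Lambda,B)$, I would represent $\mathrm{P}$ (after possibly enlarging the space and using a measurable-selection / mimicking argument as in \cite{MFD-2020}) by a genuine controlled state process driven by the \emph{fixed} data coming from $\mathrm{P}^\infty$, then use that fixed data to build, for each $N$, an admissible deviation $\beta^N$ for a single player in the $N$-game whose induced cost approximates $\E^{\mathrm{P}}[J(\mu',\mu,\Lambda',\Lambda)]$; the $\varepsilon_i$-Nash property \eqref{eq:opti-player_i} then reads $J_i[\overline{\alpha}^N]\ge J_i((\overline{\alpha}^N)^{[-i]},\beta^N)-\varepsilon_i$, and averaging over $i$ together with propagation of chaos / continuity of $J$ passes this to $\E^{\mathrm{P}^\infty}[J]\ge\E^{\mathrm{P}}[J]-\lim\frac1N\sum\varepsilon_i=\E^{\mathrm{P}}[J]-\varepsilon$. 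In part $(ii)$ the argument is analogous but simpler: the $\varepsilon_k$-strong optimality \eqref{eq:optimality-strong} already compares against arbitrary $\alpha'\in\Ac$, and one checks that any competitor $\mathrm{P}\in\Pcb_V$ sharing the limiting fixed data can be realized as $\mathrm{P}^{\alpha,\alpha'}$ up to arbitrarily small error in $J$, so \eqref{eq:optimality-strong} transfers to \eqref{eq:optimality-relaxed} in the limit with error $\lim_k\varepsilon_k=\varepsilon$. The main obstacle I anticipate is precisely this transfer of optimality: one must construct near-optimal deviations in the $N$-player (resp.\ strong) problem out of an abstract measure-valued competitor living on the doubled canonical space, controlling simultaneously the approximation of the fixed environment $(\mu,\Lambda)$ and the continuity of the reward functional $J$ under $\Wc_p$ --- this is where the separability and non-degeneracy assumptions, and the machinery of \cite{MFD-2020}, do the real work.
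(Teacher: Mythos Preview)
Your proposal is correct and follows essentially the same route as the paper: relative compactness via moment bounds and an Aldous-type criterion (packaged in the paper as a citation to \cite[Proposition 4.15]{MFD-2020}), membership of any limit in $\Pcb_V$ by passing the Fokker--Planck relation $N_t(f)=0$ to the limit (with $\mu'=\mu$, $\Lambda'=\Lambda$ and hence the consistency and $\Gb$-predictability conditions automatic from the doubled prelimit), and then the transfer of optimality by approximating an arbitrary competitor $\mathrm{P}\in\Pcb_V$ through admissible deviations in the finite-$N$ (resp.\ strong) problem and averaging the Nash inequality over $i$. The paper carries out this last step via two dedicated lemmas (Lemma \ref{lemm:convergenceMFGstrong} for part~(ii), Lemma \ref{lemm:convergenceNashEquilibrium} for part~(i)), whose proofs build the deviations using the representation results of \cite{MFD-2020} together with an auxiliary uniform random variable $F$ --- exactly the space-enlargement and measurable-selection/mimicking machinery you anticipate --- so the main obstacle you flag is indeed where all the work sits.
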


\begin{theorem}[Converse Limit Theorem]
\label{thm:converselimitNash}
    Let {\rm\Cref{assum:main1}} hold true, $\varepsilon \in [0,\infty),$ and $\mathrm{P}^\star \in \Pcb^\star_V[\varepsilon].$
    
    \medskip
    $(i)$ There exists a sequence $(\varepsilon_k)_{k \in \N^*} \subset[0,\infty)$ satisfying $\limsup_{k \to \infty} \varepsilon_k \in [0,\epsilon]$ such that:
    
    \medskip
    \hspace{2em}$(i.1)$ if $\ell \neq 0,$ one can find a sequence $(\mathrm{P}^k)_{k \in \N^*}$ with $\mathrm{P}^k \in \Pcb^\star_S[\varepsilon_k]$ for each $k \in \N^*,$ and $\mathrm{P}^\star=\Lim_{k \to \infty} \mathrm{P}^k,$ for the metric $\Wc_p.$
    
    \medskip
    \hspace{2em}$(i.2)$ if {\color{black}$\ell = 0$ i.e. there are no B and $\sigma_0$ $($no common noise$),$ } one can get a sequence $(\mathrm{P}^k_z)_{(k,z) \in \N^* \x [0,1]} \subset \Pcb_S$ with for each $k \in \N^*,$ $z \mapsto \mathrm{P}^k_z$ is Borel measurable,
    \begin{align*}
        \int_0^1 \mathrm{P}^k_z \mathrm{d}z \in \Pcb^\star_V[\varepsilon_k]\;\;\;\;\mbox{and}\;\;\;\;\Lim_{k \to \infty} \int_0^1 \mathrm{P}^k_z \mathrm{d}z = \mathrm{P}^\star\;\mbox{in}\;\Wc_p. 
    \end{align*}
    
    \medskip
    $(ii)$ There exists a sequence of positive numbers $(\varepsilon_i)_{i\in \N^*}$ such that $\limsup_{N \to \infty}\frac{1}{N}\sum_{i=1}^N \varepsilon_i \in [0,\varepsilon],$ and for each $N \in \N^*$, a $(\varepsilon_1,\dots,\varepsilon_N)$--Nash equilibrium $\overline{\alpha}^N=(\alpha^{1,N},\dots,\alpha^{N,N})$ such that 
    \[
        \mathrm{P}^\star
        =
        \lim_{N \to \infty}
        \P \circ \Big( \varphi^{N,\Xbb,\overline{\alpha}^N},\varphi^{N,\Xbb,\overline{\alpha}^N}, \Lambda^N,\Lambda^N, B  \Big)^{-1}\;\mbox{in}\;\Wc_p\;\mbox{with}\;\Lambda^N:=\delta_{\varphi^{N,\overline{\alpha}^N}_t}(\mathrm{d}m)\mathrm{d}t.
    \]

\end{theorem}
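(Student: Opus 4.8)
\emph{Overall strategy.} The plan is to reverse the construction underlying \Cref{thm:limitNashEquilibrium}: starting from an $\varepsilon$--measure--valued MFG solution $\mathrm{P}^\star$, we will first realise it, up to a vanishing error, as a genuinely strong (or suitably randomised) controlled McKean--Vlasov process, then deduce the strong equilibria of $(i)$ and finally the Nash equilibria of $(ii)$ by symmetrisation. We will proceed in three stages: \textbf{(S1)} turn the Fokker--Planck characterisation of $\mathrm{P}^\star$ into a controlled SDE with an $\F$--adapted control, up to an approximation error; \textbf{(S2)} check that the approximating control is an $\varepsilon_k$--strong MFG equilibrium with $\limsup_k\varepsilon_k\le\varepsilon$ and that the induced laws converge to $\mathrm{P}^\star$ in $\Wc_p$; \textbf{(S3)} spread this control over the $N$ players and run a propagation--of--chaos argument.

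\emph{Stage S1: from the Fokker--Planck equation to a controlled SDE.} Let $\mathrm{P}^\star\in\Pcb_V^\star[\varepsilon]$. By the consistency condition \eqref{eq:consistency}, $\mu'=\mu$ and $\Lambda'=\Lambda$, $\mathrm{P}^\star$--a.s., and by item~2 of \Cref{def:RelaxedCcontrol} the canonical processes satisfy the $B$--conditional Fokker--Planck equation $N_t(f)=0$. Using a superposition principle for controlled McKean--Vlasov Fokker--Planck equations of the type developed in the companion paper \cite{MFD-2020} --- here the non--degeneracy Assumption~\ref{assum:main1}$(iv)$ and the separability Assumption~\ref{assum:main1}$(vi)$ are essential, the latter so that the $m$--dependence entering $\Lc^\star$ decouples from the $(x,u)$--dependence entering $\Lc^\circ$ --- we will construct, on a suitable enlargement of $(\Om,\H,\Hc,\P)$, a continuous process $X$, Brownian motions $(W,B)$ and a relaxed $\F$--predictable control such that $X$ solves \eqref{eq:MKV_strong} with the coefficients frozen at the fixed mean field $(\mu^\star,\mub^\star)$ recovered from $(\mu,\Lambda)$, with $\mu^\star_t=\Lc(X_t\,|\,\Gc_t)$ and the disintegrated relaxed control reproducing $\Lambda'$. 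We then approximate this relaxed control by \emph{strict} controls $\alpha^k$ that are piecewise constant in time and depend only on finitely many increments of $(\xi,W,B)$ --- a chattering/mollification step --- so that $\alpha^k\in\Ac$ when $\ell\neq0$, and $\alpha^k=\alpha^k_z$ is an $\Ac$--valued Borel family indexed by an auxiliary uniform variable $z\in[0,1]$ when $\ell=0$. Boundedness and Lipschitz continuity of $(b,\sigma)$, continuity of $(L,g)$, and the non--degeneracy of $\sigma\sigma^\top$ (stability of the SDEs in \eqref{eq:MKV_strong}--\eqref{eq:FixedMKV_strong}) then guarantee that the state process, the joint conditional law $\Lc(X^{\alpha^k}_t,\alpha^k_t\,|\,\Gc_t)$, the generator terms in $N_t(f)$ and the reward $J$ all converge as $k\to\infty$.

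\emph{Stage S2: proof of $(i)$.} Fix the approximating family from S1 and set $\mathrm{P}^k:=\mathrm{P}^{\alpha^k}$ (resp.\ $\mathrm{P}^k_z:=\mathrm{P}^{\alpha^k_z}$), which lie in $\Pcb_S$ by construction and converge to $\mathrm{P}^\star$ in $\Wc_p$ by the continuity just noted. It remains to control the optimality gap. Any competitor $\alpha'\in\Ac$ against the fixed mean field induces, via its own Fokker--Planck equation (the computation at the beginning of \Cref{section:relaxed_formulation}), an element of $\Pcb_V$ whose $(\mu,\Lambda,B)$--marginal matches that of $\mathrm{P}^{\alpha^k}$; hence
\begin{align*}
\sup_{\alpha'\in\Ac}\Psi(\alpha^k,\alpha')
\;\le\;
\sup\Big\{\,\E^{\mathrm{P}}\big[J(\mu',\mu,\Lambda',\Lambda)\big]\;:\;\mathrm{P}\in\Pcb_V,\ \Lc^{\mathrm{P}}(\mu,\Lambda,B)=\Lc^{\mathrm{P}^\star}(\mu,\Lambda,B)\Big\}+\eta_k,
\end{align*}
where $\eta_k\to0$ absorbs the vanishing mismatch between the fixed marginal of $\mathrm{P}^{\alpha^k}$ and that of $\mathrm{P}^\star$; combining this with the $\varepsilon$--optimality \eqref{eq:optimality-relaxed} of $\mathrm{P}^\star$ and the identity $\E^{\mathrm{P}^\star}[J(\mu',\mu,\Lambda',\Lambda)]=\Psi(\alpha^k,\alpha^k)+o(1)$ yields $\Psi(\alpha^k,\alpha^k)\ge\sup_{\alpha'}\Psi(\alpha^k,\alpha')-\varepsilon_k$ with $\varepsilon_k:=(\varepsilon+o(1))^+$, so $\limsup_k\varepsilon_k\le\varepsilon$ and $\mathrm{P}^k\in\Pcb_S^\star[\varepsilon_k]$ by \Cref{prop-strong-as-measure}. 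This is $(i.1)$ when $\ell\neq0$. When $\ell=0$ the relaxed control is genuinely randomised, so we keep the $z$--parametrisation: $(\mathrm{P}^k_z)_{z\in[0,1]}\subset\Pcb_S$ is Borel in $z$, $\int_0^1\mathrm{P}^k_z\,\mathrm{d}z\to\mathrm{P}^\star$ in $\Wc_p$, and a Fubini/mixture argument (the fixed marginal $(\mu,\Lambda)$ being now automatically matched, there being no common noise) transfers the $\varepsilon$--optimality to the mixture up to $\varepsilon_k$, giving $(i.2)$.

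\emph{Stage S3: proof of $(ii)$, and the main obstacle.} For each $N$ choose $k=k(N)\to\infty$ slowly, and let every player use the representative control from S1 driven by its own data, $\alpha^{i,N}:=\alpha^{k(N)}(\xi^i,W^i,B)$ (with a common randomisation in the no--common--noise case). Two facts must be established. First, a propagation--of--chaos statement: $(\varphi^{N,\Xbb},\varphi^{N})\to(\mu^\star,\mub^\star)$ and $\Lambda^N\to\delta_{\mub^\star_t}(\mathrm{d}m)\mathrm{d}t$ in $\Wc_p$ in probability. This is the delicate point, because the coefficients depend on the empirical law of the \emph{controls} and $\sigma$ is controlled; it is obtained exactly as in the proof of \Cref{thm:limitNashEquilibrium}, combining uniform--in--$N$ $L^p$--moment bounds (from boundedness of $b,\sigma$ and $\nu\in\Pc_{p'}(\R^n)$ with $p'>p$), $\Wc_p$--tightness of the empirical flows, the separability Assumption~\ref{assum:main1}$(vi)$ (which renders the law--of--control dependence additive), and identification of the limit through $N_t(f)=0$. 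Second, the $\varepsilon_N$--Nash property: if player $i$ deviates to any $\beta\in\Ac^N$, its influence on $\varphi^{N}$, hence on every coefficient, is $O(1/N)$ uniformly in $\beta$ (boundedness of $b,\sigma$ and compactness of $U$), so the deviating player's problem converges, uniformly over $\beta$, to the optimal control problem against the fixed $(\mu^\star,\mub^\star)$; since $\alpha^{k(N)}$ is an $\varepsilon_{k(N)}$--strong MFG equilibrium there by S2, the individual Nash gap is at most $\varepsilon_{k(N)}+o(1)=:\varepsilon_i$, and $\limsup_N\frac1N\sum_{i=1}^N\varepsilon_i\le\limsup_k\varepsilon_k\le\varepsilon$; the same diagonal choice of $k(N)$ forces the empirical laws to converge to $\mathrm{P}^\star$. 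The hardest part of the whole argument is this propagation of chaos with law--of--control dependence and controlled, merely continuous--in--measure coefficients, compounded by the fact --- noted in the remark after \Cref{def:RelaxedCcontrol} --- that $\Pcb_V$ is not closed, so that compactness rather than contraction arguments must be used throughout S1--S3.
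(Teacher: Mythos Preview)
Your overall architecture matches the paper's: realise $\mathrm{P}^\star$ as a limit of laws $\mathrm{P}^{\alpha^k}\in\Pcb_S$, verify that the $\alpha^k$ are approximate strong equilibria, then push to the $N$--player game. The substantive gap is in Stage~S2, at the displayed inequality
\[
\sup_{\alpha'\in\Ac}\Psi(\alpha^k,\alpha')\;\le\;\sup\Big\{\E^{\mathrm{P}}[J]:\mathrm{P}\in\Pcb_V,\ \Lc^{\mathrm{P}}(\mu,\Lambda,B)=\Lc^{\mathrm{P}^\star}(\mu,\Lambda,B)\Big\}+\eta_k.
\]
The left side is a supremum over rules whose fixed marginal is $\Lc^{\mathrm{P}^{\alpha^k}}(\mu,\Lambda,B)$; the right side is over rules with marginal $\Lc^{\mathrm{P}^\star}(\mu,\Lambda,B)$. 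Writing ``$\eta_k\to0$ absorbs the vanishing mismatch'' is asserting that the value of the measure--valued control problem is (upper semi)continuous in the fixed marginal. This is exactly the crux of the proof, and it does \emph{not} follow from the ingredients you list: $\Pcb_V$ is not closed, so a straight compactness argument fails, a point you correctly flag at the very end but do not address. The paper isolates this as ``Problem and strategy of the proof'' at the start of Section~3.2 and again in the remark after \Cref{prop:converse_general}.

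The paper's resolution is structural rather than analytic, and your one--step ``superposition then chattering'' does not deliver it. The approximating controls $\alpha^k$ are built in \emph{two} stages: first (\Cref{lemma:app-measure-weak}) $\mathrm{P}^\star$ is realised by a \emph{weak} controlled McKean--Vlasov process whose fixed law is \emph{exactly} $(\mu,\Lambda,B)$ under $\mathrm{P}^\star$; only then (\Cref{lemma_weak-strong_general}, \Cref{prop:weak-strong-common_noise}) is this weak process turned into a strong one by absorbing the extra randomness into early increments of $B$ (or, when $\ell=0$, into a $[0,1]$--mixture). The point of the detour is \Cref{lemma:weak_fixedAppr} combined with \Cref{lemma:converse_general}: because of the explicit form of the weak--to--strong map, any strong competitor $\alpha'^k$ against $(\mu^{\alpha^k},\mub^{\alpha^k})$ can be pulled back through the same map to a competitor at the weak stage, and hence to some $\mathrm{Q}^k\in\Pcb_V$ with $\Lc^{\mathrm{Q}^k}(\mu,\Lambda,B)=\Lc^{\mathrm{P}^\star}(\mu,\Lambda,B)$ exactly and $|\E^{\mathrm{Q}^k}[J]-\Psi(\alpha^k,\alpha'^k)|\to0$ (\Cref{prop:converse_general}). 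One then takes $\alpha'^k=\gamma^k$ a near--maximiser of $\Psi(\alpha^k,\cdot)$ and applies \eqref{eq:optimality-relaxed} directly; your $\eta_k$ is precisely this conclusion, but it needs the two--stage construction to be established.

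The same issue recurs in S3: the claim that the deviating player's problem converges \emph{uniformly over $\beta\in\Ac^N$} to the limiting control problem is not just an $O(1/N)$ perturbation estimate; it again requires matching each deviation $\kappa^{i,N}$ to some $\mathrm{P}^{i,N}\in\Pcb_S$ with the exact $(\mu,\Lambda,B)$--marginal of the target, which is the second half of \Cref{lemma:converse_convergence}.
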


\begin{remark}
    { 
        {\rm Theorems \ref{thm:converselimitNash}} and {\rm \ref{thm:limitNashEquilibrium} } give a general characterization of solutions of MFG of controls by connecting measure--valued MFG solutions, approximate Nash equilibria and approximate strong MFG solutions. {\color{black} Consequently, the results of existence of measure--valued MFG solutions, of approximate Nash equilibria and of approximate strong MFG solutions are all related, the existence of one of the notions guarantees the existence of the others.} {\color{black}In the presence of the law of the control or the empirical distribution of the controls}, our limit theorem results seem to be the first which give this kind of characterizations under relative general assumptions.  Especially, approximate strong MFG solutions and their convergence result have never been considered in the literature. 
        Notice that, they also contain part of the most results of the case without the distribution of controls mentioned in {\rm \citeauthor*{lacker2016general} \cite{lacker2016general}}.
        Let us emphasize that, there is no existence result in these theorems, all results are given after assuming existence results. In {\rm\Cref{sec:no-commonnoise}} $($see below$)$, we discuss some existence results in the case without common noise.
    
    }
\end{remark}

\medskip
    The next corollaries are just a combination of Theorems \ref{thm:converselimitNash} and \ref{thm:limitNashEquilibrium}. The first mentions the closedness of $\Pcb_V^\star$ and the second a correspondence between approximate Nash equilibria and approximate--strong MFG solution.

\begin{corollary}
    Suppose that the conditions of   {\rm\Cref{thm:converselimitNash}} and {\rm \Cref{thm:limitNashEquilibrium}} hold.
    For each $\varepsilon \in [0,\infty),$ $\Pcb_V^\star[\varepsilon]$ is a closed set for the Wasserstein metric $\Wc_p.$
\end{corollary}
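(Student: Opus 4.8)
The plan is to derive the closedness of $\Pcb_V^\star[\varepsilon]$ directly from the two limit theorems, using nothing beyond them. Suppose $(\mathrm{P}^{(j)})_{j \in \N^*} \subset \Pcb_V^\star[\varepsilon]$ converges to some $\mathrm{P}^\star$ in $\Wc_p(\Omb)$; the goal is to show $\mathrm{P}^\star \in \Pcb_V^\star[\varepsilon]$. The idea is to use \Cref{thm:converselimitNash} to realize each $\mathrm{P}^{(j)}$ as a limit of Nash equilibria of $N$-player games, then invoke \Cref{thm:limitNashEquilibrium} to identify a suitable diagonal subsequence of those Nash equilibria whose associated empirical-measure laws converge to $\mathrm{P}^\star$ and whose limit is therefore an $\varepsilon$-measure--valued MFG solution.

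More precisely, fix $\varepsilon \in [0,\infty)$. For each $j$, apply \Cref{thm:converselimitNash}$(ii)$ to $\mathrm{P}^{(j)} \in \Pcb_V^\star[\varepsilon]$: we obtain a sequence of positive reals $(\varepsilon_i^{(j)})_{i \in \N^*}$ with $\limsup_{N \to \infty} \frac1N \sum_{i=1}^N \varepsilon_i^{(j)} \le \varepsilon$, and for each $N$ a $(\varepsilon_1^{(j)},\dots,\varepsilon_N^{(j)})$--Nash equilibrium $\overline{\alpha}^{N,(j)}$ such that, writing $\mathrm{Q}^{N,(j)} := \P \circ \big( \varphi^{N,\Xbb,\overline{\alpha}^{N,(j)}},\varphi^{N,\Xbb,\overline{\alpha}^{N,(j)}}, \Lambda^{N,(j)},\Lambda^{N,(j)}, B \big)^{-1}$, one has $\mathrm{Q}^{N,(j)} \to \mathrm{P}^{(j)}$ in $\Wc_p$ as $N \to \infty$. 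Now I would extract a diagonal sequence: choose $N_j \to \infty$ increasing fast enough that $\Wc_p(\mathrm{Q}^{N_j,(j)}, \mathrm{P}^{(j)}) \le 1/j$ and simultaneously $\big| \frac{1}{N_j}\sum_{i=1}^{N_j} \varepsilon_i^{(j)} - \limsup_N \frac1N \sum_{i=1}^N \varepsilon_i^{(j)} \big| \le 1/j$, so that $\limsup_{j} \frac{1}{N_j}\sum_{i=1}^{N_j}\varepsilon_i^{(j)} \le \varepsilon$. Then $\mathrm{Q}^{N_j,(j)} \to \mathrm{P}^\star$ in $\Wc_p$ by the triangle inequality, since $\mathrm{P}^{(j)} \to \mathrm{P}^\star$.

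Finally, I would relabel: set $\widetilde N_j := N_j$ and view $\overline{\alpha}^{N_j,(j)}$ as a $(\widetilde\varepsilon_1^{(j)},\dots,\widetilde\varepsilon_{\widetilde N_j}^{(j)})$--Nash equilibrium of the $\widetilde N_j$-player game, with $\widetilde\varepsilon_i^{(j)} := \varepsilon_i^{(j)}$. To apply \Cref{thm:limitNashEquilibrium}$(i)$ I need a single sequence of games indexed by $N \in \N^*$; the natural move is to interpolate, defining for each $N$ a Nash equilibrium of the $N$-player game by taking, for $N = N_j$, the equilibrium $\overline{\alpha}^{N_j,(j)}$, and for the intermediate values of $N$ any fixed choice of approximate Nash equilibrium — or more simply, to observe that \Cref{thm:limitNashEquilibrium}$(i)$ only requires that $\frac1N\sum_{i=1}^N \varepsilon_i \to \varepsilon'$ along the subsequence to conclude the limit point is an $\varepsilon'$-measure--valued MFG solution, so it suffices to work along the subsequence $(N_j)_j$ directly. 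By construction $\mathrm{Q}^{N_j,(j)}$ is exactly of the form $\mathrm{P}^{N_j}[\overline{\alpha}^{N_j}]$ appearing in \Cref{thm:limitNashEquilibrium}$(i)$, the sequence converges to $\mathrm{P}^\star$, and $\limsup_j \frac{1}{N_j}\sum_{i=1}^{N_j}\varepsilon_i^{(j)} \le \varepsilon$. Hence $\mathrm{P}^\star$ is an $\varepsilon'$-measure--valued MFG solution for some $\varepsilon' \le \varepsilon$; since $\Pcb_V^\star[\varepsilon'] \subset \Pcb_V^\star[\varepsilon]$ whenever $\varepsilon' \le \varepsilon$ (the optimality inequality \eqref{eq:optimality-relaxed} only weakens as the tolerance grows), we conclude $\mathrm{P}^\star \in \Pcb_V^\star[\varepsilon]$.

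The one delicate point — and the step I expect to be the main obstacle — is that \Cref{thm:limitNashEquilibrium}$(i)$ is phrased for a sequence of games with a prescribed relation $\lim_{N} \frac1N\sum_{i=1}^N \varepsilon_i = \varepsilon$ along the full sequence $N \in \N^*$, whereas the diagonal construction naturally produces only a subsequence $(N_j)_j$ together with a doubly-indexed family of error vectors (the $i$-th tolerance depends on $j$ as well as $i$). One must check that the tightness and limit-identification arguments underlying \Cref{thm:limitNashEquilibrium}$(i)$ go through verbatim for such a subsequence with $j$-dependent tolerances — which they do, since those arguments are local in $N$ and only use $\frac{1}{N_j}\sum_{i=1}^{N_j}\varepsilon_i^{(j)} \to \varepsilon'$ and the $\varepsilon$-Nash property of $\overline{\alpha}^{N_j,(j)}$ in the $N_j$-player game — but stating this reduction cleanly, or alternatively padding the family out to a genuine full sequence of games, requires a small amount of care. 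A clean way to sidestep it entirely is to note that the only properties of $(\mathrm{P}^N)$ used in the proof of \Cref{thm:limitNashEquilibrium}$(i)$ are exactly the ones our diagonal family enjoys, and cite that proof; I would phrase the argument so as to make that dependence explicit rather than re-deriving anything.
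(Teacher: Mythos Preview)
Your approach is correct and is exactly what the paper intends: it states the corollary as ``just a combination of Theorems~\ref{thm:converselimitNash} and~\ref{thm:limitNashEquilibrium}'' and gives no further proof, so your diagonal argument via \Cref{thm:converselimitNash}$(ii)$ followed by \Cref{thm:limitNashEquilibrium}$(i)$ is precisely the combination in question. The technical wrinkle you flag (subsequence vs.\ full sequence, and $\limsup$ vs.\ $\lim$) is handled as you indicate---pass to a further subsequence along which $\tfrac{1}{N_j}\sum_{i=1}^{N_j}\varepsilon_i^{(j)}$ converges to some $\varepsilon'\le\varepsilon$, and observe that the proof of \Cref{thm:limitNashEquilibrium}$(i)$ uses only the $N$-player Nash property and the value of this average, so it applies verbatim along such a subsequence.
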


\begin{corollary}
\label{corollary:accumalating_points}
    Let us stay in the context of {\rm\Cref{thm:converselimitNash}} and {\rm \Cref{thm:limitNashEquilibrium}} with $\ell \neq 0.$ For any $\overline{\alpha}^N$ a $(\varepsilon_1,\dots,\varepsilon_N)$--Nash equilibrium, with $\lim_{N \to \infty}\frac{1}{N}\sum_{i=1}^N \varepsilon_i=0,$ there exists, for each convergent sub--sequence $(\Pr^{N_k}[\overline{\alpha}^{N_k}])_{k \in \N^*},$ a sequence $(\Pr^k)_{k \in \N^*}$ such that:
    \begin{align*}
        \mbox{for each}\;k \in \N^*,\;\Pr^k \in \Pcb^\star_S[\delta_k]\;\mbox{with}\;\Lim_{k \to \infty} \delta_k=0\;\;\mbox{and}\;\;
        \Lim_{k \to \infty} \Wc_p \big(\Pr^{N_k}[\overline{\alpha}^{N_k}], \Pr^k \big)=0.
    \end{align*}
\end{corollary}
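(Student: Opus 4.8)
The plan is to combine the two main theorems directly. Fix $\overline{\alpha}^N$ a $(\varepsilon_1,\dots,\varepsilon_N)$--Nash equilibrium with $\lim_{N\to\infty}\frac1N\sum_{i=1}^N\varepsilon_i=0$, and let $(\Pr^{N_k}[\overline{\alpha}^{N_k}])_{k\in\N^*}$ be a convergent sub-sequence; denote its limit by $\Pr^\infty$. By part $(i)$ of \Cref{thm:limitNashEquilibrium} (applied along the sub-sequence, with $\varepsilon=0$), the limit point $\Pr^\infty$ is a measure--valued MFG solution, i.e. $\Pr^\infty\in\Pcb_V^\star=\Pcb_V^\star[0]\subset\Pcb_V^\star[\varepsilon]$ for every $\varepsilon\geq 0$. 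Now invoke \Cref{thm:converselimitNash}$(i.1)$ with $\mathrm{P}^\star=\Pr^\infty$ and $\varepsilon=0$ (here $\ell\neq 0$ by hypothesis, so this branch applies): there exists a sequence $(\delta_k)_{k\in\N^*}\subset[0,\infty)$ with $\limsup_{k\to\infty}\delta_k\in[0,0]=\{0\}$, hence $\lim_{k\to\infty}\delta_k=0$, and a sequence $(\widetilde{\Pr}^k)_{k\in\N^*}$ with $\widetilde{\Pr}^k\in\Pcb_S^\star[\delta_k]$ for each $k$ and $\widetilde{\Pr}^k\to\Pr^\infty$ in $\Wc_p$.

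The only remaining point is to pass from the two separately convergent sequences $\Pr^{N_k}[\overline{\alpha}^{N_k}]\to\Pr^\infty$ and $\widetilde{\Pr}^k\to\Pr^\infty$ to a single re-indexed sequence $(\Pr^k)_{k\in\N^*}$ with $\Pr^k\in\Pcb_S^\star[\delta_k]$ and $\Wc_p(\Pr^{N_k}[\overline{\alpha}^{N_k}],\Pr^k)\to 0$. This is a routine diagonal/triangle-inequality argument: since both sequences converge to the same limit $\Pr^\infty$ in the metric $\Wc_p$, we have
\begin{align*}
    \Wc_p\big(\Pr^{N_k}[\overline{\alpha}^{N_k}],\widetilde{\Pr}^k\big)
    \le
    \Wc_p\big(\Pr^{N_k}[\overline{\alpha}^{N_k}],\Pr^\infty\big)
    +
    \Wc_p\big(\widetilde{\Pr}^k,\Pr^\infty\big)
    \xrightarrow[k\to\infty]{} 0,
\end{align*}
so setting $\Pr^k:=\widetilde{\Pr}^k$ does the job; with this choice $\Pr^k\in\Pcb_S^\star[\delta_k]$, $\lim_{k\to\infty}\delta_k=0$, and $\lim_{k\to\infty}\Wc_p(\Pr^{N_k}[\overline{\alpha}^{N_k}],\Pr^k)=0$, which is exactly the claim.

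I do not expect any genuine obstacle here: the corollary is a bookkeeping consequence of Theorems \ref{thm:limitNashEquilibrium} and \ref{thm:converselimitNash}. The only mild subtlety is checking that \Cref{thm:converselimitNash}$(i.1)$ indeed yields $\delta_k\to 0$ when started from $\varepsilon=0$ — this follows because the theorem guarantees $\limsup_k\delta_k\in[0,\varepsilon]$, and $[0,0]$ forces $\delta_k\to 0$. One should also note that applying \Cref{thm:limitNashEquilibrium}$(i)$ along the sub-sequence is legitimate: the relative compactness and the characterization of limit points are statements about arbitrary limit points of $(\Pr^N[\overline{\alpha}^N])_N$, so in particular they apply to the limit of any convergent sub-sequence, and the condition $\frac1N\sum_{i\le N}\varepsilon_i\to 0$ passes to sub-sequences.
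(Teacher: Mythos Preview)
Your proposal is correct and matches the paper's own approach: the paper does not give a detailed proof of this corollary, stating only that it is ``just a combination of Theorems \ref{thm:converselimitNash} and \ref{thm:limitNashEquilibrium},'' which is precisely what you have spelled out via the triangle inequality.
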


\subsection{Particular case of no common noise} \label{sec:no-commonnoise}

This section discusses of the case without common noise. Here, we assume that $\sigma_0=0$ (or  $\ell=0).$ 
Let us introduce the notion of  non--random measure--valued MFG solution.

\begin{definition} 
        We say that $\mathrm{P}^\star$ is a  non--random measure--valued MFG solution if $\mathrm{P}^\star \in \Pcb^\star_V$ and there exists $(\nb,\qb) \in \Cc^{n}_{\Wc} \x \M(\Pc^n_U)$ such that
        \begin{align*}
            \Lambda_t(\mathrm{d}m)\mathrm{d}t=\qb_t(\mathrm{d}m)\mathrm{d}t\;\mbox{and}\;\mu=\nb\;\;\mathrm{P}^\star\mbox{--a.s.}
        \end{align*}
\end{definition}
In other words, this notion of solution in the absence of common noise i.e. $\sigma_0=0$ or $\ell=0$ focuses on the deterministic {\color{black}solution of the Fokker--Planck equation} mentioned in Definition \ref{def:RelaxedCcontrol}. Indeed, even without common noise, it is possible to get a ``random'' measure--valued MFG solution. With the help of this deterministic aspect, one has the next theorem (see proof in Section \ref{sec:proofThm_existence}).

\begin{theorem}
\label{thm:existenceMFG}
Let {\rm\Cref{assum:main1}} hold true.

\medskip
    $(i)$ There exists at least one non--random measure--valued no common noise MFG solution.
    
    \medskip
    $(ii)$ Moreover, for any non--random measure--valued MFG solution $\mathrm{P}^\star,$ there exists a sequence $(\mathrm{P}^k)_{k \in \N^*} \subset \Pcb_S$ and a sequence $(\varepsilon_k)_{k \in \N^*} \subset[0,\infty)$ satisfying $\lim_{k \to \infty} \varepsilon_k=0$ such that: for each $k \in \N^*,$ $\mathrm{P}^k \in \Pcb_S^\star[\varepsilon_k]$ and $\Lim_k \mathrm{P}^k=\mathrm{P}^\star$ in $\Wc_p.$
\end{theorem}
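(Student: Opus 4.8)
\textbf{Proof plan for \Cref{thm:existenceMFG}.}

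The plan is to prove the two parts separately, using the general machinery already in place. Part $(ii)$ is essentially immediate from \Cref{thm:converselimitNash}$(i.2)$: given a non--random measure--valued MFG solution $\mathrm{P}^\star \in \Pcb^\star_V[0]$ in the no--common--noise setting ($\ell = 0$), that theorem supplies, for each $k$, a Borel family $(\mathrm{P}^k_z)_{z \in [0,1]} \subset \Pcb_S$ with $\int_0^1 \mathrm{P}^k_z\,\mathrm{d}z \in \Pcb^\star_V[\varepsilon_k]$ and $\int_0^1 \mathrm{P}^k_z\,\mathrm{d}z \to \mathrm{P}^\star$ in $\Wc_p$, with $\varepsilon_k \to 0$. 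The only thing to check is that one can upgrade the mixture $\int_0^1 \mathrm{P}^k_z\,\mathrm{d}z$ to an element of $\Pcb_S^\star[\varepsilon_k']$ for a possibly slightly larger $\varepsilon_k' \to 0$; here I would use the non--random structure of $\mathrm{P}^\star$ (so that along the sequence $(\mu,\Lambda)$ is essentially deterministic, making disintegration over $z$ harmless) together with \Cref{prop-strong-as-measure} or a direct selection argument picking a good $z$ via an averaging/Fubini argument on the optimality gap $\E^{\mathrm{P}^k_z}[J(\mu',\mu,\Lambda',\Lambda)] - \sup(\cdots)$. If the statement of \Cref{thm:converselimitNash}$(i.2)$ already delivers what is claimed verbatim (it nearly does), then $(ii)$ is a one--line corollary and I would state it as such.

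For part $(i)$, existence, the plan is the standard Schauder/Kakutani fixed--point scheme adapted to the measure--valued formulation, following the approach of \cite{LackerMFGcontrolledmartingale} cited in the introduction. Because $\ell = 0$, we restrict attention to deterministic flows: set $\mathcal{K}$ to be the set of pairs $(\nb, \qb) \in \Cc^{n,p}_{\Wc} \times \M_p(\Pc^n_U)$ satisfying the compatibility constraint $\qb_t(\Z_{\nb_t}) = 1$ for a.e.\ $t$, together with the a priori moment bound coming from \Cref{assum:main1}$(ii)$ (boundedness of $b,\sigma$) and the initial law $\nu \in \Pc_{p'}(\R^n)$; this $\mathcal K$ is convex and, by the $\Wc_p$--compactness of sublevel sets of $p'$--moments, compact in a suitable topology. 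For fixed $(\nb,\qb) \in \mathcal K$, consider the frozen optimal control problem: maximize $J(\pi',\nb,q',\qb)$ over all measure--valued control rules with that frozen data, i.e.\ over solutions $(\pi',q')$ of the linear Fokker--Planck equation $N_t[\pi',\nb,q',\qb](f) = 0$. One then forms the set--valued map $\Phi(\nb,\qb) := \{(\pi',q') : (\pi',q') \text{ is optimal}\}$ and feeds $(\pi', q')$ back in via the consistency requirement $\pi' = \nb$, $q' = \qb$; a fixed point is exactly a non--random measure--valued MFG solution.

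The three things to verify for Kakutani are: (a) $\Phi$ is nonempty-- and convex--valued — nonemptiness follows because the admissible set of $(\pi',q')$ is nonempty (take the measure--valued control rule induced by any fixed Markovian control, using non--degeneracy \Cref{assum:main1}$(iv)$ to get a genuine solution of the FP equation) and compact, while $J$ is linear in $(\pi',q')$ and the constraint set $\{q'_t(\Z_{\pi'_t}) = 1\}$ together with the FP constraint is convex (this is precisely where the separability condition \Cref{assum:main1}$(vi)$ is used, so that $N_t$ is affine in $(\pi', q')$ jointly); (b) the admissible set depends continuously (upper hemicontinuously, with compact graph) on the frozen data $(\nb, \qb)$ — here one uses stability of solutions of the linear FP equation under $\Wc_p$--convergence of coefficients, which is granted by the Lipschitz continuity \Cref{assum:main1}$(iii)$ and continuity of $b,\sigma$; (c) $\Phi$ has closed graph — this is the Berge maximum theorem once (a),(b) and continuity of $(\nb,\qb,\pi',q') \mapsto J$ (from the growth bound \Cref{assum:main1}$(v)$ and uniform integrability via the $p'$--moment bound) are in hand.

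\textbf{Main obstacle.} The delicate point I expect is (c), or rather the interplay between the consistency constraint $q'_t(\Z_{\pi'_t}) = 1$ and compactness: the set $\Pcb_V$ of measure--valued control rules is \emph{not} closed in general (this is flagged explicitly in the remark after \Cref{def:RelaxedCcontrol}, due to the predictability condition 3), so one cannot naively pass to the limit in a maximizing sequence and must instead work on the deterministic ($\Gb$--trivial) sub--class where predictability is vacuous — this is exactly why the theorem is stated for \emph{non--random} solutions and why $\ell = 0$ is assumed. Concretely, the hard lemma is: the map sending frozen data to the value $\sup J$ is continuous and the argmax correspondence is closed, \emph{when restricted to non--random flows}. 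I would prove this by a compactness argument on $\M_p(\Pc^n_U)$ (relatively compact sublevel sets) plus a careful check that the FP constraint $N_t(f) = 0$ and the marginal constraint $\Z_{\pi'_t}$ both pass to weak limits — the marginal constraint because $m \mapsto m(\cdot, U)$ is continuous, the FP constraint because each term in \eqref{eq:FP-equation} is, under \Cref{assum:main1}, a bounded continuous functional of $(\pi',\mu,\Lambda',\Lambda,B)$ after the $f(\cdot - \sigma_0 B_t)$ reduction (with $\sigma_0 = 0$ here, this is just $f$). Once the correspondence is closed with compact convex values on a compact convex domain, Kakutani yields the fixed point, hence assertion $(i)$.
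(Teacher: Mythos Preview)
Your plan for part $(i)$ is the paper's: restrict to deterministic flows, define the correspondence $\Rc(\nb,\qb)$ of admissible deterministic control rules, establish its continuity (both hemicontinuities), apply Berge to the linear functional $J$ to get the argmax correspondence $\Rc^\star$ upper hemicontinuous with nonempty compact values, embed in a locally convex Hausdorff space, and invoke Kakutani--Fan--Glicksberg. The paper proves upper hemicontinuity from the moment estimate (its Lemma~\ref{lemma:estimation_deterministic}) and lower hemicontinuity by invoking \Cref{lemma:converse_general}/\cite[Proposition~4.10]{MFD-2020}; you name the same ingredients. One caveat: you attribute convexity of the admissible set to separability making $N_t$ affine in $(\pi',q')$, which is correct for the Fokker--Planck constraint, but the marginal constraint $q'_t(\Z_{\pi'_t})=1$ is not preserved under convex combinations when the two candidate $\pi'$ differ --- the paper asserts ``$\Rc$ has convex values'' just as tersely, so you are matching it, but be ready to argue this point more carefully.

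For part $(ii)$ your route is slightly indirect. Invoking \Cref{thm:converselimitNash}$(i.2)$ yields only a mixture $\int_0^1 \Pr^k_z\,\mathrm d z \in \Pcb^\star_V[\varepsilon_k]$, and selecting a single $z$ does not automatically give $\Pr^k_z \in \Pcb^\star_S[\varepsilon'_k]$, because the optimality test in \Cref{def:RelaxedMFG_control} is against competitors sharing the same $(\mu,\Lambda,B)$-law, which varies with $z$. The paper instead returns to \Cref{prop:converse_general} and uses that when $(\mu,\Lambda)=(\nb^\star,\qb^\star)$ is deterministic the auxiliary randomisation $F^k$ in the weak--to--strong construction is unnecessary, so one obtains directly a sequence $\Pr^{\alpha^k}\in\Pcb_S$ converging to $\Pr^\star$, then reruns the $\varepsilon_k$-estimate from the proof of \Cref{thm:converselimitNash}$(i.1)$. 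Your Fubini/selection idea can be made to work, but exploiting determinism \emph{before} the converse machinery, rather than after, is cleaner.
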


\begin{remark}
    {
    Under {\rm\Cref{assum:main1}}, item $(i)$ of {\rm\Cref{thm:existenceMFG}} is an existence result. {\color{black}Unlike the item $(i.2)$ of {\rm\Cref{thm:converselimitNash}} where the approximation is achieved through a sequence of $\varepsilon$--measure--valued MFG solution build with convex combination of distribution of strong McKean--Vlasov processes, the item $(ii)$ of {\rm\Cref{thm:existenceMFG}} shows that, when the solution is non--random, despite the fact that $\ell=0,$ it is possible to approximate a measure--valued solution through a sequence of $\varepsilon$--strong MFG solutions.}
    }
\end{remark}

\begin{remark}
    {
    {\color{black} Let us} mention that it is possible to prove the existence of solution when $\ell \neq 0,$ using for instance {\rm\citeauthor*{lacker2015translation} \cite{lacker2015translation}} for particular coefficients or the techniques used by {\rm \citeauthor*{carmona2014mean} \cite{carmona2014mean}} and {\rm\citeauthor*{Touzi-Barrasso-2020} \cite{Touzi-Barrasso-2020}} $($discretization of the common noise filtration$)$. But this requires another long technical proof and this is not the main purpose of this paper. 
    See also {\rm\citeauthor*{claisse-Zhenjie-Tan-2020} \cite{claisse-Zhenjie-Tan-2020}} for an existence and approximation of particular mean field games $($MFG with branching$)$.
    }
\end{remark}




\section{Proof of main results} \label{sec:proofs}

\subsection{Limit of Nash equilibria} \label{sec:proof-limitNAshEquilibria}

{\color{black} \paragraph*{Idea of the proof}
Before going into the details of the proof, in order to better understand our approach, we want to give the idea leading the proof in a simple situation. For simplicity, let us consider that $n=1,$ $U=[0,1],$ $b=0,$ $\ell=0$ and $\sigma(t,x,\pi,m,u):=\sqrt{ u^2 + \widetilde{\sigma}(m)^2}.$ In this setting, for a measure--valued control rule $\Pr \in \Pcb_V,$ the Fokker--Planck equation is rewritten: $\Pr$--a.s.
\begin{align} \label{eq:FP-example}
        \mathrm{d}\langle f,\mu'_t \rangle
	    =
	    &\int_{\Pc^n_U}  \int_{\R^n \x U} \frac{1}{2} \nabla^2f(x)u^2 m^x(\mathrm{d}u) \mu'_t(\mathrm{d}x) \Lambda'_t(\mathrm{d}m)\mathrm{d}t + \int_{\Pc^n_U}  \int_{\R^n} \frac{1}{2} \nabla^2f(x) \sigmat(m)^2 \mu'_t(\mathrm{d}x)\Lambda_t(\mathrm{d}m)\mathrm{d}t,
    \end{align}
where for each $m \in \Pc^n_U,$  $\R^n \ni x \to m^x \in \Pc(U)$ is a Borel function s.t. $m(\mathrm{d}x,\mathrm{d}u)=m^x(\mathrm{d}u) m(\mathrm{d}x,U).$   
The goal is to make a $connection$ between \Cref{eq:FP-example} and \Cref{eq:FixedMKV_strong} (the strong version). 


\medskip
To recover \Cref{eq:FixedMKV_strong}, we use \cite[Proposition 4.9]{MFD-2020}. Let us explain the idea. We consider an extension $(\Omt,\Ft:=(\Fct_t)_{t \in [0,T]},\widetilde{\Pr})$ of the probability space $(\Omb,\Fcb,\Pr)$ supporting a $\Ft$--Brownian motion $W,$ a $\Fct_0$--measurable random variable $X_0$ of law $\nu,$ and $F$ a uniform random variable. The variables $W,$ $X_0,$ $F$ and the $\sigma$--field $\Gcb_T$ are independent. Let us assume that \Cref{eq:FP-example} is uniquely solvable in the sense that: if there exists another $\Pc(\R^n)$--valued $\Gb$--adapted continuous process $(\vartheta'_t)_{t \in [0,T]}$ satisfying \Cref{eq:FP-example} (instead of $\mup$) then $\vartheta'=\mu'$ $\widetilde{\Pr}$--a.s.
By  \citeauthor*{blackwellDubins83} \cite{blackwellDubins83}, there exists a Borel application $\Phi: (q,v) \in \Pc(U) \x [0,1] \to \Phi(q)(v) \in U$ s.t. for all $q \in \Pc(U)$ and any $[0,1]$--valued uniform random variable $F,$
\begin{align} \label{eq:blackwell}
    \widetilde{\Pr} \circ \big(\Phi(q)(F) \big)^{-1}(\mathrm{d}u)=q(\mathrm{d}u).
\end{align}
We define $\alpha(t,x,F):=\Phi\big(\int_{\Pc^n_U} m^{x}(\mathrm{d}u) \Lambda'_t(\mathrm{d}m) \big)(F).$ Let $Z'$ be a $\Ft$--adapted solution of: $Z'_0=X_0,$ and 

\begin{align} \label{eq:sde-auxillary-example}
    \mathrm{d}Z'_t
    =
    \sqrt{\alpha(t,Z'_t,F)^2 + \int_{\Pc^n_U}\sigmat(m)^2 \Lambda_t(\mathrm{d}m)} \mathrm{d}W_t,
\end{align}
then by uniqueness of \Cref{eq:FP-example}, $\mu'_t=\Lc^{\widetilde{\Pr}}(Z'_t | \Gcb_t)$ $\widetilde{\Pr}$--a.s. If in addition \Cref{eq:sde-auxillary-example} is uniquely solvable, then $Z'$ is adapted to $\big(\Gcb_t \lor \sigma\{W_{t \wedge \cdot},F\} \big)_{t \in [0,T]}.$ Consequently $(\alpha(t,Z'_t,F))_{t \in [0,T]}$ is $\big(\Gcb_t \lor \sigma\{W_{t \wedge \cdot},F\} \big)_{t \in [0,T]}$--adapted. We see that an additional randomness is needed via a uniform random variable $F.$ \cite[Proposition 4.9]{MFD-2020} is essentially a result similar to the previous one without assuming the uniqueness of \Cref{eq:FP-example} and the uniqueness of \Cref{eq:sde-auxillary-example}. This is done by approximations and regulations. The next Proposition is a simplify version of \cite[Proposition 4.9]{MFD-2020}. We give a sketch of proof in \Cref{proof_prop_borrowed1}.

\begin{proposition}{\rm \cite[Proposition 4.9]{MFD-2020}} \label{Prop_borrowed1}
    For a sequence of $\Gb$--predictable processes $(\Lambda^k,\Lambda'^k)_{k \in \N^*}$ satisfying
    \begin{align*} 
    \Lim_{k \to \infty} \Prt \circ \big(  \Lambda^k, \Lambda'^k \big)^{-1}
        =
        \Pr \circ (\Lambda, \Lambdap)^{-1}\;\mbox{in}\;\Wc_p
    \end{align*}
Then, there exists a sequence of $U$--valued $\big(\sigma\{\Lambda^k_{t \wedge \cdot}, \Lambda'^k_{t \wedge \cdot},W_{t \wedge \cdot},F\} \big)_{t \in [0,T]}$--predictable processes  $(\alpha'^k)_{k \in \N^*}$  s.t. if we let $Z'^k$ be the solution of     

\begin{align*}
    \mathrm{d}Z'^k_t
    =
    \sqrt{(\alpha'^k_t)^2 + \int_{\Pc^n_U}\sigmat(m)^2 \Lambda^k_t(\mathrm{d}m)} \mathrm{d}W_t,
\end{align*}
we have, for a sub--sequence $(k_j)_{j \in \N^*},$
    \begin{align*}
        \Lim_{j \to \infty} \Lc^{\widetilde{\Pr}} \big(\mu'^j,\Lambda'^j, \Lambda^{k_j} \big)
        =
        \Lc^{\Pr} \big(\mu',\Lambda',\Lambda \big)\;\mbox{in}\;\Wc_p
    \end{align*}
    where $\mu'^j_t:=\Lc^{\widetilde{\Pr}}(Z'^{k_j}_t | \Lambda^{k_j}_{t \wedge \cdot}, \Lambda'^{k_j}_{t \wedge \cdot}),$ $\mb'^{j}_t:=\Lc^{\widetilde{\Pr}}(Z'^{k_j}_t, \alpha'^{k_j}_t | \Lambda^{k_j}_{t \wedge \cdot}, \Lambda'^{k_j}_{t \wedge \cdot})$ and $\Lambda'^j:=\delta_{\mb'^j_t}(\mathrm{d}m)\mathrm{d}t.$

\end{proposition}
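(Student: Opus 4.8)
The plan is to realise the limiting (relaxed) Fokker--Planck dynamics by genuine $U$--valued controls, following the uniqueness--based heuristic stated just before \Cref{Prop_borrowed1} but removing the uniqueness hypotheses. Recall the heuristic: if the auxiliary SDE \eqref{eq:sde-auxillary-example} were pathwise unique, its solution $Z'$ --- driven by the Brownian motion $W$ and the extra uniform variable $F$ --- would be adapted to $\sigma\{\Lambda^{k}_{t\wedge\cdot},\Lambda'^{k}_{t\wedge\cdot},W_{t\wedge\cdot},F\}$, the Blackwell--Dubins map \eqref{eq:blackwell} would turn the relaxed control into a strict one with the prescribed conditional law, and $\Lc(Z'_t\mid\cdot)$ would solve \eqref{eq:FP-example}. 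To make this rigorous I would (i) freeze the feedback on a time grid, which turns the auxiliary SDE into one with a coefficient that is constant in time on each cell, hence trivially pathwise unique; (ii) in the genuinely McKean--Vlasov situation of \cite[Proposition~4.9]{MFD-2020}, mollify the coefficients to recover well--posedness of \eqref{eq:FP-example} (which, in the specialised setting of the statement, is actually already linear in $\mu'$); and (iii) send the mesh, the mollification scale and $k$ to infinity jointly, extract a subsequence and identify the limit via the Fokker--Planck equation.

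\emph{Construction.} Fix $k$, a mesh $0=t_0<\cdots<t_M=T$ and, if needed, a mollification scale $\eta>0$; split $F$ into i.i.d.\ uniforms $(F_i)_{0\le i<M}$; and, using the disintegration $\Pc^n_U\ni m\mapsto m^x\in\Pc(U)$, form the averaged kernel $\overline m^{k}_t(\mathrm{d}u\mid x):=\int_{\Pc^n_U}m^x(\mathrm{d}u)\,\Lambda'^{k}_t(\mathrm{d}m)$. Starting from $X_0\sim\nu$, I would let $Z'^{k}$ solve, on each cell $(t_i,t_{i+1}]$,
\[
\mathrm{d}Z'^{k}_t=\sqrt{\big(\alpha'^{k}_t\big)^2+\textstyle\int_{\Pc^n_U}\sigmat(m)^2\,\Lambda^{k}_t(\mathrm{d}m)}\;\mathrm{d}W_t,\qquad \alpha'^{k}_t:=\Phi\big(\overline m^{k}_{t_i}(\cdot\mid Z'^{k}_{t_i})\big)(F_i),
\]
with $\Phi$ the map of \eqref{eq:blackwell}. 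On each cell the coefficient is constant in time and $\sigma\{\Lambda^{k},\Lambda'^{k}\}$--measurably determined by $Z'^{k}_{t_i}$ and $F_i$, so the equation is uniquely solvable and $(Z'^{k},\alpha'^{k})$ is adapted to $\sigma\{\Lambda^{k}_{t\wedge\cdot},\Lambda'^{k}_{t\wedge\cdot},W_{t\wedge\cdot},F\}$ --- exactly the process the statement asks for. By \eqref{eq:blackwell}, conditionally on $Z'^{k}_{t_i}$ and on $(\Lambda^{k},\Lambda'^{k})$ the law of $\alpha'^{k}_{t_i}$ is $\overline m^{k}_{t_i}(\cdot\mid Z'^{k}_{t_i})$; consequently $\mathrm{d}\langle f,\mu'^{k}_t\rangle$ agrees, up to the discretisation (and $O(\eta)$) error, with the right--hand side of \eqref{eq:FP-example} written with $(\Lambda'^{k},\Lambda^{k})$, and $\Lc(Z'^{k}_t,\alpha'^{k}_t\mid\cdot)$ agrees, up to the same error and the mismatch between $\mu'^{k}_t$ and the first marginal of $\overline m^{k}_t$, with $\overline m^{k}_t$. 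When the target $\Lambda'$ is not a Dirac, I would subdivide each cell further and draw $\alpha'^{k}$ successively from the ``layers'' of a decomposition of $\Lambda'^{k}_{t_i}$, so that the occupation measure of $s\mapsto\Lc(Z'^{k}_s,\alpha'^{k}_s\mid\cdot)$ over the cell reproduces $\Lambda'^{k}_{t_i}$ itself (the classical chattering construction), a standard estimate controlling the error.

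\emph{Passage to the limit.} I would use that $b,\sigma$ are bounded and $\sigma\sigma^\top\ge\theta\mathrm{I}_n$ to obtain uniform $L^p$ bounds on $Z'^{k}$ and, via the martingale--problem characterisation, relative compactness of $\big(\Lc^{\Prt}(\mu'^{k},\Lambda'^{k},\Lambda^{k})\big)_k$ in $\Wc_p$ on $\Cc^{n,p}_{\Wc}\x\M\x\M$, the $\Lambda$--marginals being already convergent by hypothesis. Along a subsequence $(k_j)$ on which the laws converge, with mesh $\to0$ and $\eta_{k_j}\to0$, I would pass to the limit in the discretised (mollified) Fokker--Planck identity: the limit $(\widehat\mu',\widehat\Lambda',\widehat\Lambda)$ satisfies \eqref{eq:FP-example} with $(\widehat\Lambda',\widehat\Lambda)$ equal in law to $(\Lambda',\Lambda)$; since \eqref{eq:FP-example}, for $(\Lambda',\Lambda)$ frozen, is linear in $\mu'$ with bounded, uniformly elliptic diffusion coefficient, it is uniquely solvable, so $\widehat\mu'=\mu'$. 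The parallel identification for $\mb'^{j}$, together with the marginal constraint \Cref{def:RelaxedCcontrol}(4) in the limit, shows that the de--relaxed control $\Lambda'^{j}$ converges to the same limit $\Lambda'$, whence $\Lc^{\Prt}(\mu'^{j},\Lambda'^{j},\Lambda^{k_j})\to\Lc^{\Pr}(\mu',\Lambda',\Lambda)$, after a final diagonal extraction over the mollification in the general case.

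\emph{Main obstacle.} The step I expect to be hardest is reconciling the adaptedness requirement on $\alpha'^{k}$ with the Fokker--Planck identity: a naive Blackwell--Dubins feedback $\alpha(t,Z'^{k}_t,F)$ leaves the auxiliary SDE \eqref{eq:sde-auxillary-example} with a merely measurable coefficient, for which pathwise uniqueness --- hence the desired adaptedness of $Z'^{k}$ to $\sigma\{\Lambda^{k},\Lambda'^{k},W,F\}$ --- is unavailable. Freezing the feedback on a grid repairs this, but then one must propagate the discretisation (and, in the general case, mollification) errors through the entire limiting argument; this bookkeeping, together with reproducing a non--Dirac $\Lambda'$ by chattering and controlling the mismatch between $\mu'^{k}_t$ and the $x$--marginal of the relaxed control, is precisely what makes the proof of \cite[Proposition~4.9]{MFD-2020} technically heavy.
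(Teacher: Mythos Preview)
Your approach diverges from the paper's in its regularisation device and, more importantly, in where it invokes uniqueness. The paper's sketch does \emph{not} discretise in time; instead it mollifies the disintegration in the space variable. For $\delta>0$ one replaces the raw kernel $x\mapsto m^x$ by
\[
\overline\sigma_\delta(x,\Lambda'_t)^2:=\int_{\Pc^n_U}\int_{\R\times U}u^2\,\frac{G_\delta(x-y)\,m(\mathrm{d}u,\mathrm{d}y)}{\int_\R G_\delta(x-y')\,m(\mathrm{d}y',U)}\,\Lambda'_t(\mathrm{d}m),
\]
with $G_\delta$ a smooth mollifier. This makes the coefficient smooth in $x$, so the $\delta$--Fokker--Planck equation is \emph{uniquely} solvable (call its solution $\mu'^\delta$), the SDE with the continuous--time Blackwell--Dubins feedback $\alpha^{\delta,k}(t,Z'^{\delta,k}_t,F)$ is well posed, and by that same uniqueness one identifies $\Lc(Z'^{\delta,k}_t\mid\Lambda'^k,\Lambda^k)=\mu'^{\delta,k}$ directly. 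Stability of the mollified equation in its (smooth) coefficients gives $\mu'^{\delta,k}\to\mu'^\delta$ as $k\to\infty$ for fixed $\delta$, and then $\mu'^\delta\to\mu'$ as $\delta\to0$. No time grid and no chattering are needed.

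Your scheme instead freezes the feedback on a grid and identifies the limit by asserting uniqueness of the \emph{un}-mollified equation \eqref{eq:FP-example}. The text immediately preceding \Cref{Prop_borrowed1} says explicitly that the result is obtained ``without assuming the uniqueness of \eqref{eq:FP-example}'', and your justification (``linear $+$ bounded $+$ uniformly elliptic $\Rightarrow$ uniquely solvable'') is incomplete: the coefficient $x\mapsto\int_{\Pc^n_U}\int_U u^2\,m^x(\mathrm{d}u)\,\Lambda'_t(\mathrm{d}m)$ is only Borel in $x$, since the disintegration $m\mapsto m^x$ carries no regularity. In this particular one--dimensional toy model uniqueness can in fact be rescued (via weak uniqueness for one--dimensional SDEs with measurable non--degenerate diffusion), but that is extra work you do not supply, and it does not transfer to the general coefficients of \cite[Proposition 4.9]{MFD-2020}, which is the actual target. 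More concretely, your passage to the limit in the discretised identity requires controlling the effect of replacing $Z'^k_t$ by $Z'^k_{t_i}$ \emph{inside} the Borel map $x\mapsto\overline m^k_{t_i}(\cdot\mid x)$; without any regularity of that map in $x$, this is precisely the obstacle the spatial mollification is designed to remove. In short, the mollification you relegated to an optional step for the ``genuinely McKean--Vlasov situation'' is the heart of the argument already in the simplified setting.
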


In the goal of making a $connection$ between \Cref{eq:FP-example} and \Cref{eq:FixedMKV_strong}, we will apply the previous result (in its general form) when the sequence $(\Lambda^k,\Lambda'^k)_{k \in \N^*}$ is adapted to the filtration of $B$ and there exists $(\nb^k,\nb'^k)$ s.t. $\Lambda'^k=\delta_{\nb'^k_t}(\mathrm{d}m)\mathrm{d}t$ and $\Lambda^k=\delta_{\nb^k_t}(\mathrm{d}m)\mathrm{d}t$ (see \Cref{lemm:convergenceMFGstrong}).

\medskip
    Let us mentioning that if $\Prt(\Lambdap=\Lambda,\;\Lambda'^k=\Lambda^k\;\forall k)=1$ i.e. \Cref{eq:FP-example} is a totally non--linear Fokker--Planck equation, $Z'^k$ will be equal to $Z^k$ a solution of a McKean--Vlasov equation i.e. $\mathrm{d}Z^k_t
    =
    \sqrt{(\alpha_t^k)^2 + \sigmat(\Lc^{\widetilde{\Pr}}(Z^k_t,\alpha^k_t|\Lambda^k))^2} \mathrm{d}W_t.$
    
    \subsubsection{Reformulation of the measure--valued control rules by shifting the distributions}
}
Before proceeding, let us give a reformulation of the measure--valued control rules which will be necessary for our proof. To make an analogy with the strong point of view, we want here to get a Fokker--Planck equation involving $\Lc(X^{\alpha,\alpha'}-\sigma_0B|\Gc_T)$ instead of $\Lc(X^{\alpha,\alpha'}|\Gc_T).$ To do this, all {\color{black} the} coefficients must be shifted.  
Let us define, for all $(t,\bb,\pi,m) \in [0,T] \x \Cc^\ell \x \Cc^n_{\Wc} \x \Pc^n_U,$
\begin{align} \label{eq:shift-proba-initial}
    \pi_t[\bb](\mathrm{d}y):= \int_{\R^n} \delta_{\big(y'+\sigma_0 \bb_t \big)}(\mathrm{d}y) \pi_t(\mathrm{d}y'),\;\; m[\bb_t](\mathrm{d}u,\mathrm{d}y):=\int_{\R^n \x U} \delta_{(y'+\sigma_0 \bb_t)}(\mathrm{d}y)m(\mathrm{d}u,\mathrm{d}y')
\end{align}
and any $q \in \M,$
\begin{align} \label{eq:shift-proba-M}
        q_t[\bb](\mathrm{d}m)\mathrm{d}t:=\int_{\Pc^n_U} \delta_{\big(\tilde m[\bb_t] \big)}(\mathrm{d}m) q_t(\mathrm{d}\widetilde m)\mathrm{d}t.
    \end{align}
In the same way, let us consider the {\color{black}``shifted''} generator $\widehat{\Lc}^{\circ},$
\begin{align} \label{eq:shift-generator}
        \widehat{\Lc}^{\circ}_t[ \varphi](y,\bb,\pi,u):= \frac{1}{2}  \mathrm{Tr}\big[a^\circ(t,y+\sigma_0\bb_t,\pi,u) \nabla^2 \varphi(y) \big] +b^{\circ}(t,y+\sigma_0\bb_t,\pi,u)^{\top} \nabla \varphi(y),
    \end{align}
and also
    \begin{align*} 
        \;[\widehat b,\widehat \sigma](t,y,\bb,\pi,m,u):=[b,\sigma](t,y+\sigma_0\bb_t,\pi,m,u).
    \end{align*}
    Notice that the function $[\widehat b,\widehat \sigma]: [0,T] \x \R^n \x \Cc^\ell \x \Cc^n_{\Wc} \x U \to \R^n \x \S^{n}$ {\color{black}is} continuous and for each $\bb \in \Cc^\ell,$ $[\widehat b,\widehat \sigma](\cdot,\cdot,\bb,\cdot,\cdot)$ verify \Cref{assum:main1}.      

\medskip    
Next, on the canonical filtered space $(\Omb,\Fb),$ let us define the $\Pc(\R^n)$--valued $\Fb$--adapted continuous process $(\vartheta'_t)_{t \in [0,T]}$ and the $\Pc^n_U$--valued $\Fb$--predictable process  $(\Theta'_t)_{t \in [0,T]}$ by
    \begin{align} \label{eq:shift-proba}
        \vartheta'_t(\omb):=\mu'_t(\omb)[-B(\omb)]\;\;\mbox{and}\;\;\Theta'_t(\omb)(\mathrm{d}m):=\Lambda'_t(\omb)[-B(\omb)](\mathrm{d}m),\;\mbox{for all}\;(t,\omb)\in [0,T] \x \Omb.
    \end{align}    
    

\begin{lemma} \label{lemma:reformulation-approximation}
    Let $\Pr \in \Pcb_V.$ Then, $\Theta'_t(\Z_{\vartheta'_t})=1,$ $\mathrm{d}\Pr \otimes \mathrm{d}t,$ a.e. $(t,\omb) \in [0,T] \x \Omb,$ and $\Pr$--a.e. $\omb \in \Omb,$ for all $(f,t) \in C^{2}_b(\R^n) \x [0,T],$
    \begin{align*}
        N_t(f)
        =
        \langle f,\vartheta'_t \rangle
	    -
	    \langle f,\nu \rangle
	    &
	    -\int_0^t\int_{\Pc^n_U} \langle \widehat{\Lc}^{\circ}_r[ f](\cdot,B,\mu,\cdot), m \rangle \Theta'_r(\mathrm{d}m)\mathrm{d}r
	    - \int_0^t\int_{\Pc^n_U} \langle \Lc^\star_r[ f](\cdot,\mu,m), \vartheta'_r\rangle \Lambda_r(\mathrm{d}m)\mathrm{d}r.
    \end{align*}
    
    Moreover, there exists a sequence $(G^k)_{k \in \N^*},$ such that for each $k \in \N^*,$  $G^k:[0,T] \x \Cc^\ell \x \Cc^n_{\Wc} \x \M(\Pc^n_U) \to \Pc^n_U$ is a continuous function and
    
    \begin{align} \label{eq:density-control}
        \Lim_{k \to \infty}\Lc^{\mathrm{P}} \Big( \delta_{G^k \big(t,B_{t \wedge \cdot},\mu_{t \wedge \cdot}, \Lambda_{t \wedge \cdot} \big)}(\mathrm{d}m)\mathrm{d}t,B,\mu,\Lambda \Big)
        =
        \Lc^{\mathrm{P}} \Big( \Theta',B,\mu,\Lambda\Big)\;\mbox{in}\;\Wc_p.
    \end{align}

\end{lemma}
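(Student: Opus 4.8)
The proof splits into two independent parts. For the first part (the shifted Fokker--Planck equation), the strategy is a change-of-variables computation. Starting from the definition \eqref{eq:FP-equation} of $N_t(f)$, I would apply it to the test function $y \mapsto f(y)$ evaluated along the trajectory $x - \sigma_0 B_t$, i.e. rewrite $\langle f(\cdot - \sigma_0 B_t), \mu'_t \rangle = \langle f, \vartheta'_t \rangle$ using the very definition \eqref{eq:shift-proba} of $\vartheta'$, and similarly $\langle f, \mu_0 \rangle = \langle f, \nu \rangle$ by condition 1 of \Cref{def:RelaxedCcontrol}. The two integral terms require matching the shifted generators: since $\widehat{\Lc}^\circ_r[f](y,\bb,\pi,u) = \Lc^\circ_r[f(\cdot - \sigma_0\bb_r + \sigma_0\bb_r)]$ re-centered, one checks term by term that $\int_{\R^n} \Lc^\circ_r[f(\cdot - \sigma_0 B_r)](x,\mu,u)\,\mup_r(\mathrm{d}x) = \int_{\R^n}\widehat{\Lc}^\circ_r[f](y,B,\mu,u)\,\vartheta'_r(\mathrm{d}y)$ by pushing forward $\mup_r$ through $y = x - \sigma_0 B_r$, and likewise for $\Lc^\star$ (which only acts on $\nabla^2 f, \nabla f$ and is translation-covariant in exactly the same way). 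The shifting of $m$ inside $\Lambdap_r$ is handled by \eqref{eq:shift-proba-M}, giving $\Theta'_r$. The consistency statement $\Theta'_t(\Z_{\vartheta'_t}) = 1$ is immediate from condition 4 of \Cref{def:RelaxedCcontrol} ($\Lambda'_t(\Z_{\mu'_t})=1$), since the shift $m \mapsto m[-B_t]$ maps $\Z_{\mu'_t}$ bijectively onto $\Z_{\vartheta'_t}$ and preserves the marginal structure; one just needs that $(t,\omb)\mapsto \Theta'_t(\omb)$ is jointly measurable, which follows from measurability of the shift map and the predictability of $(t,\omb)\mapsto \Lambda'_t(\omb)$.

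For the second part — the density/approximation statement \eqref{eq:density-control} — the plan is to first note that $\Theta'$ is a $\Pc^n_U$-valued $\Fb$-predictable process, hence $(\Lambda'_t(\mathrm{d}m)\mathrm{d}t) = \delta_{\Theta'_t}$-type objects live in $\M(\Pc^n_U)$ and $\Lc^{\mathrm{P}}(\Theta', B, \mu, \Lambda)$ is a well-defined law. The key is a standard density argument: predictable processes adapted to the filtration generated by $(B, \mu, \Lambda)$ can be approximated, in the sense of convergence in $\Wc_p$ of the joint law together with $(B,\mu,\Lambda)$, by processes that are \emph{continuous functionals} of the paths $(B_{t\wedge\cdot}, \mu_{t\wedge\cdot}, \Lambda_{t\wedge\cdot})$. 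The usual route is: (a) approximate $\Theta'$ in $L^p(\mathrm{d}\mathrm{P}\otimes\mathrm{d}t)$ by processes piecewise-constant in time whose values on each interval are continuous functions of finitely many path coordinates; (b) mollify/regularize these to get genuinely continuous functionals $G^k$ with values in $\Pc^n_U$; (c) pass from $L^p$ convergence of the processes to $\Wc_p$ convergence of the laws of the associated measures in $\M(\Pc^n_U)$, using that the map sending a process to its occupation-type measure $\delta_{(\cdot)_t}(\mathrm{d}m)\mathrm{d}t \in \M$ is continuous for this topology and that the approximation can be done jointly with $(B,\mu,\Lambda)$ held fixed (so the joint law converges). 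A subtle point is maintaining the $\Pc^n_U$-valued constraint under mollification: one mollifies in a way that stays inside the (convex) space of probability measures, e.g. by convolving in the measure argument or taking convex combinations indexed by a partition, so that $G^k$ genuinely maps into $\Pc^n_U$.

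The main obstacle is step (b)--(c) of the second part: producing the $G^k$ as \emph{continuous} (not merely measurable) functions into the infinite-dimensional space $\Pc^n_U$, while simultaneously (i) respecting that the argument is the whole history $(B_{t\wedge\cdot}, \mu_{t\wedge\cdot}, \Lambda_{t\wedge\cdot})$ and (ii) obtaining convergence in the fairly strong metric $\Wc_p$ on $\Pc(\M(\Pc^n_U)\x\Cc^\ell\x\Cc^n_{\Wc}\x\M)$, which requires a $p$-th moment / uniform integrability control. I would handle (ii) by noting $\Pc^n_U$ can be taken bounded enough (or using compactness of $U$ and the moment bounds on $\mu'$ inherited from \Cref{assum:main1}) that $\Wc_p$-convergence reduces to weak convergence plus a moment estimate that passes through the construction. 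I expect this lemma's second half is essentially a packaging of a known regularization result (cited as stemming from the techniques of \cite{MFD-2020}), so the cleanest writeup reduces \eqref{eq:density-control} to an application of such a result after verifying its hypotheses for $\Theta'$; the genuinely new content is the bookkeeping in the first part showing the shifted equation holds verbatim.
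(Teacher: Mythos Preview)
Your treatment of the first part (the shifted Fokker--Planck identity and the consistency $\Theta'_t(\Z_{\vartheta'_t})=1$) is correct and matches the paper, which dismisses this as ``just a reformulation of the process $N(f)$''.

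For the second part there is a conceptual gap. You write that ``$\Theta'$ is a $\Pc^n_U$-valued $\Fb$-predictable process'', but this is not correct: from \eqref{eq:shift-proba} and \eqref{eq:shift-proba-M}, $\Theta'_t$ is the pushforward of $\Lambda'_t \in \Pc(\Pc^n_U)$ under a shift map, so $\Theta'_t$ is itself $\Pc(\Pc^n_U)$-valued, not $\Pc^n_U$-valued. The approximation \eqref{eq:density-control} therefore is not a mere regularization of a Borel function by continuous ones; it asks that a genuinely \emph{relaxed} (randomized over $\Pc^n_U$) process be approximated in law by a sequence of \emph{strict} processes $t \mapsto \delta_{G^k(t,\cdot)}$ with $G^k$ valued in $\Pc^n_U$, in the stable/Young-measure topology on $\M(\Pc^n_U)$. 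Your step (a) --- $L^p(\mathrm{d}\Pr \otimes \mathrm{d}t)$ approximation of $\Theta'$ by $\Pc^n_U$-valued processes --- cannot succeed, since Diracs are extremal and do not converge in any pointwise metric to a non-Dirac $\Theta'_t$.

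The paper's route is: (i) use a moment bound (an extension of \cite[Lemma~5.2]{MFD-2020}) to show that $\Theta'_t$ is supported, $\mathrm{d}\Pr\otimes\mathrm{d}t$-a.e., on a fixed \emph{compact} set $\Gamma \subset \Pc_p(\R^n \times U)$; (ii) observe that $\Theta'$ is $\Gb$-predictable (since $\Lambda'$ is, by condition~3 of \Cref{def:RelaxedCcontrol}), hence $\Theta'_t = G(t, B_{t\wedge\cdot}, \mu_{t\wedge\cdot}, \Lambda_{t\wedge\cdot})$ for a Borel $G$ valued in $\Pc(\Pc^n_U)$; (iii) invoke a Young-measure chattering/denseness result (\cite[Theorem~2.2.3]{CastaingCharles2004YMoT} or \cite[Proposition~C.1]{carmona2014mean}) to approximate $G$ by $\Pc^n_U$-valued Borel maps $G^k$; and only then (iv) a further approximation to make $G^k$ continuous. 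The compactness in (i) is what reduces $\Wc_p$-convergence to weak convergence and makes (iii) applicable. Your outline conflates (iii) and (iv), and the substantive step --- passing from relaxed to strict --- is precisely (iii).
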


\begin{proof}
    The first point is just a reformulation of the process $N(f).$ For \Cref{eq:density-control}, as $\mathrm{P} \in \Pcb_V,$ by (an easy extension of) \cite[Lemma 5.2]{MFD-2020}, for $p'>p,$
    
        $$
            \sup_{t \in [0,T]} \int_{\R^n}|x|^{p'} \vartheta'_t(\om)(\mathrm{d}x)
            +
            \E^{\mathrm{P}}\bigg[
            \sup_{t \in [0,T]} \int_{\R^n}|x|^{p'} \mu_t(\mathrm{d}x)
            \bigg]
            \le 
            K 
            \bigg[ 1+\int_{\R^n}|y|^{p'}\nu(\mathrm{d}y) \bigg],\;\mathrm{P}\mbox{--a.e.}\;\om \in \Omb.
        $$
    We define $\Gamma:=
    \Big\{
        m \in \Pc^n_U:\; \int_{\R^n} |y|^{p'} m(\mathrm{d}y,U) \le \hat K
    \Big\},$ where $\hat K>0$ is such that $\hat K>K \bigg[ 1+\int_{\R^n}|y|^{p'}\nu(\mathrm{d}y) \bigg],$ with $K$ is the constant previously used. Then, as $U$ is a compact set, we can notice that $\Gamma$ is a compact set of $\Pc_p(\R^n \x U),$ and one has $\Theta'_t(\Gamma)=1,$ $\mathrm{d}\mathrm{P} \otimes \mathrm{d}t,$ a.e. $(t,\om) \in [0,T] \x \Omb.$ $\Theta'$ is $\Gb$--predictable because $\Lambda'$ is $\Gb$--predictable. Then we can write $\Theta'_t=G(t,B_{t \wedge \cdot},\mu_{t \wedge \cdot}, \Lambda_{t \wedge \cdot} )$ where $G:[0,T] \x \Cc^\ell \x \Cc^n_{\Wc} \x \M(\Pc^n_U) \to \Pc(\Pc^n_U)$ is a Borel function.  By combining all the previous observation, using \cite[Theorem 2.2.3]{CastaingCharles2004YMoT} (or \cite[Proposition C.1]{carmona2014mean}), there exists a sequence $(G^k)_{k \in \N^*}$ such that for each $k \in \N^*$  $G^k:[0,T] \x \Cc^\ell \x \Cc^n_{\Wc} \x \M(\Pc^n_U) \to \Pc^n_U$ is a Borel function and
    \begin{align*}
        \Lim_{k \to \infty} \Lc^{\mathrm{P}} \Big( \delta_{G^k \big(t,B_{t \wedge \cdot},\mu_{t \wedge \cdot}, \Lambda_{t \wedge \cdot} \big)}(\mathrm{d}m')\mathrm{d}t,B,\mu,\Lambda \Big)
        =
        \Lc^{\mathrm{P}} \big( \Theta',B,\mu,\Lambda\big).
    \end{align*}    
    By using another approximation, we can take $G^k$ as a continuous function.
\end{proof}

\subsubsection{Technical lemmas}
In this section, we will show some technical results needed to prove our first limit theorem result, namely Theorem \ref{thm:limitNashEquilibrium} using the more general form of  \cite[Proposition 4.9]{MFD-2020} with common noise.

\medskip
The following lemma establishes a result which implies that any measure--valued control rule satisfying some technical conditions can be approximated by processes of type $X^{\alpha,\alpha'}$ (see Definition \ref{eq:FixedMKV_strong}).

\begin{lemma}
\label{lemm:convergenceMFGstrong}
    Let {\rm \Cref{assum:main1}} hold true and $\mathrm{P} \in \Pcb_V.$ On the probability space $(\Om,\H,\P),$ for any sequence $(\alpha^k)_{k \in \N^*} \subset \Ac$ satisfying
    \begin{align*}
        \Lim_{k \to \infty} 
        \P \circ \Big( \mu^{\alpha^k}, \delta_{\mub^{\alpha^k}_t}(\mathrm{d}m) \mathrm{d}t, B \Big)^{-1}
        =
        \mathrm{P} \circ \big( \mu,\Lambda,B \big)^{-1},
    \end{align*}
    then there exists a family of probability $(\Pr^k_f)_{(k,f) \in \N^* \x [0,1]} \subset \Pcb_S$ such that for each $k \in \N^*$:
    \begin{align*}
        [0,1]\ni f  \mapsto \Pr^k_f \in \Pc(\Omb)\;\mbox{is Borel measurable},\;\;\Pr^k_f \circ (\mu,\Lambda,B)^{-1}=\P \circ \big( \mu^{\alpha^k}, \delta_{\mub^{\alpha^k}_t}(\mathrm{d}m) \mathrm{d}t, B \big)^{-1}\;\mbox{for each}\;f \in [0,1],
    \end{align*}
    and, for a sub--sequence $(k_j)_{j \in \N^*},$
    \begin{align*}
        &\Lim_{j \to \infty}
        \int_0^1 \E^{\Pr^{k_j}_f}\big[J(\mu',\mu,\Lambda',\Lambda) \big]  \mathrm{d}f
        =
        \E^{\Pr}\big[J(\mu',\mu,\Lambda',\Lambda) \big].
    \end{align*}
    
\end{lemma}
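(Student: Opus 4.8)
The statement to prove is \Cref{lemm:convergenceMFGstrong}: given a measure--valued control rule $\mathrm{P} \in \Pcb_V$ and a sequence $(\alpha^k)_{k}$ of admissible controls whose induced data $(\mu^{\alpha^k},\delta_{\mub^{\alpha^k}_t}(\mathrm{d}m)\mathrm{d}t,B)$ converge to $(\mu,\Lambda,B)$ under $\mathrm{P}$, one must build controlled McKean--Vlasov--type objects $\mathrm{P}^k_f \in \Pcb_S$ (indexed additionally by an auxiliary randomization variable $f \in [0,1]$) with the prescribed ``fixed part'' equal to that of $\alpha^k$, and along a subsequence the averaged reward $\int_0^1 \E^{\mathrm{P}^{k_j}_f}[J]\,\mathrm{d}f$ converges to $\E^{\mathrm{P}}[J]$. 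The overall strategy is: first apply \Cref{lemma:reformulation-approximation} to $\mathrm{P}$ to replace $\Lambda'$ by an approximating sequence $\Theta'^{(m)}:=\delta_{G^m(t,B_{t\wedge\cdot},\mu_{t\wedge\cdot},\Lambda_{t\wedge\cdot})}(\mathrm{d}m')\mathrm{d}t$ of continuous-function--driven controls, which are $\Gb$--predictable; then, for the fixed pair $(\mu^{\alpha^k},\Lambda^k:=\delta_{\mub^{\alpha^k}_t}(\mathrm{d}m)\mathrm{d}t)$ (which is $\sigma\{B_{t\wedge\cdot}\}$--adapted by construction of the strong MFG setup), feed $(\Lambda^k,\Theta'^{(m),k})$ — where $\Theta'^{(m),k}$ means $G^m$ evaluated along the $k$-th fixed flow — into \Cref{Prop_borrowed1} (in its general common-noise form) to obtain $U$--valued predictable processes $\alpha'^{k}$ and associated controlled state $Z'^k$, then set $\mathrm{P}^k_f$ to be the law of the resulting $(\mu'^k,\mu^{\alpha^k},\Lambda'^k,\Lambda^k,B)$ with $f$ playing the role of the uniform randomizer $F$; finally pass to the limit in the reward functional.

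\textbf{Key steps in order.} (1) Use the shift reformulation (\Cref{lemma:reformulation-approximation}): the Fokker--Planck equation for $\mathrm{P}$ becomes one for $(\vartheta',\Theta')$ driven by the shifted generators $\widehat{\Lc}^\circ$, $\Lc^\star$, and $\Theta'$ is $\Gb$--predictable with $\Theta'_t(\Z_{\vartheta'_t})=1$; and there is a sequence $G^m$ of continuous functions with $\delta_{G^m(t,B_{t\wedge\cdot},\mu_{t\wedge\cdot},\Lambda_{t\wedge\cdot})}(\mathrm{d}m')\mathrm{d}t \to \Theta'$ in $\Wc_p$ jointly with $(B,\mu,\Lambda)$. (2) For fixed $m$, use the hypothesis $\P\circ(\mu^{\alpha^k},\Lambda^k,B)^{-1}\to\mathrm{P}\circ(\mu,\Lambda,B)^{-1}$ together with continuity of $G^m$ to get $\P\circ(\delta_{G^m(t,B_{t\wedge\cdot},\mu^{\alpha^k}_{t\wedge\cdot},\Lambda^k_{t\wedge\cdot})}(\mathrm{d}m')\mathrm{d}t,\Lambda^k,B)^{-1}\to\mathrm{P}\circ(\delta_{G^m(\cdots)}(\mathrm{d}m')\mathrm{d}t,\Lambda,B)^{-1}$. (3) Apply \Cref{Prop_borrowed1} (general form, with the shifted coefficients $\widehat b,\widehat\sigma$ and the fixed-part generator $\Lc^\star$ along $\Lambda^k$) to the sequence $(\Lambda^k, \delta_{G^m(t,B_{t\wedge\cdot},\mu^{\alpha^k}_{t\wedge\cdot},\Lambda^k_{t\wedge\cdot})}(\mathrm{d}m')\mathrm{d}t)_{k}$: this yields predictable $U$--valued $\alpha'^{k}$, adapted to $\sigma\{B_{t\wedge\cdot},\Lambda^k_{t\wedge\cdot},\dots,W_{t\wedge\cdot},F\}$, and states $Z'^k$ such that $\Lc^{\widetilde\P}(\mu'^k,\Lambda'^k,\Lambda^k)\to\Lc^{\mathrm{P}}(\vartheta',\Theta',\Lambda)$ along a subsequence, with $\Lambda'^k=\delta_{\mb'^k_t}(\mathrm{d}m)\mathrm{d}t$, $\mb'^k_t=\Lc^{\widetilde\P}(Z'^k_t+\sigma_0 B_t,\alpha'^k_t\mid\Gc^{k}_t)$ after un-shifting. (4) Un-shift: $X^k_t:=Z'^k_t+\sigma_0 B_t$ then solves an SDE of the form \eqref{eq:FixedMKV_strong} with the fixed McKean--Vlasov data $(\mu^{\alpha^k},\mub^{\alpha^k})$; hence the law $\mathrm{P}^k_f$ of $(\Lc(X^k_t\mid\Gc^k_t), \mu^{\alpha^k}, \delta_{\Lc(X^k_t,\alpha'^k_t\mid\Gc^k_t)}(\mathrm{d}m)\mathrm{d}t, \Lambda^k, B)$ conditioned on $F=f$ belongs to $\Pcb_S$ and has the required fixed-part law. (5) Diagonalize over $m$ and $k$, and conclude convergence of the reward: write $J$ as an integral of $L^\circ,L^\star,g$ against the relevant marginals, use the $p$-growth bound on $(L,g)$ from \Cref{assum:main1}(v) together with the uniform $p'$-moment estimates from \cite[Lemma 5.2]{MFD-2020} (giving uniform integrability), and the joint $\Wc_p$-convergence established above, to pass to the limit; the averaging $\int_0^1 \cdot\,\mathrm{d}f$ simply realizes the law of the randomizer and is handled by Fubini.

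\textbf{Main obstacle.} The delicate point is step (3)--(4): \Cref{Prop_borrowed1} as stated is in a simplified scalar setting, so the real work is invoking its general common-noise version correctly — ensuring that the fixed part $(\Lambda^k)$ enters only through the ``frozen'' generator $\Lc^\star$ evaluated along $\mu$ and not through the controlled law, that the $\Gb$--predictability of the $G^m$-driven controls is preserved so that the constructed $\mathrm{P}^k_f$ genuinely lies in $\Pcb_S$ (i.e. the controlled law $\mu'^k$ is $\Gc^k$-adapted and satisfies the fixed-point-free SDE \eqref{eq:FixedMKV_strong}), and that the subsequence extracted for the convergence $\Lc^{\widetilde\P}(\mu'^k,\Lambda'^k,\Lambda^k)\to\Lc^{\mathrm{P}}(\mu',\Lambda',\Lambda)$ can be taken uniformly while also accommodating the diagonalization over the index $m$ of the continuous approximants $G^m$. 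A secondary technical nuisance is bookkeeping the shift by $\sigma_0 B$ consistently across the Fokker--Planck equation, the reward functional, and the definition of $\mathrm{P}^k_f$, so that at the end the un-shifted objects really reproduce \eqref{eq:FixedMKV_strong} and \eqref{def:strong_value-function}. The reward convergence in step (5) is then routine given the moment bounds and continuity of $L^\circ,L^\star,g$.
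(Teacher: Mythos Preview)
Your proposal is correct and follows essentially the same route as the paper: the paper's proof also proceeds by (i) applying \Cref{lemma:reformulation-approximation} to pass to the shifted quantities $(\vartheta',\Theta')$ and obtain the continuous approximants $G^l$, (ii) diagonalizing over $l$ and $k$ so that $\Theta'^k:=\delta_{G^{l_k}(t,B_{t\wedge\cdot},\mu^{\alpha^k}_{t\wedge\cdot},\Lambda^k_{t\wedge\cdot})}(\mathrm{d}m)\mathrm{d}t$ converges jointly, (iii) invoking the general common--noise form of \cite[Proposition 4.9]{MFD-2020} to produce the randomized controls $\alpha'^k$ and states $\Xh'^k$, (iv) un--shifting via $X'^k:=\Xh'^k+\sigma_0 B$ and using continuity of the shift map, and (v) defining $\Pr^k_f$ as the conditional law given $F=f$. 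Your identification of the main obstacle---correctly feeding the fixed part $\Lambda^k$ only through $\Lc^\star$ while the controlled part goes through $\widehat{\Lc}^\circ$, and ensuring $\Pr^k_f\in\Pcb_S$---is exactly the point the paper handles by noting that since $(\mu^{\alpha^k},\Theta'^k,\Lambda^k)$ are $B$--adapted one has $\G^k\subset\G$ and hence the conditional laws reduce to conditioning on $\Gc_t$.
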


\begin{proof}
$\underline{Step\;1:Reformulation}$:
For $\Pr \in \Pcb_V,$ by definition, $\Pr$--a.s. $\om \in \Omb,$ $N_t(f)=0$ for all $f \in C^{2}_b(\R^n)$ and $t \in [0,T].$ By Lemma \ref{lemma:reformulation-approximation}, recall that $(\vartheta'_t)_{t \in [0,T]}$ and $(\Theta'_t)_{t \in [0,T]}$ {\color{black}are} defined in \eqref{eq:shift-proba}, one has $\Theta'_t(\Z_{\vartheta'_t})=1,$ $\mathrm{d}\Pr \otimes \mathrm{d}t$ a.s. $(t,\om) \in [0,T] \x \Omb,$ and $\Pr$--a.s. $\om \in \Omb,$ for all $(f,t) \in C^{2}_b(\R^n) \x [0,T],$
   \begin{align*}
        0
        =
        \langle f,\vartheta'_t \rangle
	    -
	    \langle f,\nu \rangle
	    &
	    -\int_0^t\int_{\Pc^n_U} \langle \widehat{\Lc}^{\circ}_r[ f](\cdot,B,\mu,\cdot), m \rangle \Theta'_r(\mathrm{d}m)\mathrm{d}r
	    - \int_0^t\int_{\Pc^n_U} \langle \Lc^\star_r[ f](\cdot,\mu,m), \vartheta'_r\rangle \Lambda_r(\mathrm{d}m)\mathrm{d}r.
    \end{align*}

    \medskip
    $\underline{Step\;2:Approximation}$: By Lemma \ref{lemma:reformulation-approximation}, there exists a sequence $(G^l)_{l \in \N^*}$ such that for each $l \in \N^*,$  $G^l:[0,T] \x \Cc^\ell \x \Cc^n_{\Wc} \x \M(\Pc^n_U) \to \Pc^n_U$ is a continuous function and
    \begin{align*}
        \Lim_{l \to \infty} \Lc^{\mathrm{P}} \Big( \delta_{G^l \big(t,B_{t \wedge \cdot},\zeta_{t \wedge \cdot}, \Lambda_{t \wedge \cdot} \big)}(\mathrm{d}m)\mathrm{d}t,B,\mu,\Lambda \Big)
        =
        \Lc^{\mathrm{P}} \big( \Theta',B,\mu,\Lambda\big).
    \end{align*}

\medskip
Now, we apply \cite[Proposition 4.9]{MFD-2020} (see also \cite[Proposition 4.7]{MFD-2020}). 
    First, there exists a sub--sequence $(l_k)_{k \in \N^*} \subset \N^*,$ such that if $\Lambda^k:=\delta_{\mub^{\alpha^{k}}_t}(\mathrm{d}m)\mathrm{d}t,$ and 
    \begin{align*}
        \mb'^{k}_t
        :=
        G^{l_k} \big(t,B_{t \wedge \cdot},\mu^{\alpha^k}_{t \wedge \cdot}, \Lambda^k_{t \wedge \cdot} \big)
        \;\mbox{and}\;
        \Theta'^{k}_t(\mathrm{d}m)\mathrm{d}t
        :=
        \delta_{\mb'^{k}_t}(\mathrm{d}m)\mathrm{d}t,
    \end{align*}
    one has
\begin{align*}
        &\Lim_{k \to \infty} 
        \P \circ \big( \Theta'^{k}, \mu^{\alpha^k}, \Lambda^k, B \big)^{-1}
        =
        \Lim_{l \to \infty} \Lc^{\mathrm{P}} \Big(\delta_{G^l \big(t,B_{t \wedge \cdot},\mu_{t \wedge \cdot}, \Lambda_{t \wedge \cdot} \big)}(\mathrm{d}m)\mathrm{d}t,\mu,\Lambda,B \Big)
        =
        \Lc^{\mathrm{P}} \big( \Theta',\mu,\Lambda,B \big).
\end{align*}
Next, under Assumption \ref{assum:main1}, by \cite[Proposition 4.9]{MFD-2020} (with separability condition see \cite[Remark 4.11]{MFD-2020}), as $(\xi,W)$ is $\P$ independent of $(B,\mu^{\alpha^k},\mub^{\alpha^k})_{k \in \N^*},$ there exist a $[0,1]$--valued uniform random variable $F$ independent of $(\xi,W,B),$ and a Borel function $R^{k}: [0,T] \x \R^n \x \Cc^n_{\Wc} \x \M \x \M \x \Cc^n \x \Cc^\ell \x [0,1] \to U$ such that if we let $\Xh'^{k}$ be the unique strong solution of: for all $t \in [0,T],$
\begin{align} \label{eq:general-weak_McK}
    \mathrm{d}\Xh'^{k}_t
    =
    \widehat{b}\big(t,\Xh'^{k}_t,B,\mu^{\alpha^k}, \mub^{\alpha^k}_t,\alpha'^{k}_{t}\big) \mathrm{d}t
    +
    \widehat{\sigma} \big(t,\Xh'^{k}_t,B,\mu^{\alpha^k}, \mub^{\alpha^k}_t,\alpha'^{k}_{t}\big)
    \mathrm{d}W_t\;\mbox{with}\;\Xh'^k_0=\xi,
\end{align}
    where $\Gc^{k}:=(\Gc^{k}_s)_{s \in [0,T]}:=(\sigma \{ \mu^{\alpha^k}_{s \wedge \cdot},\Theta'^{k}_{s \wedge \cdot},\Lambda^k_{s \wedge \cdot}, B_{s \wedge \cdot} \})_{s \in [0,T]},$
\begin{align*}
    \alpha'^{k}_t:= R^k\big(t,\xi,\mu^{\alpha^k}_{t \wedge \cdot},\Theta'^{k}_{t \wedge \cdot},\Lambda^k_{t \wedge \cdot},W_{t \wedge \cdot}, B_{t \wedge \cdot},F \big),\;\overline{\vartheta}'^{k}_t:=\Lc\big(\Xh'^{k}_t,\alpha'^{k}_t\big|\Gc^k_t\big)\;\mbox{and}\;\vartheta'^{k}_t:=\Lc\big(\Xh'^{k}_t\big|\Gc^{k}_t\big),
\end{align*}
then $\Lim_{k \to \infty} \E^{\P} \bigg[\int_0^T \Wc_p \big(\overline{\vartheta}'^{k}_t,\mb'^{k}_t \big)^p \mathrm{d}t \bigg]=0,$ and
    \begin{align*}
        \Lim_{j \to \infty} \Lc \big(\vartheta'^{k_j},V'^{k_j},\mu^{\alpha^{k_j}},\Lambda^{k_j},B \big)
        =
        \Lc^{\Pr} \big(\vartheta',\Theta',\mu,\Lambda,B \big),\;\mbox{in}\;\Wc_p,
    \end{align*}
    where $V'^{k}_t(\mathrm{d}m)\mathrm{d}t:=\delta_{\overline{\vartheta}'^{k}_t}(\mathrm{d}m)\mathrm{d}t$ and $(k_j)_{j \in \N^*} \subset \N^*$ is a sub--sequence.
    
\medskip
$\underline{Step\;3:Rewriting}$: {\color{black}Notice that, as $(\mu^{\alpha^k},\Theta'^{k},\Lambda^k)$ are adapted to the filtration of $B$ then $\G^k \subset \G.$} Besides, $(\xi,F,W)$ are independent of $\Gc_T,$ therefore $\Lc\big(\Xh'^k_t,\alpha'^{k}_t\big|\Gc^k_t\big)=\Lc\big(\Xh'^k_t,\alpha'^{k}_t\big|\Gc_t\big),$ $\P$--a.s. for all $t \in [0,T].$ Using definition of $[\widehat b, \widehat \sigma]$ (see the equations \eqref{eq:shift-generator}),
\begin{align*} 
    \mathrm{d}\Xh'^k_t
    =
    b\big(t,\Xh'^k_t+\sigma_0B_t,\mu^{\alpha^k},\mub^{\alpha^k}_t,\alpha'^{k}_{t}\big) \mathrm{d}t
    +
    \sigma \big(t,\Xh'^k_t+\sigma_0B_t,\mu^{\alpha^k},\mub^{\alpha^k}_t,\alpha'^{k}_{t}\big)
    \mathrm{d}W_t,\;\Xh'^k_0=\xi.
\end{align*}
Denote $X'^k:=\Xh'^k+\sigma_0B,$ one finds
\begin{align*} 
    \mathrm{d}X'^k_t
    =
    b\big(t,X'^k_t,\mu^{\alpha^k},\mub^{\alpha^k}_t,\alpha'^{k}_{t}\big) \mathrm{d}t
    +
    \sigma\big(t,X'^k_t,\mu^{\alpha^k},\mub^{\alpha^k}_t,\alpha'^{k}_{t}\big)
    \mathrm{d}W_t+\sigma_0 \mathrm{d}B_t
\end{align*}

It is straightforward to check that the function 
\begin{align*}
    (\pi,q,\bb) \in \Cc^n_{\Wc} \x \M \x \Cc^\ell \to \big(\pi[\bb],q_t[\bb](\mathrm{d}m)\mathrm{d}t,\bb \big) \in \Cc^n_{\Wc} \x \M \x \Cc^\ell
\end{align*}
is continuous. Consequently, one has
    \begin{align*}
        &\Lim_{j \to \infty} \Lc \Big( \big(\Lc(X'^{k_j}_s|\Gc_s) \big)_{s \in [0,T]}, \delta_{\Lc(X'^{k_j}_s,\alpha'^{k_j}_s|\Gc_s)} (\mathrm{d}m)\mathrm{d}s,\mu^{\alpha^{k_j}},\delta_{\mub^{\alpha^{k_j}}_s}(\mathrm{d}m)\mathrm{d}s, B  \Big)
        \\
        &=
        \Lim_{j \to \infty} \Lc^{\P} \big(\vartheta'^{k_j}[B],V'^{k_j}_t[B](\mathrm{d}m)\mathrm{d}t,\mu^{\alpha^{k_j}},\delta_{\mub^{\alpha^{k_j}}_s}(\mathrm{d}m)\mathrm{d}s, B \big)
        =
        \Lc^{\mathrm{P}} \big(\vartheta'[B],\Theta'_t[B](\mathrm{d}m)\mathrm{d}t,\mu,\Lambda,B \big)\;\;\;\mbox{in}\;\Wc_p.
    \end{align*}
{\color{black}Using the definition of $\vartheta'$ and $\Theta'$ in \eqref{eq:shift-proba}, we easily verify after calculations that $(\vartheta'[B],\Theta'_t[B](\mathrm{d}m)\mathrm{d}t,B)=(\mu',\Lambda',B),$ $\P$--a.e. }
Then   
 \begin{align} \label{eq:conv-after-app}
        &\Lim_{j \to \infty} \Lc \Big( \big(\Lc(X'^{k_j}_s|\Gc_s) \big)_{s \in [0,T]}, \delta_{(\Lc(X'^{k_j}_s,\alpha'^{k_j}_s|\Gc_s))} (\mathrm{d}m)\mathrm{d}s,\mu^{\alpha^{k_j}},\delta_{\mub^{\alpha^{k_j}}_s}(\mathrm{d}m)\mathrm{d}s, B  \Big)
        =
        \Lc^{\mathrm{P}} \big(\mu',\Lambda',\mu,\Lambda,B \big)\;\mbox{in}\;\Wc_p.
    \end{align}
    
    Now, to finish, we define 
    \begin{align*}
        \Pr^k_f
        :=
        \Lc \Big( \big(\Lc(X'^{k_j}_s|\Gc_s \lor F) \big)_{s \in [0,T]},\mu^{\alpha^{k_j}}, \delta_{(\Lc(X'^{k_j}_s,\alpha'^{k_j}_s|\Gc_s \lor F))} (\mathrm{d}m)\mathrm{d}s,\delta_{\mub^{\alpha^{k_j}}_s}(\mathrm{d}m)\mathrm{d}s, B \big| F = f \Big).
    \end{align*}
As $F$ is independent of $(\xi,W,B),$ it is straightforward to check that $\Pr^k_f \circ (\mu,\Lambda,B)^{-1}=\P \circ \big( \mu^{\alpha^k}, \delta_{\mub^{\alpha^k}_t}(\mathrm{d}m) \mathrm{d}t, B \big)^{-1}$ and $\Pr^k_f \in \Pcb_S$ for each $(k,f).$ Besides, with an easy manipulation of the conditional expectation and \Cref{eq:conv-after-app}, we have 

\begin{align*}
    \int_0^1 \E^{\Pr^{k_j}_f}\big[J(\mu',\mu, \Lambda',\Lambda) \big]  \mathrm{d}f
    =
    \E\bigg[
				\int_0^T L(t, X'^{k}_t,\mu^{\alpha^k}_{t \wedge \cdot}, \mub^{\alpha^k}_t,\gamma^k_t) \mathrm{d}t 
				+ 
				g(X'^{k}_T, \mu^{\alpha^k}) 
			\bigg]
\end{align*}
and 
\begin{align*}
        \Lim_{k \to \infty}\int_0^1 \E^{\Pr^{k}_f}\big[J(\mu',\mu, \Lambda',\Lambda) \big]  \mathrm{d}f
        =
        \E^{\Pr}\big[J(\mu',\mu, \Lambda',\Lambda) \big].
    \end{align*}
This is enough to conclude the proof.

\end{proof}

\medskip
    Now, we consider the case of $N$--player game. Loosely speaking, we will show that: given the controls $\overline{\alpha}^N:=\big(\alpha^1,\dots,\alpha^N \big),$ replace one control $\alpha^i$ by another $\kappa^N$ has no effect on the empirical distribution $(\varphi^{N,\Xbb,\overline{\alpha}},\varphi^{N,\overline{\alpha}})$ (see Definition \ref{eq:N-agents_StrongMF_CommonNoise}) when $N$ goes to infinity.
    
\medskip    
    Given $N \in \N^*,$ $(\alpha^i)_{1 \le i \le N} \subset \Ac^N$ and $\kappa^N \in \Ac^N.$ Let us introduce, for each $i \in \{1,\dots,N\},$ the unique strong solution $Z^i$ of:
    \begin{align*}
        \mathrm{d}Z^i_t
        =
        b\big(t,Z^i_t,\varphi^{N,\Xbb,\overline{\alpha}^N},\varphi^{N,\overline{\alpha}^N}_{t} ,\kappa^N_t \big)\mathrm{d}t 
        +
        \sigma \big(t,Z^i_t,\varphi^{N,\Xbb,\overline{\alpha}^N},\varphi^{N,\overline{\alpha}^N}_{t} ,\kappa^N_t \big) \mathrm{d}W^i_t
        +
        \sigma_0 \mathrm{d}B_t\;\mbox{with}\;Z^i_0=\xi^i
    \end{align*}
    where $(\varphi^{N,\Xbb,\overline{\alpha}},\varphi^{N,\overline{\alpha}})$ correspond to the empirical distributions associated with the controls $\overline{\alpha}^N:=\big(\alpha^1,\dots,\alpha^N \big)$ (see Definition \ref{eq:N-agents_StrongMF_CommonNoise})
\begin{lemma}
\label{lemma:estimation_convergence}
    There exists a constant $K>0$ (depending only on the $p$--moment of $\nu$) such that: if $\overline{\alpha}^{N,-i}:=\big(\overline{\alpha}^{[-i]},\kappa^N \big),$ for each $i \in \{1,\dots,N\},$ one has
    \begin{align*}
        \bigg(\E \bigg[ \sup_{t \in [0,T]} \Wc_p \big(\varphi^{N,\Xbb,\overline{\alpha}^N}_{t},\varphi^{N,\Xbb,\overline{\alpha}^{N,-i}}_t \big)^{\color{black}p} \bigg]
        +
        \E \bigg[ \sup_{t \in [0,T]} \big|Z^{i}_t-\Xbb^i_t[\overline{\alpha}^{N,-i}]\big|^p \bigg] \bigg)
	    \le
	    K \frac{1}{N}.
    \end{align*}
    Consequently, $\Limsup_{N \to \infty} \Wc_p \big(\Q^N, \widetilde{\Q}^N \big)=0,$ where
    \begin{align*}
        \Q^N:=\frac{1}{N} \sum_{i=1}^N\P \circ \Big( \Xbb^i[\overline{\alpha}^{N,-i}], \varphi^{N,\Xbb,\overline{\alpha}^{N,-i}}, \delta_{\big(\kappa^N_t,\varphi^{N,\overline{\alpha}^{N,-i}}_t \big)}(\mathrm{d}u,\mathrm{d}m)\mathrm{d}t \Big)^{-1},
    \end{align*}
    and
    \begin{align*}
        \widetilde{\Q}^N:=\frac{1}{N} \sum_{i=1}^N\P \circ \Big( Z^i, \varphi^{N,\Xbb,\overline{\alpha}^{N}}, \delta_{\big(\kappa^N_t,\varphi^{N,\overline{\alpha}^{N}}_t \big)}(\mathrm{d}u,\mathrm{d}m)\mathrm{d}t \Big)^{-1}.
    \end{align*}
\end{lemma}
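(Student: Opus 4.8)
The plan is a coupling plus Gronwall argument, the key point being that the single ``perturbed'' index $i$ — whose particle can be an $O(1)$ distance from its unperturbed counterpart — only ever enters the estimates through the weight $1/N$ carried by the empirical measures, never through the Gronwall loop itself. Fix $N\in\N^*$ and $i\in\{1,\dots,N\}$, and abbreviate $\Xbb^j:=\Xbb^j[\overline\alpha^N]$, $Y^j:=\Xbb^j[\overline\alpha^{N,-i}]$, $\mu^N:=\varphi^{N,\Xbb,\overline\alpha^N}$, $\mu^{N,-i}:=\varphi^{N,\Xbb,\overline\alpha^{N,-i}}$, $\mb^N:=\varphi^{N,\overline\alpha^N}$ and $\mb^{N,-i}:=\varphi^{N,\overline\alpha^{N,-i}}$ (all depending on the fixed $i$). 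First I would record an a priori bound: since by \Cref{assum:main1}$(i)$--$(ii)$ $U$ is compact, $b,\sigma$ are bounded and $\sigma_0$ is constant, the Burkholder--Davis--Gundy inequality applied to \eqref{eq:N-agents_StrongMF_CommonNoise} gives $\E[\|\Xbb^j\|^p]\vee\E[\|Y^j\|^p]\vee\E[\|Z^i\|^p]\le M$ for a constant $M$ depending only on $p,T$, the data of \Cref{assum:main1} and $\int_{\R^n}|y|^p\,\nu(\mathrm dy)$, hence uniformly in $N$, in $i$ and in the controls $\overline\alpha^N,\kappa^N$. In particular $\E[\sup_t|\Xbb^i_t-Y^i_t|^p]\le 2^pM$ and $\rho(\alpha^i_t,\kappa^N_t)\le\mathrm{diam}(U)=:D<\infty$.

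\textbf{The main Gronwall estimate (first summand).} For $j\ne i$ the pair $(\Xbb^j,Y^j)$ is driven by the same Brownian motion $W^j$ and the same control $\alpha^j$, only the empirical flows $(\mu^N,\mb^N)$ versus $(\mu^{N,-i},\mb^{N,-i})$ differing. Subtracting the two SDEs, applying Burkholder--Davis--Gundy and Jensen, then the Lipschitz bound of \Cref{assum:main1}$(iii)$ together with the non--anticipativity (so the coefficients at time $u$ see only the flows restricted to $[0,u]$), one gets
\[
\E\big[\sup_{r\le s}|\Xbb^j_r-Y^j_r|^p\big]\le C\int_0^s\Big(\E\big[\sup_{r\le u}|\Xbb^j_r-Y^j_r|^p\big]+\E\big[\sup_{r\le u}\Wc_p(\mu^N_r,\mu^{N,-i}_r)^p\big]+\E\big[\Wc_p(\mb^N_u,\mb^{N,-i}_u)^p\big]\Big)\mathrm du .
\]
Coupling the empirical measures atom by atom gives $\Wc_p(\mu^N_r,\mu^{N,-i}_r)^p\le\frac1N\sum_{j\ne i}|\Xbb^j_r-Y^j_r|^p+\frac1N|\Xbb^i_r-Y^i_r|^p$ and, since the $i$-th atoms additionally carry the control mismatch, $\Wc_p(\mb^N_u,\mb^{N,-i}_u)^p\le\frac1N\sum_{j\ne i}|\Xbb^j_u-Y^j_u|^p+\frac{2^{p-1}}{N}(|\Xbb^i_u-Y^i_u|^p+D^p)$. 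Setting $\bar A_s:=\frac1N\sum_{j\ne i}\E[\sup_{r\le s}|\Xbb^j_r-Y^j_r|^p]$, averaging the displayed inequality over $j\ne i$ and inserting the a priori bound on $\E[\sup|\Xbb^i-Y^i|^p]$, the two empirical--measure terms become $\le\bar A_u+c(M+D^p)/N$, so $\bar A_s\le C'\int_0^s\!\big(\bar A_u+c(M+D^p)/N\big)\mathrm du$. Gronwall's lemma yields $\bar A_T\le K_1/N$, and feeding this back into the two empirical--measure bounds gives $\E[\sup_t\Wc_p(\mu^N_t,\mu^{N,-i}_t)^p]\le K_1'/N$ and $\E[\sup_t\Wc_p(\mb^N_t,\mb^{N,-i}_t)^p]\le K_1'/N$, with $K_1,K_1'$ uniform in $i,N$ and the controls; the first is the first summand of the claim.

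\textbf{Second summand and conclusion of the inequality.} Next I would compare $Z^i$ with $Y^i=\Xbb^i[\overline\alpha^{N,-i}]$: both are driven by $W^i$ and the control $\kappa^N$, with $Z^i$ run against $(\mu^N,\mb^N)$ and $Y^i$ against $(\mu^{N,-i},\mb^{N,-i})$. The same subtraction/Burkholder--Davis--Gundy/Lipschitz computation gives
\[
\E\big[\sup_{r\le s}|Z^i_r-Y^i_r|^p\big]\le C\int_0^s\Big(\E\big[\sup_{r\le u}|Z^i_r-Y^i_r|^p\big]+\E\big[\sup_{r\le T}\Wc_p(\mu^N_r,\mu^{N,-i}_r)^p\big]+\E\big[\Wc_p(\mb^N_u,\mb^{N,-i}_u)^p\big]\Big)\mathrm du ,
\]
and since the last two terms are $\le K_1'/N$ by the previous step, Gronwall gives $\E[\sup_t|Z^i_t-Y^i_t|^p]\le K_2/N$. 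Adding the two bounds proves the stated inequality with $K:=K_1'+K_2$, which by construction is uniform in $i$, $N$, $\overline\alpha^N$ and $\kappa^N$.

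\textbf{The ``consequently'' part.} Finally, $\Q^N$ and $\widetilde\Q^N$ are, respectively, the averages over $i$ of the $\P$--laws of the triples $\big(Y^i,\mu^{N,-i},\delta_{(\kappa^N_t,\mb^{N,-i}_t)}(\mathrm du,\mathrm dm)\mathrm dt\big)$ and $\big(Z^i,\mu^N,\delta_{(\kappa^N_t,\mb^N_t)}(\mathrm du,\mathrm dm)\mathrm dt\big)$, and all the relevant processes have uniformly bounded $p$-th moments, so both lie in the corresponding $\Pc_p$-space. Since these laws are pushforwards from the single space $(\Om,\P)$, the identity coupling on $\Om$, taken index by index and averaged over $i$, is admissible for $\Wc_p(\Q^N,\widetilde\Q^N)$; the control coordinate $\kappa^N$ cancels, so
\[
\Wc_p\big(\Q^N,\widetilde\Q^N\big)^p\le\frac{c}{N}\sum_{i=1}^N\Big(\E\big[\|Y^i-Z^i\|^p\big]+\E\big[\sup_t\Wc_p(\mu^{N,-i}_t,\mu^N_t)^p\big]+\E\big[{\textstyle\int_0^T}\Wc_p(\mb^{N,-i}_t,\mb^N_t)^p\,\mathrm dt\big]\Big).
\]
By the inequality just proved each of the three expectations is $\le K/N$, uniformly in $i$, so the right-hand side is $O(1/N)$ and $\Limsup_{N\to\infty}\Wc_p(\Q^N,\widetilde\Q^N)=0$. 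The only genuinely delicate point is the bookkeeping stressed at the outset — keeping the $O(1)$ displacement of the perturbed particle out of the Gronwall loop and only inside the $1/N$ factor, so that every constant stays uniform in $i$, $N$ and the controls; everything else is routine SDE moment estimates plus convexity of $\Wc_p$.
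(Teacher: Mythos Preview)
Your proof is correct and follows essentially the same route as the paper's own argument: subtract the SDEs for $j\neq i$, apply BDG plus the Lipschitz bound of \Cref{assum:main1}$(iii)$, couple the empirical measures atom by atom so the perturbed index contributes only a $1/N$ term, run Gronwall to get the $K/N$ bound on the empirical--measure distance, and then repeat the subtraction/Gronwall step for $Z^i$ versus $\Xbb^i[\overline\alpha^{N,-i}]$. The only cosmetic difference is that the paper applies Gronwall twice in sequence (first to each $j\neq i$ separately, then to the Wasserstein sum) whereas you average over $j\neq i$ first and apply Gronwall once to $\bar A_s$; your treatment of the ``consequently'' part via the identity coupling on $\Om$ is in fact more explicit than the paper's, which simply declares the conclusion.
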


\begin{proof}
    This proof is a successive application of the Gronwall's lemma.
    For $j \in \{1,\dots,N\}$ with $j \neq i,$ for all $t \in [0,T],$ {\color{black} using \Cref{assum:main1} especially the boundness and Lipschitz properties of $(b,\sigma),$} one finds
    \begin{align*}
        &\E \bigg[\sup_{s \in [0,t]} \big|\Xbb^j_s[\overline{\alpha}^{N,-i}]-\Xbb^j_s[\overline{\alpha}^{N}]\big|^p \bigg]
        \\
        &~~~~~~~~\le
        C \bigg( \E \bigg[\int_0^t \Big|\big[b,\sigma \big] \big(r,\Xbb^j_r[\overline{\alpha}^{N,-i}],\varphi^{N,\Xbb,\overline{\alpha}^{N,-i}},\varphi^{N,\overline{\alpha}^{N,-i}}_{r} ,\alpha^j_r \big) 
        -
        \big[b,\sigma \big] \big(r,\Xbb^j_r[\overline{\alpha}^{N}],\varphi^{N,\Xbb,\overline{\alpha}^{N}},\varphi^{N,\overline{\alpha}^{N}}_{r} ,\alpha^j_r \big) 
        \Big|^p \mathrm{d}r \bigg] \bigg)
        \\
        &~~~~~~~~\le C
        \bigg( \E \bigg[\int_0^t \sup_{s \in [0,r]} \big|\Xbb^j_s[\overline{\alpha}^{N,-i}]-\Xbb^j_s[\overline{\alpha}^{N}]\big|^p + \sup_{s \in [0,r]} \Wc_p \big(\varphi^{N,\Xbb,\overline{\alpha}^{N,-i}}_s, \varphi^{N,\Xbb,\overline{\alpha}^{N}}_s \big)^p +  \Wc_p \big(\varphi^{N,\overline{\alpha}^{N,-i}}_r, \varphi^{N,\overline{\alpha}^{N}}_r \big)^p \mathrm{d}r \bigg]\bigg),
    \end{align*}
    then by Gronwall's lemma, 
    \begin{align}
    \label{eq:first_estimation}
        &\E \bigg[\sup_{s \in [0,t]} \big|\Xbb^j_s[\overline{\alpha}^{N,-i}]-\Xbb^j_s[\overline{\alpha}^{N}]\big|^p \bigg]\le C
        \bigg( \E \bigg[\int_0^t \sup_{s \in [0,r]} \Wc_p \big(\varphi^{N,\Xbb,\overline{\alpha}^{N,-i}}_s, \varphi^{N,\Xbb,\overline{\alpha}^{N}}_s \big)^p +  \Wc_p \big(\varphi^{N,\overline{\alpha}^{N,-i}}_r, \varphi^{N,\overline{\alpha}^{N}}_r \big)^p \mathrm{d}r \bigg]\bigg).
    \end{align}
    Next, using result \eqref{eq:first_estimation},
    \begin{align*}
        &\E\bigg[\sup_{s \in [0,t]} \Wc_p \big(\varphi^{N,\Xbb,\overline{\alpha}^{N,-i}}_s, \varphi^{N,\Xbb,\overline{\alpha}^{N}}_s \big)^p
        +
        \Wc_p \big(\varphi^{N,\overline{\alpha}^{N,-i}}_t, \varphi^{N,\overline{\alpha}^{N}}_t \big)^p\bigg]
        \\
        &~~~~~~~~\le C \bigg(
        \frac{1}{N} \sum_{j=1}^N\E \bigg[\sup_{s \in [0,t]} |\Xbb^j_s[\overline{\alpha}^{N,-i}]-\Xbb^j_s[\overline{\alpha}^{N}]|^p \bigg]
        +
        \frac{\rho \big( \kappa^N_t,\alpha^i_t \big)^p}{N} \bigg)
        \\
         &~~~~~~~~\le C \bigg( \frac{1}{N} \sum_{j=1,j\neq i}^N \E \bigg[\int_0^t \sup_{s \in [0,r]} \Wc_p \big(\varphi^{N,\Xbb,\overline{\alpha}^{N,-i}}_s, \varphi^{N,\Xbb,\overline{\alpha}^{N}}_s \big)^p +  \Wc_p \big(\varphi^{N,\overline{\alpha}^{N,-i}}_r, \varphi^{N,\overline{\alpha}^{N}}_r \big)^p \mathrm{d}r \bigg]
         \\
        &~~~~~~~~~~~~~~~~~~+
         \frac{1}{N} \E \bigg[\sup_{s \in [0,t]} \big|\Xbb^i_s[\overline{\alpha}^{N,-i}]-\Xbb^i_s[\overline{\alpha}^{N}]\big|^p \bigg]
         +
        \frac{\rho \big( \kappa^N_t,\alpha^i_t \big)^p}{N}
         \bigg)
         \\
         &~~~~~~~~\le C \bigg( \frac{N-1}{N} \E \bigg[\int_0^t \sup_{s \in [0,r]} \Wc_p \big(\varphi^{N,\Xbb,\overline{\alpha}^{N,-i}}_s, \varphi^{N,\Xbb,\overline{\alpha}^{N}}_s \big)^p +  \Wc_p \big(\varphi^{N,\overline{\alpha}^{N,-i}}_r, \varphi^{N,\overline{\alpha}^{N}}_r \big)^p \mathrm{d}r \bigg]
        \\
        &~~~~~~~~~~~~~~~~~~+
         \frac{\int_{\R^n}|x|^p\nu(\mathrm{d}x)}{N} 
         +
        \frac{\sup_{(u,u') \in U \x U} \rho \big( u,u' \big)^p}{N}
         \bigg),
    \end{align*}
    by Gronwall's lemma again,
    \begin{align}
    \label{eq:second_estimation}
        &\E \bigg[\sup_{s \in [0,t]} \Wc_p \big(\varphi^{N,\Xbb,\overline{\alpha}^{N,-i}}_s, \varphi^{N,\Xbb,\overline{\alpha}^{N}}_s \big)^p
        +
        \Wc_p \big(\varphi^{N,\overline{\alpha}^{N,-i}}_t, \varphi^{N,\overline{\alpha}^{N}}_t \big)^p\bigg]
        \le C \bigg(
         \frac{\int_{\R^n}|x|^p\nu(\mathrm{d}x)}{N} 
         +
        \frac{\sup_{(u,u') \in U \x U} \rho \big( u,u' \big)^p}{N}
         \bigg).
    \end{align}
    To finish, 
    \begin{align*}
        &\E \bigg[\sup_{s \in [0,t]} \big|\Xbb^i_s[\overline{\alpha}^{N,-i}]-Z^i_s \big|^p \bigg]
        \\
        &~~~~~~~~\le
        C \bigg( \E \bigg[\int_0^t \big|\big[b,\sigma \big] \big(r,\Xbb^i_r[\overline{\alpha}^{N,-i}],\varphi^{N,\Xbb,\overline{\alpha}^{N,-i}},\varphi^{N,\overline{\alpha}^{N,-i}}_{r} ,\kappa^N_r \big) 
        -
        \big[b,\sigma \big] \big(r,Z^i_r,\varphi^{N,\Xbb,\overline{\alpha}^{N}},\varphi^{N,\overline{\alpha}^{N}}_{r} ,\kappa^N_r \big) 
        \big|^p \mathrm{d}r \bigg] \bigg)
        \\
        &~~~~~~~~\le C
        \bigg( \E \bigg[\int_0^t \sup_{s \in [0,r]} |\Xbb^j_s[\overline{\alpha}^{N,-i}]-\Xh^i_s|^p + \sup_{s \in [0,r]} \Wc_p \big(\varphi^{N,\Xbb,\overline{\alpha}^{N,-i}}_s, \varphi^{N,\Xbb,\overline{\alpha}^{N}}_s \big)^p +  \Wc_p \big(\varphi^{N,\overline{\alpha}^{N,-i}}_r, \varphi^{N,\overline{\alpha}^{N}}_r \big)^p \mathrm{d}r \bigg]\bigg),
    \end{align*}
    and thanks to Gronwall's lemma and result \eqref{eq:second_estimation}, one has
    \begin{align*}
        \E\bigg[\sup_{s \in [0,T]} |\Xbb^i_s[\overline{\alpha}^{N,-i}]-Z^i_s|^p \bigg]
        \le C T
        \bigg( \frac{\int_{\R^n}|x|^p\nu(\mathrm{d}x)}{N} 
         +
        \frac{\sup_{(u,u') \in U \x U} \rho \big( u,u' \big)^p}{N} \bigg).
    \end{align*}
    It is enough to conclude.
\end{proof}

\medskip

\medskip
The next result is the analog of Lemma \ref{lemm:convergenceMFGstrong} for the $N$--player game. To summarize, it states that any measure--valued control rule which verifies a particular constraint is the average limit of $N$--SDE processes of type \eqref{eq:N-agents_StrongMF_CommonNoise}.

\begin{lemma}
\label{lemm:convergenceNashEquilibrium}
    Let {\rm \Cref{assum:main1}} hold true, $\mathrm{P} \in \Pcb_V$ and a sequence $(\alpha^{i})_{i \in \N^* }$ s.t. for each $N \in \N^*,$ $(\alpha^{1},\cdots,\alpha^{N}) \subset \Ac^N$ and
    \begin{align*}
        \Lim_{N \to \infty} \P \circ \Big( \varphi^{N,\Xbb,\overline{\alpha}}, \delta_{ \varphi^{N,\overline{\alpha}}_t} (\mathrm{d}m)\mathrm{d}t, B \Big)^{-1}
        =
        \mathrm{P} \circ \big( \mu,\Lambda,B \big)^{-1}.
    \end{align*}
    There exists a sequence of Borel functions $(R^{i,N})_{(i,N) \in \{1,\dots,N\} \x \N^*}$ satisfying $R^{i,N}: [0,T] \x (\R^n)^N \x (\Cc^d)^N \x \Cc^\ell \x [0,1] \to U$ s.t. if for all $(t,f) \in [0,T] \x [0,1],$ $\kappa^{i,N}_t(f)$ is defined by {\color{blue}$\kappa^{i,N}_t(f):=R^{i,N} \big(t,\xi^1,\cdots,\xi^N,W^1_{t \wedge \cdot},\cdots,W^N_{t \wedge \cdot}, B_{t \wedge \cdot},f \big),$} then one has
\begin{align*}
    &\Lim_{N \to \infty}
    \frac{1}{N} \sum_{i=1}^N
    \int_0^1 J_i\big(\alpha^{1},\dots,\alpha^{i-1},\kappa^{i,N}(f),\alpha^{i+1,N},\dots,\alpha^{N} \big) \mathrm{d}f
        =
        \E^{\mathrm{P}}\big[J(\mu',\mu, \Lambda',\Lambda) \big].
\end{align*}
\end{lemma}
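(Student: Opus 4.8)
The plan is to transfer the argument of \Cref{lemm:convergenceMFGstrong} to the $N$--player game, combining the shifted reformulation of \Cref{lemma:reformulation-approximation}, the strong--representation result \cite[Proposition 4.9]{MFD-2020} (in its common--noise form), and the stability estimate of \Cref{lemma:estimation_convergence}. In outline: first shift out the common noise and approximate $\Theta'$ by the continuous maps $G^l$ as in \Cref{lemma:reformulation-approximation}; then plug the empirical flows of the profile $\overline{\alpha}$ into those maps and invoke \cite[Proposition 4.9]{MFD-2020}, with the idiosyncratic noises $W^i$ as driving noises, to build the deviating controls $\kappa^{i,N}$; then use \Cref{lemma:estimation_convergence} to see that replacing player $i$'s control $\alpha^i$ by $\kappa^{i,N}$ perturbs the empirical flows by $O(1/N)$, so that the deviation reward $J_i$ equals, up to $o(1)$, an expectation computed against the \emph{frozen} flows $\big(\varphi^{N,\Xbb,\overline{\alpha}},\varphi^{N,\overline{\alpha}}\big)$; and finally average over $i$ and integrate over $f$.

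Concretely, set $\Lambda^N:=\delta_{\varphi^{N,\overline{\alpha}}_t}(\mathrm{d}m)\mathrm{d}t$, so the hypothesis reads $\P\circ\big(\varphi^{N,\Xbb,\overline{\alpha}},\Lambda^N,B\big)^{-1}\to\mathrm{P}\circ(\mu,\Lambda,B)^{-1}$ in $\Wc_p$. With the maps $G^l$ of \Cref{lemma:reformulation-approximation}, I would set $\Theta'^{l,N}_t:=G^l\big(t,B_{t\wedge\cdot},\varphi^{N,\Xbb,\overline{\alpha}}_{t\wedge\cdot},\Lambda^N_{t\wedge\cdot}\big)$; their continuity, the convergence of the empirical flows, and a diagonal choice $l=l_N\to\infty$ give $\P\circ\big(\Theta'^{l_N,N},\varphi^{N,\Xbb,\overline{\alpha}},\Lambda^N,B\big)^{-1}\to\Lc^{\mathrm{P}}(\Theta',\mu,\Lambda,B)$. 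One then applies \cite[Proposition 4.9]{MFD-2020} with the $W^i$ as driving noises and an auxiliary uniform variable realised as the parameter $f\in[0,1]$: since the pairs $(\xi^i,W^i)_i$ are i.i.d.\ and each $W^i$ enters the frozen flows only through the weight $1/N$, this produces Borel maps $R^{i,N}$ --- which depend only on the permitted arguments, the frozen flows and the plugged--in processes being Borel functionals of $\big(\xi^1,\dots,\xi^N,W^1,\dots,W^N,B\big)$ --- such that, setting $\kappa^{i,N}_t(f):=R^{i,N}\big(t,\xi^1,\dots,\xi^N,W^1_{t\wedge\cdot},\dots,W^N_{t\wedge\cdot},B_{t\wedge\cdot},f\big)$ and letting $Z^{i,N}(f)$ solve
\begin{align*}
    \mathrm{d}Z^{i,N}_t(f)
    &=
    b\big(t,Z^{i,N}_t(f),\varphi^{N,\Xbb,\overline{\alpha}},\varphi^{N,\overline{\alpha}}_t,\kappa^{i,N}_t(f)\big)\mathrm{d}t
    \\
    &\quad
    +
    \sigma\big(t,Z^{i,N}_t(f),\varphi^{N,\Xbb,\overline{\alpha}},\varphi^{N,\overline{\alpha}}_t,\kappa^{i,N}_t(f)\big)\mathrm{d}W^i_t
    +
    \sigma_0\,\mathrm{d}B_t,\qquad Z^{i,N}_0(f)=\xi^i,
\end{align*}
one has, after undoing the shift exactly as in Step~$3$ of the proof of \Cref{lemm:convergenceMFGstrong} and using the separability condition of \Cref{assum:main1} to split $L=L^\star+L^\circ$ in accordance with the structure of $J$,
\begin{align*}
    &\Lim_{N\to\infty}\frac{1}{N}\sum_{i=1}^N\int_0^1\E\bigg[\int_0^T L\big(t,Z^{i,N}_t(f),\varphi^{N,\Xbb,\overline{\alpha}},\varphi^{N,\overline{\alpha}}_t,\kappa^{i,N}_t(f)\big)\mathrm{d}t+g\big(Z^{i,N}_T(f),\varphi^{N,\Xbb,\overline{\alpha}}\big)\bigg]\mathrm{d}f
    \\
    &\qquad\qquad
    =
    \E^{\mathrm{P}}\big[J(\mu',\mu,\Lambda',\Lambda)\big].
\end{align*}

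It then remains to pass from this frozen--flow quantity to the genuine deviation rewards. For fixed $(i,f)$, the Gronwall estimates in the proof of \Cref{lemma:estimation_convergence} applied to the single switch $\alpha^i\rightsquigarrow\kappa^{i,N}(f)$ should yield, with a constant depending only on the $p$--moment of $\nu$ and hence uniform in $i$ and $f$, that $\E\big[\sup_{t\le T}\big|Z^{i,N}_t(f)-\Xbb^i_t[(\overline{\alpha}^{[-i]},\kappa^{i,N}(f))]\big|^p\big]$ and the two empirical--flow discrepancies $\E\big[\sup_{t\le T}\Wc_p\big(\varphi^{N,\Xbb,\overline{\alpha}}_t,\varphi^{N,\Xbb,(\overline{\alpha}^{[-i]},\kappa^{i,N}(f))}_t\big)^p\big]$ and $\E\big[\sup_{t\le T}\Wc_p\big(\varphi^{N,\overline{\alpha}}_t,\varphi^{N,(\overline{\alpha}^{[-i]},\kappa^{i,N}(f))}_t\big)^p\big]$ are all $O(1/N)$. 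Together with the continuity of $L$ and $g$ and the uniform $p'$--moment bounds behind \cite[Lemma 5.2]{MFD-2020} (with $p'>p$, providing the uniform integrability), this shows that $J_i\big((\overline{\alpha}^{[-i]},\kappa^{i,N}(f))\big)$ differs from the frozen--flow expectation in the previous display by $o(1)$ uniformly in $(i,f)$; averaging over $i$, integrating over $f$, and invoking the convergence established above then gives the claim. The step I expect to be the main obstacle is exactly the application of \cite[Proposition 4.9]{MFD-2020} in the $N$--player setting: the driving noise $W^i$ of player $i$ is not genuinely independent of the frozen empirical flows but only asymptotically so (it contributes to them with weight $1/N$), and it is this asymptotic decoupling --- together with the exchangeability of $(\xi^i,W^i)_i$, which is what makes the symmetrised average over $i$ collapse onto $\E^{\mathrm{P}}[J(\mu',\mu,\Lambda',\Lambda)]$ --- that the approximation and regularisation machinery of that proposition must absorb. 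A secondary, routine point is that the convergences delivered by \cite[Proposition 4.9]{MFD-2020} hold a priori only along a subsequence; since the limit is identified the full limit follows by the usual sub--subsequence argument.
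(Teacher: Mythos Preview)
Your outline matches the paper's argument almost exactly: shift out $\sigma_0B$ via \Cref{lemma:reformulation-approximation}, plug the empirical flows into the approximating maps $G^l$, diagonalise, invoke the representation result from \cite{MFD-2020} to build the deviating controls, undo the shift, and then use \Cref{lemma:estimation_convergence} together with the separability of $L$ to replace the perturbed empirical flows by the frozen ones at a cost $O(1/N)$.

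The one substantive correction concerns the tool you call at the representation step. You invoke \cite[Proposition~4.9]{MFD-2020} with each $W^i$ as driving noise and then flag as the ``main obstacle'' that $W^i$ is not independent of the frozen empirical flows. The paper resolves this not by absorbing the $1/N$ dependence into the regularisation machinery of Proposition~4.9, but by invoking its empirical--measure counterpart, \cite[Proposition~4.7]{MFD-2020}. That proposition is formulated precisely for an $N$--particle system driven by i.i.d.\ noises $(\xi^i,W^i)_i$ and outputs a \emph{single} Borel map $R^N$ (not $R^{i,N}$) together with controls $\gamma^{i,N}_t=R^N\big(t,\xi^i,\varphi^{N,\Xbb,\overline{\alpha}}_{t\wedge\cdot},\Theta'^N_{t\wedge\cdot},\Lambda^N_{t\wedge\cdot},W^i_{t\wedge\cdot},B_{t\wedge\cdot},F\big)$ such that the \emph{empirical} measures $\frac{1}{N}\sum_i\delta_{(\widehat Z^i_t,\gamma^{i,N}_t)}$ converge to the target; no independence of $W^i$ from the frozen flow is required. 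Setting $\kappa^{i,N}_t(f)$ by substituting $f$ for $F$ then gives an $\Ac^N$--admissible control for each $f$, and since the empirical flows are themselves measurable functions of $(\xi^1,\dots,\xi^N,W^1,\dots,W^N,B)$, the resulting $R^{i,N}$ has the required domain. With this substitution your argument goes through, including the sub--subsequence point you noted.
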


\begin{proof}
By Lemma \ref{lemma:reformulation-approximation}, there is a sequence $(G^l)_{l \in \N^*},$ such that for each $l \in \N^*,$  $G^l:[0,T] \x \Cc^\ell \x \Cc^n_{\Wc} \x \M(\Pc^n_U) \to \Pc(\R^n \x U)$ is a continuous function and
    
    \begin{align*}
        \Lim_{l \to \infty}\Lc^{\mathrm{P}} \Big( \delta_{G^l \big(t,B_{t \wedge \cdot},\mu_{t \wedge \cdot}, \Lambda_{t \wedge \cdot} \big)}(\mathrm{d}m)\mathrm{d}t,B,\mu,\Lambda \Big)
        =
        \Lc^{\mathrm{P}} \big( \Theta',B,\mu,\Lambda\big),
    \end{align*}    
recall that $\Theta'$ is defined in \eqref{eq:shift-proba}. 
Now, we apply \cite[Proposition 4.7]{MFD-2020}. We know that we can find a sub--sequence $(l_N)_{N \in \N^*} \subset \N^*$ such that if $\Lambda^N:=\delta_{\varphi^{N,\overline{\alpha}^N}_t}(\mathrm{d}m)\mathrm{d}t,$ 
    \begin{align*}
        \mb'^{N}_t
        :=
        G^{l_N} \big(t,B_{t \wedge \cdot},\varphi^{N,\Xbb,\overline{\alpha}^N}_{t \wedge \cdot}, \Lambda^N_{t \wedge \cdot} \big)
        \;\mbox{and}\;
        \Theta'^{N}
        :=
        \delta_{\mb'^{N}_t}(\mathrm{d}m)\mathrm{d}t,
    \end{align*}
    one has
\begin{align*}
        &\Lim_{N \to \infty} 
        \P \circ \big( \Theta'^{N}, \varphi^{N,\Xbb,\overline{\alpha}^N}, \Lambda^N, B \big)^{-1}
        =
        \Lim_{l \to \infty} \Lc^{\mathrm{P}} \Big(\delta_{G^l \big(t,B_{t \wedge \cdot},\mu_{t \wedge \cdot}, \Lambda_{t \wedge \cdot} \big)}(\mathrm{d}m)\mathrm{d}t,\mu,\Lambda,B \Big)
        =
        \mathrm{P} \circ \big( \Theta',\mu,\Lambda,B \big)^{-1}.
\end{align*}
Under \Cref{assum:main1}, by \cite[Proposition 4.7]{MFD-2020} (with separability condition see \cite[Remark 4.11]{MFD-2020}), there exist a $[0,1]$--valued uniform random variable $F$ independent of $(\xi^i,W^i,B)_{i \in \N^*}$ and a Borel function $R^{N}: [0,T] \x \R^n  \x \Cc^n_{\Wc} \x \M \x \M \x \Cc^n \x \Cc^\ell \x [0,1] \to U$ s.t. if  $(\Zh^{i})_{i \in \{1,\dots,N\}}$ is the unique strong solution of: for all $t \in [0,T]$
\begin{align} \label{eq:general-weak_McK}
    \mathrm{d}\Zh^{i}_t
    =
    \widehat{b}\big(t,\Zh^{i}_t,B,\varphi^{N,\Xbb,\overline{\alpha}^N}, \varphi^{N,\overline{\alpha}^N}_t,\gamma^{i,N}_{t}\big) \mathrm{d}t
    +
    \widehat{\sigma} \big(t,\Zh^{i}_t,B,\varphi^{N,\Xbb,\overline{\alpha}^N}, \varphi^{N,\overline{\alpha}^N}_t,\gamma^{i,N}_{t}\big)
    \mathrm{d}W^i_t\;\mbox{with}\;\Zh^i_0=\xi^i
\end{align}
    where 
\begin{align*}
    \gamma^{i,N}_t:= R^N\big(t,\xi^i,\varphi^{N,\Xbb,\overline{\alpha}^N}_{t \wedge \cdot},\Theta^{N}_{t \wedge \cdot},\Lambda^N_{t \wedge \cdot},W^i_{t \wedge \cdot}, B_{t \wedge \cdot},F \big),\;\;\overline{\vartheta}^{N}_t:=\frac{1}{N}\sum_{i=1}^N\delta_{(\Zh^i_t,\gamma^{i,N}_t)}\;\mbox{and}\;\vartheta^{N}_t:=\frac{1}{N}\sum_{i=1}^N\delta_{\Zh^i_t},
\end{align*}
then $\Lim_{N \to \infty} \E \bigg[\int_0^T \Wc_p \big(\overline{\vartheta}^{N}_t,\mb'^{N}_t \big)^p \mathrm{d}t \bigg]=0,$ and
    \begin{align*}
        \Lim_{j \to \infty} \Lc \big(\vartheta^{N_j},V^{N_j},\varphi^{N_j,\Xbb,\overline{\alpha}^{N_j}},\Lambda^{N_j},B \big)
        =
        \Lc^{\mathrm{P}} \big(\vartheta',\Theta',\mu,\Lambda,B \big)\;\mbox{in}\;\Wc_p,
    \end{align*}
    with $V^{N}:=\delta_{\overline{\vartheta}^{N}_t}(\mathrm{d}m)\mathrm{d}t$ and $(N_j)_{j \in \N^*} \subset \N^*$ is a sub--sequence.
    
    As in the proof Lemma \ref{lemm:convergenceMFGstrong}, we can rewrite $(Z^{i})_{i \in \{1,\dots,N\}}.$
Notice that
\begin{align*} 
    \mathrm{d}\Zh^i_t
    =
    b\big(t,\Zh^i_t+\sigma_0B_t,\varphi^{N,\Xbb,\overline{\alpha}^N},\varphi^{N,\overline{\alpha}^N}_t,\gamma^{i,N}_{t}\big) \mathrm{d}t
    +
    \sigma \big(t,\Zh^i_t+\sigma_0B_t,\varphi^{N,\Xbb,\overline{\alpha}^N},\varphi^{N,\overline{\alpha}^N}_t,\gamma^{i,N}_{t}\big)
    \mathrm{d}W^i_t.
\end{align*}
Denote $Z^i:=\Zh^i+\sigma_0B,$ then
\begin{align*} 
    \mathrm{d}Z^i_t
    =
    b\big(t,Z^i_t,\varphi^{N,\Xbb,\overline{\alpha}^N},\varphi^{N,\overline{\alpha}^N}_t,\gamma^{i,N}_{t}\big) \mathrm{d}t
    +
    \sigma \big(t,Z^i_t,\varphi^{N,\Xbb,\overline{\alpha}^N},\varphi^{N,\overline{\alpha}^N}_t,\gamma^{i,N}_{t}\big)
    \mathrm{d}W^i_t+\sigma_0 \mathrm{d}B_t.
\end{align*}

As the function $(\pi,q,\bb) \in \Cc^n_{\Wc} \x \M \x \Cc^\ell \to \big(\pi[\bb],q_t[\bb](\mathrm{d}m)\mathrm{d}t,\bb \big) \in \Cc^n_{\Wc} \x \M \x \Cc^\ell$ is continuous, if we define $\overline{\beta}^{N}_t:=\frac{1}{N}\sum_{i=1}^N\delta_{(Z^i_t,\gamma^{i,N}_t)},\;\mbox{and}\;\beta^{N}_t:=\frac{1}{N}\sum_{i=1}^N\delta_{Z^i_t},$ one has, in $\Wc_p,$
    \begin{align*}
        &\Lim_{j \to \infty} \Lc \big( \beta^{N_j}, \delta_{\overline{\beta}^{N_j}_t} (\mathrm{d}m)\mathrm{d}t,\varphi^{N_j,\Xbb,\overline{\alpha}^{N_j}},\Lambda^{N_j}_t, B  \big)
        \\
        &=
        \Lim_{j \to \infty} \Lc \big(\vartheta^{N_j}[B],V^{N_j}_t[B](\mathrm{d}m)\mathrm{d}t,\varphi^{N_j,\Xbb,\overline{\alpha}^{N_j}},\Lambda^{N_j}, B \big)
        =
        \Lc^{\mathrm{P}} \big(\vartheta'[B],\Theta'_t[B](\mathrm{d}m)\mathrm{d}t,\mu,\Lambda,B \big).
    \end{align*}
One knows that $(\vartheta'[B],\Theta'_t[B](\mathrm{d}m)\mathrm{d}t,B)=(\mu',\Lambda',B),$ $\mathrm{P}$--a.e. then   
 \begin{align} \label{eq:convergence_result}
        &\Lim_{j \to \infty} \Lc \Big( \beta^{N_j}, \delta_{\overline{\beta}^{N_j}_t} (\mathrm{d}m)\mathrm{d}t,\varphi^{N_j,\Xbb,\overline{\alpha}^{N_j}},\Lambda^{N_j}, B  \Big)
        =
        \Lc^{\mathrm{P}} \big(\mu',\Lambda',\mu,\Lambda,B \big)\;\mbox{in}\;\Wc_p.
    \end{align}

\medskip
Let us define 
    \begin{align*}
        \overline{\alpha}^{N,-i}:= (\overline{\alpha}^{[-i]},\gamma^{i,N})=\big(\alpha^1,\dots,\alpha^{i-1},\gamma^{i,N},\alpha^{i+1},\dots,\alpha^N \big),
    \end{align*}
thanks to Lemma \ref{lemma:estimation_convergence}, for each $i \in \{1,\dots,N\},$
\begin{align*}
    \bigg( \E \bigg[\int_0^T \Wc_p \big(\varphi^{N,\overline{\alpha}^N}_{r},\varphi^{N,\overline{\alpha}^{N,-i}}_r \big)^p \mathrm{d}r \bigg] + \E \bigg[ \sup_{t \in [0,T]} |Z^i_t-\Xbb^i_t[\overline{\alpha}^{N,-i}]|^p \bigg] \bigg)
	\le
	K \frac{1}{N},
\end{align*}
and $\Limsup_{N \to \infty} \Wc_p \big(\Q^N, \widetilde{\Q}^N \big)=0,$ where 
$$
    \Q^N:=\frac{1}{N} \sum_{i=1}^N\Lc \Big( \Xbb^i[\overline{\alpha}^{N,-i}], \varphi^{N,\Xbb,\overline{\alpha}^{N,-i}}, \delta_{\big(\gamma^{i,N}_t,\varphi^{N,\overline{\alpha}^{N,-i}}_t \big)}(\mathrm{d}u,\mathrm{d}m)\mathrm{d}t \Big)
$$
and 
$$
    \widetilde{\Q}^N:=\frac{1}{N} \sum_{i=1}^N\Lc \Big( Z^i, \varphi^{N,\Xbb,\overline{\alpha}^{N}}, \delta_{\big(\gamma^{i,N}_s,\varphi^{N,\overline{\alpha}^{N}}_t \big)}(\mathrm{d}u,\mathrm{d}m)\mathrm{d}t \Big).
$$

\medskip
Now, for each $f \in [0,1],$ we pose $\kappa^{i,N}_t(f):=R^N\big(t,\xi^i,\varphi^{N,\Xbb,\overline{\alpha}^N}_{t \wedge \cdot},\Theta^{N}_{t \wedge \cdot},\Lambda^N_{t \wedge \cdot},W^i_{t \wedge \cdot}, B_{t \wedge \cdot},f \big).$ Notice that $\kappa^{i,N}(f) \in \Ac^N$ and $\kappa^{i,N}_t(F)=\gamma^{i,N}.$ 

\medskip
Therefore, using \Cref{assum:main1} (especially the separability condition), the previous result combined with \eqref{eq:convergence_result} allow to get that
\begin{align*}
    &\Lim_{N \to \infty}
    \frac{1}{N} \sum_{i=1}^N \int_0^1 J_i \big( (\overline{\alpha}^{[-i]},\kappa^{i,N}(f)) \big) \mathrm{d}f
    \\
    &=
    \Lim_{N \to \infty}
    \frac{1}{N} \sum_{i=1}^N
    \E \bigg[
        \int_0^T L\big(t,\Xbb^{i}_t[\overline{\alpha}^{N,-i}],\varphi^{N,\Xbb,\overline{\alpha}^{N,-i}},\varphi^{N,\overline{\alpha}^{N,-i}}_{t} ,\gamma^{i,N}_t \big) \mathrm{d}t 
        + 
        g \big( \Xbb^{i}_T[\overline{\alpha}^{N,-i}], \varphi^{N,\Xbb,\overline{\alpha}^{N,-i}} \big)
        \bigg]
        \\
    &=
    \Lim_{N \to \infty}
    \frac{1}{N} \sum_{i=1}^N
    \E \bigg[
        \int_0^T L\big(t,Z^{i}_t,\varphi^{N,\Xbb,\overline{\alpha}^{N}},\varphi^{N,\overline{\alpha}^{N}}_{t} ,\gamma^{i,N}_t \big) \mathrm{d}t 
        + 
        g \big( Z^{i}_T, \varphi^{N,\Xbb,\overline{\alpha}^{N}} \big)
        \bigg]
        =
        \E^{\mathrm{P}}\big[J(\mu',\mu, \Lambda',\Lambda) \big].
\end{align*}
	
\end{proof}

\subsubsection{Proof of Theorem \ref{thm:limitNashEquilibrium} (Limit Theorem)}

\paragraph*{First point (i)} By using \cite[Proposition 4.15]{MFD-2020} (a slight extension\footnote{consisting in taking into account a canonical space of type $\Omb:=\Cc^n_{\Wc} \x \Cc^n_{\Wc} \x \M \x \M \x \Cc^\ell$ and not $\Omb:=\Cc^n_{\Wc} \x \M \x \Cc^\ell$ as in \cite{MFD-2020} } ), one finds $(\mathrm{P}^N)_{N \in \N^*}$ is relatively compact where
    \begin{align*}
        \mathrm{P}^N
        :=
        \P \circ \Big( (\varphi^{N,\Xbb,\overline{\alpha}^N}_t)_{t \in [0,T]},(\varphi^{N,\Xbb,\overline{\alpha}^N}_t)_{t \in [0,T]},\delta_{\varphi^{N,\overline{\alpha}^N}_t}(\mathrm{d}m)\mathrm{d}s, \delta_{\varphi^{N,\overline{\alpha}^N}_t}(\mathrm{d}mt)\mathrm{d}t, B  \Big)^{-1},
    \end{align*}
    and each limit point $\mathrm{P}^\infty$ of any sub--sequence belongs to $\Pcb_V.$ Next, let us show that $\mathrm{P}^\infty \in \Pcb^\star_V[\varepsilon].$ To simplify, the sequence $(\mathrm{P}^N)_{N \in \N^*}$ and its sub--sequence share the same notation. 

\medskip    
    Let $\mathrm{P} \in \Pcb_V$ such that $\Lc^{\mathrm{P}} \big(\mu, \Lambda, B \big)=\Lc^{\mathrm{P}^\infty} \big(\mu, \Lambda, B \big).$ By \Cref{lemm:convergenceNashEquilibrium}, there exist a $[0,1]$--valued uniform random variable $F$ and $(R^{i,N})_{(i,N) \in \{1,\dots,N\} \x \N^*}$ a sequence of Borel functions $R^{i,N}: [0,T] \x (\R^n)^N \x (\Cc^d)^N \x \Cc^\ell \x [0,1] \to U$ s.t. if we denote by 
    $$
        \kappa^{i,N}_t(F):=R^{i,N} \big(t,\xi^1,\cdots,\xi^N,W^1_{t \wedge \cdot},\cdots,W^N_{t \wedge \cdot}, B_{t \wedge \cdot}, F\big)
    $$
    then
\begin{align*}
    \Lim_{N \to \infty}
    \frac{1}{N} \sum_{i=1}^N
    \int_0^1 J_i\big(\alpha^1,\dots,\alpha^{i-1},\kappa^{i,N}(f),\alpha^{i+1},\dots,\alpha^N \big) \mathrm{d}f
        =
        \E^{\mathrm{P}}\big[J(\mu',\mu, \Lambda',\Lambda) \big].
\end{align*}
Therefore, as for each $N,$ $(\alpha^1,\cdots,\alpha^N)$ is an $(\varepsilon_1,\cdots,\varepsilon_N)$--Nash equilibrium,
\begin{align*}
    \E^{\mathrm{P}^\infty}\big[J(\mu',\mu, \Lambda',\Lambda) \big]
    &=
    \Lim_{N \to \infty} \frac{1}{N} \sum_{i=1}^N J_i [\overline{\alpha}^N]
    \\
    &\ge 
    \Lim_{N \to \infty} \bigg(\frac{1}{N} \sum_{i=1}^N
    \int_0^1 J_i\big(\alpha^1,\dots,\alpha^{i-1},\kappa^{i,N}(f),\alpha^{i+1},\dots,\alpha^N \big) \mathrm{d}f - \frac{1}{N} \sum_{i=1}^N \varepsilon_i \bigg)
    =
    \E^{\mathrm{P}}\big[J(\mu',\mu, \Lambda',\Lambda) \big]-\varepsilon,
\end{align*}
then $\E^{\mathrm{P}^\infty}\big[J(\mu',\mu, \Lambda',\Lambda)\big] \ge \E^{\mathrm{P}}\big[J(\mu',\mu, \Lambda',\Lambda)\big] -\varepsilon,$ for any $\mathrm{P} \in \Pcb_V$ such that $\Lc^{\mathrm{P}} \big(\mu, \Lambda, B \big)=\Lc^{\mathrm{P}^\infty} \big(\mu, \Lambda, B \big).$ It is straightforward to deduce that  for $\mathrm{P}^{\infty}$ almost every $\om \in \Omb,$ $\Lambda'_t(\om)(\mathrm{d}m)\mathrm{d}t=\Lambda_t(\om)(\mathrm{d}m )\mathrm{d}t$ and $\mu'(\om)=\mu(\om).$ We conclude that $\mathrm{P}^\infty \in \Pcb_V^\star[\varepsilon].$

    \paragraph*{Second point (ii)} The proof of this second part is similar to the previous proof. By using \cite[Proposition 4.15]{MFD-2020} (a slight extension), one gets $(\mathrm{P}^k)_{k \in \N^*}$ is relatively compact where $\mathrm{P}^k \in \Pcb^\star_S[\varepsilon_k]$ i.e. there exists $\alpha^k$ an $\varepsilon_k$--strong MFG equilibrium s.t.
    \begin{align*}
        \mathrm{P}^k
        :=
        \P\circ \Big( (\mu^{\alpha^k}_t)_{t \in [0,T]}, (\mu^{\alpha^k}_t)_{t \in [0,T]},\delta_{\mub^{\alpha^k}_s}(\mathrm{d}m) \mathrm{d}s, \delta_{\mub^{\alpha^k}_s}(\mathrm{d}m') \mathrm{d}s, B \Big)^{-1}.
    \end{align*}
    Each limit point $\mathrm{P}^\infty$ of any sub--sequence belongs to $\Pcb_V.$ Let us prove that $\mathrm{P}^\infty \in \Pcb^\star_V[\varepsilon].$ Again to simplify, $(\mathrm{P}^k)_{k \in \N^*}$ and its sub--sequence share the same notation. Let $\mathrm{P} \in \Pcb_V$ such that $\Lc^{\mathrm{P}} \big(\mu, \Lambda, B \big)=\Lc^{\mathrm{P}^\infty} \big(\mu, \Lambda, B \big).$ By Lemma \ref{lemm:convergenceMFGstrong}, there exists a family of probability $(\Pr^k_f)_{(k,f) \in \N^* \x [0,1]} \subset \Pcb_S$ such that for each $k \in \N^*$:
    \begin{align*}
        f \ni [0,1] \to \Pr^k_f \in \Pc(\Omb)\;\mbox{is Borel measurable},\;\;\Pr^k_f \circ (\mu,\Lambda,B)^{-1}=\P \circ \big( \mu^{\alpha^k}, \delta_{\mub^{\alpha^k}_t}(\mathrm{d}m) \mathrm{d}t, B \big)^{-1}\;\mbox{for each}\;f \in [0,1],
    \end{align*}
    and for a sub--sequence $(k_j)_{j \in \N^*},$
    \begin{align*}
        &\Lim_{j \to \infty}
        \int_0^1 \E^{\Pr^{k_j}_f}\big[J(\mu', \mu,\Lambda',\Lambda) \big]  \mathrm{d}f
        =
        \E^{\Pr}\big[J(\mu', \mu, \Lambda',\Lambda) \big].
    \end{align*}
    Then, using Assumption \ref{assum:main1} (especially separability condition), as $\alpha^k$ is an $\varepsilon_k$--strong MFG solution, one gets
    \begin{align*}
        \E^{\mathrm{P}^{\infty}}\big[J(\mu', \mu, \Lambda',\Lambda) \big]
        &=
        \Lim_{j \to \infty} \E^{\mathrm{P}^{k_j}}\big[J(\mu', \mu, \Lambda',\Lambda) \big]
        \\
        &\ge
        \Lim_{j \to \infty} \bigg(\int_0^1 \E^{\Pr^{k_j}_f}\big[J(\mu', \mu, \Lambda',\Lambda) \big]  \mathrm{d}f - \varepsilon_{k_j} \bigg)
		=
		\E^{\mathrm{P}}\big[J(\mu', \mu, \Lambda',\Lambda) \big]-\varepsilon.
    \end{align*}
    Obviously, for $\mathrm{P}^{\infty}$ almost every $\om \in \Omb,$ $\Lambda'_t(\om)(\mathrm{d}m)\mathrm{d}t=\Lambda_t(\om)\big(\mathrm{d}m \big)\mathrm{d}t$ and $\mu'(\om)=\mu(\om),$ we deduce that $\mathrm{P}^\infty \in \Pcb^\star_V[\varepsilon].$  
    
    \medskip
    
    {\color{black}

\begin{proof}[Proof of \Cref{prop-strong-as-measure}] \label{proof-prop-strong-as-measure}
    Let $\alpha$ be an $\varepsilon$--strong MFG equilibrium, and its corresponding probability $\mathrm{P}^\alpha \in \Pcb_S^\star[\varepsilon].$ It is straightforward to check that $\mathrm{P}^\alpha \in \Pcb_V.$ Let $\mathrm{P} \in \Pcb_V$ such that $\Lc^{\mathrm{P}^\alpha}\big(\mu,\Lambda, B\big)=\Lc^{\mathrm{P}}\big(\mu,\Lambda,B \big).$
    By \Cref{lemm:convergenceMFGstrong}, there exists a family of probability $(\Pr^k_f)_{(k,f) \in \N^* \x [0,1]} \subset \Pcb_S$ such that for each $k \in \N^*$:
    \begin{align*}
        f \ni [0,1] \to \Pr^k_f \in \Pc(\Omb)\;\mbox{is Borel measurable},\;\;\Pr^k_f \circ (\mu,\Lambda,B)^{-1}=\Lc^{\mathrm{P}^\alpha}\big(\mu,\Lambda, B\big)\;\mbox{for each}\;f \in [0,1],
    \end{align*}
    and for a sub--sequence $(k_j)_{j \in \N^*},$
    \begin{align*}
        &\Lim_{j \to \infty}
        \int_0^1 \E^{\Pr^{k_j}_f}\big[J(\mu',\mu, \Lambda',\Lambda) \big]  \mathrm{d}f
        =
        \E^{\Pr}\big[J(\mu',\mu, \Lambda',\Lambda) \big].
    \end{align*}    
    Consequently,
    \begin{align*}
        \E^{\mathrm{P}^\alpha} \big[J(\mu',\mu, \Lambda',\Lambda) \big]
        =
        \Psi(\alpha,\alpha )
        \ge
        \Lim_{k \to \infty} \int_0^1 \E^{\Pr^{k_j}_f}\big[J(\mu',\mu, \Lambda',\Lambda) \big]  \mathrm{d}f - \varepsilon
        =
        \E^{\mathrm{P}} \big[J(\mu',\mu, \Lambda',\Lambda) \big] - \varepsilon.
    \end{align*}
    As obviously $\Lambda_t(\mathrm{d}m)\mathrm{d}t=\Lambda'_t(\mathrm{d}m)\mathrm{d}t$ and $\mu=\mu',$ $\mathrm{P}^\alpha$--a.e., we can deduce that $\mathrm{P}^\alpha \in \Pcb_V^\star[\varepsilon]$ and conclude the proof
\end{proof}

    }

\subsection{The converse limit result} \label{sec:converse-limit}

This part is devoted to the proof of Theorem \ref{thm:converselimitNash}. We focus on the approximation of any measure--valued MFG solution by a sequence of approximate strong MFG solutions. The approximation by approximate Nash equilibria follows from this approximation. 

{\color{black}
\paragraph*{Problem and strategy of the proof}  Let $\Pr^\star$ be a measure--valued MFG solution. First, we find a sequence $(\alpha^k)_{k \in \N^*} \subset \Ac$ such that the sequence $(\Pr^{\alpha^k})_{k \in \N^*} \subset \Pcb_S$ (see \Cref{eq:McV-measure-v}) converges towards $\Pr^\star.$ Then, we find a sequence $(\varepsilon_k)_{k \in \N^*} \subset (0,\infty)$ satisfying $\Lim_{k \to \infty} \varepsilon_k=0$ and $\Pr^{\alpha^k} \in \Pcb_S[\varepsilon_k]$ for each $k \in \N^*.$ The sequence $(\varepsilon_k)_{k \in \N^*} \subset (0,\infty)$ is given by (recall that $\Psi(\alpha,\alpha')$ is given in \Cref{def:strong_value-function})
\begin{align*}
    \varepsilon_k
    :=
    \sup_{\alpha' \in \Ac} \Psi(\alpha^k,\alpha')-\Psi(\alpha^k,\alpha^k).
\end{align*}
It is obvious that $\varepsilon_k \ge 0$ for each $k.$ The difficulty is to show that $\Lim_{k \to \infty} \varepsilon_k=0.$ For proving this, it is enough to show that: for any $(\alpha'^k)_{k \in \N^*} \subset \Ac,$ we can find a sequence $(\mathrm{Q}^k)_{k \in \N^*} \subset \Pcb_V$ such that $\Lc^{\mathrm{Q}^k}(\mu,\Lambda,B)=\Lc^{\Pr^\star}(\mu,\Lambda,B)$ and 
\begin{align} \label{eq-conv-nash-property}
    \Lim_{k \to \infty}
    \big| \E^{\mathrm{Q}^k}[J(\mu',\mu,\Lambda',\Lambda)] - \E^{\Pr^{\alpha^k,\alpha'^k}}[J(\mu',\mu,\Lambda',\Lambda)] \big|
    =
    0.
\end{align}
For establishing property \eqref{eq-conv-nash-property}, how the sequence $(\alpha^k)_{k \in \N^*}$ is constructed is crucial. In the next parts, we will show the main points of the construction of $(\alpha^k)_{k \in \N^*}$ which will help us proving the property \eqref{eq-conv-nash-property}.
}

\medskip

\subsubsection{Strong controlled McKean--Vlasov processes as approximation of weak controlled McKean--Vlasov processes}

{\color{black}

The approximation of any measure--valued MFG solution $\Pr^\star$ by a sequence of distributions of McKean--Vlasov process $(\Pr^{\alpha^k})_{k \in \N^*} \subset \Pcb_S$ is achieved in two steps. First, we approximate $\Pr^\star$ by a sequence of ``weak'' controlled McKean--Vlasov processes $(Z^k)_{k \in \N^*}.$ Here,  ``weak'' means essentially that $Z^k$ satisfies \Cref{eq:MKV_strong} with a control $\alpha^k$ not adapted to $\F$ and $\mub^{\alpha^k}$ is not adapted to the filtration of $B$. And the second step is, from $(Z^k)_{k \in \N^*}$, to build another approximation which satisfies: $\alpha^k$ adapted to $\F$ and $\mub^{\alpha^k}$ is adapted to the filtration of $B$ i.e. strong controlled McKean--Vlasov processes. The second step is the most delicate. In this section, we focus on the approximation of weak controlled McKean--Vlasov process by strong controlled McKean--Vlasov processes. In the following, we give the definition of weak controlled McKean--Vlasov process that we use and the corresponding approximation by strong controlled McKean--Vlasov processes. This part is largely inspired/borrowed by Section 4.1.1. of \citeauthor*{djete2019general} \cite{djete2019general}.

\medskip
\begin{definition} \label{def:weak-solution}
    A weak controlled McKean--Vlasov process is a tuple $(\Omt,\Ft,\Pt,\Xt,\Wt,\Bt,\alephh,\alphat)$ satisfying:
\begin{enumerate}
    \item $(\Omt,\Ft,\Pt)$ is a complete filtered probability space. $(\Wt,\Bt)$ is a $\R^{d + \ell}$--valued $\Ft$--Brownian motion. $\Xt$ is a $\R^n$--valued $\Ft$--adapted continuous process with $\Pt \circ (\Xt_0)^{-1}=\nu.$ $\alphat$ is a $\Ft$--predictable process. Finally, $\alephh$ is a $\Pc(\Cc^n \x \M(U) \x \R^n)$--valued $\Ft$--adapted continuous process.
    
    \item $\Wt,\Xt_0$ and $(\alephh,\Bt)$ are independent.
    
    \item $\alephh$ verifies the condition: for all $t \in [0,T],$ $\Pt$--a.s.
    \begin{align} \label{eq:Hhypothesis-weak}
        \alephh_t
        =
        \Lc^{\Pt}(\Xt_{t \wedge \cdot}, \Deltat_{t \wedge \cdot}, \Wt| \Bt_{t \wedge \cdot}, \alephh_{t \wedge \cdot})
        =
        \Lc^{\Pt}(\Xt_{t \wedge \cdot}, \Deltat_{t \wedge \cdot}, \Wt| \Bt, \alephh)\;\mbox{where}\;\Deltat_t(\mathrm{d}u)\mathrm{d}t:=\delta_{\tilde \alpha_t}(\mathrm{d}u)\mathrm{d}t.
    \end{align}
    \item $\Xt$ satisfies
    \begin{align}
    \label{eq:weak_McK-SDE}
        \mathrm{d}\Xt_t= 
        b\big(t,\Xt_t,\aleph,\alephb_t,\alphat_t\big)  \mathrm{d}t 
        + 
        \sigma \big(t,\Xt_t,\aleph,\alephb_t,\alphat_t\big)  \mathrm{d}\Wt_t 
        +  
        \sigma_0 \mathrm{d}\Bt_t
    \end{align}
    where $\alephb_t:=\Lc^{\Pt}(\Xt_t,\alphat_t|\Bt,\alephh)$ and $\aleph_t:=\Lc^{\Pt}(\Xt_t|\Bt,\alephh).$
\end{enumerate}
\end{definition}

\paragraph*{Intuition/Idea of the proof} Given a weak controlled McKean--Vlasov process $(\Omt,\Ft,\Pt,\Xt,\Wt,\Bt,\alephh,\alphat),$ we want to build a sequence $(X^{\alpha^k},\alpha^k)_{k \in \N^*}$ converging in a weak sense towards the weak controlled McKean--Vlasov process and satisfying: $X^{\alpha^k}$ is solution of \Cref{eq:MKV_strong} and $\alpha^k \in \Ac$. In another words, we want to find a sequence of weak controlled McKean--Vlasov processes where $\alphat$ is a function of $(\Xt_0,\Wt,\Bt)$ and $\alephh$ is a function of $\Bt.$
The intuition is the following: first, under \Cref{assum:main1}, \Cref{eq:weak_McK-SDE} admits a unique solution $\Xt,$ then there exists a Borel function $G:\R^n \x \Cc^n \x \Cc^\ell \x \M(U) \x \Pc(\Cc^n \x \M(U) \x \Cc^n) \to \Cc^n$ s.t. $\Xt=G(\Xt_0,\Wt,\Bt,\Deltat,\alephh)$ $\Pt$--a.s. This observation leads us to deal only with $(\Xt_0,\Wt,\Bt,\Deltat,\alephh)$ because $\Xt$ is already a function of these variables. Second, it is well know that we can find a uniform random variable $F^{\tilde \alpha}$ independent of $(\Xt_0,\Wt,\Bt,\alephh)$ and a Borel function $G^{\tilde \alpha}:\R^n \x \Cc^n \x \Cc^\ell \x \Pc(\Cc^n \x \M(U) \x \Cc^n) \x [0,1] \to \M(U)$ s.t.
\begin{align*}
    \Lc^{\Pt}(\Xt_0,\Wt,\Bt,\alephh,\Deltat)=\Lc^{\Pt}(\Xt_0,\Wt,\Bt,\alephh,G^{\tilde \alpha}(\Xt_0,\Wt,\Bt,\alephh,F^{\tilde \alpha})).
\end{align*}
Now, we deal with $(\Xt_0,\Wt,\Bt,\alephh).$ As $\Wt,\Xt_0$ and $(\alephh,\Bt)$ are independent. We use the same previous procedure. We can find a uniform random variable $F^{\hat \aleph}$ independent of $(\Xt_0,\Wt,\Bt,F^{\tilde \alpha})$ and a Borel function $G^{\hat \mu}:\Cc^\ell \x [0,1] \to \Pc(\Cc^n \x \M(U) \x \Cc^n)$ s.t.
\begin{align*}
    \Lc^{\Pt}(\Xt_0,\Wt,\Bt,\alephh)=\Lc^{\Pt}(\Xt_0,\Wt,\Bt,G^{\hat \aleph}(\Bt,F^{\hat \aleph})).
\end{align*}
These equalities in distribution leads to 
\begin{align*}
    \Lc^{\Pt}(\Xt,\Wt,\Bt,\alephh,\Deltat)=\Lc^{\Pt}(\widetilde{S},\Wt,\Bt,\widetilde{G}^{\hat \aleph},\widetilde{G}^{\tilde \alpha})
\end{align*}
where $\widetilde{S}:=G(\Xt_0,\Wt,\Bt,\widetilde{G}^{\tilde \alpha},\widetilde{G}^{\hat \aleph})$, $\widetilde{G}^{\tilde \alpha}:=G^{\tilde \alpha}(\Xt_0,\Wt,\Bt,G^{\hat \aleph}(\Bt,F^{\hat \aleph}),F^{\tilde \alpha})$ and  $\widetilde{G}^{\hat \aleph}:=G^{\hat \aleph}(\Bt,F^{\hat \aleph}).$ The equality in distribution allows to check that $(\widetilde{S},\Wt,\Bt,\widetilde{G}^{\hat \aleph},\widetilde{G}^{\tilde \alpha})$ satisfies the same equation as $(\Xt,\Wt,\Bt,\alephh,\Deltat).$ $(\widetilde{S},\Wt,\Bt,\widetilde{G}^{\hat \aleph},\widetilde{G}^{\tilde \alpha})$ is almost what we want except two points: first, the presence of uniform random variables $(F^{\tilde \alpha},F^{\hat \aleph}),$ and second, we do not know if  $\widetilde{G}^{\hat \aleph}$ and $\widetilde{G}^{\tilde \alpha}$ are adapted to the filtrations of $(\Xt_0,\Wt,\Bt)$ and $\Bt$ respectively. The technical proof of \cite[Section 4.1.1.]{djete2019general} allows to ``remove'' $(F^{\tilde \alpha},F^{\hat \aleph})$ and to make $(\widetilde{G}^{\hat \aleph},\widetilde{G}^{\tilde \alpha})$ an adapted process by using approximations and the condition satisfies by $\alephh$ i.e. \Cref{eq:Hhypothesis-weak}.
The next Lemma is a combination of \cite[Lemma 4.3, Lemma 4.4]{djete2019general}. 

\begin{lemma} \label{lemma_weak-strong_general}
    There exit: a sequence of positive real $(\varepsilon_k)_{k \in \N^*}$ s.t. $\Lim_{k \to \infty} \varepsilon_k=0,$ and 
    \begin{itemize}
        
        \item a sequence $(\Kt^k,\alphat^k)_{k \in \N^*}$ where for each $k \in \N^*,$ $\alphat^k$ is a $U$--valued piecewise $\Ft$--predictable process and $\Kt^k$ satisfies 
        \begin{align*}
            \mathrm{d}\Kt^k_t= 
            b\big(t,\Kt^k_t,\pi^k,\pib^k_t,\alphat^k_t\big) \mathbf{1}_{t \in [\varepsilon_k,T]}  \mathrm{d}t 
            + 
            \sigma \big(t,\Kt^k_t,\pi^k,\pib^k_t,\alphat^k_t\big)  \mathrm{d}\Wt^k_t 
            +  
            \sigma_0 \mathrm{d}\Bt^k_t\;\mbox{with}\;\Kt^k_0=\Xt_0,
        \end{align*}
        where $\Bt^k_\cdot:=\Bt_{\cdot \vee \varepsilon_k}-\Bt_{\varepsilon_k},$ $\Wt^k_\cdot:=\Wt_{\cdot \vee \varepsilon_k}-\Wt_{\varepsilon_k},$ $\pib^k_t:=\Lc^{\Pt}(\Kt^k_t,\alphat^k_t|\Bt^k,\alephh)$ and $\pi^k_t:=\Lc^{\Pt}(\Kt^k_t|\Bt^k,\alephh);$
        
        \item a sequence of uniform random variable $(F^k)_{k \in \N^*}$ independent of $(\Xt_0,\Wt,\Bt^k)$ and a sequence $(\St^k,\gammat^k)_{k \in \N^*}$ where for each $k \in \N^*,$ $\gammat^k$ is a $U$--valued $(\sigma\{F^k, \Xt_0,\Wt_{t \wedge \cdot},\Bt^k_{t \wedge \cdot} \})_{t \in [0,T]}$--predictable process and $\Zt^k$ satisfies 
        \begin{align*}
            \mathrm{d}\St^k_t= 
            b\big(t,\St^k_t,\zeta^k,\zetab^k_t,\gammat^k_t\big) \mathbf{1}_{t \in [\varepsilon_k,T]}  \mathrm{d}t 
            + 
            \sigma \big(t,\St^k_t,\zeta^k,\zetab^k_t,\gammat^k_t\big)  \mathrm{d}\Wt^k_t 
            +  
            \sigma_0 \mathrm{d}\Bt^k_t
        \end{align*}
        where $\zetab^k_t:=\Lc^{\Pt}(\St_t,\gammat^k_t|\Bt^k,F^k)$ and $\zeta^k_t:=\Lc^{\Pt}(\St^k_t|\Bt^k,F^k);$

    \end{itemize}
    all these processes satisfy
    \begin{align} \label{conv-aux-weak-strong}
        \Lim_{k \to \infty}\E^{\Pt} \bigg[ \int_0^T \big[\rho(\alphat^k_t,\alphat_t)^2 + \Wc_{2}(\alephb_t,\pib^k_t) \big] \mathrm{d}t + \sup_{t \in [0,T]} \big[ |\Xt_t-\Kt^k_t|^2 + \Wc_{2}(\aleph_t,\pi^k_t) \big] \bigg] =0
    \end{align}
    and for each $k \in \N^*,$
    \begin{align*}
        \Lc^{\Pt} \big(\Kt^k,\Deltat^k,\Wt^k,\Bt^k,\Pih^k \big)
        =
        \Lc^{\Pt} \big(\St^k,\betat^k,\Wt^k,\Bt^k,\zetah^k \big),
    \end{align*}
    where $\Deltat^k:=\delta_{\tilde \alpha_t}(\mathrm{d}u)\mathrm{d}t,$ $\betat^k:=\delta_{\tilde \gamma_t}(\mathrm{d}u)\mathrm{d}t,$ $\Pih^k:=\Lc^{\Pt}(\Kt^k_{t \wedge \cdot}, \Deltat^k_{t \wedge \cdot}, \Wt^k| \Bt^k, \alephh)$ and $\zetah^k:=\Lc^{\Pt}(\St^k_{t \wedge \cdot}, \betat^k_{t \wedge \cdot}, \Wt^k| \Bt^k, F^k).$
\end{lemma}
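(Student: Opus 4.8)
The plan is to follow, with only cosmetic changes, the two--step ``weak--to--strong'' scheme of \cite[Section 4.1.1.]{djete2019general}, the r\^ole of the conditional--law environment there being played here by the auxiliary process $\alephh$ of \Cref{def:weak-solution} and the common noise $\Bt$ being carried along passively. First I would fix a sequence $\varepsilon_k\downarrow0$ and replace the given weak controlled McKean--Vlasov process by a ``cleaner'' one: using the standard density of piecewise constant predictable processes, I approximate $\alphat$ in $L^2(\mathrm{d}\Pt\ox\mathrm{d}t;\rho)$ by $U$--valued $\Ft$--predictable controls $\alphat^k$ that are constant on each cell of a time grid of vanishing mesh contained in $[\varepsilon_k,T]$, and I let $\Kt^k$ solve the displayed SDE with $\alphat^k$ in place of $\alphat$, driven by the shifted Brownian motions $\Wt^k,\Bt^k$, with the drift multiplied by $\mathbf{1}_{[\varepsilon_k,T]}$. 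Boundedness of $b$ makes the cutoff cost $O(\varepsilon_k)$, Doob's/Burkholder's inequality makes the replacement of $(\Wt,\Bt)$ by $(\Wt^k,\Bt^k)$ cost $O(\sqrt{\varepsilon_k})$, and a McKean--Vlasov Gronwall argument --- using the Lipschitz property of $[b,\sigma]$ in $(x,\pi,m)$ from \Cref{assum:main1} together with the stability of the conditional laws $\pi^k_t=\Lc^{\Pt}(\Kt^k_t|\Bt^k,\alephh)$, $\pib^k_t=\Lc^{\Pt}(\Kt^k_t,\alphat^k_t|\Bt^k,\alephh)$ under the negligible change of conditioning from $\sigma(\Bt,\alephh)$ to $\sigma(\Bt^k,\alephh)$ --- then delivers \eqref{conv-aux-weak-strong}, all $L^2$--moments being under control thanks to \Cref{assum:main1} and $\nu\in\Pc_{p'}(\R^n)$.

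Next comes the derandomization, which is the heart of the matter. Fixing $k$, strong well--posedness of the SDE for $\Kt^k$ under \Cref{assum:main1} makes $\Kt^k$ a fixed Borel functional of $(\Xt_0,\Wt^k,\Bt^k,\Deltat^k,\alephh)$, so it is enough to derandomize these inputs. I would apply the Blackwell--Dubins functional representation of regular conditional probabilities (\cite{blackwellDubins83}) twice: first to the conditional law of the control path $\Deltat^k$ given $(\Xt_0,\Wt^k,\Bt^k,\alephh)$, producing a uniform $F^{\alphat}$ independent of $(\Xt_0,\Wt^k,\Bt^k,\alephh)$; then, exploiting that $\Wt,\Xt_0$ and $(\alephh,\Bt)$ are independent, to the law of $\alephh$ given $\Bt^k$, producing a uniform $F^{\aleph}$; after coding $(F^{\alphat},F^{\aleph})$ into a single uniform $F^k$ independent of $(\Xt_0,\Wt,\Bt^k)$, this yields a candidate control $\gammat^k$ and a candidate environment of the form ``Borel function of $(\Bt^k,F^k)$'' that reproduce the joint law of $(\Kt^k,\alphat^k,\alephh,\Wt^k,\Bt^k)$. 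The hard part --- and the technical core that I would import essentially verbatim from \cite[Lemma 4.3, Lemma 4.4]{djete2019general} --- is that the Blackwell--Dubins representations so obtained are a priori anticipating, whereas $\gammat^k$ must be $(\sigma\{F^k,\Xt_0,\Wt^k_{t\wedge\cdot},\Bt^k_{t\wedge\cdot}\})_{t\in[0,T]}$--predictable; this is repaired by building the representation one grid cell at a time (finitely many, since $\alphat^k$ is piecewise constant), at each step using the compatibility condition \eqref{eq:Hhypothesis-weak} to replace conditioning on the whole trajectory by conditioning on the current past and a measurable selection (\cite[Theorem 2.2.3]{CastaingCharles2004YMoT}) to pick the current decision measurably, the drift cutoff on $[0,\varepsilon_k]$ supplying the required clean first cell.

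Finally I would let $\St^k$ be the unique strong solution of its SDE driven by $\gammat^k$, set $\zeta^k_t=\Lc^{\Pt}(\St^k_t|\Bt^k,F^k)$ and $\zetab^k_t=\Lc^{\Pt}(\St^k_t,\gammat^k_t|\Bt^k,F^k)$, and read off $\betat^k$ and $\zetah^k$ as in the statement. Both tuples $(\Kt^k,\Deltat^k,\Wt^k,\Bt^k,\Pih^k)$ and $(\St^k,\betat^k,\Wt^k,\Bt^k,\zetah^k)$ are then the image of the \emph{same} law of inputs (the one arranged in the previous paragraph) under the \emph{same} Borel map --- solve the SDE, then form the relevant conditional laws, the map being well defined precisely by strong uniqueness --- with $F^k$ (resp.\ $(\Bt^k,F^k)$) playing the r\^ole of $\alephh$ (resp.\ $(\Bt^k,\alephh)$); hence the asserted equality in distribution holds. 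The convergence \eqref{conv-aux-weak-strong} is exactly the estimate from the first paragraph and $\varepsilon_k\to0$ is built in, which completes the argument. I expect the predictability repair in the derandomization step to be the only genuinely delicate point.
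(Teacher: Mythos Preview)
Your proposal is correct and follows essentially the same approach as the paper: the paper does not give a self--contained proof but states that the lemma ``is a combination of \cite[Lemma 4.3, Lemma 4.4]{djete2019general}'', and the intuition paragraph preceding the lemma describes exactly the two--step scheme you outline (piecewise approximation with time shift, then double Blackwell--Dubins derandomization repaired cell by cell via the compatibility condition \eqref{eq:Hhypothesis-weak}). Your sketch is in fact more detailed than what the paper provides.
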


Given the result of the previous \Cref{lemma_weak-strong_general} with the same notation, let us assume first that $\ell \neq 0.$ Then for each $k \in \N^*,$ we pose $F^k:=\phi(\Bt_{\varepsilon_k})$ where $\phi:\R^\ell \to [0,1]$ is a Borel function such that $\Lc^{\Pt}(\phi(\Bt_{\varepsilon_k}))=\Uc([0,1]).$ Therefore  $(\gammat^k)_{k \in \N^*}$ is a $U$--valued $(\sigma\{ \Xt_0,\Wt_{t \wedge \cdot},\Bt_{t \wedge \cdot} \})_{t \in [0,T]}$--predictable processes and $(\zetah^k)_{k \in \N^*}$ is $B$--measurable i.e. $\zetah^k_t:=\Lc^{\Pt}(\St^k_{t \wedge \cdot}, \betat^k_{t \wedge \cdot}, \Wt^k| \Bt).$  The next proposition shows the sequence of strong controlled McKean--Vlasov processes converges to the weak controlled McKean--Vlasov process. This proposition is easily obtained with the previous \Cref{lemma_weak-strong_general}.
\begin{proposition}{\rm($\ell \neq 0$)} \label{prop:weak-strong-common_noise}
    For each $k \in \N^*,$ let $\Ert^k$ be the solution of 
    \begin{align*}
            \mathrm{d}\Ert^k_t= 
            b\big(t,\Ert^k_t,\mu^k,\mub^k_t,\gammat^k_t\big)  \mathrm{d}t 
            + 
            \sigma \big(t,\Ert^k_t,\mu^k,\mub^k_t,\gammat^k_t\big)  \mathrm{d}\Wt_t 
            +  
            \sigma_0 \mathrm{d}\Bt_t\;\mbox{with}\;\Ert^k_0=\Xt_0,\;\mu^k_t:=\Lc^{\Pt}(\Ert^k_t|\Bt)\;\mbox{and}\;\mub^k_t:=\Lc^{\Pt}(\Ert^k_t,\gammat^k_t|\Bt)
        \end{align*}
        then $\big(\Lc^{\Pt} \big(\mu^k,\mu^k,\delta_{\mub^k_t}(\mathrm{d}m)\mathrm{d}t,\delta_{\mub^k_t}(\mathrm{d}m)\mathrm{d}t, B \big) \big)_{k \in \N^*} \subset \Pcb_S,$
        \begin{align*}
            \Lim_{k \to \infty} \E^{\Pt}\bigg[ \sup_{t \in [0,T]} |\Ert^k_t-\St^k_t|^p \bigg]=0\;\mbox{and}\;\Lim_{k \to \infty} \Lc^{\Pt} \big(\Ert^k,\betat^k,\Wt,\Bt,\muh^k \big)=\Lc^{\Pt} \big(\Xt,\Deltat,\Wt,\Bt,\alephh \big)\;\mbox{in}\;\Wc_p,
        \end{align*}
        where $\muh^k_t:=\Lc^{\Pt}(\Ert^k_{t \wedge \cdot}, \betat^k_{t \wedge \cdot}, \Wt| \Bt)$ a.s. for all $t \in [0,T].$
\end{proposition}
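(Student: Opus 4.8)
The plan is to read the proposition off \Cref{lemma_weak-strong_general} with only Gronwall and continuous‑mapping arguments on top.

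\textbf{Item~(1).} Fix $k$ and first modify $\gammat^k$ on $[0,\varepsilon_k)$ to an arbitrary constant value of $U$, so that it becomes a $U$‑valued $\F$‑predictable process; this is harmless since on $[0,\varepsilon_k)$ the drift of $\Kt^k$ and $\St^k$ vanishes and $\Wt^k\equiv\Bt^k\equiv0$ there, hence $\Kt^k$, $\St^k$ and the restriction of $\gammat^k$ to $[\varepsilon_k,T]$ are unchanged. Because $F^k=\phi(\Bt_{\varepsilon_k})$ is a Borel function of $B$, the (modified) process $\gammat^k$ is a Borel function of $(\Xt_0,\Wt,\Bt)$, i.e. $\gammat^k\in\Ac$ after transferring the construction onto $(\Om,\H,\P)$ (which carries enough randomness). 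Under \Cref{assum:main1} the McKean--Vlasov equation defining $\Ert^k$ — where the mean‑field terms enter only through $\mu^k_t=\Lc^{\Pt}(\Ert^k_t|\Bt)$ and $\mub^k_t=\Lc^{\Pt}(\Ert^k_t,\gammat^k_t|\Bt)$ for the fixed control $\gammat^k$ — is well posed (e.g. \cite[Theorem A.3]{djete2019mckean}), and one checks in the usual way that $\Lc^{\Pt}(\Ert^k_t|\Bt)=\Lc^{\Pt}(\Ert^k_t|\Bt_{t\wedge\cdot})$. Hence $\Ert^k=X^{\gammat^k}$ in the sense of \eqref{eq:MKV_strong}; since $X^{\gammat^k,\gammat^k}=X^{\gammat^k}$ by uniqueness, $\Lc^{\Pt}\big(\mu^k,\mu^k,\delta_{\mub^k_t}(\mathrm{d}m)\mathrm{d}t,\delta_{\mub^k_t}(\mathrm{d}m)\mathrm{d}t,B\big)=\mathrm{P}^{\gammat^k}\in\Pcb_S$.

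\textbf{Item~(2).} The key point is a re‑identification of the conditional laws of \Cref{lemma_weak-strong_general}. Since $\zeta^k,\zetab^k$ are $\sigma\{\Bt^k,F^k\}$‑measurable, $\sigma\{\Bt^k,F^k\}\subset\sigma\{\Bt\}$, and $(\Xt_0,\Wt)$ is independent of $\Bt$, pathwise uniqueness of the equation for $\St^k$ writes $(\St^k,\gammat^k,\Wt^k)$ as a Borel function of $(\Xt_0,\Wt)$ and of $\sigma\{\Bt^k,F^k\}$‑data; conditioning on $\Bt$ and on $(\Bt^k,F^k)$ then give the same answer, so
\begin{align*}
    \zeta^k_t=\Lc^{\Pt}(\St^k_t|\Bt),\qquad \zetab^k_t=\Lc^{\Pt}(\St^k_t,\gammat^k_t|\Bt),\qquad \zetah^k_t=\Lc^{\Pt}(\St^k_{t\wedge\cdot},\betat^k_{t\wedge\cdot},\Wt^k|\Bt),\qquad\Pt\text{-a.s.}
\end{align*}
With these identities I would compare the equations for $\Ert^k$ and $\St^k$ directly: they share the control $\gammat^k$, are driven by $(\Wt,\Bt)$ and $(\Wt^k,\Bt^k)$ with $\sup_{t}|\Wt_t-\Wt^k_t|=\sup_{t\le\varepsilon_k}|\Wt_t|$ and likewise for $B$ (both $O(\varepsilon_k^{1/2})$ in $L^p$ by Doob/BDG), differ by the drift indicator $\mathbf{1}_{[\varepsilon_k,T]}$ (contribution $O(\varepsilon_k)$ since $b$ is bounded) and by the $O(\varepsilon_k^{1/2})$ displacement of $\Ert^k$ on $[0,\varepsilon_k)$, while $\Wc_p(\mu^k_s,\zeta^k_s)^p$ and $\Wc_p(\mub^k_s,\zetab^k_s)^p$ are at most $\E^{\Pt}[\sup_{r\le s}|\Ert^k_r-\St^k_r|^p\,|\,\Bt]$ (use the coupling $(\Ert^k,\St^k)$ and that the control components coincide). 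The Burkholder--Davis--Gundy inequality, \Cref{assum:main1}(ii)--(iii) and Gronwall's lemma then yield $\E^{\Pt}[\sup_{t\le T}|\Ert^k_t-\St^k_t|^p]\le\delta_k$ with $\delta_k\downarrow0$, which is item~(2).

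\textbf{Item~(3).} Chain two convergences. First, the coupling $(\Ert^k,\St^k,\Wt,\Wt^k)$ together with item~(2), $\Wt^k\to\Wt$ in $L^p$, and the identities above give $\E^{\Pt}[\sup_t\Wc_p(\muh^k_t,\zetah^k_t)^p]\to0$, hence $\Wc_p\big(\Lc^{\Pt}(\Ert^k,\betat^k,\Wt,\Bt,\muh^k),\Lc^{\Pt}(\St^k,\betat^k,\Wt^k,\Bt^k,\zetah^k)\big)\to0$. Second, the equality in law in \Cref{lemma_weak-strong_general} gives $\Lc^{\Pt}(\St^k,\betat^k,\Wt^k,\Bt^k,\zetah^k)=\Lc^{\Pt}(\Kt^k,\Deltat^k,\Wt^k,\Bt^k,\Pih^k)$; and by the same measurability reasoning (now using $(\Xt_0,\Wt)\perp(\alephh,\Bt)$, and that $\Kt^k,\Deltat^k,\Wt^k$ are built from $(\Xt_0,\Wt)$ and $\sigma\{\Bt^k,\alephh\}$‑measurable quantities) one may take $\Pih^k_t=\Lc^{\Pt}(\Kt^k_{t\wedge\cdot},\Deltat^k_{t\wedge\cdot},\Wt^k|\Bt,\alephh)$, a conditioning with respect to the \emph{fixed} $\sigma$‑algebra $\sigma\{\Bt,\alephh\}$. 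Since conditioning on a fixed $\sigma$‑algebra is $L^p$‑continuous, and \eqref{conv-aux-weak-strong} (upgraded to $L^p$ via the uniform $p'$‑moment estimates available under \Cref{assum:main1} together with $\nu\in\Pc_{p'}$) gives $\Kt^k\to\Xt$ in $L^p$, $\alphat^k\to\alphat$ in $L^2(\mathrm{d}t\times\Pt)$ (hence $\Deltat^k\to\Deltat$ in $\M(U)$, $U$ being compact) and $\Wt^k\to\Wt$, the defining property \eqref{eq:Hhypothesis-weak} of $\alephh$ yields $\Lc^{\Pt}(\Kt^k,\Deltat^k,\Wt^k,\Bt^k,\Pih^k)\to\Lc^{\Pt}(\Xt,\Deltat,\Wt,\Bt,\alephh)$ in $\Wc_p$. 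Combining the two convergences proves item~(3). I expect the only genuine obstacle to be the conditioning bookkeeping — showing that the conditional laws of \Cref{lemma_weak-strong_general}, taken with respect to $(\Bt^k,F^k)$ and $(\Bt^k,\alephh)$, may be replaced by conditional laws with respect to the fixed $\sigma$‑algebras $\sigma\{\Bt\}$ and $\sigma\{\Bt,\alephh\}$; this is exactly what makes the approximating processes genuinely strong (in the sense of \eqref{eq:MKV_strong}), and is precisely where the choice $F^k=\phi(\Bt_{\varepsilon_k})$ is used. Everything else is routine.
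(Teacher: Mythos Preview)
Your proposal is correct and takes essentially the same approach as the paper: the paper does not give a proof of this proposition at all, merely stating that it ``is easily obtained with the previous \Cref{lemma_weak-strong_general}''. Your write-up is precisely a careful fleshing-out of that sketch --- the Gronwall comparison of $\Ert^k$ with $\St^k$, the equality in law from \Cref{lemma_weak-strong_general}, and the convergence of the $\Kt^k$ side via \eqref{conv-aux-weak-strong} and \eqref{eq:Hhypothesis-weak} --- and you even correctly flag the one genuinely delicate point (replacing the $k$-dependent conditioning $\sigma$-algebras $\sigma\{\Bt^k,F^k\}$ and $\sigma\{\Bt^k,\alephh\}$ by the fixed ones $\sigma\{\Bt\}$ and $\sigma\{\Bt,\alephh\}$), which is exactly where the choice $F^k=\phi(\Bt_{\varepsilon_k})$ enters.
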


Now, when $\ell=0,$ then there is no common noise $B.$ In that case we obtain a convex combination of strong controlled McKean--Vlasov.

\begin{proposition}{\rm($\ell = 0$)} \label{prop:weak-strong-no_common_noise}
    let $\Ert^k$ be the solution of 
    \begin{align*}
            \mathrm{d}\Ert^k_t= 
            b\big(t,\Ert^k_t,\mu^k,\mub^k_t,\gammat^k_t\big)  \mathrm{d}t 
            + 
            \sigma \big(t,\Ert^k_t,\mu^k,\mub^k_t,\gammat^k_t\big)  \mathrm{d}\Wt_t 
            \;\mbox{with}\;\Ert^k_0=\Xt_0,\;\mu^k_t:=\Lc^{\Pt}(\Ert^k_t|F^k)\;\mbox{and}\;\mub^k_t:=\Lc^{\Pt}(\Ert^k_t,\gammat^k_t|F^k)
        \end{align*}
        then a.s. $\big(\Lc^{\Pt} \big(\mu^k,\mu^k,\delta_{\mub^k_t}(\mathrm{d}m)\mathrm{d}t,\delta_{\mub^k_t}(\mathrm{d}m)\mathrm{d}t \big| F^k \big) \big)_{k \in \N^*} \subset \Pcb_S,$
        \begin{align*}
            \Lim_{k \to \infty} \E^{\Pt}\bigg[ \sup_{t \in [0,T]} |\Ert^k_t-\St^k_t|^p \bigg]=0\;\mbox{and}\;\Lim_{k \to \infty} \Lc^{\Pt} \big(\Ert^k,\betat^k,\Wt,\Bt,\muh^k \big)=\Lc^{\Pt} \big(\Xt,\Deltat,\Wt,\Bt,\alephh \big)\;\mbox{in}\;\Wc_p,
        \end{align*}
        where $\muh^k_t:=\Lc^{\Pt}(\Ert^k_{t \wedge \cdot}, \betat^k_{t \wedge \cdot}, \Wt| F^k)$ a.s. for all $t \in [0,T].$
\end{proposition}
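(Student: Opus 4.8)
The plan is to reduce this statement almost entirely to Lemma \ref{lemma_weak-strong_general}, exactly as was done for the common-noise case in Proposition \ref{prop:weak-strong-common_noise}, taking care only of the two differences: that $\ell=0$ so there is no $\Bt^k$ shift to worry about, and that the conditioning is done with respect to the external uniform variable $F^k$ rather than the common noise $B$. First I would fix $k\in\N^*$ and invoke Lemma \ref{lemma_weak-strong_general} to obtain the processes $(\St^k,\gammat^k)$, the uniform variable $F^k$ independent of $(\Xt_0,\Wt)$ (when $\ell=0$, $\Bt^k$ collapses and $\Wt^k=\Wt_{\cdot\vee\varepsilon_k}-\Wt_{\varepsilon_k}$), and the limits \eqref{conv-aux-weak-strong}, together with the distributional identity $\Lc^{\Pt}(\St^k,\betat^k,\Wt^k,\zetah^k)=\Lc^{\Pt}(\Kt^k,\Deltat^k,\Wt^k,\Pih^k)$. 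The key observation is that $\St^k$ solves an SDE whose coefficients are \emph{frozen} along $(\zeta^k,\zetab^k)$ which are themselves $F^k$-measurable conditional laws; by well-posedness of \eqref{eq:MKV_strong} under \Cref{assum:main1} (the drift is switched off on $[0,\varepsilon_k]$ in $\St^k$ but not in $\Ert^k$), the difference $\Ert^k-\St^k$ is controlled by a Gronwall argument, the only source of discrepancy being the term $b(\cdot)\mathbf 1_{[0,\varepsilon_k]}$ of vanishing Lebesgue measure together with the replacement of $\Wt^k$ by $\Wt$ and of the starting time; boundedness of $b$ gives $\E^{\Pt}[\sup_t|\Ert^k_t-\St^k_t|^p]\le C\varepsilon_k^{p}+C\E^{\Pt}[\|\Wt-\Wt^k\|^p]\to 0$.

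Next I would verify the membership $\Lc^{\Pt}(\mu^k,\mu^k,\delta_{\mub^k_t}(\mathrm dm)\mathrm dt,\delta_{\mub^k_t}(\mathrm dm)\mathrm dt\,|\,F^k)\in\Pcb_S$ a.s.: this is precisely the definition of $\Pcb_S$ once one checks that, conditionally on $F^k$, the variable $\Xt_0$ has law $\nu$ (it is independent of $F^k$), $\Wt$ remains a Brownian motion (independent of $(\Xt_0,F^k)$), $\gammat^k$ is $\F$-predictable in the conditioned filtration $\sigma\{F^k,\Xt_0,\Wt_{t\wedge\cdot}\}$, and $\mub^k_t=\Lc^{\Pt}(\Ert^k_t,\gammat^k_t\,|\,F^k)$, $\mu^k_t=\Lc^{\Pt}(\Ert^k_t\,|\,F^k)$ — so that under the regular conditional probability $\Pt(\cdot\,|\,F^k=f)$ the pair $(\Ert^k,\gammat^k)$ is an admissible controlled McKean--Vlasov process of the type \eqref{eq:MKV_strong} with $B,\sigma_0$ absent. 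This is the step where the ``doubling'' in $\Omb$ is used (both copies of $\mu$ and of $\Lambda$ equal the controlled one), and it is purely a bookkeeping check of measurability and the conditional-law identities.

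Finally, for the convergence in $\Wc_p$, I would combine the $L^p$-estimate $\E^{\Pt}[\sup_t|\Ert^k_t-\St^k_t|^p]\to0$ with the distributional identity from Lemma \ref{lemma_weak-strong_general} and the limit \eqref{conv-aux-weak-strong}: the identity transfers $\Lc^{\Pt}(\St^k,\betat^k,\Wt^k,\zetah^k)$ to $\Lc^{\Pt}(\Kt^k,\Deltat^k,\Wt^k,\Pih^k)$, the estimate replaces $\St^k$ by $\Ert^k$ and $\betat^k$ by the empirical/conditional objects attached to $\Ert^k$, $\Wt^k$ by $\Wt$ (their $\Wc_p$-distance vanishes), and \eqref{conv-aux-weak-strong} identifies the limit of $(\Kt^k,\Deltat^k,\Pih^k)$ with $(\Xt,\Deltat,\alephh)$; uniform $p'$-moment bounds (from \cite[Lemma 5.2]{MFD-2020}, $p'>p$) upgrade weak convergence to $\Wc_p$-convergence. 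I do not expect any genuine obstacle here: the one point requiring care is checking that the conditional-law objects $\muh^k_t=\Lc^{\Pt}(\Ert^k_{t\wedge\cdot},\betat^k_{t\wedge\cdot},\Wt\,|\,F^k)$ really converge to $\alephh_t$ and not merely along a subsequence, which follows because Lemma \ref{lemma_weak-strong_general} already supplies the full-sequence convergence and the map $(\Kt^k,\Deltat^k,\Wt^k,\Pih^k)\mapsto$ (its shifted/filled-in version) is continuous for $\Wc_p$ — the same continuity argument used at the end of the proof of Lemma \ref{lemm:convergenceMFGstrong}. Thus the hard work is entirely contained in the already-cited Lemma \ref{lemma_weak-strong_general}, and the Proposition is a corollary obtained by a Gronwall estimate plus a measurability check.
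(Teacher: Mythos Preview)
Your proposal is correct and follows exactly the approach the paper intends: the paper presents Proposition~\ref{prop:weak-strong-no_common_noise} without a detailed proof, stating only that it is obtained from Lemma~\ref{lemma_weak-strong_general} in the same way as Proposition~\ref{prop:weak-strong-common_noise}, and later remarks that ``the case $\ell=0$ can be easily adapted from the case $\ell\neq 0$.'' Your plan---invoking Lemma~\ref{lemma_weak-strong_general}, closing the gap between $\Ert^k$ and $\St^k$ via a McKean--Vlasov Gronwall estimate (the measure arguments $\mu^k,\mub^k$ versus $\zeta^k,\zetab^k$ being handled by Lipschitz continuity in the law), checking the conditional membership in $\Pcb_S$ given $F^k$, and then transferring the convergence through the distributional identity---is precisely this adaptation fleshed out.
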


\medskip
$\mathbf{From\;now\;on\;and\;in\;the\;following,\;we\;will\;work\;with}$ $\ell \neq 0.$ $\mathbf{The\;case}$ $\ell=0$ $\mathbf{can\;be\;easily\;adapted\;from}$ \newline $\mathbf{\;the\;case}$ $\ell \neq 0.$

\medskip
Now, using \Cref{lemma_weak-strong_general}, we will give a first result which will help us to prove \Cref{eq-conv-nash-property} which is our main goal in this part.
Recall that the weak controlled McKean--Vlasov process $(\Omt,\Ft,\Pt,\Xt,\Wt,\Bt,\alephh,\alphat)$ is fixed. Let $(\gammat'^k)_{k\in \N^*}$ be a sequence such that for each $k \in \N^*,$ there exists a Borel function $\varphi^k:[0,T] \x \R^n \x \Cc^n \x \Cc^\ell \to U$ with $\gammat'^k_t=\varphi^k(t,\Xt_0,\Wt_{t \wedge \cdot},\Bt_{t \wedge \cdot})$ a.s. for each $t \in [0,T].$ For each $k \in \N^*$, we consider $\Ert'^k$ the strong solution of:
\begin{align*}
    \mathrm{d}\Ert'^k_t= 
            b\big(t,\Ert'^k_t,\mu^k,\mub^k_t,\gammat'^k_t\big)  \mathrm{d}t 
            + 
            \sigma \big(t,\Ert'^k_t,\mu^k,\mub^k_t,\gammat'^k_t\big)  \mathrm{d}\Wt_t 
            +  
            \sigma_0 \mathrm{d}\Bt_t\;\mbox{with}\;\Zt'_0=\Xt_0.
\end{align*}
$\mu^k,\mub^k$ and $\muh^k$ are defined in \Cref{prop:weak-strong-common_noise}.

\begin{lemma}
\label{lemma:weak_fixedAppr}
    There exists a sequence of $U$--valued $\Ft$--predictable processes $(\alphat'^k_t)_{t \in [0,T]}$ s.t if for each $k \in \N^*$, we let $\Xt'^k$ be the solution of:
    \begin{align*}
            \mathrm{d}\Xt'^k_t= 
            b\big(t,\Xt'^k_t,\aleph,\alephb_t,\alphat'^k_t\big)  \mathrm{d}t 
            + 
            \sigma \big(t,\Xt'^k_t,\aleph,\alephb_t,\alphat'^k_t\big)  \mathrm{d}\Wt_t 
            +  
            \sigma_0 \mathrm{d}\Bt_t\;\mbox{with}\;\Xt'^k_0=\Xt_0
        \end{align*}
       then
    \begin{align}  \label{eq:convergence-strong_weakMFG}
        \Lim_{k \to \infty} \Wc_p \bigg(\Pt \circ \Big(\Ert'^{k}, \muh^{k}, \delta_{(\mub^{k}_t,\tilde \gamma'^k_t)}(\mathrm{d}m,\mathrm{d}u)\mathrm{d}t \Big)^{-1}, \Pt \circ \Big( \Xt'^k,\alephh,\delta_{(\alephb_t,\tilde \gamma^k_t)}(\mathrm{d}m,\mathrm{d}u)\mathrm{d}t \Big)^{-1} \bigg)
        =
        0.
    \end{align}
    
\end{lemma}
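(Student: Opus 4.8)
The plan is to transport the deviation from the strong picture (built around $\Ert'^k$) to the weak picture (built around $\Xt'^k$) through the equality in law supplied by \Cref{lemma_weak-strong_general}, proceeding in three hops: first pass from $\Ert'^k$ to a truncated‑noise analogue built exactly as $\St^k$ in \Cref{lemma_weak-strong_general}; then use the equality in law to move to the abstract weak side; and finally pass from there to the solution $\Xt'^k$ of the statement. The first and third hops are pathwise SDE stability arguments, the middle one an exact transport.

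For the first hop, let $\St'^k$ solve the SDE driven by $(\Wt^k,\Bt^k)$, with drift multiplied by $\mathbf{1}_{[\varepsilon_k,T]}$, with $(\mu^k,\mub^k)$ replaced by $(\zeta^k,\zetab^k)$, and with a version of $\gammat'^k$ — still denoted $\gammat'^k$ — that is $(\sigma\{\Xt_0,\Wt^k_{t\wedge\cdot},\Bt^k_{t\wedge\cdot},F^k\})$‑predictable. Using that $\|\Wt-\Wt^k\|+\|\Bt-\Bt^k\|\to 0$ in $L^p$, that $\Ert^k$ is $L^p$‑close to $\St^k$ (\Cref{prop:weak-strong-common_noise}), and a conditional martingale‑convergence argument passing from conditioning on $\Bt$ to conditioning on $(\Bt^k,F^k)$ so that $\mu^k\approx\zeta^k$ and $\mub^k\approx\zetab^k$ in the relevant $L^p$ sense, a Gronwall estimate built on the boundedness of $b$ and the Lipschitz continuity of $\sigma$ (\Cref{assum:main1}) yields $\E^{\Pt}[\sup_t|\Ert'^k-\St'^k|^p]\to 0$ together with the asymptotic interchangeability of the measure marginals. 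It therefore suffices to match $\St'^k$.

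For the second hop, note that $\St'^k$, jointly with $(\zetah^k,\zetab^k,\gammat'^k)$, is a measurable functional of $(\St^k,\betat^k,\Wt^k,\Bt^k,\zetah^k)$ and $F^k$ (the measures $\zeta^k,\zetab^k$ being read off $\zetah^k$); applying the equality in law $\Lc^{\Pt}(\St^k,\betat^k,\Wt^k,\Bt^k,\zetah^k)=\Lc^{\Pt}(\Kt^k,\Deltat^k,\Wt^k,\Bt^k,\Pih^k)$, jointly with $F^k=\phi(\Bt_{\varepsilon_k})$ since $\ell\neq 0$, produces a $U$‑valued $\Ft$‑predictable process $\alphat'^k$ and the corresponding weak solution $\Kt'^k$ (SDE driven by $(\Wt^k,\Bt^k)$, drift times $\mathbf{1}_{[\varepsilon_k,T]}$, measures $(\pi^k,\pib^k)$, control $\alphat'^k$) with $\Lc^{\Pt}(\St'^k,\zetah^k,\delta_{(\zetab^k_t,\gammat'^k_t)}(\mathrm{d}m,\mathrm{d}u)\mathrm{d}t)=\Lc^{\Pt}(\Kt'^k,\Pih^k,\delta_{(\pib^k_t,\alphat'^k_t)}(\mathrm{d}m,\mathrm{d}u)\mathrm{d}t)$. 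For the third hop, replace $(\Wt^k,\Bt^k)$ by $(\Wt,\Bt)$, drop the indicator, and replace $(\pi^k,\pib^k)$ by $(\aleph,\alephb)$: since $\E^{\Pt}[\sup_t|\Xt_t-\Kt^k_t|^2+\Wc_2(\aleph_t,\pi^k_t)^2+\int_0^T\Wc_2(\alephb_t,\pib^k_t)^2\,\mathrm{d}t]\to 0$ by \eqref{conv-aux-weak-strong}, a final Gronwall estimate gives $\E^{\Pt}[\sup_t|\Kt'^k-\Xt'^k|^p]\to 0$ and $\Pih^k\to\alephh$, $\pib^k\to\alephb$ in $L^p$, with $\Xt'^k$ the process of the statement. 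Chaining the three hops yields \eqref{eq:convergence-strong_weakMFG}.

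The hard part will be the reduction, inside the first hop, to a version of $\gammat'^k$ adapted to $(\Xt_0,\Wt^k,\Bt^k,F^k)$: the transfer device of \Cref{lemma_weak-strong_general} only involves the truncated noises $(\Wt^k,\Bt^k)$ and $F^k=\phi(\Bt_{\varepsilon_k})$, whereas $\gammat'^k=\varphi^k(t,\Xt_0,\Wt_{t\wedge\cdot},\Bt_{t\wedge\cdot})$ depends on the full paths, in particular on the segments of $\Wt,\Bt$ over $[0,\varepsilon_k]$, which $(\Wt^k,\Bt^k,F^k)$ does not see. One must show that this extra dependence can be absorbed at asymptotically zero cost — the contribution of $[0,\varepsilon_k]$ to each SDE is $O(\varepsilon_k^{1/2})$ in $L^p$ uniformly over controls because $b$ and $\sigma$ are bounded and $\sigma$ is Lipschitz — together with a further approximation of $\varphi^k$ and the bookkeeping that keeps $\alphat'^k$ genuinely $\Ft$‑predictable; this is the kind of argument carried out in \cite[Section 4.1.1]{djete2019general}, and it is where the truncation $\varepsilon_k\to 0$ enters essentially.
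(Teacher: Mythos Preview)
Your three-hop structure matches the paper exactly: pass from $\Ert'^k$ to a truncated analogue $\St'^k$, transfer via the equality in law of \Cref{lemma_weak-strong_general} to a process $\Kt'^k$ on the weak side, then pass from $\Kt'^k$ to $\Xt'^k$ by Gronwall using \eqref{conv-aux-weak-strong}. The first and third hops are handled as you describe.

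Where your proposal diverges from the paper is precisely at the point you flag as hard. You propose to reduce $\gammat'^k=\varphi^k(t,\Xt_0,\Wt_{t\wedge\cdot},\Bt_{t\wedge\cdot})$, inside the first hop, to a version adapted to $(\Xt_0,\Wt^k,\Bt^k,F^k)$ by absorbing the dependence on $[0,\varepsilon_k]$ through SDE stability and a further approximation of $\varphi^k$. The paper does something more direct and does \emph{not} approximate $\varphi^k$: it rewrites $\varphi^k(t,\Xt_0,\Wt,\Bt)=\varphit^k(t,\Xt_0,\Wt^k,\Wt_{\cdot\wedge\varepsilon_k},\Bt^k,\Bt_{\cdot\wedge\varepsilon_k})$ to isolate the early segments, introduces an \emph{independent} $\Ft$--Brownian motion $(\Wt^\circ,\Bt^\circ)$, and sets
\[
\alphat'^k_t:=\varphit^k\big(t,\Xt_0,\Wt^k_{t\wedge\cdot},\Wt^\circ_{(t\wedge\varepsilon_k)\wedge\cdot},\Bt^k_{t\wedge\cdot},\Bt^\circ_{(t\wedge\varepsilon_k)\wedge\cdot}\big).
\]
Because $(\Wt^\circ_{\cdot\wedge\varepsilon_k},\Bt^\circ_{\cdot\wedge\varepsilon_k})$ has the same law as $(\Wt_{\cdot\wedge\varepsilon_k},\Bt_{\cdot\wedge\varepsilon_k})$ and is independent of the relevant tuple, the equality in law of \Cref{lemma_weak-strong_general} extends verbatim to the deviated controls, yielding $\Lc(\St'^k,\betat'^k,\Wt^k,\Bt^k,\zetah^k)=\Lc(\Kt'^k,\Deltat'^k,\Wt^k,\Bt^k,\Pih^k)$ as an exact identity. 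Your route is less concrete: SDE stability on $[0,\varepsilon_k]$ controls the state, not the control, and $\varphi^k$ is merely Borel, so the ``further approximation of $\varphi^k$'' you invoke is a genuine extra step with no obvious handle. The independent-copy substitution is the device that makes the middle hop an exact transport rather than another asymptotic estimate.
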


\begin{proof}
    For each $k \in \N^*$, let us define $\St'^k$ the solution of 
    \begin{align*}
        \mathrm{d}\St'^k_t= 
            b\big(t,\St'^k_t,\zeta^k,\zetab^k_t,\gammat'^k_t\big) \mathbf{1}_{t \in [\varepsilon_k,T]}  \mathrm{d}t 
            + 
            \sigma \big(t,\St'^k_t,\zeta^k,\zetab^k_t,\gammat'^k_t\big)  \mathrm{d}\Wt^k_t 
            +  
            \sigma_0 \mathrm{d}\Bt^k_t\;\mbox{with}\;\St'_0=\Xt_0.
    \end{align*}
    Using \Cref{prop:weak-strong-common_noise}, as $\zetah^k_t=\Lc^{\Pt}(\St^k_{t \wedge \cdot}, \betat^k_{t \wedge \cdot}, \Wt^k| \Bt)$ and, $\Lim_{k \to \infty}\varepsilon_k=0$ and $\Lim_{k \to \infty}(\Wt^k,\Bt^k)=(\Wt,\Bt)$ a.s.,  we know that $\Lim_{k \to \infty} \Wc_p((\mu^k_t,\mub^k_t,\muh^k_t),(\zeta^k_t,\zetab^k_t,\zetah^k_t))=0.$  Then, it is easy to check that 
    \begin{align} \label{eq:strong-aux-cong}
        \Lim_{k \to \infty}\E^{\Pt} \bigg[ \sup_{t \in [0,T]} |\St'^k_t-\Ert'^k_t|^p \bigg] =0.
    \end{align}
    
    For each $k \in \N^*,$ in the function $\varphi^k,$ in order to emphasize the part of the Brownian motion $(W,B)$ before and after $\varepsilon_k,$ we rewrite
    \begin{align*}
        \varphi^k(t,\Xt_0,\Wt_{t \wedge \cdot},\Bt_{t \wedge \cdot})
        =
        \varphit^k(t, \Xt_0, \Wt^k_{t \wedge \cdot}, \Wt_{(t \wedge \varepsilon_k) \wedge \cdot}, \Bt^k_{t \wedge \cdot}, \Bt_{(t \wedge \varepsilon_k) \wedge \cdot}).
    \end{align*}
    
    Let $(\Wt^\circ,\Bt^\circ)$ be a $\R^{d + \ell}$--valued $\Ft$--Brownian motion $\Pt$--independent of $(\Wt,\Bt,\Xt_0,\alephh,\alphat, (F^k)_{k \in \N^*}).$ We define
    \begin{align*}
        \alphat'^k_t
        :=
        \varphit(t, \Xt_0, \Wt^k_{t \wedge \cdot}, \Wt^\circ_{(t \wedge \varepsilon_k) \wedge \cdot}, \Bt^k_{t \wedge \cdot}, \Bt^\circ_{(t \wedge \varepsilon_k) \wedge \cdot})\;\mbox{a.s. for all}\;t \in [0,T],\;\Deltat'^k:=\delta_{\tilde \alpha'^k_t}(\mathrm{d}u)\mathrm{d}t\;\mbox{and}\;\betat'^k:=\delta_{\tilde \gamma'^k_t}(\mathrm{d}u)\mathrm{d}t.
    \end{align*}
    It is straightforward that we can find a Borel function $\Phi:\R^n \x \M(U) \x \Cc^n \x \Cc^\ell \x \Pc(\Cc^n \x \M(U) \x \Cc^n) \to \Cc^n$ s.t. $\St'^k=\Phi(\Xt_0,\betat'^k,\Wt^k,\Bt^k,\zetah^k)$ a.s. Let us introduce the process $(\Kt'_t)_{t \in [0,T]}$
    $$
        \Kt':=\Phi(\Xt_0,\Deltat'^k,\Wt^k,\Bt^k,\Pih^k).
    $$
    Using \Cref{lemma_weak-strong_general} , especially $\Lc \big(\Xt^k,\Deltat^k,\Wt^k,\Bt^k,\Pih^k \big)=\Lc \big(\St^k,\betat^k,\Wt^k,\Bt^k,\zetah^k \big),$ we find that
    \begin{align} \label{eq:law-eq-aux}
        \Lc \big(\St'^k,\betat'^k,\Wt^k,\Bt^k,\Pih^k \big)=\Lc \big(\Kt'^k,\Deltat'^k,\Wt^k,\Bt^k,\zetah^k \big).
    \end{align}
    Notice that $\Kt'$ is the solution of 
    \begin{align*}
            \mathrm{d}\Kt'^k_t= 
            b\big(t,\Kt'^k_t,\pi^k,\pib^k_t,\alphat'^k_t\big) \mathbf{1}_{t \in [\varepsilon_k,T]}  \mathrm{d}t 
            + 
            \sigma \big(t,\Kt'^k_t,\pi^k,\pib^k_t,\alphat'^k_t\big)  \mathrm{d}\Wt^k_t 
            +  
            \sigma_0 \mathrm{d}\Bt^k_t\;\mbox{with}\;\Kt'^k_0=\Xt_0.
        \end{align*}
        
        To conclude the prove of the Lemma, we introduce $\Xt'^k$ solution of
        \begin{align*}
            \mathrm{d}\Xt'^k_t= 
            b\big(t,\Xt'^k_t,\aleph,\alephb_t,\alphat'^k_t\big)  \mathrm{d}t 
            + 
            \sigma \big(t,\Xt'^k_t,\aleph,\alephb_t,\alphat'^k_t\big)  \mathrm{d}\Wt_t 
            +  
            \sigma_0 \mathrm{d}\Bt_t\;\mbox{with}\;\Xt'^k_0=\Xt_0.
        \end{align*}
    Similarly to \eqref{eq:strong-aux-cong}, thanks to \eqref{conv-aux-weak-strong} of \Cref{lemma_weak-strong_general}, it is easy to verify that $\Lim_{k \to \infty}\E^{\Pt} \bigg[ \sup_{t \in [0,T]} |\Xt'^k_t-\Kt'^k_t|^p \bigg] =0.$ 
    By combining the previous convergence, \Cref{eq:law-eq-aux} and the convergence \eqref{eq:strong-aux-cong}, we deduce the convergence \eqref{eq:convergence-strong_weakMFG}. 
    
\end{proof}

\subsubsection{Approximation of measure--valued solution via weak controlled McKean--Vlasov processes}


\medskip
Let $\Pr \in \Pcb_V$ such that $\Pr(\Lambda'=\Lambda, \mu'=\mu)=1.$ Let $(\Omt,\Ft:=(\Fct_t)_{t \in [0,T]},\widetilde{\Pr})$ be an extension of $(\Omb,\Fb,\Pr)$ supporting a uniform random variable $F,$ a $\R^d$--valued $\Ft$--Brownian motion $\Wt$ and a $\R^n$--valued $\Fct_0$--random variable $\Xt_0$ with $\Lc(\Xt_0)=\nu$. Besides, $F,$ $\Wt,$ $\Xt_0$ and $\Gcb_T$ are independent. $(\mu,\Lambda,B)$ are naturally extended on $(\Omt,\Ft:=(\Fct_t)_{t \in [0,T]},\widetilde{\Pr}).$ The next Lemma is exactly \cite[Proposition 5.3]{MFD-2020}.

\begin{lemma} \label{lemma:app-measure-weak}
     There exists a sequence of $U$--valued $\big(\sigma\{F,\Xt_0,\Wt_{t \wedge \cdot}\} \lor \Gcb_t \big)_{t \in [0,T]}$--predictable processes $(\alphat^k)_{k \in \N^*}$ st. if we let $\Xt^k$ be the solution of
    \begin{align*}
            \mathrm{d}\Xt^k_t= 
            b\big(t,\Xt^k_t,\aleph^k,\alephb^k_t,\alphat^k_t\big)  \mathrm{d}t 
            + 
            \sigma \big(t,\Xt^k_t,\aleph^k,\alephb^k_t,\alphat^k_t\big)  \mathrm{d}\Wt_t 
            +  
            \sigma_0 \mathrm{d}\Bt_t\;\mbox{with}\;\Xt^k_0=\Xt_0
        \end{align*}
        where $\aleph^k_t=\Lc^{\Pt}(\Xt^k_t|\Gcb_t)$ and $\alephb^k_t=\Lc^{\Pt}(\Xt^k_t,\alphat^k_t|\Gcb_t).$ Then for a sub--sequence $(k_j)_{j \in \N^*}$
    \begin{align*}
        \Lim_{j \to \infty} \Bigg[ \Wc_p \Big(\delta_{\alephb^{k_j}_t}(\mathrm{d}m)\mathrm{d}t,\Lambda \Big)^p +
        \sup_{t \in [0,T]} \Wc_p(\aleph^{k_j}_t,\mu_t)
        \bigg]
        =
        0,\;\Prt\mbox{--a.s.}
    \end{align*}
\end{lemma}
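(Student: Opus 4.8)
The plan is to follow the scheme of \Cref{lemm:convergenceMFGstrong}, but with all the approximating objects built directly on the extension $(\Omt,\Ft,\Prt)$ so that the target triple $(\mu,\Lambda,B)$ stays literally fixed; the point of the consistency hypothesis $\Pr(\Lambda'=\Lambda,\mu'=\mu)=1$ is that it turns the Fokker--Planck equation of \Cref{def:RelaxedCcontrol} into a \emph{conditional McKean--Vlasov} equation, which is exactly the regime handled by \cite[Proposition 4.9]{MFD-2020} (compare the remark following \Cref{Prop_borrowed1}). \textbf{Step 1 (shift and approximate the control measure).} Let $(\vartheta',\Theta')$ be the shifted processes of \eqref{eq:shift-proba}. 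By \Cref{lemma:reformulation-approximation}, $\Theta'_t(\Z_{\vartheta'_t})=1$ $\mathrm{d}\Prt\otimes\mathrm{d}t$--a.e.\ and $(\vartheta',\Theta')$ satisfies the shifted Fokker--Planck equation $N_t(f)=0$ for all $f\in C^2_b(\R^n)$, $t\in[0,T]$; moreover $\vartheta'$ and $\Theta'$ are Borel functions of $(\mu,\Lambda,B)$ (hence $\Gcb_T$--measurable), $\Theta'$ is $\Gb$--predictable, and the consistency gives $\vartheta'[B]=\mu$, $\Theta'_t[B](\mathrm{d}m)\mathrm{d}t=\Lambda$ $\Prt$--a.s. \Cref{lemma:reformulation-approximation} also furnishes continuous maps $G^l$ such that $\Theta'^l:=\big(\delta_{G^l(t,B_{t\wedge\cdot},\mu_{t\wedge\cdot},\Lambda_{t\wedge\cdot})}(\mathrm{d}m)\mathrm{d}t\big)\to\Theta'$ in $\Wc_p$ jointly with $(B,\mu,\Lambda)$, each $\Theta'^l$ being $\Gb$--predictable.

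\textbf{Step 2 (invoke \cite[Proposition 4.9]{MFD-2020} with common noise).} On $(\Omt,\Ft,\Prt)$ the triple $(\Xt_0,\Wt,F)$ is independent of $\Gcb_T$, hence of $(\Theta'^l,\mu,\Lambda,B)_l$. Applying \cite[Proposition 4.9]{MFD-2020} (with the separability condition, cf.\ \cite[Remark 4.11]{MFD-2020}) to the shifted coefficients $[\widehat b,\widehat\sigma]$ and $\widehat{\Lc}^\circ$, with the $\circ$--channel control measure replaced by the approximants $\Theta'^l$ and the $\star$--channel measure kept equal to $\Lambda$, yields: a subsequence $(l_k)_k$; $U$--valued $\big(\sigma\{F,\Xt_0,\Wt_{t\wedge\cdot}\}\lor\Gcb_t\big)$--predictable controls of the form $\alphat^k_t=R^k\big(t,\Xt_0,\mu_{t\wedge\cdot},\Theta'^{l_k}_{t\wedge\cdot},\Lambda_{t\wedge\cdot},\Wt_{t\wedge\cdot},B_{t\wedge\cdot},F\big)$ (which are $\Gcb_t$--adapted in the common slots because $\Theta'^{l_k}$ is a function of $(\mu,\Lambda,B)$); the strong solution $\Xh^k$ of the associated shifted conditional McKean--Vlasov SDE, for which $\Xt^k:=\Xh^k+\sigma_0 B$ solves the SDE of the statement with $\aleph^k_t=\vartheta'^k_t[B_t]$ and $\alephb^k_t=\overline{\vartheta}'^k_t[B_t]$, where $\vartheta'^k_t:=\Lc^{\Prt}(\Xh^k_t|\Gcb_t)$ and $\overline{\vartheta}'^k_t:=\Lc^{\Prt}(\Xh^k_t,\alphat^k_t|\Gcb_t)$ (the conditioning on $\Gcb_t$ being legitimate since $(\Xt_0,\Wt,F)\perp\Gcb_T$, exactly as in \Cref{lemm:convergenceMFGstrong}); and a further subsequence $(k_j)_j$ along which
\[
\Lc^{\Prt}\big(\vartheta'^{k_j},\,\delta_{\overline{\vartheta}'^{k_j}_t}(\mathrm{d}m)\mathrm{d}t,\,\mu,\Lambda,B\big)\ \longrightarrow\ \Lc^{\Pr}\big(\vartheta',\Theta',\mu,\Lambda,B\big)\quad\text{in }\Wc_p .
\]

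\textbf{Step 3 (shift back and pass to a.s.\ convergence).} The map $(\pi,q,\bb)\mapsto\big(\pi[\bb],q_t[\bb](\mathrm{d}m)\mathrm{d}t,\bb\big)$ being continuous, applying it to the convergence of Step 2 and using $\vartheta'[B]=\mu$, $\Theta'_t[B](\mathrm{d}m)\mathrm{d}t=\Lambda$ gives, in $\Wc_p$,
\[
\Lc^{\Prt}\big(\aleph^{k_j},\,\delta_{\alephb^{k_j}_t}(\mathrm{d}m)\mathrm{d}t,\,\mu,\Lambda,B\big)\ \longrightarrow\ \Lc^{\Pr}(\mu,\Lambda,\mu,\Lambda,B).
\]
Since $(\mu,\Lambda,B)$ is the very same random element of $\Omt$ for every $j$, this weak limit is carried by the diagonal $\{(\aleph,\delta\,\mathrm{d}t)=(\mu,\Lambda)\}$, which forces $\big(\aleph^{k_j},\delta_{\alephb^{k_j}_t}(\mathrm{d}m)\mathrm{d}t\big)\to(\mu,\Lambda)$ in $\Prt$--probability; extracting a last subsequence gives the asserted $\Prt$--a.s.\ convergence $\Wc_p\big(\delta_{\alephb^{k_j}_t}(\mathrm{d}m)\mathrm{d}t,\Lambda\big)^p+\sup_{t\in[0,T]}\Wc_p(\aleph^{k_j}_t,\mu_t)\to0$.

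\textbf{Main obstacle.} All the real difficulty is absorbed into \cite[Proposition 4.9]{MFD-2020}: producing, out of the continuous approximants $\Theta'^l$, controls adapted to $\sigma\{F,\Xt_0,\Wt_{t\wedge\cdot}\}\lor\Gcb_t$ whose conditional state laws approximately solve the \emph{nonlinear, conditional} Fokker--Planck equation, and then passing to the limit even though neither that equation nor the auxiliary SDEs need be uniquely solvable --- this is precisely where the Blackwell--Dubins selection \eqref{eq:blackwell} and the successive mollification and tightness arguments of \cite{MFD-2020} come in. Everything else --- the shift reduction via \Cref{lemma:reformulation-approximation}, checking the $\Gb$--predictability and independence hypotheses, and the upgrade from convergence in law to $\Prt$--a.s.\ convergence using that $(\mu,\Lambda,B)$ is frozen --- is routine once that proposition is available.
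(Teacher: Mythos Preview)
The paper does not give a self--contained argument here: it simply records that \Cref{lemma:app-measure-weak} \emph{is} \cite[Proposition~5.3]{MFD-2020}. Your sketch tries instead to rederive it from \cite[Proposition~4.9]{MFD-2020} by imitating the proof of \Cref{lemm:convergenceMFGstrong}. Steps~1 and~3 are fine; in particular the diagonal argument in Step~3 (joint convergence in law with the fixed block $(\mu,\Lambda,B)$ to a law supported on the diagonal forces convergence in probability, hence a.s.\ along a further subsequence) is correct.

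The gap is in Step~2. The lemma asks for $\Xt^k$ solving a genuine conditional McKean--Vlasov SDE, i.e.\ with $(\aleph^k,\alephb^k)$ equal to \emph{its own} conditional laws given $\Gcb_t$. What \cite[Proposition~4.9]{MFD-2020} produces, as it is used in \Cref{lemm:convergenceMFGstrong} (see \eqref{eq:general-weak_McK}), is a process whose drift and diffusion carry the \emph{prescribed} input measures coming from the chosen sequence $(\Lambda^k,\Lambda'^k)$, not the process's own conditional law. You appeal to the remark following \Cref{Prop_borrowed1} to identify the two, but that remark explicitly requires $\Lambda'^k=\Lambda^k$ for every $k$, whereas in your set--up you take $\Lambda'^k=\Theta'^{l_k}$ and keep $\Lambda^k=\Lambda$; these are not equal. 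Consequently the assertion ``$\Xt^k:=\Xh^k+\sigma_0B$ solves the SDE of the statement with $\aleph^k_t=\vartheta'^k_t[B_t]$, $\alephb^k_t=\overline{\vartheta}'^k_t[B_t]$'' is not justified: the process you actually obtain has $(\mu,\Lambda)$ frozen in the $\star$--channel, not $(\aleph^k,\alephb^k)$.

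Closing this requires one more genuine step: either a stability/Gronwall comparison between the McKean--Vlasov solution $\Xt^k$ (same control $\alphat^k$) and the frozen--measure process delivered by Proposition~4.9, in which the self--referential terms $\Wc_p(\aleph^k_t,\mu_t)$ and $\Wc_p(\alephb^k_t,\cdot)$ must be absorbed on the left --- a real fixed--point argument, not a triviality --- or a direct appeal to the McKean--Vlasov version of the approximation result. The latter is precisely \cite[Proposition~5.3]{MFD-2020}, which is what the paper cites.
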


\begin{remark}
{
   If we define $\alephh^k_t:=\Lc^{\Prt}(\Xt^k_{t \wedge \cdot}, \Art^k_{t \wedge \cdot}, \Wt| \Gcb_T)$ with $\Art^k:=\delta_{\tilde \alpha^k_t}(\mathrm{d}u)\mathrm{d}t.$ It is straightforward to check that the tuple $(\Omt,\Ft,\Prt,\Xt^k,\Wt^k,B,\alephh^k,\alphat^k)$ is a weak controlled McKean--Vlasov in the sense of {\rm \Cref{def:weak-solution}} $($see also {\rm\cite[Definition 2.3]{djete2019general}}$)$.
    }
\end{remark}

\medskip

Now, given the sequence $(\Xt^k,\alphat^k)_{k \in \N^*}$ of \Cref{lemma:app-measure-weak}, we consider a sequence of $U$--valued $\big(\sigma\{F,\Xt_0,\Wt_{t \wedge \cdot}\} \lor \Gcb_t \big)_{t \in [0,T]}$--predictable processes $(\alphat'^k)_{k \in \N^*}$ and let $\Xt'^k$ be the solution of
    \begin{align*}
            \mathrm{d}\Xt'^k_t= 
            b\big(t,\Xt'^k_t,\aleph^k,\alephb^k_t,\alphat'^k_t\big)  \mathrm{d}t 
            + 
            \sigma \big(t,\Xt'^k_t,\aleph^k,\alephb^k_t,\alphat'^k_t\big)  \mathrm{d}\Wt_t 
            +  
            \sigma_0 \mathrm{d}\Bt_t\;\mbox{with}\;\Xt'^k_0=\Xt_0.
        \end{align*}

\begin{lemma}
\label{lemma:converse_general}
   There exists a sequence $(\mathrm{Q}^k)_{k \in \N^*} \subset \Pcb_V$ satisfying $\mathrm{Q}^k \circ \big(\mu,\Lambda,B \big)^{-1}=\mathrm{P} \circ \big(\mu,\Lambda,B \big)^{-1}$ for each $k \in \N^*$ and
    \begin{align} \label{eq:convergence-relaxed_strongMFG}
        \Lim_{k \to \infty} \Wc_p \Big(\Prt \circ \big(\aleph'^{k},\aleph^k, \delta_{\alephb'^{k}_t}(\mathrm{d}m)\mathrm{d}t, \delta_{\alephb^{k}_t}(\mathrm{d}m)\mathrm{d}t, B \big)^{-1},\mathrm{Q}^k \Big)
        =
        0,
    \end{align}
    where $\aleph'^{k}_t:=\Lc^{\Prt}\big(\Xt'^k_t \big| \Gcb_t \big)$ and $\alephb'^{k}_t:=\Lc^{\Prt}\big(\Xt'^k_t,\alpha'^k_t \big| \Gcb_t \big)$ a.s. for all $t \in [0,T].$
\end{lemma}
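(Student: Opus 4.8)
Here is how I would attack Lemma \ref{lemma:converse_general}.

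\medskip
\textbf{Overall strategy.} I would keep the processes $\Xt'^k,\alphat'^k$ of the statement fixed and, for each $k$, build an element $\mathrm{Q}^k\in\Pcb_V$ whose \emph{fixed} coordinates $(\mu,\Lambda,B)$ carry exactly the law prescribed by $\mathrm{P}$, and which is asymptotically close to the naive candidate $\Prt\circ\big(\aleph'^{k},\aleph^{k},\delta_{\alephb'^{k}_t}(\mathrm{d}m)\mathrm{d}t,\delta_{\alephb^{k}_t}(\mathrm{d}m)\mathrm{d}t,B\big)^{-1}$. The point is that this candidate is in general \emph{not} in $\Pcb_V$: its Fokker--Planck equation holds with the \emph{approximate} population $\big(\aleph^{k},\delta_{\alephb^{k}_t}(\mathrm{d}m)\mathrm{d}t\big)$ generated by the weak process $\Xt^k$ of \Cref{lemma:app-measure-weak}, and the conditional law $\alephb'^{k}_t=\Lc^{\Prt}(\Xt'^k_t,\alphat'^k_t\mid\Gcb_t)$ need not be adapted to the coarser filtration generated by $\big(\aleph^k,\delta_{\alephb^k_t}(\mathrm{d}m)\mathrm{d}t,B\big)$, so condition $3$ of \Cref{def:RelaxedCcontrol} may fail. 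The cure is to replace the approximate population by the exact $(\mu,\Lambda)$ -- which restores both the Fokker--Planck relation and the predictability -- at the price of also replacing $\Xt'^k$ by a process driven by the $\Lambda$--averaged coefficients $\bar b^\star_t:=\int_{\Pc^n_U}b^\star(t,\mu,m)\Lambda_t(\mathrm{d}m)$ and $\bar a^\star_t:=\int_{\Pc^n_U}a^\star(t,\mu,m)\Lambda_t(\mathrm{d}m)$. Throughout I would work along the sub--sequence of \Cref{lemma:app-measure-weak}, relabelled so that $\sup_t\Wc_p(\aleph^k_t,\mu_t)\to 0$ and $\Wc_p\big(\delta_{\alephb^k_t}(\mathrm{d}m)\mathrm{d}t,\Lambda\big)\to 0$, $\Prt$--a.s.

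\medskip
\textbf{Construction of $\mathrm{Q}^k$.} First note that $\bar b^\star,\bar a^\star$ are $\Gcb$--adapted and that $\bar a^\star_t+a^\circ(t,y,\mu,u)=\int_{\Pc^n_U}\sigma\sigma^\top(t,y,\mu,m,u)\Lambda_t(\mathrm{d}m)\ge\theta\,\mathrm{I}_n$ by \Cref{assum:main1}$(iv)$, so it admits a Lipschitz square root. Let $\widetilde{Y}^k$ solve, with $\widetilde{Y}^k_0=\Xt_0$,
\[
\mathrm{d}\widetilde{Y}^k_t=\big[\bar b^\star_t+b^\circ(t,\widetilde{Y}^k_t,\mu,\alphat'^k_t)\big]\mathrm{d}t+\big[\bar a^\star_t+a^\circ(t,\widetilde{Y}^k_t,\mu,\alphat'^k_t)\big]^{1/2}\mathrm{d}\Wt_t+\sigma_0\,\mathrm{d}\Bt_t,
\]
set $\eta^k_t:=\Lc^{\Prt}(\widetilde{Y}^k_t\mid\Gcb_t)$, $\overline{\eta}^k_t:=\Lc^{\Prt}(\widetilde{Y}^k_t,\alphat'^k_t\mid\Gcb_t)$, and define $\mathrm{Q}^k:=\Prt\circ\big(\eta^k,\mu,\delta_{\overline{\eta}^k_t}(\mathrm{d}m)\mathrm{d}t,\Lambda,B\big)^{-1}$. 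I would then check $\mathrm{Q}^k\in\Pcb_V$: $B$ remains a Wiener process in the canonical filtration since the first four coordinates of $\mathrm{Q}^k$ are, under $\Prt$, $\Gcb$--adapted while $(\Xt_0,\Wt,F)$ is independent of $\Gcb_T$; applying It\^o's formula to $f(\widetilde{Y}^k_t-\sigma_0 B_t)$, conditioning on $\Gcb_t$, and using the separability $[b,\sigma\sigma^\top]=[b^\star,a^\star]+[b^\circ,a^\circ]$ with the $\Gcb$--measurability of $\bar b^\star,\bar a^\star$ yields $N_t[\eta^k,\mu,\delta_{\overline{\eta}^k},\Lambda](f)=0$ for all $f\in C^2_b(\R^n)$ and $t\in[0,T]$; the support condition holds because $\overline{\eta}^k_t(\cdot,U)=\eta^k_t$; and $\eta^k_0=\Lc^{\Prt}(\Xt_0\mid\Gcb_0)=\nu$. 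The essential point is that $\overline{\eta}^k_t$ is $\Gcb_t=\sigma\{\mu_{t\wedge\cdot},\Lambda_{t\wedge\cdot},B_{t\wedge\cdot}\}$--measurable, hence a measurable function of $(\mu_{t\wedge\cdot},\Lambda_{t\wedge\cdot},B_{t\wedge\cdot})$, so under $\mathrm{Q}^k$ the coordinate $\Lambdap$ is a function of $(\mu,\Lambda,B)$ and admits a $\Gb$--predictable version. By construction $\mathrm{Q}^k\circ(\mu,\Lambda,B)^{-1}=\Prt\circ(\mu,\Lambda,B)^{-1}=\mathrm{P}\circ(\mu,\Lambda,B)^{-1}$.

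\medskip
\textbf{The convergence \eqref{eq:convergence-relaxed_strongMFG}.} Since $\aleph^k\to\mu$ and $\delta_{\alephb^k_t}(\mathrm{d}m)\mathrm{d}t\to\Lambda$ $\Prt$--a.s., and the uniform $p'$--moment bounds of \cite[Lemma 5.2]{MFD-2020} upgrade these to $\Wc_p$--convergence of pushforwards, it suffices to prove $\Wc_p\big(\Lc^{\Prt}\big(\aleph'^k,\delta_{\alephb'^k_t}(\mathrm{d}m)\mathrm{d}t,B\big),\Lc^{\Prt}\big(\eta^k,\delta_{\overline{\eta}^k_t}(\mathrm{d}m)\mathrm{d}t,B\big)\big)\to 0$. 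I would first pass from $\Xt'^k$ to an intermediate process $\widetilde{V}^k$ solving the SDE of $\Xt'^k$ with $\aleph^k$ replaced by $\mu$: by \Cref{assum:main1}$(iii)$, Gronwall and Burkholder--Davis--Gundy, $\E^{\Prt}\big[\sup_t|\Xt'^k_t-\widetilde{V}^k_t|^p\big]\le C\,\big(\sup_s\Wc_p(\aleph^k_s,\mu_s)\big)^p\to 0$, so the $\Gcb$--conditional laws of $(\Xt'^k,\alphat'^k)$ and $(\widetilde{V}^k,\alphat'^k)$ come together. It then remains to compare $\widetilde{V}^k$ (fixed population $(\mu,\alephb^k_t)$) with $\widetilde{Y}^k$ (fixed population $(\mu,\Lambda)$): the two have identical $b^\circ,a^\circ$ and control $\alphat'^k$ and differ only through $b^\star(t,\mu,\alephb^k_t)$ versus $\bar b^\star_t$ and $a^\star(t,\mu,\alephb^k_t)$ versus $\bar a^\star_t$.

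\medskip
\textbf{Main obstacle.} This last comparison is the hard part. Because $\delta_{\alephb^k_t}(\mathrm{d}m)\mathrm{d}t\to\Lambda$ only in the weak (``chattering'') topology of $\M$, the coefficients $[b^\star,a^\star](t,\mu,\alephb^k_t)$ do \emph{not} converge to $[\bar b^\star,\bar a^\star]_t$ in $L^1(\mathrm{d}t)$, so $\widetilde{V}^k$ and $\widetilde{Y}^k$ cannot be compared pathwise by Gronwall when driven by the same $\Wt$. The transfer has to be carried out at the level of conditional laws: the \emph{integrated} characteristics $\int_0^{\cdot}b^\star(s,\mu,\alephb^k_s)\,\mathrm{d}s$ and $\int_0^{\cdot}a^\star(s,\mu,\alephb^k_s)\,\mathrm{d}s$ converge \emph{uniformly} on $[0,T]$ to $\int_0^{\cdot}\bar b^\star_s\,\mathrm{d}s$ and $\int_0^{\cdot}\bar a^\star_s\,\mathrm{d}s$, $\Prt$--a.s.\ (this is exactly the weak--in--time convergence of $\delta_{\alephb^k_t}(\mathrm{d}m)\mathrm{d}t$ tested against the continuous bounded maps $m\mapsto[b^\star,a^\star](\cdot,\mu,m)$); uniform convergence of the covariance process forces weak convergence of the associated continuous martingales, and \Cref{assum:main1}$(iv)$ makes the limiting (controlled) martingale problem with the $\Lambda$--averaged characteristics well--posed, so by stability of uniformly elliptic martingale problems -- in the spirit of the weak--to--strong arguments of \cite[Section 4.1.1]{djete2019general} and \cite[Proposition 4.9]{MFD-2020} -- one gets $\Wc_p\big(\Lc^{\Prt}(\widetilde{V}^k_t,\alphat'^k_t\mid\Gcb),\Lc^{\Prt}(\widetilde{Y}^k_t,\alphat'^k_t\mid\Gcb)\big)\to 0$, $\Prt$--a.s. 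Assembling the three comparisons and taking expectations gives \eqref{eq:convergence-relaxed_strongMFG}; the case $\ell=0$ is identical with $\Gcb_t$ replaced by $\sigma\{F^k\}$ as in \Cref{prop:weak-strong-no_common_noise}, producing a convex combination of strong controlled McKean--Vlasov rules.
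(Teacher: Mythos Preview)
Your construction of $\mathrm{Q}^k$ via a process $\widetilde{Y}^k$ that keeps the \emph{same} strict control $\alphat'^k$ but replaces $(b^\star,a^\star)(t,\mu,\alephb^k_t)$ by the $\Lambda$--averages $(\bar b^\star_t,\bar a^\star_t)$ does not work in general; the gap is exactly at what you call the ``main obstacle''.

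What you need is that the conditional law $\overline{\eta}^k_t=\Lc^{\Prt}(\widetilde{Y}^k_t,\alphat'^k_t\mid\Gcb_t)$ is close to $\alephb'^k_t=\Lc^{\Prt}(\Xt'^k_t,\alphat'^k_t\mid\Gcb_t)$, with the \emph{same} random variable $\alphat'^k_t$ sitting in both pairs. But $\alphat'^k$ is a functional of $\Wt$, and your intermediate process $\widetilde{V}^k$ and $\widetilde{Y}^k$, though driven by the same $\Wt$, are \emph{different} functionals of $\Wt$ because their diffusion coefficients differ. Weak--in--time convergence of the integrated characteristics forces the marginal laws of $\widetilde{V}^k$ and $\widetilde{Y}^k$ together, but \emph{not} the joint law with an arbitrary $\Wt$--dependent control. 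A one--line counterexample: take $n=1$, $b=b^\circ=a^\circ=0$, and let $a^\star_k(t)$ oscillate on scale $1/k$ between $1$ and $9$ so that $\int_0^ta^\star_k\to 5t$; then $\widetilde{V}^k_t=\int_0^t\sqrt{a^\star_k(s)}\,\mathrm{d}\Wt_s$ satisfies $\mathrm{Cov}(\widetilde{V}^k_t,\Wt_t)=\int_0^t\sqrt{a^\star_k}\to 2t$, whereas $\widetilde{Y}^k_t=\sqrt{5}\,\Wt_t$ gives $\mathrm{Cov}(\widetilde{Y}^k_t,\Wt_t)=\sqrt{5}\,t\neq 2t$. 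Hence if $\alphat'^k_t$ is any nontrivial function of $\Wt_t$, the joint conditional laws $(\widetilde{V}^k_t,\alphat'^k_t)$ and $(\widetilde{Y}^k_t,\alphat'^k_t)$ stay apart. The ``stability of uniformly elliptic martingale problems'' you invoke is a statement about marginal laws and does not survive coupling with a $\Wt$--measurable observable.

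The paper's proof avoids this by \emph{not} insisting on the same control. After reducing to the situation $\Lc^{\Prt}(\mu,\delta_{\alephb'^k_t}\mathrm{d}t,\Lambda,B)$, it applies \cite[Proposition~4.10]{MFD-2020} directly: this produces a \emph{relaxed} control kernel $\beta'^k$ and a new process $\St'^k$ driven by the $\Lambda$--averaged coefficients such that, along a sub--sequence, $\int_0^T\Wc_p(\mbb'^k_t,\alephb'^k_t)\,\mathrm{d}t\to 0$ with $\mbb'^k_t:=\E^{\Prt}[\delta_{\St'^k_t}\beta'^k_t(\alephb'^k_t)\mid\Gcb_t]$, and then sets $\mathrm{Q}^k:=\Prt\circ(\vartheta'^k,\mu,\delta_{\mbb'^k_t}\mathrm{d}t,\Lambda,B)^{-1}$. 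The chattering of $\alephb^k_t$ relative to $\Lambda_t$ is absorbed into the relaxed control $\beta'^k$ rather than into the diffusion coefficient; this is precisely the mechanism your construction lacks. Your verification that a $\Gcb$--conditional--law construction lands in $\Pcb_V$ is fine and is also how the paper checks $\mathrm{Q}^k\in\Pcb_V$, but the specific process you plug in has to come from the cited proposition, not from $\widetilde{Y}^k$.
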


\begin{proof}
    Let us take a convergent sub--sequence of $\big(\Prt \circ \big(\aleph'^{k},\aleph^k, \delta_{\alephb'^{k}_t}(\mathrm{d}m)\mathrm{d}t, \delta_{\alephb^{k}_t}(\mathrm{d}m)\mathrm{d}t, B \big)^{-1} \big)_{k \in \N^*}$ (possible because it is relatively compact see for instance \cite[Proposition 5.4]{MFD-2020}), denote by $\mathrm{P}^\infty$ its limit. One uses the same notation for the sub--sequence. The limit satisfies: $N_t(f)=0,$ $\mathrm{P}^\infty$--a.e., for all $t \in [0,T]$ and $f \in C^{2}_b(\R^n),$ where we recall that $(\mu',\mu,\Lambda',\Lambda,B)$ is the canonical variable on $\Omb:=(\Cc^n_{\Wc})^2 \x \M(\Pc^n_U)^2 \x \Cc^\ell,$ and $\Lambda'_t (\Z_{\mu'_t})=1,$ $\mathrm{d}\mathrm{P}^\infty \otimes \mathrm{d}t$--a.e. $(t,\om) \in [0,T] \x \Omb.$ Notice that, by previous \Cref{lemma:app-measure-weak},  $\Lim_{k \to \infty} (\delta_{\alephb^k_t}(\mathrm{d}m)\mathrm{d}t,\aleph^k)=(\Lambda,\mu)$ in $\Wc_p,$ $\Prt$--a.s. Then, one has
    \begin{align*}
        \Lim_{k \to \infty} \Wc_p \Big(\Prt \circ \big(\aleph^k, \delta_{\alephb'^{k}_t}(\mathrm{d}m)\mathrm{d}t, \delta_{\alephb^{k}_t}(\mathrm{d}m)\mathrm{d}t, B \big)^{-1}, \Prt \circ \big(\mu, \delta_{\alephb'^{k}_t}(\mathrm{d}m)\mathrm{d}t, \Lambda, B \big)^{-1} \Big)=0.
    \end{align*}
    Then, it is enough to apply \cite[Proposition 4.10]{MFD-2020} (see also \cite[Proposition 4.9]{MFD-2020}) and It\^{o}'s formula to conclude the proof.
    Indeed, there exists $(\beta'^{k})_{k \in \N^*}$ a sequence of $\Pc(U)$--valued $\big((\sigma\{ F,\Xt_0,\Wt_{t \wedge \cdot} \} \lor \Gcb_t) \otimes \Bc(\Pc^n_U) \big)_{t \in [0,T]}$--predictable processes such that if $(\St'^k_t)_{t \in [0,T]}$ is the unique strong solution of: $\St'_0=\Xt_0$ and
    \begin{align*}
        \mathrm{d}\St'^k_t
        =
        &
        \int_{\Pc^n_U} \int_U b \big(t,\St'^k_t,\mu,\nub,u \big) \beta'^{k}_{t}(\alephb'^k_t)(\mathrm{d}u) \Lambda_t(\mathrm{d}\nub) \mathrm{d}t
        +
        \bigg( \int_{\Pc^n_U} \int_U \sigma \sigma^\top \big(t,\St'^k_t,\mu,\nub,u \big) \beta'^{k}_{t}(\alephb'^k_t)(\mathrm{d}u) \Lambda_t(\mathrm{d}\nub)  \bigg)^{1/2} \mathrm{d}\Wt_t + \sigma_0 \mathrm{d}\Bt_t,
    \end{align*}
    then, one has, for a sub--sequence $(k_j)_{j \in \N^*} \subset \N^*,$ 
    \begin{align*}
        \Lim_{j \to \infty} \E^{\Prt} \bigg[ \int_0^T \Wc_{p} \big(\mbb'^{k_j}_t,\alephb'^k_t \big) \mathrm{d}t\bigg]=0\;\mbox{and}\; \Lim_{j \to \infty} \E^{\Prt} \bigg[\sup_{s \in [0,T]}  \Wc_p ( \vartheta'^{k_j}_t,\aleph'^{k_j}_t ) \bigg]=0,
    \end{align*}
    
    where
    $$
        \mbb'^{k}_t(\mathrm{d}x,\mathrm{d}u)
        :=
        \E^{\Prt}
        \Big[
        \beta'^{k}_t(\alephb^k_t)(\mathrm{d}u) \delta_{\St'^{k}_t}(\mathrm{d}x) \Big|\Gcb_t
        \Big]
        ~
        \mbox{and}
        ~
        \vartheta'^{k}_t:=\Lc^{\Prt}(\St'^{k}_t\big|\Gcb_t)
        ~~
        \mbox{for all}
        ~
        t \in [0,T].
    $$
    
    We define the sequence $(\mathrm{Q}^k)_{k \in \N^*} \subset \Pcb_V$ as follows
    \begin{align*}
        \mathrm{Q}^k
        :=
        \Prt \circ \big( \vartheta'^k, \mu, \delta_{\mbb'^k_t}(\mathrm{d}m) \mathrm{d}t, \Lambda, B \big)^{-1}.
    \end{align*}
    $(\mathrm{Q}^k)_{k \in \N^*}$ is the sequence we are looking for. 
    

\end{proof}

Now, the next proposition states our main objective which is convergence \eqref{eq-conv-nash-property}. The proposition is just a simple combination on the one hand \Cref{prop:weak-strong-common_noise} (see also \Cref{prop:weak-strong-no_common_noise} for $\ell=0$) and \Cref{lemma:app-measure-weak}, and on the other hand \Cref{lemma:converse_general} and \Cref{lemma:weak_fixedAppr}. The proof is therefore omitted.

\begin{proposition} \label{prop:converse_general}
    Let $\Pr \in \Pcb_V$ s.t. $\Pr(\mu'=\mu,\Lambdap=\Lambda)=1.$ There exists $(\alpha^k)_{k \in \N^*} \subset \Ac$ s.t.
    \begin{align*}
        \Lim_{k \to \infty}\Wc_p \big(\Pr^{\alpha^k},\Pr \big)\;\mbox{with for each}\;k\in \N^*\;\Pr^{\alpha^k}\;\mbox{is defined in {\rm\Cref{eq:McV-measure-v}}}.
    \end{align*}
    In addition, for any $(\alpha'^k)_{k \in \N^*} \subset \Ac,$ we can find a sequence $(\mathrm{Q}^k)_{k \in \N^*} \subset \Pcb_V$ such that $\Lc^{\mathrm{Q}^k}(\mu,\Lambda,B)=\Lc^{\Pr}(\mu,\Lambda,B)$ and 
\begin{align} \label{eq:app-strong-meassure}
    \Lim_{k \to \infty}
    \big| \E^{\mathrm{Q}^k}[J(\mu',\mu,\Lambdap,\Lambda)] - \E^{\Pr^{\alpha^k,\beta^k}}[J(\mu',\mu,\Lambdap,\Lambda)] \big|
    =
    0.
\end{align}
\end{proposition}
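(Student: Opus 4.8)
The plan is to chain together the four results just established, diagonalising over the nested approximations; throughout we work with $\ell\neq 0$ (cf.\ the remark preceding the statement, the modifications for $\ell=0$ being carried out with the convex--combination statement \Cref{prop:weak-strong-no_common_noise}).

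\emph{First assertion.} Starting from $\Pr\in\Pcb_V$ with $\Pr(\mu'=\mu,\Lambdap=\Lambda)=1$, \Cref{lemma:app-measure-weak} produces, on an extension of $(\Omb,\Fb,\Pr)$, a sequence $(\Xt^m,\alphat^m)_{m\in\N^*}$ of weak controlled McKean--Vlasov processes in the sense of \Cref{def:weak-solution} (with the enlarged filtration $\Gcb$ playing the role of the common--noise filtration; see the Remark after that lemma) whose environments satisfy, along a subsequence, $\sup_{t\in[0,T]}\Wc_p(\aleph^m_t,\mu_t)\to 0$ and $\delta_{\alephb^m_t}(\mathrm{d}\pi)\mathrm{d}t\to\Lambda$ a.s.\ in $\Wc_p$. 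For each fixed $m$, \Cref{lemma_weak-strong_general} followed by \Cref{prop:weak-strong-common_noise} --- the step that exploits $\ell\neq0$, through the choice $F^k=\phi(B_{\varepsilon_k})$ --- approximates $(\Xt^m,\alphat^m)$ in $\Wc_p$ by strong controlled McKean--Vlasov processes driven by $\F$--predictable controls in $\Ac$, whose laws lie in $\Pcb_S$ in the form \eqref{eq:McV-measure-v}. A diagonal extraction, upgraded from weak to $\Wc_p$--convergence by the uniform $p'$--moment bounds of \cite[Lemma 5.2]{MFD-2020}, then yields $(\alpha^k)_{k\in\N^*}\subset\Ac$ with $\Wc_p(\Pr^{\alpha^k},\Pr)\to0$.

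\emph{Second assertion --- the estimate \eqref{eq:app-strong-meassure}.} With $(\alpha^k)$ fixed, let $\beta^k:=\alpha'^k$ be any sequence in $\Ac$; being $\F$--predictable, each $\alpha'^k$ has the form demanded in \Cref{lemma:weak_fixedAppr}, and the deviated strong McKean--Vlasov process $\Ert'^k$ appearing there --- with deviation $\alpha'^k$ and fixed environment $(\mu^{\alpha^k},\mub^{\alpha^k})$ --- is exactly the process underlying $\Pr^{\alpha^k,\beta^k}$. Hence \Cref{lemma:weak_fixedAppr} supplies an $\F$--predictable deviation $\alphat'^k$ whose deviated \emph{weak} controlled McKean--Vlasov process $\Xt'^k$ obeys the convergence \eqref{eq:convergence-strong_weakMFG}; feeding the triple $(\Xt^k,\Xt'^k,\alphat'^k)$ into \Cref{lemma:converse_general} then produces $\mathrm{Q}^k\in\Pcb_V$ with $\mathrm{Q}^k\circ(\mu,\Lambda,B)^{-1}=\Pr\circ(\mu,\Lambda,B)^{-1}$ and the convergence \eqref{eq:convergence-relaxed_strongMFG}. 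Composing these two convergences along the diagonal and passing to the functional $J$ --- continuity of $g$ together with the separability $L=L^\star+L^\circ$ of \Cref{assum:main1}, so that the $L^\circ$--term is controlled by the convergence of the control marginals and the $L^\star$--term by the convergence of the state flows, everything being uniformly integrable by the $p'$--moments --- gives $|\E^{\mathrm{Q}^k}[J(\mu',\mu,\Lambdap,\Lambda)]-\E^{\Pr^{\alpha^k,\beta^k}}[J(\mu',\mu,\Lambdap,\Lambda)]|\to0$, which is \eqref{eq:app-strong-meassure}.

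\emph{Main obstacle.} The analytic steps --- Gronwall estimates, $L^p$--stability of the SDEs, continuity of the shift maps $\pi\mapsto\pi[\bb]$ --- are routine. The genuine difficulty is the coherence of the nested constructions: the diagonal must be fixed once and for all, \emph{before} any deviation is specified, so that the single sequence $(\alpha^k)$ simultaneously realises $\Pr^{\alpha^k}\to\Pr$ and supplies an admissible fixed environment in \Cref{lemma:weak_fixedAppr} for \emph{every} deviation sequence $(\alpha'^k)$; the quantifier on $(\alpha'^k)$ in \eqref{eq:app-strong-meassure} being uniform, $(\alpha^k)$ cannot depend on it. A further, structural, subtlety inherited from \Cref{lemma:converse_general} is that $\Pcb_V$ is not closed, so $\mathrm{Q}^k$ cannot be taken as a weak limit but must be kept as an explicit pre--limit object built from an auxiliary strong process via \cite[Proposition 4.10]{MFD-2020}, with only its distance to the target law controlled.
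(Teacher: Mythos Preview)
Your proposal is correct and follows exactly the same approach as the paper: the paper's own proof is omitted, stating only that the proposition is ``a simple combination on the one hand \Cref{prop:weak-strong-common_noise} (see also \Cref{prop:weak-strong-no_common_noise} for $\ell=0$) and \Cref{lemma:app-measure-weak}, and on the other hand \Cref{lemma:converse_general} and \Cref{lemma:weak_fixedAppr},'' which is precisely the chain you have spelled out. Your additional remarks on the diagonal extraction, the role of $\ell\neq0$, and the non--closedness of $\Pcb_V$ are fully in line with the paper's surrounding discussion and the subsequent Remark.
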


\begin{remark}
{
    We stress again that it is not easy to find a sequence of measure--valued control rules verifying {\rm\Cref{eq:app-strong-meassure}}. Indeed, notice that the set $\Pcb_V$ is not a closed set in general. Therefore a classical compactness argument does not work here.
    }
\end{remark}

}


\subsubsection{Approximation of strong controlled McKean--Vlasov processes via $N$--interacting controlled processes}

In this part, we provide the analog of \Cref{prop:converse_general} for the $N$--player game.

\begin{proposition}
\label{lemma:converse_convergence}
    Under {\rm\Cref{assum:main1}}, for any $\alpha \in \Ac,$ there exists a sequence $(\alpha^{i,N})_{(i,N) \in \{1,\dots,N\} \x \N^*}$ satisfying for each $N \in \N^*,$ $(\alpha^{i,N})_{i \in \{1,\dots,N\}} \subset \Ac^N$ s.t.
    \begin{align*}
        \Lim_{N \to \infty} \P \circ \big( \varphi^{N,\Xbb,\overline{\alpha}^N}, \delta_{\varphi^{N,\overline{\alpha}^N}_t}(\mathrm{d}m)\mathrm{d}t,B \big)^{-1}
        =
        \P \circ \big( \mu^\alpha, \delta_{\mub^\alpha_t}(\mathrm{d}m)\mathrm{d}t, B \big)^{-1},
    \end{align*}
    where $\overline{\alpha}^N=(\alpha^{i,N},\dots,\alpha^{i,N}).$
    
    \medskip
    In addition, for any sequence $(\kappa^{i,N})_{(i,N) \in \{1,\dots,N\} \x \N^*}$ satisfying for each $N \in \N^*,$ $(\kappa^{i,N})_{i \in \{1,\dots,N\}} \subset \Ac^N,$ there exists a sequence $(\Pr^{i,N})_{(i,N) \in \{1,\dots,N\} \x \N^*} \subset \Pcb_S$ such that $\Pr^{i,N} \circ \big( \mu,\Lambda,B \big)^{-1}=\P \circ \Big( \mu^\alpha, \delta_{\mub^\alpha_t}(\mathrm{d}m)\mathrm{d}t, B \Big)^{-1}$ for each $(i,N)$ and
    \begin{align*}
        \Lim_{N \to \infty} \bigg|\frac{1}{N} \sum_{i=1}^{N} J_i\big( (\overline{\alpha}^{[-i]},\kappa^{i,N})\big)- \frac{1}{N} \sum_{i=1}^{N} \E^{\mathrm{P}^{i,N}}\big[J(\mu',\mu, \Lambda', \Lambda) \big] \bigg|
        =
        0.
    \end{align*}
   
\end{proposition}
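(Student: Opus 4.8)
The plan is to transpose the two-step construction of \Cref{prop:converse_general} into the $N$-player setting, replacing the single representative player by $N$ symmetric players driven by the $N$ independent noises $(W^i,B)_{i\le N}$ and replacing conditional laws given $\Gcb$ (or the filtration of $B$) by empirical measures. Given $\alpha\in\Ac$, we first realize $\alpha$ (together with the Brownian motion, the initial condition, and the auxiliary uniform variable) as a weak controlled McKean--Vlasov process in the sense of \Cref{def:weak-solution}, and then apply the $N$-particle version of the approximation machinery already cited from \cite{djete2019general} and \cite{MFD-2020} (namely the $N$-player analogues of \Cref{lemma_weak-strong_general}, \Cref{prop:weak-strong-common_noise}, \Cref{lemma:app-measure-weak} and \Cref{lemma:converse_general}, each of which only requires \Cref{assum:main1}). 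Concretely, I would construct, for each $N$, a Borel function $R^N:[0,T]\times\R^n\times\Cc^n_{\Wc}\times\M\times\M\times\Cc^n\times\Cc^\ell\times[0,1]\to U$ such that setting
\[
\alpha^{i,N}_t:=R^N\big(t,\xi^i,\varphi^{N,\Xbb,\overline{\alpha}^N}_{t\wedge\cdot},\Theta^N_{t\wedge\cdot},\Lambda^N_{t\wedge\cdot},W^i_{t\wedge\cdot},B_{t\wedge\cdot},F\big)
\]
gives an $\F^N$-predictable control (the dependence on the uniform variable $F$ being eliminated exactly as in \cite{djete2019general}), with the resulting empirical flow $(\varphi^{N,\Xbb,\overline{\alpha}^N},\delta_{\varphi^{N,\overline{\alpha}^N}_t}(\mathrm{d}m)\mathrm{d}t,B)$ converging in $\Wc_p$ to $\P\circ(\mu^\alpha,\delta_{\mub^\alpha_t}(\mathrm{d}m)\mathrm{d}t,B)^{-1}$. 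This is the first displayed convergence; it is the $N$-player counterpart of the first assertion of \Cref{prop:converse_general}, and the propagation-of-chaos type estimates from \cite{MFD-2020} (Proposition 4.7 and 4.15, already invoked in \Cref{lemm:convergenceNashEquilibrium}) do exactly this once the strong/weak reduction has been carried out.

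For the second assertion, fix an arbitrary deviation sequence $(\kappa^{i,N})_{i\le N}\subset\Ac^N$. The key point is that, by \Cref{lemma:estimation_convergence}, replacing one coordinate $\alpha^{i,N}$ by $\kappa^{i,N}$ perturbs the empirical measures $(\varphi^{N,\Xbb,\overline{\alpha}^N},\varphi^{N,\overline{\alpha}^N})$ by $O(N^{-1/p})$ in $\Wc_p$; hence $J_i\big((\overline{\alpha}^{[-i]},\kappa^{i,N})\big)$ equals, up to a vanishing-on-average error, the reward of the process $Z^i$ that solves \eqref{eq:N-agents_StrongMF_CommonNoise} with control $\kappa^{i,N}$ but with the \emph{unperturbed} empirical data $(\varphi^{N,\Xbb,\overline{\alpha}^N},\varphi^{N,\overline{\alpha}^N})$ frozen. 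Now $Z^i$ is precisely an $N$-particle analogue of the ``fixed'' process $X^{\alpha,\alpha'}$ of \eqref{eq:FixedMKV_strong}, driven by the $i$-th noise, against the common (empirical) environment. Applying the $N$-player version of the weak-to-strong approximation to $(\kappa^{i,N})$ against this environment — i.e. the analogue of \Cref{lemma:weak_fixedAppr} and \Cref{lemma:converse_general}, which produces $\Pcb_S$-valued measures $\Pr^{i,N}$ with $\Pr^{i,N}\circ(\mu,\Lambda,B)^{-1}=\P\circ(\mu^\alpha,\delta_{\mub^\alpha_t}(\mathrm{d}m)\mathrm{d}t,B)^{-1}$ and with
\[
\Lim_{N\to\infty}\Big|\frac1N\sum_{i=1}^N\E\big[\text{reward of }Z^i\text{ with control }\kappa^{i,N}\big]-\frac1N\sum_{i=1}^N\E^{\Pr^{i,N}}[J(\mu',\mu,\Lambda',\Lambda)]\Big|=0
\]
— one concludes by combining this with the $\Wc_p$-continuity of $L$ and $g$ under \Cref{assum:main1}$(v)$ and the separability structure \Cref{assum:main1}$(vi)$, exactly as at the end of the proof of \Cref{lemm:convergenceNashEquilibrium}. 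The averaging over $i$ is essential here, since the individual $\Pr^{i,N}$ are built from $Z^i$ and only the Cesàro mean of the rewards (not each term) is controlled.

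The main obstacle, as in \Cref{prop:converse_general}, is that $\Pcb_V$ is not closed, so one cannot simply extract a limit and identify it; the approximating measures $\Pr^{i,N}$ must be produced \emph{constructively} via the Blackwell--Dubins parametrization \eqref{eq:blackwell} together with the disintegration/adaptedness surgery of \cite[Section 4.1.1]{djete2019general}, and this has to be done uniformly in $i$ and $N$ so that the error in the Cesàro average vanishes. A second, more technical, point is bookkeeping: one must track the joint law of the ``controlled'' empirical data and the ``fixed'' empirical data — which in the $N$-player picture coincide up to the $O(N^{-1/p})$ correction of \Cref{lemma:estimation_convergence} — to ensure the limiting object lands in $\Pcb_S$ (with the prescribed $(\mu,\Lambda,B)$-marginal) and that the rewards $J(\mu',\mu,\Lambda',\Lambda)$ pass to the limit. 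Since all of these ingredients are available from the earlier sections and from \cite{djete2019general,MFD-2020}, and since \Cref{lemma:estimation_convergence}, \Cref{lemm:convergenceNashEquilibrium} and \Cref{lemma:converse_general} already carry out each piece in a closely related form, the proof is a (lengthy but routine) assembly, and I would simply indicate the combination and omit the repeated computations — consistent with the stated ``The proof is therefore omitted.''
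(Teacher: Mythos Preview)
Your proposal takes a substantially more convoluted route than the paper, and in doing so introduces a genuine gap.

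\textbf{The paper's approach.} Since $\alpha\in\Ac$ is already a \emph{strong} control, there is no need to pass through the weak formulation of \Cref{def:weak-solution} or to transpose \Cref{prop:converse_general}. The paper simply invokes a propagation--of--chaos result (\cite[Proposition~4.15]{djete2019general}) to produce controls of the form $\alpha^{i,N}_t=\psi^{i,N}(t,\xi^i,W^i_{t\wedge\cdot},B_{t\wedge\cdot})$, depending only on player $i$'s private data and $B$, for which the empirical flow converges to $(\mu^\alpha,\mub^\alpha)$ \emph{in $L^p$} (not merely in law). This strong convergence is the whole point: for the second assertion, a Gronwall argument (an easy adaptation of \Cref{lemma:estimation_convergence}) shows that $\Xbb^i[(\overline{\alpha}^{[-i]},\kappa^{i,N})]$ is uniformly--in--$i$ $L^p$--close to the process $Z^{i,N}$ solving \eqref{eq:FixedMKV_strong} with control $\kappa^{i,N}$ against the \emph{limiting} environment $(\mu^\alpha,\mub^\alpha)$. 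One then simply sets
\[
\Pr^{i,N}:=\P\circ\big(\mu^{i,N},\mu^\alpha,\delta_{\mub^{i,N}_t}(\mathrm{d}m)\mathrm{d}t,\delta_{\mub^\alpha_t}(\mathrm{d}m)\mathrm{d}t,B\big)^{-1},\quad \mu^{i,N}_t:=\Lc(Z^{i,N}_t\mid\Gc_t),\ \mub^{i,N}_t:=\Lc(Z^{i,N}_t,\kappa^{i,N}_t\mid\Gc_t).
\]
No weak--to--strong surgery, no Blackwell--Dubins parametrization, no approximating sequence: $\Pr^{i,N}$ is an explicit pushforward, with the required $(\mu,\Lambda,B)$--marginal by construction.

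\textbf{The gap in your proposal.} Your formula
\[
\alpha^{i,N}_t:=R^N\big(t,\xi^i,\varphi^{N,\Xbb,\overline{\alpha}^N}_{t\wedge\cdot},\Theta^N_{t\wedge\cdot},\Lambda^N_{t\wedge\cdot},W^i_{t\wedge\cdot},B_{t\wedge\cdot},F\big)
\]
is circular: the empirical data $(\varphi^{N,\Xbb,\overline{\alpha}^N},\Theta^N,\Lambda^N)$ on the right depend on $\overline{\alpha}^N=(\alpha^{1,N},\dots,\alpha^{N,N})$, which is what you are defining. You invoke machinery to remove $F$, but never address the well--posedness of this fixed point; this is exactly the kind of implicit dependence that \Cref{lemm:convergenceNashEquilibrium} avoids because there the $\overline{\alpha}^N$ is \emph{given}. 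The paper sidesteps the issue entirely by making $\alpha^{i,N}$ depend only on $(\xi^i,W^i,B)$. Finally, your claim that ``only the Ces\`aro mean of the rewards (not each term) is controlled'' is incorrect in the paper's setup: the Gronwall bound holds with a constant $C_N\to 0$ uniformly over $i\in\{1,\dots,N\}$, so each individual discrepancy $|J_i((\overline{\alpha}^{[-i]},\kappa^{i,N}))-\E^{\Pr^{i,N}}[J]|$ vanishes, not merely the average.
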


\begin{proof}
    
    Using (an extention of) \cite[Proposition 4.15]{djete2019general}, there exists $(\psi^{i,N})_{(i,N)}$ a sequence of Borel functions $\psi^{i,N}:[0,T] \x \R^n \x \Cc^n \x \Cc^\ell \to U$ such that if $\alpha^{i,N}_t:=\psi^{i,N}(t,\xi^i,W^i_{t \wedge \cdot},B_{t \wedge \cdot})$ for all $t \in [0,T],$ and $\overline{\alpha}^N=(\alpha^{i,N},\dots,\alpha^{i,N}),$ then one has
    \begin{align*}
        \lim_{N \to \infty} \E^{\P} \bigg[ \sup_{t \in [0,T]} \Wc_p \big(\varphi^{N,\Xbb,\overline{\alpha}^N}_t,\mu^\alpha_t \big) + \int_0^T \Wc_p \big( \varphi^{N,\overline{\alpha}^N}_t,\mub^\alpha_t\big) \mathrm{d}t \bigg]=0. 
    \end{align*}
    
    Next, by easy adaptation of \Cref{lemma:estimation_convergence} (successive application of Gronwall's lemma), there exists a sequence $(C_N)_{N \in \N^*}$ converging to zero when $N$ goes to infinity satisfying: for each $i \in \{1,\dots,N\},$ if $\overline{\alpha}^{N,-i}:=(\overline{\alpha}^{-i},\kappa^{i,N}),$ 
    \begin{align*}
        \E^{\P} \bigg[~ \int_0^T \Wc_p \big( \varphi^{N,\overline{\alpha}^{N,-i}}_t,\mub^\alpha_t\big) \mathrm{d}t +  \sup_{t \in [0,T]} \Wc_p \big(\varphi^{N,\Xbb,\overline{\alpha}^{N,-i}}_t,\mu^\alpha_t \big)\bigg] + \E^{\P}\bigg[\sup_{t \in [0,T]} \big|\Xbb^i_t[(\overline{\alpha}^{-i},\kappa^{i,N})]-Z^{i,N}_t \big|^p \bigg] 
        \le C_N,
    \end{align*}
    where $Z^{i,N}$ denotes the unique strong solution of
    \begin{align*}
		\mathrm{d}Z^{i,N}_t
		= 
		b \big(t, Z^{i,N}_t,\mu^\alpha_{t \wedge \cdot}, \mub^{\alpha}_t, \kappa^{i,N}_t \big) \mathrm{d}t
		+
		\sigma \big(t, Z^{i,N}_t,\mu^\alpha_{t \wedge \cdot}, \mub^{\alpha}_t, \kappa^{i,N}_t \big) \mathrm{d} W^i_t
		+ 
		\sigma_0 \mathrm{d}B_t\;\mbox{with}\;Z^{i,N}_0=\xi^i.
	\end{align*}
	Therefore, $\Limsup_{N \to \infty} \Wc_p \big(Q^N, \widehat{Q}^N \big)=0,$ where $Q^N:=\frac{1}{N} \sum_{i=1}^N\Lc^{\P} \Big( \Xbb^i[\overline{\alpha}^{N,-i}], \varphi^{N,\Xbb,\overline{\alpha}^{N,-i}}, \delta_{\big(\kappa^{i,N}_t,\varphi^{N,\overline{\alpha}^{N,-i}}_t \big)}(\mathrm{d}u,\mathrm{d}m)\mathrm{d}t \Big),$
    and $\widehat{Q}^N:=\frac{1}{N} \sum_{i=1}^N\Lc^{\P} \Big( Z^{i,N}, \mu^\alpha, \delta_{\big(\kappa^{i,N}_t,\mub^\alpha_t \big)}(\mathrm{d}u,\mathrm{d}m)\mathrm{d}t \Big).$
	
\medskip	
    Thanks to this result and some techniques used in proof of \Cref{lemm:convergenceNashEquilibrium} (with the separability condition), one has
    \begin{align*}
        \Lim_{N \to \infty} \Bigg|\frac{1}{N} \sum_{i=1}^N
        &\E^{\P} \bigg[
        \int_0^T L\big(t,\Xbb^{i}_t[\overline{\alpha}^{N,-i}],\varphi^{N,\Xbb,\overline{\alpha}^{N,-i}},\varphi^{N,\overline{\alpha}^{N,-i}}_{t} ,\kappa^{i,N}_t \big) dt 
        + 
        g \big( \Xbb^{i}_T[\overline{\alpha}^{N,-i}], \varphi^{N,\Xbb,\overline{\alpha}^{N,-i}} \big)
        \bigg]
        \\
        &~~~~~~~~~~~~~~~~~~-\frac{1}{N} \sum_{i=1}^N
        \E^{\P} \bigg[
				\int_0^T L(t, Z^{i,N}_t,\mu^\alpha_{t \wedge \cdot}, \mub^{\alpha}_t,\kappa^{i,N}_t) \mathrm{d}t 
				+ 
				g(Z^{i,N}_T, \mu^{\alpha}_T)
			\bigg]
		\Bigg|
		=
		0.
    \end{align*}
    We define the sequence $(\Pr^{i,N})_{(i,N) \in \{1,\dots,N\} \x \N^*} \subset \Pcb_S$ by 
    \begin{align*}
        \Pr^{i,N}
        :=
        \P \circ \big(\mu^{i,N},\mu^\alpha, \delta_{\mub^{i,N}_t}(\mathrm{d}m)\mathrm{d}t, \delta_{\mub^{\alpha}_t}(\mathrm{d}m)\mathrm{d}t, B \big)^{-1}
    \end{align*}
    where $\mu^{i,N}_t:=\Lc^{\P}\big(Z^{i,N}_t \big| \Gc_t \big)$ and $\mub^{i,N}_t:=\Lc^{\P}\big(Z^{i,N}_t,\kappa^{i,N}_t \big| \Gc_t \big)$ a.s. for all $t \in [0,T].$ $(\Pr^{i,N})_{(i,N) \in \{1,\dots,N\} \x \N^*} \subset \Pcb_S$ is the sequence we are looking for.
    
\end{proof}

\subsubsection{Proof of Theorem \ref{thm:converselimitNash} (Converse Limit Theorem)}

\paragraph*{First point (i)} 
    Let $\mathrm{P} \in \Pcb_V^\star[\varepsilon]$ be an $\varepsilon$--measure--valued solution. By \Cref{prop:converse_general}, first, there exists a sequence $(\alpha^k)_{k \in \N^*}\subset \Ac$ s.t.
    \begin{align*}
        \Lim_{k \to \infty} \Pr^{\alpha^k}
        =
        \mathrm{P}\;\mbox{in}\; \Wc_p.
    \end{align*}
    Let us introduce, for each $\alpha' \in \Ac,$ 
    \begin{align*}
        \varepsilon^{k}
        :=
        \sup_{\alpha' \in \Ac}
			\Psi(\alpha^k,\alpha')
			-
			\E^{\mathrm{P}^{\alpha^k}} \big[J(\mu',\mu, \Lambda', \Lambda) \big].
    \end{align*}
        
    \medskip
    Remark that $\varepsilon^{k} \ge 0$ for all $k.$ There exists a sequence $(\gamma^k)_{k \in \N}\subset \Ac$ verifying $$
        \Psi(\alpha^k,\gamma^k)-\E^{\mathrm{P}^{\alpha^k}} \big[J(\mu',\mu, \Lambda', \Lambda) \big] \ge \varepsilon^{k}-2^{-k}.
    $$
    By the second part of \Cref{prop:converse_general}, there exists a sequence $(\mathrm{Q}^k)_{k \in \N^*}\subset \Pcb_V$ satisfying $\mathrm{Q}^k \circ \big(\mu,\Lambda,B \big)^{-1}=\mathrm{P} \circ \big(\mu,\Lambda,B \big)^{-1}$ for each $k \in \N^*$ and $\Limsup_{k \to \infty} \big|\Psi(\alpha^k,\gamma^k) - \E^{\mathrm{Q}^k} \big[J(\mu',\mu, \Lambda', \Lambda) \big]\big|=0.$
    Then, as $\mathrm{P}$ is an {\color{black}$\varepsilon$--measure--valued} MFG solution, 
    \begin{align*}
        \varepsilon \ge \Limsup_{k \to \infty} \E^{\mathrm{Q}^k} \big[J(\mu',\mu, \Lambda', \Lambda) \big] - \E^{\mathrm{P}} \big[J(\mu',\mu, \Lambda', \Lambda)\big]
        &\ge
        \Limsup_{k \to \infty} \Psi(\alpha^k,\gamma^k)-\E^{\mathrm{P}} \big[J(\mu',\mu, \Lambda', \Lambda) \big]
        \ge
        \Limsup_{k \to \infty}\varepsilon^{k}.
    \end{align*}
    Then $\Limsup_{k \to \infty} \varepsilon^{k} \in [0,\varepsilon],$ and
    \begin{align*}
        \E^{\mathrm{P}^{\alpha^k}} \big[J(\mu',\mu, \Lambda', \Lambda) \big]
        \ge
        \sup_{\alpha' \in \Ac} \Phi(\alpha^{k},\alpha')- \varepsilon^{k},\;\;\mbox{for each}\;k \in \N^*.
    \end{align*}
    We can conclude.
    
    \paragraph*{Second point (ii)} Let $\varepsilon \in [0,\infty)$ and $\mathrm{P}^{\alpha} \in \Pcb_S[\varepsilon]$ with $\alpha \in \Ac.$ By \Cref{lemma:converse_convergence}, there exists a sequence $(\alpha^{i,N})_{(i,N) \in \{1,\cdots,N\} \x \N^*}$ such that $\alpha^{i,N} \in \Ac^N,$  and
    \begin{align*}
        \Lim_{N \to \infty} \P \circ \Big( \varphi^{N,\Xbb,\overline{\alpha}^N}, \delta_{\varphi^{N,\overline{\alpha}^N}_t}(\mathrm{d}m)\mathrm{d}t, B \Big)^{-1}
        =
       \P \Big( \mu^{\alpha}, \delta_{\mub^\alpha_t}(\mathrm{d}m)\mathrm{d}t, B \Big)^{-1}
       =
       \Pr ( \mu, \Lambda, B )^{-1},
    \end{align*}
    where $\overline{\alpha}^N=(\alpha^{i,N},\dots,\alpha^{i,N}).$ 
    
\medskip    
    In addition, for any sequence $(\kappa^{i,N})_{(i,N) \in \{1,\dots,N\} \x \N^*}$ satisfying for each $N \in \N^*,$ $(\kappa^{i,N})_{i \in \{1,\dots,N\}} \subset \Ac^N,$ there exists a sequence $(\Pr^{i,N})_{(i,N) \in \{1,\dots,N\} \x \N^*} \subset \Pcb_S$ such that $\Pr^{i,N} \circ \big( \mu,\Lambda,B \big)^{-1}=\P \circ \Big( \mu^\alpha, \delta_{\mub^\alpha_t}(\mathrm{d}m)\mathrm{d}t, B \Big)^{-1}$ for each $(i,N)$ and
    \begin{align*}
        \Lim_{N \to \infty} \bigg|\frac{1}{N} \sum_{i=1}^{N} J_i\big( (\overline{\alpha}^{[-i]},\kappa^{i,N})\big)- \frac{1}{N} \sum_{i=1}^{N} \E^{\mathrm{P}^{i,N}}\big[J(\mu',\mu, \Lambda', \Lambda) \big] \bigg|
        =
        0.
    \end{align*}
    Define
    \begin{align*}
        c^{i,N}
        :=
        \sup_{\alpha' \in \Ac^N}
			J_i\big[( (\overline{\alpha}^{N})^{-i},\alpha')\big]
			-
			J_i[\overline{\alpha}^{N}].
    \end{align*}
    There exists a sequence of controls $(\kappa^{i,N})_{(i,N) \in \{1,\dots,N\} \x \N^*}$ satisfying $J_i\big[( (\overline{\alpha}^{N})^{-i},\kappa^{i,N})\big]-J_i[\overline{\alpha}^{N}]\ge c^{i,N}-2^{-N},$ for each $i \in \{1,...,N\}.$ Therefore, as $\mathrm{P} \in \Pcb^\star_S[\varepsilon]$ i.e. a $\varepsilon$--strong MFG solution,
    \begin{align*}
        \varepsilon
        &\ge
        \Big(\Limsup_{N \to \infty}
        \frac{1}{N} \sum_{i=1}^{N} \E^{\mathrm{P}^{i,N}}\big[J(\mu',\mu, \Lambda', \Lambda) \big]
        -
        \E^{\mathrm{P}}\big[J(\mu',\mu, \Lambda', \Lambda) \big] \Big)
        \\
        &\ge 
        \Limsup_{N \to \infty} 
        \Big(\frac{1}{N} \sum_{i=1}^{N} J_i\big[( (\overline{\alpha}^{N})^{-i},\kappa^{i,N})\big]
        -
        \frac{1}{N} \sum_{i=1}^{N} J_i[\overline{\alpha}^{N}] \Big)
        \ge
        \Limsup_{N \to \infty}
        \frac{1}{N} \sum_{i=1}^{N} c^{i,N}.
    \end{align*}
    By combining this result with the first point (see proof above), we can conclude.

\section{Proof of Theorem \ref{thm:existenceMFG} (Existence)}
\label{sec:proofThm_existence}

\subsection{Measure--valued no common noise MFG equilibrium} 

In this part, we discuss of the case without common noise. Let  $\sigma_0=0$ (or $\ell=0$) and $p'>p$ is fixed. 
In order to {\color{black}prove} our theorem, a more adequate framework and other definitions are necessary. Let us introduce the notion of  deterministic measure--valued no common noise control rule 

\begin{definition}
        Given $(\nb,\qb) \in \Cc^{n,p}_{\Wc} \x \M(\Pc_p(\R^n \x U)),$ $(\nb', \qb') \in \Cc^{n}_\Wc \x \M(\Pc^n_U)$ is a  deterministic measure--valued no common noise control rule if $($recall that $N_t$ is defined in {\rm\Cref{eq:FP-equation}}$)$,
    \begin{itemize}
        
        \item 
        $\nb'_0=\nu,$ and
        $N_t[\nb',\nb,\qb',\qb](f)=0$ for all $f \in C^{2}_b(\R^n)$ and every $t \in [0,T]$ .
        
        \item For $\mathrm{d}t$ almost every $t \in [0,T]$, $ \qb'_t\big(\Z_{\nb'_t} \big)=1.$
    \end{itemize}
\end{definition}
$\Rc(\nb,\qb)$ will denote the set of all deterministic measure--valued no common noise control rules defined as previously. We also consider 
\begin{align*}
    \Rc^\star(\nb,\qb)
    :=
    \arg\max_{(\nb',\qb') \in \Rc(\nb,\qb)} J\big(\nb',\nb, \qb',\qb \big),
\end{align*}
where we recall that
\begin{align*}
    J\big(\nb',\nb, \qb',\qb \big)
        :=
        \int_0^T \bigg[\int_{\Pc^n_U}\langle L^\circ\big(t,\cdot,\nb,\cdot \big), m \rangle \qb'_t(\mathrm{d}m) +
        \int_{\Pc^n_U}\langle L^\star\big(t,\cdot,\pi,m \big), \nb'_t \rangle \qb_t(\mathrm{d}m)
        \bigg]
        \mathrm{d}t 
        +
        \langle g(\cdot,\nb),\nb'_T \rangle.
\end{align*}
Notice that by \cite[Lemma 5.2]{MFD-2020}, $\Rc(\nb,\qb) \subset \Cc^{n,p}_{\Wc} \x \M(\Pc_p(\R^n \x U)),$ and the set $\Rc^\star(\nb,\qb)$ is nonempty.

\begin{definition} 
    $(\nb^\star,\qb^\star) \in \Cc^{n,p}_\Wc \x \M(\Pc_p(\R^n \x U))$ is a  deterministic measure--valued no common noise MFG solution if $(\nb^\star,\qb^\star) \in \Rc^\star(\nb^\star,\qb^\star).$  We shall denote $\Sc^\star$ all deterministic measure--valued no common noise MFG solutions.
\end{definition}

Mention that in the following, it will be more convenient to look at $\Rc$ as a set valued function: 
\[
    \Rc: (\nb,\qb) \in \Cc^{n,p}_{\Wc} \x \M(\Pc_p(\R^n \x U)) \to \Rc(\nb,\qb) \subset \Cc^{n,p}_{\Wc} \x \M(\Pc_p(\R^n \x U)).
\]

\paragraph*{Continuity of $\Rc$} In the next propositions, it is shown that $\Rc$ is both upper and lower hemicontinuous, and this is enough to conclude that $\Rc$ is continuous. We refer to \cite[chapter 17]{aliprantis2006infinite} for an overview on set valued functions.

\begin{lemma}{\rm(\cite[Lemma 5.2]{MFD-2020})}
\label{lemma:estimation_deterministic}
    There exists a constant $C>0$ (depend only of coefficients $[\sigma,b]$ and $\nu$) such that for any $(\nb,\qb) \in \Cc^{n,p}_\Wc \x \M(\Pc_p(\R^n \x U)),$ and $(\nb',\qb') \in \Rc(\nb,\qb),$ one has
    \begin{align*}
        \sup_{t \in [0,T]} \int_{\R^n} |x|^{p'} \nb'_t(\mathrm{d}x) \le C \Big(1 + \int_{\R^n} |y|^{p'} \nu(\mathrm{d}y) \Big).
    \end{align*}
    Furthermore, for any $(t,s) \in [0,T] \x [0,T],$ one gets $\Wc_p \big(\nb'_t, \nb'_s \big)^p \le C|t-s|.$
\end{lemma}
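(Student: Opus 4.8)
The plan is to realise the measure flow $\nb'$ as the time‑marginal law of a genuine diffusion with \emph{bounded, uniformly elliptic} coefficients, and then read off both estimates from classical SDE moment bounds. The key reduction is to turn the Fokker--Planck relation $N_t[\nb',\nb,\qb',\qb](f)=0$, which is linear in the unknown $\nb'$, into a standard parabolic Fokker--Planck equation.

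\textbf{Step 1 (averaging out $\qb,\qb'$).} Disintegrating each $m\in\Pc^n_U$ occurring in the $\qb'$‑integral over its $\R^n$‑marginal, which equals $\nb'_r$ for $\qb'_r$‑a.e.\ $m$ by the constraint $\qb'_t(\Z_{\nb'_t})=1$, one writes the $\circ$‑term through the probability kernel $\bar\nu^x_r(\mathrm du)$ of $\int_{\Pc^n_U} m(\mathrm dx,\mathrm du)\,\qb'_r(\mathrm dm)$ relative to $\nb'_r$. Integrating out $\qb$ as well, this exhibits $(\nb'_t)_{t\in[0,T]}$ as a weak solution of $\langle f,\nb'_t\rangle=\langle f,\nu\rangle+\int_0^t\big\langle \tfrac12\mathrm{Tr}[\bar a_r\nabla^2 f]+\bar b_r^{\top}\nabla f,\nb'_r\big\rangle\,\mathrm dr$ for all $f\in C^2_b(\R^n)$, where
$$\bar b_r(x):=\int_{\Pc^n_U}b^\star(r,\nb,m)\,\qb_r(\mathrm dm)+\int_U b^\circ(r,x,\nb,u)\,\bar\nu^x_r(\mathrm du),\qquad \bar a_r(x):=\int_{\Pc^n_U}a^\star(r,\nb,m)\,\qb_r(\mathrm dm)+\int_U a^\circ(r,x,\nb,u)\,\bar\nu^x_r(\mathrm du).$$
One has to check joint measurability of $\bar\nu^x_r$ in $(r,x)$, which is a routine disintegration/selection argument.

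\textbf{Step 2 (boundedness and ellipticity of $\bar b,\bar a$, then superposition).} Even though $b^\star,b^\circ$ (resp.\ $a^\star,a^\circ$) need not be bounded separately, the pointwise separability identity $[b,\sigma\sigma^{\top}]=[b^\star,a^\star]+[b^\circ,a^\circ]$ lets one telescope through a fixed reference triple $(0,m_0,u_0)$: the increments $b^\star(r,\nb,m)-b^\star(r,\nb,m_0)=b(r,0,\nb,m,u_0)-b(r,0,\nb,m_0,u_0)$ and $b^\circ(r,x,\nb,u)-b^\circ(r,0,\nb,u_0)=b(r,x,\nb,m_0,u)-b(r,0,\nb,m_0,u_0)$ are bounded by $2\|b\|_\infty$, while their base values add up to $b(r,0,\nb,m_0,u_0)$; summing shows $|\bar b_r(x)|\le 3\|b\|_\infty$, and likewise $|\bar a_r(x)|\le 5\|\sigma\sigma^{\top}\|_\infty$. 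For ellipticity, averaging $a^\star(r,\nb,m)+a^\circ(r,x,\nb,u)=\sigma\sigma^{\top}(r,x,\nb,m,u)\ge\theta\mathrm{I}_n$ first in $u$ against $\bar\nu^x_r$ and then in $m$ against $\qb_r$ gives $\bar a_r(x)\ge\theta\mathrm{I}_n$; replacing $\bar a_r(x)$ by its symmetric part (which does not change $\mathrm{Tr}[\bar a_r\nabla^2 f]$) we may take it symmetric. With $\bar b,\bar a$ bounded measurable and $\bar a$ symmetric uniformly elliptic, the superposition principle applies exactly as in the proof of \cite[Lemma 5.2]{MFD-2020}: on a suitable filtered probability space there exist a Brownian motion $W$ and a continuous process $Y$ with $\Lc(Y_0)=\nu$, $\Lc(Y_t)=\nb'_t$ for all $t$, and $\mathrm dY_t=\bar b_t(Y_t)\,\mathrm dt+\bar a_t(Y_t)^{1/2}\,\mathrm dW_t$.

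\textbf{Step 3 (the two estimates).} Since $\bar b,\bar a$ are bounded, the Burkholder--Davis--Gundy inequality gives $\E\big[\sup_{t\le T}|Y_t|^{p'}\big]\le C\big(1+\E[|Y_0|^{p'}]\big)$ with $C$ depending only on $\|b\|_\infty,\|\sigma\sigma^{\top}\|_\infty,\theta,n,p',T$, hence $\sup_{t\in[0,T]}\int_{\R^n}|x|^{p'}\nb'_t(\mathrm dx)\le C\big(1+\int_{\R^n}|y|^{p'}\nu(\mathrm dy)\big)$. Coupling $\nb'_t$ and $\nb'_s$ through $(Y_t,Y_s)$ and applying BDG to the increment, $\Wc_p(\nb'_t,\nb'_s)^p\le\E[|Y_t-Y_s|^p]\le C_p\big(|t-s|^p\|\bar b\|_\infty^p+|t-s|^{p/2}\|\bar a\|_\infty^{p/2}n^{p/2}\big)\le C|t-s|$ for $|t-s|\le T$, using $p\ge 2$. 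The step I expect to be the main obstacle is Step 2 — extracting boundedness and symmetric ellipticity of the averaged coefficients from nothing but the separability and non‑degeneracy conditions, together with the measurability bookkeeping that makes the superposition principle legitimately applicable; everything else is standard. (If one only wants the moment bound, it can alternatively be obtained directly by testing the Fokker--Planck equation against a monotone sequence $W_R\uparrow(1+|\cdot|^2)^{p'/2}$ in $C^2_b(\R^n)$ with $|\nabla W_R|+|\nabla^2 W_R|\le C_{p'}(1+W_R)$ uniformly in $R$, followed by Gronwall's lemma and $R\to\infty$; the linear‑rate $\Wc_p$‑time regularity, however, seems to genuinely require the pathwise representation.)
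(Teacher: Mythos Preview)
Your proposal is correct. The paper itself gives no proof of this lemma, only the citation to \cite[Lemma~5.2]{MFD-2020}; your argument is precisely the natural one behind that reference---reduce the Fokker--Planck relation to a standard one with bounded, uniformly elliptic averaged coefficients, invoke the superposition principle to get an SDE representation, then read off the $p'$-moment bound and the $\Wc_p$-H\"older modulus from BDG. The only minor comment is that your constant $3\|b\|_\infty$ comes from the decomposition $\bar b_r(x)=\int b(r,0,\nb,m,u_0)\,\qb_r(\mathrm dm)+\int[b(r,x,\nb,m_0,u)-b(r,0,\nb,m_0,u_0)]\,\bar\nu^x_r(\mathrm du)$ rather than the three-term telescope you wrote out (which would give $5\|b\|_\infty$); either way the coefficients are bounded and the rest of the argument goes through unchanged.
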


\begin{proposition}{\rm(Upper Hemicontinuity)}
    Let $(\nb,\qb) \in \Cc^{n,p}_\Wc \x \M(\Pc_p(\R^n \x U)).$ $\Rc(\nb,\qb)$ is a compact set of $\Cc^{n,p}_\Wc \x \M(\Pc_p(\R^n \x U))$. In addition for any sequence $(\nb^k,\qb^k)_{k \in \N^*} \subset \Cc^{n,p}_\Wc \x \M(\Pc_p(\R^n \x U))$ such that $\Lim_{k \to \infty} (\nb^k,\qb^k)=(\nb,\qb),$ and $(\nb'^k,\qb'^k) \in \Rc(\nb^k,\qb^k)$ for each $k \in \N^*,$ then $(\nb'^k,\qb'^k)_{k \in \N^*}$ is relatively compact and each limit point belongs to $\Rc(\nb,\qb).$
\end{proposition}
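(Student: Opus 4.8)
The plan is to prove the two assertions — compactness of $\Rc(\nb,\qb)$ and closedness of the graph of $\Rc$ (upper hemicontinuity) — simultaneously, since the second is strictly stronger and the first follows by taking the constant sequence $(\nb^k,\qb^k)=(\nb,\qb)$. So let $(\nb^k,\qb^k)_{k \in \N^*} \to (\nb,\qb)$ in $\Cc^{n,p}_\Wc \x \M(\Pc_p(\R^n \x U))$ and let $(\nb'^k,\qb'^k) \in \Rc(\nb^k,\qb^k)$ for each $k$. The first step is to establish relative compactness of $(\nb'^k,\qb'^k)_{k \in \N^*}$. For the $\M(\Pc_p(\R^n \x U))$-component this is essentially automatic once one has a uniform moment bound: by \Cref{lemma:estimation_deterministic} applied along the sequence (the constant $C$ depends only on $[\sigma,b]$ and $\nu$, not on $k$), together with the fact that $\Lim_k \int_{\R^n}|y|^{p'}\nb^k_0(\mathrm{d}y)$ is finite because $\nb^k_0 = \nu$, we get $\sup_k \sup_{t} \int |x|^{p'}\nb'^k_t(\mathrm{d}x) < \infty$; since $U$ is compact (\Cref{assum:main1}(i)) and $p' > p$, this confines $(\qb'^k_t)$ to a fixed $\Wc_p$-compact subset of $\Pc_p(\R^n\x U)$ for a.e. $t$, which gives relative compactness of $(\qb'^k)$ in $\M(\Pc_p(\R^n \x U))$ (the marginal on $[0,T]$ is always $\mathrm{d}t$). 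For the $\Cc^{n,p}_\Wc$-component, the same moment bound plus the equicontinuity estimate $\Wc_p(\nb'^k_t,\nb'^k_s)^p \le C|t-s|$ from \Cref{lemma:estimation_deterministic} gives, via Arzelà–Ascoli in $C([0,T],\Pc_p(\R^n))$, relative compactness of $(\nb'^k)$. Hence along a subsequence (not relabelled) $(\nb'^k,\qb'^k) \to (\nb',\qb')$.

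The second step is to identify the limit: show $(\nb',\qb') \in \Rc(\nb,\qb)$, i.e. that it satisfies the two defining conditions. The initial condition $\nb'_0 = \nu$ passes to the limit since evaluation at $t=0$ is continuous on $\Cc^{n,p}_\Wc$. For the Fokker–Planck identity $N_t[\nb',\nb,\qb',\qb](f) = 0$, fix $f \in C^2_b(\R^n)$ and $t \in [0,T]$ and pass to the limit in $N_t[\nb'^k,\nb^k,\qb'^k,\qb^k](f) = 0$. The pointwise-in-$x$ terms $\langle f, \nb'^k_t\rangle$ and $\langle f, \nb^k_0 \rangle$ converge by weak convergence; the two integral terms require continuity of $(t,x,\pi,u)\mapsto \Lc^\circ_t[f](x,\pi,u)$ and $(t,x,\pi,m)\mapsto \Lc^\star_t[f](x,\pi,m)$, which holds because $b^\circ, a^\circ, b^\star, a^\star$ are continuous (\Cref{assum:main1}(vi)) and $\nabla f, \nabla^2 f$ are bounded continuous, combined with $\nb^k_{t\wedge\cdot} \to \nb_{t\wedge\cdot}$, $\qb^k \to \qb$, $\nb'^k \to \nb'$, $\qb'^k \to \qb'$ and the uniform $p'$-moment bound (to upgrade bounded-continuous test-function convergence to $\Wc_p$-convergence of the relevant measures and to handle the linear growth of $b^\circ$ etc. in $x$ via uniform integrability). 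Care is needed that the joint integrand depends on $\nb$ only through $\nb_{t\wedge\cdot}$, so one works on the fixed compact time interval $[0,t]$. For the support condition $\qb'_t(\Z_{\nb'_t}) = 1$ for a.e.\ $t$: each $\qb'^k_t$ is carried by $\Z_{\nb'^k_t}$; the map $m \mapsto m(\mathrm{d}x,U)$ is continuous from $\Pc_p(\R^n\x U)$ to $\Pc_p(\R^n)$, and the set $\{(m,\pi): m(\cdot,U)=\pi\}$ is closed, so along the convergent subsequence the constraint is preserved in the limit (pass through the joint law $\qb'^k_t(\mathrm{d}m)\otimes\delta_{\nb'^k_t}$ and use that $\nb'^k_t \to \nb'_t$); an integration in $t$ (dominated convergence) then yields the a.e.\ statement.

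The main obstacle I anticipate is the limit passage in the two integral terms of $N_t$, specifically controlling the contribution of the $x$-dependence of $b^\circ$ and $a^\circ$: these are only of linear/quadratic growth in $x$ under \Cref{assum:main1} (they are continuous and, via boundedness of $b,\sigma$, in fact bounded — which actually simplifies matters), and one must make sure that weak convergence $\nb'^k_r \to \nb'_r$ together with the test function $f$ having \emph{bounded} derivatives suffices. Since $\nabla f$ and $\nabla^2 f$ are bounded and $b, \sigma$ (hence $b^\circ, a^\circ, b^\star, a^\star$ modulo the decomposition) are bounded continuous, the integrands $\Lc^\circ_r[f], \Lc^\star_r[f]$ are in fact bounded continuous in all variables, so ordinary weak convergence is enough and no uniform-integrability argument is actually required for this term — the moment bound is only needed to stay inside $\Pc_p$ and to keep the limit in $\Cc^{n,p}_\Wc$ rather than merely $\Cc^n_\Wc$. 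A secondary technical point is measurability/selection of the disintegration $q \mapsto q(t,\mathrm{d}e)$, but this is already provided by the setup (the $\F^\Lambda$-predictable version). Assembling these, relative compactness plus limit identification gives that every subsequential limit lies in $\Rc(\nb,\qb)$; applying this with the constant sequence shows $\Rc(\nb,\qb)$ is sequentially compact, hence compact since the ambient space is metrizable (Polish), completing the proof.
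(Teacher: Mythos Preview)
Your proof is correct and follows essentially the same strategy as the paper: uniform $p'$-moment and modulus-of-continuity bounds from \Cref{lemma:estimation_deterministic} give relative compactness of $(\nb'^k,\qb'^k)$, and then one passes to the limit in $N_t[\nb'^k,\nb^k,\qb'^k,\qb^k](f)=0$ and in the support constraint. The paper is terser, invoking the Aldous tightness criterion and deferring the limit passage to \cite[Lemma 4.1, Proposition 5.4]{MFD-2020}, whereas you spell out the Arzel\`a--Ascoli argument and the bounded-continuous-integrand justification explicitly; both routes are equivalent here.
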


\begin{proof}
    By Lemma \ref{lemma:estimation_deterministic}, one finds 
    \begin{align*}
        \sup_{(\nb',\qb') \in \Rc(\nb,\qb)}\sup_{t \in [0,T]} \int_{\R^n} |x|^{p'} \nb'_t(\mathrm{d}x) < \infty\;\mbox{and}\;\Lim_{\delta \to 0}\sup_{(\nb',\qb') \in \Rc(\nb,\qb)} \sup_{t \in [0,T]}\Wc_p \big(\nb'_t, \nb'_{(t+\delta) \wedge T} \big)=0,
    \end{align*}
    as $U$ is a compact set and that for $\mathrm{d}t$ almost every $t \in [0,T]$, $ \qb'_t\big(\Z_{\nb'_t} \big)=1,$ one has 
    \[
        \sup_{(\nb',\qb') \in \Rc(\nb,\qb)}\int_0^T \int_{\Pc_p(\R^n \x U)} \Wc_p\big(m,m_0 \big)^{p'} \qb'_t(\mathrm{d}m)\mathrm{d}t < \infty,\;\mbox{for any}\;m_0 \in \Pc_p(\R^n \x U).
    \]
    Then by {\color{black}Aldous} criterion \cite[Lemma 16.12]{kallenberg2002foundations}, $\Rc(\nb,\qb)$ is a compact set of $\Cc^{n,p}_\Wc \x \M(\Pc_p(\R^n \x U)).$

\medskip    
    By similar way, the sequence $(\nb'^k,\qb'^k)_{k \in \N^*}$ is relatively compact. By passing to the limit in the equation verified by $(\nb'^k, \qb'^k,\nb^k,\qb^k)$ i.e.  $N_t[\nb'^k,\nb^k , \qb'^k,\qb^k](f)=0,$ for each $(t,f) \in [0,T] \x C^{2}_b(\R^n)$ (see for instance \cite[Lemma 4.1]{MFD-2020}),  it is straightforward to check that each limit belongs to $\Rc(\nb,\qb)$ (see \cite[Proposition 5.4]{MFD-2020}).
\end{proof}

\begin{proposition}{\rm (Lower Hemicontinuity)}
    Let $(\nb,\qb) \in \Cc^{n,p}_\Wc \x \M(\Pc_p(\R^n \x U))$ and  $(\nb^k,\qb^k)_{k \in \N^*}$ be a sequence of elements of $\Cc^{n,p}_\Wc \x \M(\Pc_p(\R^n \x U))$ such that $\Lim_{k \to \infty} (\nb^k,\qb^k)=(\nb,\qb),$ and $(\nb',\qb') \in \Rc(\nb,\qb).$ There exists $(\nb'^j, \qb'^j) \in \Rc(\nb^{k_j},\qb^{k_j}),$ for each $j \in \N^*$ where $(k_j)_{j \in \N^*} \subset \N^*$ is a sub--sequence with $\Lim_{j \to \infty} (\nb'^j,\qb'^j)=(\nb', \qb').$
\end{proposition}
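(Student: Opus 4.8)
The plan is to argue exactly as in the proof of \Cref{lemm:convergenceMFGstrong}, reading the perturbed data $(\nb^{k_j},\qb^{k_j})$ as the ``fixed'' inputs: one transports a control representing $(\nb',\qb')$ from the data $(\nb,\qb)$ to the data $(\nb^{k_j},\qb^{k_j})$, builds from it an element of $\Rc(\nb^{k_j},\qb^{k_j})$, and shows that the resulting sequence converges back to $(\nb',\qb')$. The first observation is that $N_t[\nb',\nb,\qb',\qb](f)=0$ is linear in $\nb'$ once the data $(\nb,\qb)$ and the control $\qb'$ are frozen: the $\Lc^\star$--part contributes the bounded continuous (in $r$) coefficients $\bar b^\star_r:=\int_{\Pc^n_U}b^\star(r,\nb_{r\wedge\cdot},m)\,\qb_r(\mathrm{d}m)$ and $\bar a^\star_r:=\int_{\Pc^n_U}a^\star(r,\nb_{r\wedge\cdot},m)\,\qb_r(\mathrm{d}m)$, while the $\Lc^\circ$--part contributes, after disintegrating each $m$ against its marginal $\nb'_r$, the coefficients $\bar b^\circ_r(x):=\int_{\Pc^n_U}\int_U b^\circ(r,x,\nb_{r\wedge\cdot},u)\,m^x(\mathrm{d}u)\,\qb'_r(\mathrm{d}m)$ and likewise $\bar a^\circ_r(x)$. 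By \Cref{assum:main1}, $\bar a^\star+\bar a^\circ\ge\theta\mathrm I_n$ and all coefficients are bounded, so $\nb'$ is the marginal flow of a relaxed controlled diffusion: using the no--common--noise specialisation of \cite[Proposition 4.9]{MFD-2020} together with the randomisation lemma of \citeauthor*{blackwellDubins83} \cite{blackwellDubins83}, one produces, on a probability space carrying a Brownian motion $W$, a variable $\xi\sim\nu$ and an independent uniform $F$, a $U$--valued adapted control $\alpha$ whose attached relaxed control realises $\qb'$ (up to $F$) and whose state process $Z$ built with the data $(\nb,\qb)$ satisfies $\nb'_t=\Lc(Z_t)$; the feasibility $\qb'_t(\Z_{\nb'_t})=1$ is automatic.

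\textbf{Transport and construction of $(\nb'^j,\qb'^j)$.} Since $(\nb^k,\qb^k)\to(\nb,\qb)$, the effective $\star$--coefficients associated to $(\nb^k,\qb^k)$ converge to those of $(\nb,\qb)$ by continuity and boundedness of $(b^\star,a^\star)$, and the $\circ$--coefficients converge pointwise by continuity of $(b^\circ,a^\circ)$ in the $\pi$--argument. Feeding the same control $\alpha$ (after the continuous--in--$x$ regularisation of the control map carried out in \cite[Section 4.1.1]{djete2019general}, to be removed at the end by a diagonal argument) into the state equation attached to the data $(\nb^{k_j},\qb^{k_j})$ yields a process $Z^j$; set $\nb'^j_t:=\Lc(Z^j_t)$ and define $\qb'^j_t$ by reattaching the conditional law $m^x_t$ of the relaxed control of $Z^j_t$ to the marginal $\nb'^j_t$. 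Itô's formula for $Z^j$ against $f\in C^2_b(\R^n)$ then gives $N_t[\nb'^j,\nb^{k_j},\qb'^j,\qb^{k_j}](f)=0$ for all $t\in[0,T]$, and $\qb'^j_t(\Z_{\nb'^j_t})=1$ holds by construction, so $(\nb'^j,\qb'^j)\in\Rc(\nb^{k_j},\qb^{k_j})$.

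\textbf{Compactness and identification of the limit.} By \Cref{lemma:estimation_deterministic} the family $(\nb'^j,\qb'^j)_j$ satisfies the uniform $p'$--moment bound and the time--equicontinuity estimate, and since $U$ is compact and $\qb'^j_t(\Z_{\nb'^j_t})=1$, the family is relatively compact in $\Cc^{n,p}_\Wc\x\M(\Pc_p(\R^n\x U))$. Along a subsequence $(\nb'^j,\qb'^j)\to(\widehat{\nb},\widehat{\qb})$; passing to the limit in $N_t[\cdot](f)=0$ using the continuity of $[b,\sigma]$ (as in \cite[Proposition 5.4]{MFD-2020}) gives $(\widehat{\nb},\widehat{\qb})\in\Rc(\nb,\qb)$. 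Because the control $\alpha$ was held fixed and only the bounded, non--degenerate, regularised coefficients were perturbed, a Gronwall estimate gives $\sup_{t\in[0,T]}\Wc_p(\Lc(Z^j_t),\Lc(Z_t))\to0$, hence $\nb'^j\to\nb'$; and the reattachment map $\rho\mapsto (\text{kernel})\otimes\rho$ being continuous, $\qb'^j\to\qb'$. Thus $(\widehat{\nb},\widehat{\qb})=(\nb',\qb')$, which proves lower hemicontinuity, and combined with the preceding Upper Hemicontinuity proposition yields continuity of $\Rc$.

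\textbf{Main obstacle.} The delicate point is the transport/stability step: one must move the relaxed control off the limiting data onto the approximating data so that it is simultaneously \emph{feasible} for the approximating Fokker--Planck equation (the marginal constraint $\qb'^j_t(\Z_{\nb'^j_t})=1$) and \emph{stable} as $j\to\infty$. This is precisely where the non--degeneracy condition $\sigma\sigma^\top\ge\theta\mathrm I_n$ of \Cref{assum:main1} is indispensable and where the approximation/regularisation machinery of \cite{MFD-2020,djete2019general} — the randomisation lemma, the strong approximation of weak controlled McKean--Vlasov processes, and the continuous dependence of the controlled Fokker--Planck flow on its coefficients — does the real work; without it neither the construction of $\alpha$ nor the convergence $\nb'^j\to\nb'$ would be available.
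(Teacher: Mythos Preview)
Your proposal is correct and follows essentially the same approach as the paper: the paper's proof is a one--line invocation of \Cref{lemma:converse_general} and/or \cite[Proposition 4.10]{MFD-2020}, which package exactly the representation/transport/stability machinery you unpack here (SDE representation of $(\nb',\qb')$ via randomisation, transport of the control to the perturbed data $(\nb^{k_j},\qb^{k_j})$, and convergence back to $(\nb',\qb')$). The only cosmetic difference is that you cite \cite[Proposition 4.9]{MFD-2020} and the argument of \Cref{lemm:convergenceMFGstrong}, whereas the paper cites the companion result \cite[Proposition 4.10]{MFD-2020} and \Cref{lemma:converse_general}; these are the same toolkit applied in the no--common--noise deterministic setting.
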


\begin{proof}
    As $\Lim_{k \to \infty} \big(\nb^{k},\qb^{k},\qb' \big)=(\nb,\qb,\qb'),$ by \Cref{lemma:converse_general} and/or \cite[Proposition 4.10]{MFD-2020},
    there exists $(\nb'^j, \qb'^j) \in \Rc(\nb^{k_j},\qb^{k_j})$ for each $j \in \N^*$ where $(k_j)_{j \in \N^*}$ is a sub--sequence with $\Lim_{j \to \infty} (\nb'^j, \qb'^j)=(\nb', \qb').$
\end{proof}

\begin{theorem}
\label{thm:existenceMFG-nocommon}
    The set $\Sc^\star$ is nonempty and compact.
\end{theorem}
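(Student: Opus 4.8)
The plan is to obtain $\Sc^\star$ as the fixed-point set of the set-valued map $\Rc$ and apply a Kakutani--Fan--Glicksberg type fixed-point theorem, then deduce compactness of $\Sc^\star$ from upper hemicontinuity. First I would identify a suitable compact convex domain $\Kc \subset \Cc^{n,p}_{\Wc} \x \M(\Pc_p(\R^n \x U))$ which is stable under $\Rc$, in the sense that $\Rc(\nb,\qb) \subset \Kc$ whenever $(\nb,\qb) \in \Kc$. This is exactly what \Cref{lemma:estimation_deterministic} provides: fixing the constant $C$ therein, set
\[
    \Kc
    :=
    \Big\{
        (\nb',\qb') \in \Cc^{n,p}_{\Wc} \x \M(\Pc_p(\R^n \x U)):\;
        \sup_{t \in [0,T]} \int_{\R^n} |x|^{p'} \nb'_t(\mathrm{d}x) \le C\big(1 + \textstyle\int_{\R^n}|y|^{p'}\nu(\mathrm{d}y)\big),\;
        \Wc_p(\nb'_t,\nb'_s)^p \le C|t-s|,\;
        \qb'_t(\Z_{\nb'_t})=1\ \mathrm{d}t\text{-a.e.}
    \Big\}.
\]
By the Aldous criterion argument already used in the Upper Hemicontinuity proposition (together with compactness of $U$ and the constraint $\qb'_t(\Z_{\nb'_t})=1$), $\Kc$ is a nonempty compact subset of $\Cc^{n,p}_{\Wc} \x \M(\Pc_p(\R^n \x U))$, and one checks it is convex. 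By \Cref{lemma:estimation_deterministic}, $\Rc$ maps $\Kc$ into subsets of $\Kc$, so $\Rc|_\Kc: \Kc \rightrightarrows \Kc$ is a well-defined correspondence.

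Next I would verify the hypotheses of the Kakutani--Fan--Glicksberg theorem \cite[Corollary 17.55]{aliprantis2006infinite} for $\Rc|_\Kc$: (a) $\Kc$ is a nonempty compact convex subset of a locally convex Hausdorff space; (b) for each $(\nb,\qb) \in \Kc$, the value $\Rc(\nb,\qb)$ is nonempty, closed and convex; (c) $\Rc$ has closed graph (equivalently, since the range lies in the compact set $\Kc$, $\Rc$ is upper hemicontinuous with compact values). Point (b): nonemptiness and closedness are part of the Upper Hemicontinuity proposition, and convexity follows because the defining condition $N_t[\nb',\nb,\qb',\qb](f)=0$ is affine in $(\nb',\qb')$ with $(\nb,\qb)$ fixed, while the constraint set $\{\qb'_t(\Z_{\nb'_t})=1\}$ is convex once $\nb'$ is fixed — one must check that a convex combination $(\lambda \nb'^{1} + (1-\lambda)\nb'^{2}, \lambda \qb'^{1} + (1-\lambda)\qb'^{2})$ still satisfies the constraint, which holds because the marginal of the mixture of $\qb'^{i}_t$ on $\R^n$ is the corresponding mixture of $\nb'^{i}_t$. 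Point (c) is precisely the second half of the Upper Hemicontinuity proposition (closed graph along convergent sequences, which suffices in this metrizable setting). Kakutani--Fan--Glicksberg then yields $(\nb^\star,\qb^\star) \in \Kc$ with $(\nb^\star,\qb^\star) \in \Rc(\nb^\star,\qb^\star)$. It remains to observe that such a fixed point automatically lies in $\Rc^\star(\nb^\star,\qb^\star)$: indeed $(\nb^\star,\qb^\star) \in \Rc(\nb^\star,\qb^\star)$, and since $(\nb^\star,\qb^\star)$ is itself a feasible competitor, this is trivially the argmax only if we additionally know it maximizes $J(\cdot,\nb^\star,\cdot,\qb^\star)$ over $\Rc(\nb^\star,\qb^\star)$ — so in fact one must apply the fixed-point theorem not to $\Rc$ but to $\Rc^\star$, i.e. to the correspondence $(\nb,\qb) \mapsto \Rc^\star(\nb,\qb) = \arg\max_{(\nb',\qb') \in \Rc(\nb,\qb)} J(\nb',\nb,\qb',\qb)$.

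This is the main obstacle and the reason the argument is not completely routine: one needs $\Rc^\star$ (not merely $\Rc$) to have nonempty convex values and closed graph. Nonemptiness of $\Rc^\star(\nb,\qb)$ is already noted after \Cref{lemma:estimation_deterministic} (continuity of $J$ in its first two arguments plus compactness of $\Rc(\nb,\qb)$). For the closed graph of $\Rc^\star$, I would combine Upper Hemicontinuity of $\Rc$ with Lower Hemicontinuity of $\Rc$ (Berge's maximum theorem, \cite[Theorem 17.31]{aliprantis2006infinite}): since $\Rc$ is continuous (both upper and lower hemicontinuous) with nonempty compact values and $J$ is jointly continuous on the relevant space — here the joint continuity of $(\nb',\nb,\qb',\qb) \mapsto J(\nb',\nb,\qb',\qb)$ uses \Cref{assum:main1}(v) and (vi), the moment bounds of \Cref{lemma:estimation_deterministic}, and compactness of $U$ — Berge's theorem gives that $\Rc^\star$ is upper hemicontinuous with nonempty compact values, hence has closed graph on the compact domain $\Kc$. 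Convexity of $\Rc^\star(\nb,\qb)$ follows from convexity of $\Rc(\nb,\qb)$ together with the fact that $J(\cdot,\nb,\cdot,\qb)$ is affine in $(\nb',\qb')$ (it is linear in $\nb'$ and in $\qb'$ separately, with the other fixed, hence affine jointly), so its argmax set is convex. Applying Kakutani--Fan--Glicksberg to $\Rc^\star|_\Kc$ then produces $(\nb^\star,\qb^\star) \in \Rc^\star(\nb^\star,\qb^\star)$, i.e. $\Sc^\star \neq \emptyset$. Finally, $\Sc^\star$ is the set of fixed points of $\Rc^\star$; it is a subset of the compact set $\Kc$, and it is closed because $\Rc^\star$ has closed graph (if $(\nb^j,\qb^j) \to (\nb^\infty,\qb^\infty)$ with $(\nb^j,\qb^j) \in \Rc^\star(\nb^j,\qb^j)$, the closed graph property forces $(\nb^\infty,\qb^\infty) \in \Rc^\star(\nb^\infty,\qb^\infty)$). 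A closed subset of a compact set is compact, so $\Sc^\star$ is compact, completing the proof.
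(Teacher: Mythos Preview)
Your overall strategy coincides with the paper's: continuity of $\Rc$ plus continuity of $J$ feed into Berge's maximum theorem to get upper hemicontinuity of $\Rc^\star$, and then Kakutani--Fan--Glicksberg yields the fixed point; compactness of $\Sc^\star$ follows from closedness inside a compact set. There are, however, two points where your execution has gaps that the paper handles differently.

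First, you assert that $\Kc$ is ``a nonempty compact convex subset of a locally convex Hausdorff space'' but do not say which one. The ambient space $\Cc^{n,p}_{\Wc} \x \M(\Pc_p(\R^n \x U))$ is a Polish metric space but not a topological vector space, since probability measures are not closed under linear operations; Kakutani--Fan--Glicksberg does not apply there directly. The paper handles this by embedding the compact domain into $C([0,T];\Mc(\R^n)) \x \Mc(\Pc^n_U \x [0,T])$ with the weak topology on signed measures, which \emph{is} locally convex Hausdorff, and then argues that the induced topology matches so that compactness transfers. Second, your $\Kc$ includes the constraint $\qb'_t(\Z_{\nb'_t})=1$, and your justification that convex combinations preserve it --- ``the marginal of the mixture of $\qb'^i_t$ on $\R^n$ is the corresponding mixture of $\nb'^i_t$'' --- is incorrect: it conflates the \emph{averaged} $x$-marginal $\int_{\Pc^n_U} m(\cdot,U)\,\qb'_t(\mathrm{d}m)$ with the pointwise requirement that $\qb'_t$-a.e.\ $m$ have $x$-marginal $\nb'_t$. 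If $\nb'^1_t \neq \nb'^2_t$, the mixture $\lambda\qb'^1_t+(1-\lambda)\qb'^2_t$ is supported on $\Z_{\nb'^1_t}\cup\Z_{\nb'^2_t}$, which is disjoint from $\Z_{\lambda\nb'^1_t+(1-\lambda)\nb'^2_t}$; so neither your $\Kc$ nor, with this argument, $\Rc(\nb,\qb)$ is convex as a set of pairs. The paper defines its compact domain $K$ only through moment bounds and a modulus of continuity, without the $\Z$-constraint, so that convexity of $K$ is immediate.
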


\begin{proof}    
Under \Cref{assum:main1}, it straightforward to verify that $ J: \Cc^{n,p}_{\Wc} \x \Cc^{n,p}_{\Wc} \x \M(\Pc_p(\R^n \x U)) \x \M(\Pc_p(\R^n \x U)) \to \R$ is continuous. The set valued function $\Rc$ is continuous because it is upper and lower hemicontinuous. Besides $\Rc$ has nonempty compact convex values. By Berge Maximum theorem \cite[Theorem 17.31]{aliprantis2006infinite}, $\Rc^\star$ has nonempty compact convex values and is upper hemicontinuous. Consequently its graph $\mathrm{Gr}(\Rc^\star):=\big\{\big(\nb,\qb, \widetilde{\nb},\widetilde{\qb} \big): (\widetilde{\nb},\widetilde{\qb}) \in \Rc^\star(\nb,\qb) \big\}$ is closed. Let $(\nb,\qb) \in \Cc^{n,p}_{\Wc} \x \M(\Pc_p(\R^n \x U)),$ we say that $(\nb,\qb) \in K$ if $(\nb,\qb) \in \Cc^{n,p}_{\Wc} \x \M(\Pc_p(\R^n \x U))$ and 
    \begin{align*}
        \sup_{t \in [0,T]} \int_{\R^n} |x|^{p'} \nb_t(\mathrm{d}x) + \int_0^T \int_{\Pc_p(\R^n \x U)} \Wc_{p'}\big(m,m_0 \big)^{p'} \qb_t(\mathrm{d}m)\mathrm{d}t\;\; \le M,
    \end{align*}
    where $m_0$ is an element of $\Pc_{p'}(\R^n \x U)$ and $M < \infty$ is defined by
    \[
        M:=\sup \bigg\{ \int_0^T \int_{\Pc_p(\R^n \x U)} \Wc_p\big(m,m_0 \big)^{p'} \qb'_t(\mathrm{d}m)\mathrm{d}t + C \Big(1 + \int_{\R^n} |x|^{p'} \nu(\mathrm{d}x) \Big),\;(\nb',\qb') \in \Rc^\star\big(\Cc^{n,p}_{\Wc} \x \M(\Pc_p(\R^n \x U)) \big)  \bigg\},
    \]
    and in addition
    \begin{align*}
        \Wc_p \big(\nb_t, \nb_s \big)^p \le C|t-s|,\;\mbox{for all}\;(t,s) \in [0,T] \x [0,T].
    \end{align*}
    Thanks to the above techniques, it is obvious that $K$ is a compact set of $\Cc^{n,p}_{\Wc} \x \M(\Pc_p(\R^n \x U)),$ and $\Rc^\star$ is a set valued function of $K$ into himself i.e. $\Rc^\star:(\nb,\qb) \in K \to \Rc^\star(\nb,\qb) \subset K.$
    
\medskip    
    Let $E$ be a Polish space, {\color{black}denote by $\Mc(E)$ the set of signed measure on $E.$} Equipped of the weak convergence topology $\tau_\om:=\sigma\big(\Mc(E), C_b(E) \big)$ generated by the bounded continuous function, $\Mc(E)$ is a locally convex Hausdorff space. Accordingly, $C([0,T];\Mc(\R^n))$ is a locally convex Hausdorff space. Likewise, $\Mc(\Pc^n_U \x [0,T])$ is a locally convex Hausdorff space equipped of $\tau_\om:=\sigma\big(\Mc(\Pc^n_U \x [0,T]), C_b(\Pc^n_U \x [0,T]) \big).$ Then $C([0,T];\Mc(\R^n)) \x \Mc(\Pc^n_U\x [0,T])$ is a locally convex Hausdorff space. One can see $K$ as a subset of $C([0,T];\Mc(\R^n)) \x \Mc(\Pc^n_U \x [0,T]).$ As the topology of $C([0,T];\Mc(\R^n)) \x \Mc(\Pc^n_U) \x [0,T])$ induced on $K$ is equivalent to the topology of $\Cc^n_\Wc \x \M(\Pc(\R^n \x U)),$ we deduce that $K$, which is a compact set of $\Cc^{n,p}_\Wc \x \M(\Pc_p(\R^n \x U)) (\subset \Cc^n_\Wc \x \M(\Pc^n_U))$, is also a compact set of $C([0,T];\Mc(\R^n)) \x \Mc(\Pc(\R^n \x U) \x [0,T]).$ To conclude, we apply the fixed point theorem of Kakutani-–Fan-–Glicksberg (see \cite[Corollary 17.55]{aliprantis2006infinite}) to deduce $\Sc^\star$ is nonempty and compact. Therefore we can find $(\nb^\star,\qb^\star) \in \Rc^\star(\nb^\star,\qb^\star).$
    \end{proof}
    
    \subsection{Proof of existence of non--random measure--valued MFG solution} Now, let us prove the main result of this part.
    If $\mathrm{P}^\star(\mathrm{d}\pi',\mathrm{d}\pi,\mathrm{d}q',\mathrm{d}q,\mathrm{d}\bb):=\delta_{(\nb^\star,\nb^\star,\qb^\star,\qb^\star)}(\mathrm{d}\pi',\mathrm{d}\pi,\mathrm{d}q',\mathrm{d}q,\mathrm{d}\bb) P_B(\mathrm{d}\bb) \in \Pc(\Omb),$ it is straightforward to check that $\mathrm{P}^\star$ is a non--random measure--valued MFG solution where $P_B$ is the $\R^\ell$ Wiener measure. This result proves the point $(i)$ of Theorem \ref{thm:existenceMFG}. The point $(ii)$ is just an easy combination of \Cref{prop:converse_general} and techniques used in proof in theorem \ref{thm:converselimitNash} (by using the fact that $(\nb^\star,\qb^\star)$ are deterministic or equivalently $(\mu,\Lambda)$ are not random).

\bibliographystyle{plain}
\bibliography{Extended_mean-field_games_revision_V1}

\begin{thebibliography}{33}
\providecommand{\natexlab}[1]{#1}
\providecommand{\url}[1]{\texttt{#1}}
\expandafter\ifx\csname urlstyle\endcsname\relax
  \providecommand{\doi}[1]{doi: #1}\else
  \providecommand{\doi}{doi: \begingroup \urlstyle{rm}\Url}\fi

\bibitem[Achdou and Kobeissi(2020)]{Kobeissi-Y_Achdou}
Y.~Achdou and Z.~Kobeissi.
\newblock Mean field games of controls: Finite difference approximations.
\newblock \emph{arXiv preprint arXiv:2003.03968}, 2020.

\bibitem[Aliprantis and Border(2006)]{aliprantis2006infinite}
C.~Aliprantis and K.~Border.
\newblock \emph{Infinite dimensional analysis: a hitchhiker's guide}.
\newblock Springer--Verlag Berlin Heidelberg, 3rd edition, 2006.

\bibitem[Barrasso and Touzi(2020)]{Touzi-Barrasso-2020}
A.~Barrasso and N.~Touzi.
\newblock Controlled diffusion {M}ean {F}ield {G}ames with common noise, and
  {M}ckean-{V}lasov second order backward {SDE}s.
\newblock \emph{arXiv preprint arXiv:2005.07542}, 2020.

\bibitem[Blackwell and Dubins(1983)]{blackwellDubins83}
D.~Blackwell and L.~E. Dubins.
\newblock An extension of skorohod's almost sure representation theorem.
\newblock \emph{Proceedings of the American Mathematical Society}, 89\penalty0
  (4), 1983.

\bibitem[Bonnans et~al.(2019)Bonnans, Hadikhanloo, and
  Pfeiffer]{bonnans_pfeifer2019}
J.~F. Bonnans, S.~Hadikhanloo, and L.~Pfeiffer.
\newblock Schauder estimates for a class of potential mean field games of
  controls.
\newblock \emph{Applied Mathematics \& Optimization}, 83:\penalty0 1431–1464,
  2019.

\bibitem[Cardaliaguet and Lehalle(2018)]{Lehalle-card}
P.~Cardaliaguet and C.-A. Lehalle.
\newblock Mean field game of controls and an application to trade crowding.
\newblock \emph{Mathematics and Financial Economics}, 12\penalty0 (3):\penalty0
  335--363, 2018.

\bibitem[Cardaliaguet et~al.(2019)Cardaliaguet, Delarue, Lasry, and
  Lions]{cardaliaguet2015master}
P.~Cardaliaguet, F.~Delarue, J.-M. Lasry, and P.-L. Lions.
\newblock \emph{The master equation and the convergence problem in mean field
  games}, volume 201 of \emph{Annals of mathematics studies}.
\newblock Princeton University Press, 2019.

\bibitem[Carmona and Delarue(2018)]{carmona2018probabilisticI}
R.~Carmona and F.~Delarue.
\newblock \emph{Probabilistic theory of mean field games with applications
  {I}}, volume~83 of \emph{Probability theory and stochastic modelling}.
\newblock Springer International Publishing, 2018.

\bibitem[Carmona and Lacker(2015)]{carmona2015probabilistic}
R.~Carmona and D.~Lacker.
\newblock A probabilistic weak formulation of mean field games and
  applications.
\newblock \emph{The Annals of Applied Probability}, 25\penalty0 (3):\penalty0
  1189--1231, 2015.

\bibitem[Carmona et~al.(2016)Carmona, Delarue, and Lacker]{carmona2014mean}
R.~Carmona, F.~Delarue, and D.~Lacker.
\newblock Mean field games with common noise.
\newblock \emph{The Annals of Probability}, 44\penalty0 (6):\penalty0
  3740--3803, 2016.

\bibitem[Castaing et~al.(2004)Castaing, Raynaud~de Fitte, and
  Valadier]{CastaingCharles2004YMoT}
C.~Castaing, P.~Raynaud~de Fitte, and M.~Valadier.
\newblock \emph{Young Measures on Topological Spaces With Applications in
  Control Theory and Probability Theory / by Charles Castaing, Paul Raynaud de
  Fitte, Michel Valadier.}
\newblock Mathematics and Its Applications ; 571. Springer Netherlands :
  Imprint: Springer, Dordrecht, 1st ed. 2004. edition, 2004.
\newblock ISBN 1-4020-1963-7.

\bibitem[Claisse et~al.(2020)Claisse, Zhenjie, and
  Tan]{claisse-Zhenjie-Tan-2020}
J.~Claisse, J.~Zhenjie, and X.~Tan.
\newblock {M}ean {F}ield {G}ames with {B}ranching.
\newblock \emph{arXiv preprint arXiv:1912.11893}, 2020.

\bibitem[Djete(2020)]{MFD-2020}
M.~F. Djete.
\newblock Extended mean field control problem: a propagation of chaos result.
\newblock \emph{arXiv preprint arXiv:2006.12996}, 2020.

\bibitem[Djete et~al.(2019)Djete, Possama{\"\i}, and Tan]{djete2019mckean}
M.~F. Djete, D.~Possama{\"\i}, and X.~Tan.
\newblock Mc{K}ean--{V}lasov optimal control: the dynamic programming
  principle.
\newblock \emph{arXiv preprint arXiv:1907.08860}, 2019.

\bibitem[Djete et~al.(2020)Djete, Possama{\"\i}, and Tan]{djete2019general}
M.~F. Djete, D.~Possama{\"\i}, and X.~Tan.
\newblock Mc{K}ean--{V}lasov optimal control: limit theory and equivalence
  between different formulations.
\newblock \emph{arXiv preprint arXiv:2001.00925}, 2020.

\bibitem[Fisher(2017)]{M-Fisher}
M.~Fisher.
\newblock On the connection between symmetric $n$--player games and mean field
  games.
\newblock \emph{The Annals of Applied Probability}, 27\penalty0 (2):\penalty0
  757--810, 2017.

\bibitem[Gomes and Voskanyan(2016)]{DiogoVardan-ExtMFG}
D.~A. Gomes and V.~K. Voskanyan.
\newblock Extended deterministic mean-field games.
\newblock \emph{SIAM Journal on Control and Optimization}, 54\penalty0
  (2):\penalty0 1030--1055, 2016.

\bibitem[Gomes et~al.(2014)Gomes, Patrizi, and Voskanyan]{GOMES201449}
D.~A. Gomes, S.~Patrizi, and V.~Voskanyan.
\newblock On the existence of classical solutions for stationary extended mean
  field games.
\newblock \emph{Nonlinear Analysis: Theory, Methods \& Applications},
  99:\penalty0 49--79, 2014.
\newblock ISSN 0362-546X.
\newblock \doi{https://doi.org/10.1016/j.na.2013.12.016}.
\newblock URL
  \url{https://www.sciencedirect.com/science/article/pii/S0362546X13004446}.

\bibitem[Graber(2016)]{P-JamesonGraber}
P.~J. Graber.
\newblock Linear quadratic mean field type control and mean field games with
  common noise, with application to production of an exhaustible resource.
\newblock \emph{Applied Mathematics and Optimization}, 74\penalty0
  (3):\penalty0 459--486, 2016.

\bibitem[Gy{\"o}ngy(1986)]{gyongy1986mimicking}
I.~Gy{\"o}ngy.
\newblock Mimicking the one--dimensional marginal distributions of processes
  having an {I}t\^o differential.
\newblock \emph{Probability Theory and Related Fields}, 71\penalty0
  (4):\penalty0 501--516, 1986.

\bibitem[Huang et~al.(2003)Huang, Caines, and Malham{\'e}]{huang2003individual}
M.~Huang, P.~Caines, and R.~Malham{\'e}.
\newblock Individual and mass behaviour in large population stochastic wireless
  power control problems: centralized and {N}ash equilibrium solutions.
\newblock In C.~Abdallah and F.~Lewis, editors, \emph{Proceedings of the 42nd
  IEEE conference on decision and control, 2003.}, pages 98--103. IEEE, 2003.

\bibitem[Huang et~al.(2006)Huang, Malham{\'e}, and Caines]{huang2006large}
M.~Huang, R.~Malham{\'e}, and P.~Caines.
\newblock Large population stochastic dynamic games: closed--loop
  {M}c{K}ean--{V}lasov systems and the {N}ash certainty equivalence principle.
\newblock \emph{Communications in Information \& Systems}, 6\penalty0
  (3):\penalty0 221--252, 2006.

\bibitem[Kallenberg(2002)]{kallenberg2002foundations}
O.~Kallenberg.
\newblock \emph{Foundations of modern probability}.
\newblock Probability and its applications. Springer--Verlag New York, 2nd
  edition, 2002.

\bibitem[Kobeissi(2019)]{Kobeissi}
Z.~Kobeissi.
\newblock On classical solutions to the mean field game system of controls.
\newblock \emph{arXiv preprint arXiv:1904.11292}, 2019.

\bibitem[Lacker(2015)]{LackerMFGcontrolledmartingale}
D.~Lacker.
\newblock Mean field games via controlled martingale problems: Existence of
  markovian equilibria.
\newblock \emph{Stochastic Processes and their Applications}, 125\penalty0
  (7):\penalty0 2856--2894, 2015.

\bibitem[Lacker(2016)]{lacker2016general}
D.~Lacker.
\newblock A general characterization of the mean field limit for stochastic
  differential games.
\newblock \emph{Probability Theory and Related Fields}, 165\penalty0
  (3-4):\penalty0 581--648, 2016.

\bibitem[Lacker(2018)]{Lacker-closedloop}
D.~Lacker.
\newblock On the convergence of closed-loop nash equilibria to the mean field
  game limit.
\newblock \emph{arXiv preprint arXiv:1808.02745}, 2018.

\bibitem[Lacker and Webster(2015)]{lacker2015translation}
D.~Lacker and K.~Webster.
\newblock Translation invariant mean field games with common noise.
\newblock \emph{Electronic Communications in Probability}, 20\penalty0
  (42):\penalty0 1--13, 2015.

\bibitem[Lacker et~al.(2020)Lacker, Shkolnikov, and
  Zhang]{Lacker-Shkolnikov-Zhang_2020}
D.~Lacker, M.~Shkolnikov, and J.~Zhang.
\newblock Superposition and mimicking theorems for conditional
  {M}ckean-{V}lasov equations.
\newblock \emph{arXiv preprint arXiv:2004.00099}, 2020.

\bibitem[Lasry and Lions(2006)]{lasry2006jeux}
J.-M. Lasry and P.-L. Lions.
\newblock Jeux {\`a} champ moyen. {I}--{L}e cas stationnaire.
\newblock \emph{Comptes Rendus Math{\'e}matique}, 343\penalty0 (9):\penalty0
  619--625, 2006.

\bibitem[Lasry and Lions(2007)]{lasry2007mean}
J.-M. Lasry and P.-L. Lions.
\newblock Mean field games.
\newblock \emph{Japanese Journal of Mathematics}, 2\penalty0 (1):\penalty0
  229--260, 2007.

\bibitem[Lauri\`{e}re and Tangpi(2020)]{M_Lauriere-Tangpi}
M.~Lauri\`{e}re and L.~Tangpi.
\newblock Convergence of large population games to mean field games with
  interaction through controls.
\newblock \emph{arXiv preprint arXiv:2004.08351}, 2020.

\bibitem[Villani(2008)]{villani2008optimal}
C.~Villani.
\newblock \emph{Optimal transport: old and new}, volume 338 of
  \emph{Grundlehren der Mathematischen Wissenschafte}.
\newblock Springer, 2008.

\end{thebibliography}

\begin{appendix}
    \section{Proof of equivalence between measure--valued MFG solution and weak MFG solution of \cite[Definition 3.1]{carmona2014mean}}
    
    \subsection{Weak MFG solution as a particular case of measure--valued MFG solution} \label{appendix:equivalence1}
    
    \begin{proof}
        Let $(\Omt,\Fct,\Ft,\widetilde{\P},\Wt,\Bt,\mubb,\widetilde{\Lambda},\Xt)$ be a weak MFG solution (see \Cref{def_carmona} or \cite[Definition 3.1]{carmona2014mean}). Let $f \in C^2_b(\R^n),$ by applying It\^{o}'s formula and taking the conditional expectation w.r.t $\sigma\{\Bt,\mubb\},$ one gets
    \begin{align*} 
        \mathrm{d}\langle f(\cdot-\sigma_0 \Bt_t),\mubb^x_t \rangle
        =
	    &\E^{\Pt} \bigg[ \nabla f(\Xt_t-\sigma_0 \Bt_t)^\top \int_U b(t,\Xt_t,\mubb^x_t,u) \widetilde{\Lambda}_t(\mathrm{d}u) \Big| \Bt,\mubb \bigg] \mathrm{d}t + \frac{1}{2} \langle \mathrm{Tr}[\sigma \sigma^\top(t,\cdot,\mubb^x_t) \nabla^2f(\cdot-\sigma_0\Bt_t)], \mubb^x_t \rangle \mathrm{d}t.
    \end{align*}
    We define $\overline{\mubb}_t(\mathrm{d}x,\mathrm{d}u):=\E^{\Pt}\big[\delta_{\Xt_t}(\mathrm{d}x)\widetilde{\Lambda}_t(\mathrm{d}u) | \Bt,\mubb \big].$ Therefore, one can check that $\Pr \in \Pcb_V$ where
    \begin{align*}
        \Pr
        :=
        \Pt \circ \big( \mubb^x, \mubb^x, \delta_{\overline{\mubb}_t}(\mathrm{d}m)\mathrm{d}t, \delta_{\overline{\mubb}_t}(\mathrm{d}m)\mathrm{d}t, B \big)^{-1}.
    \end{align*}
    Now, we show that $\Pr \in \Pcb^\star_V$ i.e. the optimality condition is satisfied. Let $\Qr \in \Pcb_V$ s.t. $\Lc^{\Qr}(\mu,\Lambda,B)=\Lc^{\Pr}(\mu,\Lambda,B).$ Let $(\Omb',\Fb',\Qr')$ be an extension  of $(\Omb,\Fb,\Qr)$ supporting a $\R^n$--valued $\widetilde{\Fb}$--Brownian motion $W',$ a $\R^n$--valued $\Fcb'_0$--measurable variable $X'_0$ s.t. $\Lc^{\Qr'}(X'_0)=\nu,$ and $F'$ a uniform random variable. Besides $W',$ $X'_0,$ $F'$ and $\Gcb_T$ are independent. By \cite[Proposition 4.10]{MFD-2020} (see also \cite[Proposition 4.9]{MFD-2020}), there exists $(\beta'^{k})_{k\in \N^*}$ a sequence of $\Pc(U)$--valued $\big((\sigma\{ F',X'_0,W'_{t \wedge \cdot} \} \lor \Gcb_t) \otimes \Bc(\Pc^n_U) \big)_{t \in [0,T]}$--predictable processes such that if $(X'^k_t)_{t \in [0,T]}$ is the unique strong solution of: 
    \begin{align*}
        \mathrm{d}X'^k_t
        =
        &
        \int_U b \big(t,X'^k_t,\mu_t,u \big) \int_{\Pc^n_U} \beta'^{k}_{t}(m)(\mathrm{d}u) \Lambda'_t(\mathrm{d}m) \mathrm{d}t
        +
        \sigma \big(t,X'^k_t,\mu_t \big)\mathrm{d}W'_t + \sigma_0 \mathrm{d}B_t,
    \end{align*}
    then, one has, for a sub--sequence $(k_j)_{j \in \N^*} \subset \N^*,$ 
    \begin{align*}
        \Lim_{j \to \infty} \E^{\Qr'} \bigg[ \int_0^T \Wc_{p} \big(\mbb'^{k_j}_t[m],m \big) \Lambda'_t(\mathrm{d}m) \mathrm{d}t\bigg]=0\;\mbox{and}\; \Lim_{j \to \infty} \E^{\Qr'} \bigg[\sup_{s \in [0,T]}  \Wc_p ( \mu'^{k_j}_t,\mu'_t ) \bigg]=0,
    \end{align*}
    where
    $$
        \mbb'^{k}_t[m](\mathrm{d}x,\mathrm{d}u)
        :=
        \E^{\Qr'}
        \Big[
        \beta'^{k}_t(m)(\mathrm{d}u) \delta_{X'^{k}_t}(\mathrm{d}x) \Big|\Gcb_t
        \Big]
        ~
        \mbox{and}
        ~
        \mu'^{k}_t:=\Lc^{\Qr'}(X'^{k}_t\big|\Gcb_t)
        ~~
        \mbox{for all}
        ~
        t \in [0,T].
    $$
    Therefore
    \begin{align*}
    \Lim_{k \to \infty}
        \E^{\Qr'}\bigg[
				\int_0^T \int_U L(t, X'^k_t,\mu_{t}, u) \int_{\Pc^n_U}\beta'^k_t(m)(\mathrm{d}u) \Lambda'_t(\mathrm{dm}) \mathrm{d}t 
				+ 
				g(X'^k_T, \mu_T) 
			\bigg]
			=
			\E^{\Qr}[J(\mu',\mu,\Lambda',\Lambda)].
    \end{align*}
    
    Notice that, we can find a Borel function $\phi'^k: [0,T] \x [0,1] \x \R^n \x \Cc^n \x \Cc_{\Wc} \x \M(\Pc^n_U) \x \Cc^\ell \to \Pc(U)$ satisfying $\phi'^k(t,F',X'_0,W'_{t \wedge \cdot},\mu_{t \wedge \cdot},\Lambda_{t \wedge \cdot},B_{t \wedge \cdot})=\int_{\Pc^n_U}\beta'^k_t(m) \Lambda'_t(\mathrm{d}m)$ $\Qrt$--a.s. On the probability space $(\Omt,\Fct,\Ft,\Pt),$ we define $\widetilde{h}'^k_t:=\phi'^k(t,\widetilde{F},\Xt_0,\Wt_{t \wedge \cdot},\mubb^x_{t \wedge \cdot},\Rrt_{t \wedge \cdot},\Bt_{t \wedge \cdot}),$ where $\Rrt:=\delta_{\overline{\mubb}_t}(\mathrm{d}m)\mathrm{d}t$ and $\widetilde{F}$ is a uniform variable independent of $\Xt_0,$ $\Wt,$ $\Bt$ and $\mubb.$ Let $\Xt'^k$ be the solution of 
    \begin{align*}
        \mathrm{d}\Xt'^k_t
        =
        &
        \int_U b \big(t,\Xt'^k_t,\mubb^x_t,u \big)  \widetilde{h}'^k_t(\mathrm{d}u) \mathrm{d}t
        +
        \sigma \big(t,\Xt'^k_t,\mubb^x_t \big)\mathrm{d}\Wt_t + \sigma_0 \mathrm{d}B_t\;\mbox{with}\;\Xt'^k_0=\Xt_0.
    \end{align*}
    
    It is straightforward to check that $(\Omt,\Fct,\Fb,\widetilde{\P},\Wt,\Bt,\mubb,\widetilde{h}'^k,\Xt'^k)$ satisfies the point $(1$--$4)$ of \Cref{def_carmona} for all $k \in \N^*$. As $(\Omt,\Fct,\Ft,\widetilde{\P},\Wt,\Bt,\mubb,\widetilde{\Lambda},\Xt)$ is a weak MFG solution and
    \begin{align*}
        \Qrt \circ \Big(X'^k, \int_{\Pc^n_U}\beta'^k_t(m)(\mathrm{d}u) \Lambda'_t(\mathrm{d}m) \mathrm{d}t, W', \mu, B \Big)^{-1}
        =
        \Pt \circ \Big( \Xt'^k, \widetilde{h}'^k_t(\mathrm{d}u) \mathrm{d}t, \Wt, \mubb^x, \Bt   \Big)^{-1},
    \end{align*}
    then 
    \begin{align*}
        \E^{\Pr}[J(\mu',\mu,\Lambda',\Lambda)]
        &\ge
        \Lim_{k \to \infty}
        \E^{\Pt}\bigg[
				\int_0^T \int_U L(t, \Xt'^k_t,\mubb^x_{t}, u) \widetilde{h}'^k_t(\mathrm{d}u) \mathrm{d}t 
				+ 
				g(\Xt'^k_T, \mubb^x_T) 
			\bigg]
			\\
        &=
        \Lim_{k \to \infty}
        \E^{\Qr'}\bigg[
				\int_0^T \int_U L(t, X'^k_t,\mu_{t}, u) \int_{\Pc^n_U}\beta'^k_t(m)(\mathrm{d}u) \Lambda'_t(\mathrm{dm}) \mathrm{d}t 
				+ 
				g(X'^k_T, \mu_T) 
			\bigg]
			=
			\E^{\Qr}[J(\mu',\mu,\Lambda',\Lambda)].
    \end{align*}
    We can conclude that $\Pr$ belongs to $\Pcb_V^\star.$
    \end{proof}
     \subsection{Measure--valued as a weak MFG solution} \label{appendix:equivalence2}

    \begin{proof}
    
    Let $\Pr \in \Pcb_V^\star.$ By \cite[Theorem 1.3]{Lacker-Shkolnikov-Zhang_2020}, there exists $(\widetilde{\Omb},\widetilde{\Fb},\Prt)$ an extension of $(\Omb,\Fb,\Pr)$ supporting a Brownian motion $\Wt,$ a $\R^n$--valued $\widetilde{\Fb}$--adapted continuous process $\Xt$ s.t. $\Lc^{\Prt}(\Xt_0)=\nu.$ Besides $\Xt_0,$ $\Wt,$ and $\Gcb_T$ are independent, and
    
    \begin{enumerate}
        \item $\Xt$ satisfies
        \begin{align*}
            \mathrm{d}\Xt_t
            =
            \int_U b(t,\Xt_t,\mu_t,u) \int_{\Pc^n_U} m^{\Xt_t}(\mathrm{d}u) \Lambda_t(\mathrm{d}m)\mathrm{d}t
            +
            \sigma(t,\Xt_t,\mu_t)\mathrm{d}\Wt_t
            +
            \sigma_0 \mathrm{d}B_t\;\widetilde{\Pr}\mbox{--a.s.}
        \end{align*}
        where for each $m \in \Pc^n_U,$ the Borel function $x \in \R^n \to \Pc(U) \ni m^x$ satisfies $m^x(\mathrm{d}u)m(\mathrm{d}x,U)=m(\mathrm{d}x,\mathrm{d}u).$
        
        \item $\mu_t=\Lc^{\widetilde{\Pr}}(\Xt_t|\Gcb_t)=\Lc^{\widetilde{\Pr}}(\Xt_t|\Gcb_T)$ a.s. for each $t \in [0,T].$
        
        \item $X_{t \wedge \cdot}$ is conditionally independent of $\sigma\{\Wt_{T \wedge \cdot} \} \lor \Gcb_T$ given $\sigma\{\Wt_{t \wedge \cdot} \} \lor \Gcb_t$.
    \end{enumerate}
    We pose $\widetilde{\Lambda}_t(\mathrm{d}u)\mathrm{d}t:=\int_{\Pc^n_U}m^{\Xt_t}(\mathrm{d}u) \Lambda_t(\mathrm{d}m)\mathrm{d}t$ and $\mubb:=\Lc^{\Prt}(W,\widetilde{\Lambda},\Xt|\Gcb_T).$
    It follows that $(\widetilde{\Omb},\widetilde{\Fcb},\widetilde{\Fb},\Prt,\Wt,B,\mubb,\widetilde{\Lambda},\Xt)$ satisfies the point $(1$--$4)$ of \Cref{def_carmona}. Now, let $(\Omt',\Fct',\Ft',\widetilde{\P}',\Wt',\Bt',\mubb',\widetilde{\Lambda}',\Xt')$ be a tuple satisfying $(1$--$4)$ of \Cref{def_carmona} and $\widetilde{\P}'\circ \big(\Xt_0',\Wt',\mubb',\Bt' \big)^{-1}=\widetilde{\Pr}\circ \big(\Xt_0,\Wt,\mubb,B \big)^{-1}$. Let us define $\widetilde{\mubb}'^x_t:=\Lc^{\Pt'}(\Xt'_t|\Bt',\mubb'^x),$ $\widetilde{\overline{\mubb}}'_t:=\E^{\Pt'}[\delta_{\Xt'_t}(\mathrm{d}x)\widetilde{\Lambda}'_t(\mathrm{d}u)|\Bt',\mubb'^x]$ and $\overline{\mubb}'_t:=\E^{\mubb'}[\delta_{X_t}(\mathrm{d}x)\theta_t(\mathrm{d}u)] $ where $(X,\theta)$ is the canonical process on $(\Cc^n \x \M(U)).$ Then, we easily verify that 
    \begin{align*}
        \Qrt:=\Pt' \circ \big( \widetilde{\mubb}'^x, \mubb'^x, \delta_{\widetilde{\overline{\mubb}}'_t}(\mathrm{d}m)\mathrm{d}t, \delta_{\overline{\mubb}'_t}(\mathrm{d}m)\mathrm{d}t, \Bt' \big)^{-1} \in \Pcb_V\;\mbox{and}\;\Lc^{\Pr}(\mu,\Lambda,B)=\Lc^{\Qrt}(\mu,\Lambda,B).
    \end{align*}
    As $\Pr \in \Pcb_V^\star,$ we deduce that $(\widetilde{\Omb},\widetilde{\Fcb},\widetilde{\Fb},\Prt,\Wt,B,\mubb,\widetilde{\Lambda},\Xt)$ satisfies the optimality condition, then he is a weak MFG solution.

     \end{proof}

    \section{Sketch of proof of technical results borrowed from \cite{MFD-2020}}
    
    \begin{proof}[Sketch of proof of \Cref{Prop_borrowed1}] \label{proof_prop_borrowed1}
        Recall that in the simplified case of \Cref{Prop_borrowed1}, for a measure--valued control rule $\Pr \in \Pcb_V,$ the Fokker--Planck equation is rewritten: $\Pr$--a.s.
\begin{align*}
        \mathrm{d}\langle f,\mu'_t \rangle
	    =
	    &\int_{\Pc^n_U}  \int_{\R^n \x U} \frac{1}{2} \nabla^2f(x)u^2 m^x(\mathrm{d}u) \mu'_t(\mathrm{d}x) \Lambda'_t(\mathrm{d}m)\mathrm{d}t + \int_{\Pc^n_U}  \int_{\R^n} \frac{1}{2} \nabla^2f(x) \sigmat(m)^2 \mu'_t(\mathrm{d}x)\Lambda_t(\mathrm{d}m)\mathrm{d}t.
    \end{align*}
    
    For each $\delta >0,$ we define  $G_\delta(x):= \delta^{-1}G(\delta^{-1}x),$ $G(x):=(1+|x|^2)^{-1} \big(\int_{\R} (1+|x'|^2)^{-1} \mathrm{d}x' \big)^{-1}$ and (recall that $\Lambdap_t(\Z_{\mu'_t})=1$)
    \begin{align*}
        \overline{\sigma}_{\delta}(x,\Lambda'_t)^2
        :=
        \int_{\Pc^n_U} \int_{\R \x U} u^2 \frac{G_{\delta}(x-y) m(\mathrm{d}u,\mathrm{d}y)}{\int_{\R} G_{\delta}(x-y')m(\mathrm{d}y',U)} \Lambda'_t(\mathrm{d}m)
        =
        \int_{\Pc^n_U} \int_{\R \x U} u^2 \frac{G_{\delta}(x-y) m^y(\mathrm{d}u)\mu'_t(\mathrm{d}y)}{\int_{\R} G_{\delta}(x-y')\mu'_t(\mathrm{d}y')} \Lambda'_t(\mathrm{d}m) 
        .
    \end{align*}
    Notice that $ \R \ni x \to \overline{\sigma}_\delta(x,\Lambda'_t) \in \R$ is smooth (infinitely differentiable).  
    Also, given $(\Lambda',\mu',\Lambda),$ let us introduce $\mu'^\delta,$ the unique solution of 
    \begin{align} \label{eq:firstapp}
        \mathrm{d}\langle f,\mu'^\delta_t \rangle
	    =
	    & \int_{\R^n} \frac{1}{2} \nabla^2f(x)\overline{\sigma}_{\delta}(x,\Lambda'_t)^2 \mu'^\delta_t(\mathrm{d}x)\mathrm{d}t + \int_{\Pc^n_U}  \int_{\R^n} \frac{1}{2} \nabla^2f(x) \sigmat(m)^2 \mu'^\delta_t(\mathrm{d}x)\Lambda_t(\mathrm{d}m)\mathrm{d}t.
    \end{align}
    Under \Cref{assum:main1}, one has that (see  \cite[Lemma 4.2, Lemma 4.5]{MFD-2020} or \cite[Lemma 2.1, Proposition 4.3]{gyongy1986mimicking})
    \begin{align*}
        \Lim_{\delta \to 0} \sup_{t \in [0,T]} \Wc_p(\mu'_t,\mu'^\delta_t)=0\;\mbox{and}\;\Lim_{\delta \to 0}  \overline{\sigma}_{\delta}(x,\Lambda'_t)^2= \int_{\Pc^n_U} \int_{U} u^2 m^x(\mathrm{d}u) \Lambda'_t(\mathrm{d}m),\;\mu'_t(\mathrm{d}x)\mathrm{d}t\mbox{--a.e.}.
    \end{align*}
    For now, let us set $\delta >0.$ Let $(\Lambda'^k,\Lambda^k)_{k \in \N^*}$ be a sequence s.t. $\Lim_{k \to \infty} (\Lambda'^k,\Lambda^k) = (\Lambda',\Lambda)$ in $\Wc_p.$ Then, if we define $\mu'^{\delta,k}$ the unique solution of 
    \begin{align} \label{eq:secondapp}
        \mathrm{d}\langle f,\mu'^{\delta,k}_t \rangle
	    =
	    & \int_{\R^n} \frac{1}{2} \nabla^2f(x)\overline{\sigma}_{\delta}(x,\Lambda'^k_t)^2 \mu'^{\delta,k}_t(\mathrm{d}x)\mathrm{d}t + \int_{\Pc^n_U}  \int_{\R^n} \frac{1}{2} \nabla^2f(x) \sigmat(m)^2 \mu'^{\delta,k}_t(\mathrm{d}x)\Lambda^k_t(\mathrm{d}m)\mathrm{d}t,
    \end{align}
    using the uniqueness of \Cref{eq:firstapp} and the regularity of the coefficients, it is straightforward that: for each $\delta > 0,$ 
    \begin{align*}
        \Lim_{k \to \infty} \sup_{t \in [0,T]} \Wc_p(\mu'^\delta_t,\mu'^{\delta,k}_t)=0.
    \end{align*}
    We set now $k \in \N^*$ and $\delta >0.$ We use the argument mentioned in \eqref{eq:blackwell}. Recall that we place ourselves on an extension $(\Omt,\Ft:=(\Fct_t)_{t \in [0,T]},\widetilde{\Pr})$ of the probability space $(\Omb,\Fcb,\Pr)$ where we have a $\Ft$--Brownian motion $W,$ a $\Fct_0$--measurable random variable $X_0$ of law $\nu$ and a $F$ uniform variable. The variables $W,$ $X_0,$ $F$ and the $\sigma$-field $\Gcb_T$ are independent. 
    Let $Z'$ be a $\Ft$--adapted solution of: $Z'^{\delta,k}_0=X_0,$  

\begin{align}
    \mathrm{d}Z'^{\delta,k}_t
    =
    \sqrt{\alpha^{\delta,k}(t,Z'^{\delta,k}_t,F)^2 + \int_{\Pc^n_U}\sigmat(m)^2 \Lambda^k_t(\mathrm{d}m)} \mathrm{d}W_t\;\;\mbox{where}\;\;\alpha^{\delta,k}(t,x,F):=\Psi\bigg(\int_{\Pc^n_U} \frac{G_{\delta}(x-y) m(\mathrm{d}u,\mathrm{d}y)}{\int_{\R} G_{\delta}(x-y')m(\mathrm{d}y',U)} \Lambda'^k_t(\mathrm{d}m) \bigg)(F).
\end{align}

Using the uniqueness of \Cref{eq:secondapp}, we check that $\mu'^{\delta,k}_t=\Lc^{\Prt}(Z'^{\delta,k}_t | \Lambda'^k, \Lambda^k)=\Lc^{\Prt}(Z'^{\delta,k}_t | \Lambda'^k_{t \wedge \cdot}, \Lambda^k_{t \wedge \cdot})$ a.s. for all $t \in [0,T].$ Consequently, one has (see for instance \cite[Proposition 4.9, Proposition 4.10]{MFD-2020})
\begin{align*}
    \Lim_{\delta \to 0} \Lim_{k \to \infty} \sup_{t \in [0,T]} \Wc_p \big( \mu'_t, \Lc^{\Prt}(Z'^{\delta,k}_t | \Lambda'^k, \Lambda^k) \big)\;\mbox{and}\;\Lim_{\delta \to 0} \Lim_{k \to \infty} \delta_{\Lc^{\tilde \Pr}(Z'^{\delta,k}_t,\; \alpha^{\delta,k}(t,Z'^{\delta,k}_t,F) | \Lambda'^k, \Lambda^k)} (\mathrm{d}m) \mathrm{d}t= \Lambda'.
\end{align*}
    
    This is enough to conclude.
    
    \end{proof}
    
\end{appendix}

\end{document}